\newtheorem{theorem}{Theorem}[section]
\newtheorem{lemma}[theorem]{Lemma}
\theoremstyle{definition}
\newtheorem{definition}[theorem]{Definition}
\newtheorem{prop}[theorem]{Proposition}
\newtheorem{cor}[theorem]{Corollary}
\theoremstyle{remark}
\newtheorem{remark}[theorem]{Remark}
\numberwithin{equation}{section}
\newcommand{\V}{\mathcal{V}}
\newcommand{\W}{\ensuremath{\mathcal{W}}}
\newcommand{\N}{\ensuremath{\mathbb{N}}}
\newcommand{\Sph}{\ensuremath{\mathbb{S}}}
\newcommand{\Hyp}{\ensuremath{{\mathbb{H}^2}}{}}
\newcommand{\R}{\ensuremath{\mathbb{R}}}
\newcommand{\E}{\ensuremath{\mathcal{E}}}
\newcommand*\diff{\mathop{}\!\mathrm{d}}
\newcommand{\Aring}{\ensuremath{\mathring{A}}{}}
\newcommand{\LH}{\mathcal{L}_{\mathbb{H}^2}}
\newcommand{\diam}{\mathrm{diam}}
\newcommand{\dx}{\;\mathrm{d}x}
\renewcommand{\max}{\mathrm{max}}
\begin{document}

\title[The Willmore Flow of Tori of Revolution]{The Willmore Flow of Tori of Revolution}

% Info Anna
 \author{Anna Dall'Acqua}
\address{Institut für Analysis, Universität Ulm, 89069 Ulm}
\email{anna.dallacqua@uni-ulm.de}
\author{Marius Müller}
\address{Institut für Analysis, Albert-Ludwigs-Universität Freiburg, 79104 Freiburg}
\email{marius.mueller@math.uni-freiburg.de}
\author{Reiner Schätzle}
\address{Fachbereich Mathematik, Eberhard-Karls-Universit\"at T\"ubingen, Auf der Morgenstelle 10, 72076 T\"ubingen, Germany}
\email{schaetz@everest.mathematik.uni-tuebingen.de}
\author{Adrian Spener}
\address{Institut für angewandte Analysis, Universität Ulm, 89069 Ulm}
\email{adrian.spener@alumni.uni-ulm.de}

\thanks{\textbf{Acknowledgment.}   The research of the first author was supported by the Deutsche Forschungsgemeinschaft (DFG, German Research Foundation)- project no. 404870139. The second author is supported by the LGFG Grant (Grant no. 1705 LGFG-E). The fourth author is supported by the Deutsche Forschungsgemeinschaft - project no. 355354916. The authors would like to thank Fabian Rupp for helpful discussions and the referee for helpful suggestions.}

%    General info
\subjclass[2020]{Primary 53E40, Secondary 49Q20, 58E30}

\keywords{Willmore Flow, Torus of revolution, Clifford torus, conformal class}

\begin{abstract}
We study long-time existence and asymptotic behavior for the $L^2$-gradient flow of the Willmore energy, under the condition that the initial datum is a torus of revolution. We show that if an initial datum has Willmore energy below $8\pi$ then the solution of the Willmore flow converges for $t \to \infty$ to the Clifford Torus, possibly rescaled and translated. The energy threshold of $8 \pi$ turns out to be optimal for such a convergence result. We give an application to the conformally constrained Willmore minimization problem.
\end{abstract}
\maketitle

%\today

%\tableofcontents
\vspace{-0.5cm}

\section{Introduction}

Let $f\colon \Sigma \to \R^3$ be a smooth immersion of a two-dimensional manifold without boundary. Its \emph{Willmore energy} is 
\begin{equation}
 \label{eq:DefWillEnergy}
 \W(f) = \frac{1}{4} \int_\Sigma {|\vec{H}|}^2 \diff \mu,
\end{equation}
{where $\vec{H}$ denotes the mean curvature vector and $\diff \mu$ the induced Riemannian measure. Its critical points are called \emph{Willmore immersions} and satisfy}
\begin{equation}
 \label{eq:WillmoreEquation}
\Delta \vec{H} + Q(\Aring) \vec{H} = 0, 
\end{equation}
where $\Delta$ denotes the Laplace-Beltrami operator, \Aring{} is the trace-free second fundamental form and $Q$ is quadratic in \Aring{} (see \eqref{eq:aring}). If $f(\Sigma)$ is orientable (or two-sided, which is equivalent in $\R^3$) 
% Notice that if $f(\Sigma)$ is orientable, 
then $\vec{H}=(\kappa_1+\kappa_2) \vec{N}$ with $\kappa_1,\kappa_2$ the principal curvatures of $f(\Sigma)$ and $\vec{N}$ a smooth normal vector-field.  The $L^2$-gradient flow of the Willmore functional with given initial datum $f_0$, a smooth immersion, is 
\begin{equation}
 \label{eq:WillFlowColl}
 \partial_t f = - (\Delta \vec{H} + Q(\Aring) \vec{H})
\end{equation}
with $f(t=0) = f_0$. This fourth order quasilinear geometric evolution equation has been extensively studied in \cites{KS1, KS2}, where a blow-up criterion is formulated. 
With the aid of this criterion the same authors proved in \cite{KS3} long-time existence and convergence for the \emph{flow of spherical immersions}  under the assumption that the initial immersions $f_0$, $f_0 : \mathbb{S}^2 \rightarrow \mathbb{R}^3$, satisfies $\mathcal{W}(f_0) < 8\pi$. The energy threshold of $8\pi$ is shown to be sharp in \cite{Blatt} for the convergence of spherical immersions. 

In the classical work \cite{Mayer01anumerical} the Willmore flow is studied numerically, not only for spheres but also for surfaces of different genus, such as tori. See also \cite{BarGarNu} for other numerical examples. In \cite[Sec. 8.1]{Mayer01anumerical} it is stated that the flow converges for all tori that the authors looked at, which was astounding as this behavior differs fundamentally from the surface diffusion flow, where the hole of all initial tori seems to close and the curvature blows up, cf. \cite{MayerSurfDif,BarGarNu}. Our goal is to understand analytically what happens to tori along the Willmore flow. In this article we only look at the special case of tori of revolution.% ({see Definition \ref{def:toriofrevolution}}).

\begin{definition}\label{def:toriofrevolution}
In the sequel we identify $\mathbb{S}^1 = \mathbb{R}/ \mathbb{Z}$ and set $\mathbb{H}^2 := \mathbb{R} \times (0,\infty)$. We call an immersion $f: \mathbb{S}^1 \times \mathbb{S}^1 \rightarrow \mathbb{R}^3 $ a \emph{torus of revolution} if there exists an immersed curve $\gamma \in C^\infty(\mathbb{S}^1, \mathbb{H}^2)$, {$\gamma=(\gamma^{(1)},\gamma^{(2)})$,} such that
\begin{equation}\label{eq:D1}
f(u,v) = \begin{pmatrix}
\gamma^{(1)}(u) \\ \gamma^{(2)}(u) \cos(2\pi v) \\ \gamma^{(2)}(u) \sin(2\pi v) 
\end{pmatrix}.
\end{equation}
We call $\gamma$ \emph{profile curve} and we will frequently denote $f$ as in \eqref{eq:D1} by $F_\gamma$.
\end{definition}

An essential element in our argument is that %It is especially remarkable that 
the property of being a torus of revolution is preserved along the Willmore flow. Hence the evolution by Willmore flow can also be regarded as a time evolution of the profile curves. In the arguments to come we will take advantage of an interplay between the revolution symmetry and the %aforementioned 
blow-up-criterion developed in \cite{KS1,KS2}. With this technique we have identified a geometric quantity whose boundedness ensures convergence. {This quantity is the \emph{hyperbolic length} of the profile curves given by
\begin{equation*}
    \mathcal{L}_{\mathbb{H}^2}(\gamma) := \int_{\mathbb{S}^1} \frac{|\gamma'(x)|}{\gamma^{(2)}(x)} \dx, \quad  \gamma \in C^\infty(\mathbb{S}^1, \mathbb{R} \times (0,\infty) ).
\end{equation*}}

Strikingly, the \emph{hyperbolic geometry} of the curve evolution is decisive for the convergence behavior. We recall that the hyperbolic plane $\mathbb{H}^2= \mathbb{R} \times (0,\infty)$ is endowed with the metric $g_{(x,y)} = y^{-2}  \; \mathrm{d}x \; \mathrm{d} y$.

Now we can state our main convergence criterion

\begin{theorem}\label{thm:boundhypmain}
Let $f: [0,T) \times \mathbb{S}^1 \times \mathbb{S}^1 \rightarrow \mathbb{R}^3$ be a maximal evolution by Willmore flow such that $f(0)$ is a torus of revolution. Then $f(t)$ is a torus of revolution for all $t \in [0,T)$. Suppose that $(\gamma(t))_{t \in [0,T)}$ is a collection of profile curves of  $f(t)$. If 
\begin{equation}
\liminf_{ t \rightarrow T} \mathcal{L}_{\mathbb{H}^2} ( \gamma(t)) < \infty, 
\end{equation}  
then $T = \infty$ and the Willmore flow converges {(up to reparametrizations)} in $C^k$ for all $k$ to a Willmore torus of revolution $f_\infty$. 
\end{theorem}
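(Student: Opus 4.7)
\emph{Plan.} I would split the proof into three steps: (1) showing that axisymmetry is preserved along the flow, (2) ruling out finite-time singularities by means of the Kuwert--Schätzle blow-up criterion, and (3) extracting convergence as $t\to\infty$. For Step 1, the Willmore flow is equivariant under ambient Euclidean isometries of $\R^3$ and admits unique solutions up to tangential reparametrizations. Hence for any rotation $R_\theta$ about the symmetry axis of $f_0$, both $R_\theta\circ f(t,\cdot)$ and $f(t,\cdot)$ solve the same Willmore flow with the same initial datum $R_\theta\circ f_0=f_0$, so they must agree up to a diffeomorphism of $\mathbb{S}^1\times\mathbb{S}^1$. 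This yields axisymmetry for all $t\in[0,T)$ and thus a profile curve $\gamma(t)\in C^\infty(\mathbb{S}^1,\Hyp)$.

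\emph{Step 2 (Long-time existence).} The analytic heart of the argument rests on the identity
\begin{equation*}
\W(F_\gamma)\;=\;\frac{\pi}{2}\int_\gamma \kappa_h^2\,\diff s_h,
\end{equation*}
where $\kappa_h$ and $\diff s_h$ denote hyperbolic geodesic curvature and arclength; it is verified by a direct calculation using $H(F_\gamma)=\kappa_h/\gamma^{(2)}$ (after reparametrizing $\gamma$ by hyperbolic arclength) together with $\diff\mu_{F_\gamma}=2\pi\gamma^{(2)}\,\diff s$, and is easily checked to be sharp on the Clifford torus. Because $\W(f(t))$ is nonincreasing along the flow and $\LH(\gamma(t_n))\leq\Lambda$ along some $t_n\nearrow T$ by hypothesis, the curves $\gamma(t_n)$ (parametrized by hyperbolic arclength) are uniformly bounded in $W^{2,2}(\mathbb{S}^1,\Hyp)$. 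Up to hyperbolic isometries of $\Hyp$---which lift to Möbius transformations of $\R^3$ preserving the rotation axis, under which $\W$ is invariant---the $\gamma(t_n)$ subconverge in $C^{1,\alpha}$ to a limit immersed curve whose image is compactly contained in $\Hyp$. Translating back, this shows that at the times $t_n$ no $\varepsilon_0$-concentration of $|\Aring|^2\,\diff\mu$ in small extrinsic balls is possible, and the blow-up criterion of \cite{KS2} then excludes a finite-time singularity, giving $T=\infty$.

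\emph{Step 3 (Convergence).} The gradient-flow dissipation identity yields $\int_0^\infty\|\partial_t f(t)\|_{L^2}^2\,\diff t<\infty$, and the uniform curvature control from Step 2 combined with the smoothing estimates of \cite{KS1} produces uniform $C^k$-bounds on $f(t)$ for every $k$. Along a subsequence $t_n\to\infty$ one therefore obtains $C^k$-convergence (up to reparametrization) of $f(t_n)$ to a smooth Willmore immersion $f_\infty$; that the limit is itself axisymmetric follows from Step 1. The passage from subsequential to full convergence can be done either via a Łojasiewicz--Simon gradient inequality for $\W$ adapted to the rotationally symmetric class, or by arguing directly that the $\omega$-limit set is a single orbit under reparametrizations.

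\emph{Main obstacle.} Step 2 is the technically delicate point: although hyperbolic isometries form a three-parameter group acting on profile curves, only a one-parameter subgroup (axial translations in $\R^3$) corresponds to a genuine symmetry of the Willmore flow. One must therefore carefully interface the static conformal normalization of the profile curves with the dynamic, non-conformally-invariant concentration criterion of \cite{KS2}, making essential use of the conformal behaviour of $|\Aring|^2\,\diff\mu$ under Möbius transformations of $\R^3$ and of the fact that curvature concentration along the torus is necessarily concentration along a single circular orbit, i.e.\ at a single point of the profile curve.
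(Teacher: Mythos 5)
There is a genuine gap in your Step 2, and it is exactly the point you yourself flag as the ``main obstacle.'' Your plan is to normalize the profile curves $\gamma(t_n)$ by hyperbolic isometries of $\mathbb{H}^2$ (using the bounds $\LH(\gamma(t_n))\le \Lambda$ and $\mathcal{E}(\gamma(t_n))\le \tfrac{2}{\pi}\W(f_0)$) and extract a $C^{1,\alpha}$-limit curve compactly contained in $\mathbb{H}^2$, then conclude that no $\varepsilon_0$-concentration of curvature is possible at the times $t_n$. But the normalization introduces a free Euclidean scale $\lambda_n$ (say $\lambda_n=\sup_{\mathbb{S}^1}\gamma(t_n)^{(2)}$), and the Kuwert--Sch\"atzle concentration criterion is stated for Euclidean balls of a fixed radius around the \emph{unnormalized} surface $f(t_n)$. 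Your compactness argument gives absence of $\varepsilon_0$-concentration for the normalized surface $F_{\bar\gamma(t_n)}$ at some uniform scale $\rho$, hence for $f(t_n)$ only at scale $\lambda_n\rho$; if $\lambda_n\to 0$ this is vacuous, and nothing in the hyperbolic bounds alone prevents $\lambda_n\to 0$. You never relate $\lambda_n$ to the concentration radii $r_j$ of the criterion, so Step 2 does not close.

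The missing ingredient is the packing argument in Lemma~\ref{lem:evilcirclelemma}: after the parabolic rescaling $f(t_j)/r_j$, write the concentration point $x_j/r_j$ in cylindrical coordinates as $(h_j,\rho_j\sigma_j)$; if $\rho_j$ were unbounded, rotational symmetry would give $\gtrsim \rho_j$ pairwise disjoint unit balls around the orbit circle each carrying $\varepsilon_0$ of $\int|A|^2$, contradicting $\W(f_0)<\infty$. This bounds $\rho_j$, which together with the scale invariance of $\LH$ and the inequalities \eqref{eq:hyplen}--\eqref{eq:hyplen2} bounds $\sup\tilde\gamma_j^{(2)}$, hence $\mathcal L_{\mathbb R^2}(\tilde\gamma_j)$ and the diameter of the concentration rescalings $\tilde f_{j,c_0}$, which is what Theorem~\ref{thm:THM22NEW} actually needs. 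You gesture toward ``concentration is along a single circular orbit,'' but the quantitative packing estimate is the step that actually controls the scale and it is absent. (A smaller issue in Step 1: $R_\theta\circ f_0=f_0$ is false as an identity of parametrized maps; equivariance must be combined with the reparametrization $(u,v)\mapsto(u,v+\phi)$, and one must separately show $f(t)^{(3)}(\cdot,0)\equiv 0$ persists, which the paper does via a Gronwall argument after restarting the flow.)
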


We remark that the concept of $C^k$-convergence that we impose is a \emph{geometric} one, see Appendix \ref{app:smoothconv} (Definition \ref{def:Clconv}) for details. {From now on, the term $C^k$-convergence is understood up to reparametrizations as in Definition \ref{def:Clconv}}.

That the \emph{hyperbolic geometry} of the profile curve plays a role is not surprising --  there is an interesting correspondence between the Willmore energy of tori of revolution and the hyperbolic elastic energy of curves, observed in \cite{LangerSingerWillmore}. With this correspondence one can for example show the Willmore conjecture for tori of revolution, cf. \cite{LangerSinger1}. Other applications of this relationship include \cite{MR2480063,Mandel2018}. To the authors' knowledge, this is the first time that this correspondence is used in a problem depending on time.

The main question now is {to identify which}
initial data generate evolutions with bounded hyperbolic length. {It turns out that the same energy threshold of $8 \pi $ needed for spherical immersions (see  \cite{KS3}) is needed in the case of tori of revolution.}

\begin{theorem}
 \label{thm:MainThmPrecisely}
 Let $f_0 \colon \mathbb{S}^1 \times \mathbb{S}^1 \to \R^3$ be a torus of revolution satisfying $\W(f_0) \leq 8 \pi$. Let $f\colon[0,T) \times (\mathbb{S}^1 \times \mathbb{S}^1) \to \R^3$ evolve by the Willmore flow with initial datum $f_0$. Then $T=  \infty$ and $f$ converges in $C^k$ for all $k \in \mathbb{N}$, to the Clifford torus, {possibly rescaled and translated in the direction (1,0,0).}
\end{theorem}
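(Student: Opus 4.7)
The plan is to reduce Theorem~\ref{thm:MainThmPrecisely} to the convergence criterion of Theorem~\ref{thm:boundhypmain} by producing a uniform bound on the hyperbolic length along the flow, and then to identify the limit as a Clifford torus via a classification of Willmore tori of revolution with low energy.

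The first ingredient is the Langer--Singer correspondence, which for a profile curve $\gamma \in C^\infty(\mathbb{S}^1, \mathbb{H}^2)$ expresses the Willmore energy of the torus of revolution $F_\gamma$ in terms of the hyperbolic elastic energy of $\gamma$. Concretely, the identity
\begin{equation*}
\mathcal{W}(F_\gamma) = \frac{\pi}{2}\int_{\gamma}\bigl(\kappa_h^2+1\bigr)\diff s_h
\end{equation*}
holds, where $\kappa_h$ and $\diff s_h$ denote hyperbolic geodesic curvature and arc-length respectively. Dropping the $\kappa_h^2$ contribution yields the clean estimate
\begin{equation*}
\mathcal{L}_{\mathbb{H}^2}(\gamma)\leq \frac{2}{\pi}\mathcal{W}(F_\gamma).
\end{equation*}
Since $\mathcal{W}$ is non-increasing along its $L^2$-gradient flow, the hypothesis $\mathcal{W}(f_0)\leq 8\pi$ gives $\mathcal{L}_{\mathbb{H}^2}(\gamma(t))\leq 16$ uniformly in $t\in[0,T)$, whence Theorem~\ref{thm:boundhypmain} produces $T=\infty$ and $C^k$-convergence of $f(t)$ to a Willmore torus of revolution $f_\infty$ with $\mathcal{W}(f_\infty)\leq 8\pi$.

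The remaining, and main, step is to show that $f_\infty$ is a Clifford torus up to the indicated rescaling and axial translation. I would appeal once more to the Langer--Singer dictionary: closed Willmore tori of revolution correspond bijectively to closed elastica in the hyperbolic plane. The set of closed hyperbolic elastica consists of the geodesic circles together with a discrete family of ``wavelike'' closed solutions. One then argues that only the geodesic circles produce tori with Willmore energy at most $8\pi$, while every non-circular closed elastica yields $\mathcal{W}>8\pi$. The hyperbolic geodesic circles correspond precisely to Clifford tori; and since the hyperbolic isometries preserving the given axis (horizontal translations and dilations fixing the boundary) correspond to axial translations and rescalings of $F_\gamma$ in $\mathbb{R}^3$, one recovers the claimed description of the limit.

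The main obstacle I expect is this classification, in particular the explicit energy comparison ruling out wavelike closed elastica, which requires an elliptic-integral computation along the one-parameter family of such elastica. The borderline case $\mathcal{W}(f_\infty)=8\pi$ is particularly delicate, since one must rule out a degeneration of the profile curve (a ``bubbling of a sphere'', analogous to the spherical situation in~\cite{KS3}). Fortunately, the uniform hyperbolic-length bound forces the limiting profile curve to remain in a compact subset of $\mathbb{H}^2$ modulo horizontal translations, which prevents it from escaping to the boundary of $\mathbb{H}^2$; from there one can argue that $f_\infty$ is smooth, embedded, and falls within the above classification.
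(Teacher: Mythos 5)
Your key analytic step rests on a formula that is false, and the error is fatal to the whole strategy. You claim
\begin{equation*}
\mathcal{W}(F_\gamma) = \frac{\pi}{2}\int_{\gamma}\bigl(\kappa_h^2+1\bigr)\diff s_h,
\end{equation*}
from which you conclude $\mathcal{L}_{\mathbb{H}^2}(\gamma)\leq \tfrac{2}{\pi}\mathcal{W}(F_\gamma)$ by dropping the $\kappa_h^2$ term. But the Langer--Singer/Bryant--Griffiths identity used in this paper (see \eqref{eq:willelaEnergy}) is $\mathcal{W}(F_\gamma)=\tfrac{\pi}{2}\int_\gamma |\kappa|_{\mathbb{H}^2}^2\diff s$, with \emph{no} additive $+1$ in the integrand. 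A quick sanity check: for the Clifford torus, $\mathcal{W}=2\pi^2$ while $\int_\gamma \kappa_h^2\diff s_h=4\pi$ and $\mathcal{L}_{\mathbb{H}^2}(\gamma)=2\pi$; your formula would give $\frac{\pi}{2}(4\pi+2\pi)=3\pi^2\neq2\pi^2$. More damningly, if your formula were true the hyperbolic length would be controlled by the Willmore energy for \emph{every} torus of revolution, so by Theorem~\ref{thm:boundhypmain} every Willmore flow of a torus of revolution would converge, directly contradicting Theorem~\ref{thm:optii}. The Willmore energy does \emph{not} control the hyperbolic length in general. What the paper actually uses is Theorem~\ref{thm:reilly}, a nontrivial Reilly-type inequality from \cite{AdrianMarius} stating $\mathcal{E}(\gamma)/\mathcal{L}_{\mathbb{H}^2}(\gamma)\geq c(\varepsilon)$ \emph{only for} $\mathcal{E}(\gamma)\leq 16-\varepsilon$; the threshold $16$ in that inequality (equivalently $8\pi$ for the Willmore energy) is sharp and is precisely what makes the $8\pi$ bound in the theorem both necessary and sufficient. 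This is the central mechanism of the proof and cannot be replaced by a pointwise estimate on the integrand.

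Two smaller points. For the classification of the limit, the paper does not carry out an elliptic-integral analysis of wavelike elastica; it cites \cite[Cor.~6.11]{AdrianMarius}, which already says that a closed hyperbolic elastica with $\mathcal{E}\leq 16$ is the Clifford profile curve up to isometry, and then observes which hyperbolic isometries preserve the revolution structure (giving rescaling and translation in $(1,0,0)$), see Proposition~\ref{prop:willtori}. Also, the borderline case $\mathcal{W}(f_0)=8\pi$ needs a separate, short argument: since $2\pi^2<8\pi$, $f_0$ cannot itself be Willmore, so the energy strictly decreases instantaneously and after a short time one is in the strict-inequality regime; your sketch does not isolate this issue and instead gestures at a ``bubbling'' analysis that isn't needed here.
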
 

Here the Clifford Torus is the surface of revolution given by 
\begin{equation}
 \label{eq:defClifford}
 (u,v) \mapsto \left(\tfrac{1}{\sqrt{2}} \sin (2 \pi u),  \left(1 + \tfrac{1}{\sqrt{2}} \cos (2\pi u)  \right) \cos (2\pi v), \left( 1 + \tfrac{1}{\sqrt{2}} \cos (2\pi u) \right) \sin (2\pi v)   \right).
\end{equation}
Notice that it is not important which parametrization we choose since $C^k$-convergence is a geometric concept. The Clifford torus 
arises from stereographic projection of the minimal surface $\frac{1}{\sqrt{2}} (\mathbb{S}^1 \times \mathbb{S}^1) \subset \mathbb{S}^3.$  From the solution \cite{MR3152944} of the famous Willmore conjecture we know that the Clifford torus is the global minimum of the Willmore energy among tori in $\R^3$ and the unique minimum modulo smooth conformal transformations (of $\R^3$) and reparametrizations. Our method relies on a \emph{gap theorem} for Willmore tori of revolution consequence of \cite{AdrianMarius}, see Proposition \ref{prop:willtori}. This relates to the findings in \cite{Mondino}.

The convergence result in Theorem \ref{thm:MainThmPrecisely} holds up to surprisingly little invariances. It is often expected that such convergence results can only be shown up to invariances of the Willmore energy - i.e. reparametrizations and conformal transformations. The fact that we do not have to apply conformal transformations along the flow to achieve convergence is explained by the use of a Lojasiewicz-Simon gradient inequality. This inequality is a purely analytical tool, so the invariances will not play a role.  For the limit immersion, we can rule out all conformal transformations that break the rotational symmetry and even more -- symmetry-preserving Möbius inversions can also be ruled out due to the fact that they are not invariances of the Willmore flow equation.
What remains is just scaling and translation in direction $(1,0,0)$. This is not surprising since both transformations preserve the symmetry we consider and also preserve solutions of the Willmore flow equation, possibly rescaling appropriately in time.

We also prove that the energy threshold of $8\pi$ is sharp by constructing explicit non-convergent evolutions  {with initial data $f_0$ satisfying $\mathcal{W}(f_0)>8\pi$.} There are multiple reasons why this number could be a universal threshold for any genus.   The most striking is the inequality of Li and Yau that shows that {immersions} of Willmore energy below $8\pi$ are embeddings, cf. \cite[Thm 6]{LiYau}. Another property is that the metric of tori of energy $\leq 8\pi- \delta, \delta > 0,$ is `uniformly controlled up to Möbius transformations and reparametrizations, see {\cite[Thm 1.1]{Schaetzle}} for details. As pointed out in \cite[p.282]{Simon}, \cite{KuwertLi}, there exist surfaces of arbitrary genus with energy below $8\pi$.

As already announced, we also show optimality of the energy bound of $8\pi$. 
\begin{theorem}\label{thm:optii}
For any $\varepsilon >0 $ there exists a torus of revolution {$f_0 : \mathbb{S}^1 \times \mathbb{S}^1 \rightarrow \mathbb{R}^3$} such that $\mathcal{W}(f_0) < 8\pi + \varepsilon$ and the maximal Willmore flow $(f(t))_{t \in [0,T)}$ develops a singularity (in finite or infinite time). {More precisely, one of the following phenomena occurs 
\begin{enumerate}
\item[(1)] (Concentration of curvature) The second fundamental form $(||A(t)||_{L^\infty(\Sigma)})_{t \in [0,T)}$ is unbounded. This singularity can occur in finite or infinite time.
 \item[(2)] (Diameter Blow-Up in infinite time) $T= \infty$ and $\lim_{t\rightarrow \infty} \mathrm{diam} (f(t))( \mathbb{S}^1 \times \mathbb{S}^1) = \infty$.
\end{enumerate}
In both cases the Willmore flow cannot converge in $C^2$. }
\end{theorem}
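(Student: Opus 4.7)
The plan is to construct, for each $\varepsilon > 0$, an initial datum $f_0 = f_\varepsilon$ that is a smooth torus of revolution with $\mathcal{W}(f_\varepsilon) < 8\pi + \varepsilon$ whose maximal Willmore flow cannot converge in $C^2$, in the spirit of Blatt's analogous threshold-sharpness result for spherical immersions \cite{Blatt}. Once non-convergence is established, the two alternatives (1) and (2) follow from the Kuwert--Sch\"atzle blow-up theory \cite{KS1, KS2} combined with Theorem \ref{thm:boundhypmain}.

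\textbf{Construction.} I construct $f_\varepsilon$ as a ``near-double-sphere'' torus of revolution, whose profile curve $\gamma_\varepsilon \subset \mathbb{H}^2$ is a smooth simple closed curve approximating the union of two disjoint (near-)hyperbolic geodesic arcs -- each generating a near-round sphere under revolution -- joined by two short bridge arcs. By tuning the geometry so that the bridges are nearly minimal (catenoidal segments) and the smoothing regions have vanishing Willmore contribution, the total Willmore energy can be made arbitrarily close to $4\pi + 4\pi = 8\pi$ from above; in particular, $\mathcal{W}(f_\varepsilon) \in (8\pi, 8\pi + \varepsilon)$ for the desired parameter range.

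\textbf{Ruling out convergence.} Assume, toward contradiction, that $f_\varepsilon(t) \to f_\infty$ in $C^2$. By monotonicity of $\mathcal{W}$ along the flow, $\mathcal{W}(f_\infty) < 8\pi + \varepsilon$, and $f_\infty$ is a smooth Willmore torus of revolution whose profile $\gamma_\infty$ is, by the Langer--Singer correspondence \cite{LangerSinger2}, a smooth closed hyperbolic elastica. The key step is a mountain-pass obstruction: any continuous $\mathcal{W}$-non-increasing path in the space of smooth closed profile curves connecting $\gamma_\varepsilon$ to a simple-loop Willmore elastica $\gamma_\infty$ must pass through a configuration whose associated Willmore energy strictly exceeds $8\pi + \varepsilon$. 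Morally, the ``two-almost-sphere'' profile and any admissible single-loop limit are separated by an energy barrier concentrated near the bridge regions: pinching a bridge produces a curvature spike, while broadly opening it requires crossing a saddle of the hyperbolic elastic energy at level $\geq 8\pi + \varepsilon$. Since the Willmore flow strictly dissipates energy, the barrier cannot be crossed, yielding a contradiction.

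\textbf{The dichotomy and main obstacle.} Given non-convergence, I invoke \cite{KS1, KS2}: if $\|A(t)\|_{L^\infty}$ is unbounded (in finite or infinite time) we are in case (1); otherwise $\|A(t)\|_{L^\infty}$ is uniformly bounded and the blow-up criterion of \cite{KS1} forces $T = \infty$. In this latter case, if the diameter were also bounded, standard parabolic compactness (using bounded curvature, diameter, and area) would yield a $C^2$ subsequential limit modulo translations; this limit's profile has bounded hyperbolic length, so Theorem \ref{thm:boundhypmain} would upgrade to full $C^k$ convergence, contradicting non-convergence. Hence the diameter must blow up in infinite time -- case (2). The main analytical obstacle throughout is the mountain-pass obstruction: rigorously producing a quantitative Willmore-energy barrier, uniform in $\varepsilon$, that separates the ``near-double-sphere'' configuration from any admissible low-energy Willmore-limit torus of revolution. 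This plausibly requires either a compactness/classification of smooth closed hyperbolic elasticae of low elastic energy, or a carefully chosen geometric monitor functional on the profile curve (e.g., a bridge-thickness or winding-type quantity) whose controlled behavior along the flow sharply separates the two regimes.
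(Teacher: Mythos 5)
Your proposal has the right overall architecture (construct a low-energy but topologically/geometrically obstructed initial torus, rule out $C^2$-convergence, then dichotomize via Kuwert--Sch\"atzle blow-up theory), but the central step -- ruling out convergence -- rests on a ``mountain-pass obstruction'' that you yourself flag as an unresolved analytical obstacle, and that is indeed a genuine gap. Proving a quantitative Willmore-energy barrier, uniform in $\varepsilon$, between a ``near-double-sphere'' profile and the set of closed hyperbolic elasticae of low energy is a substantial open task: you would need a classification or compactness theory for such elasticae, together with a careful argument that every non-increasing path from $\gamma_\varepsilon$ must cross the barrier. As written, the convergence obstruction is a heuristic, not a proof.

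The paper sidesteps this entirely by using a \emph{topological flow invariant}. The initial profile curve $\gamma_\varepsilon$ is chosen (via a result from the paper's companion reference) so that its Euclidean total curvature $T[\gamma_\varepsilon]$ (i.e.\ turning number) vanishes and $\mathcal{E}(\gamma_\varepsilon) \in [16, 16+\varepsilon)$, hence $\mathcal{W}(F_{\gamma_\varepsilon}) < 8\pi + \varepsilon$ by \eqref{eq:willelaEnergy}. Lemma \ref{lem:totcurv} shows $T[\cdot]$ is constant along the Willmore flow of tori of revolution (being a regular-homotopy invariant that is weakly $W^{2,2}$-continuous), so any $C^k$-limit profile $\gamma_\infty$ would be a closed hyperbolic elastica with $T[\gamma_\infty] = 0$. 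But there are no such elasticae, giving a contradiction immediately -- no energy-barrier analysis is needed. This is the crucial idea you are missing: instead of an energetic obstruction (which is hard to quantify), one needs a conserved discrete quantity that distinguishes the initial datum from all admissible limits. Your ``near-double-sphere'' profile is a simple loop with turning number $\pm 1$, so it would \emph{not} carry this obstruction; the construction itself, not only the argument, must change.

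Your dichotomy step is close to what the paper does, but also needs tightening. You would need to show that when $\|A(t)\|_{L^\infty}$ stays bounded and the diameter stays bounded, the hyperbolic length of the profile curve is bounded before you can invoke Theorem \ref{thm:boundhypmain}; the paper's Lemma \ref{lem:diamblow} supplies the missing contrapositive (bounded $\|A\|_{L^\infty}$ plus $\mathcal{L}_{\mathbb{H}^2}(\gamma(t)) \to \infty$ forces diameter blow-up), and Lemma \ref{thm:singex} supplies the hyperbolic-length blow-up as a consequence of the non-convergence argument. In the paper the dichotomy is organized around the concentration radii $r_j$ of Theorem \ref{thm:THM22NEW} rather than directly on boundedness of $\|A\|_{L^\infty}$, which makes the case $r_j \to 0 \Rightarrow$ unbounded curvature a clean consequence of the area bound \eqref{eq:C3}.
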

The singular behavior as described in Theorem \ref{thm:optii} will actually occur for each initial immersion $F_\gamma$, as in Definition \ref{def:toriofrevolution}, with $\gamma$ a curve of \emph{vanishing total curvature}, cf. \eqref{eq:totalcurv}. {This gives }
a class of singular examples for the Willmore flow. The total curvature also plays a significant role in earlier constructions of singular examples, see 
\cite{Blatt} for $\Sigma = \mathbb{S}^2$.

As a consequence of our main result we are able to show that each rectangular conformal class contains a torus of revolution of energy below $8\pi$. This result has far-reaching consequences for the minimization of the Willmore energy with fixed conformal class, studied for example in \cite{KS4}.
In this article the authors show that minimizers in a given conformal class exist under the condition that the class contains an element of Willmore energy below $8\pi$. By our result this condition is satisfied for every \emph{rectangular} conformal class.

{This paper is organized as follows. In Section 2 we fix the notation and collect some useful facts on elastic curves in the hyperbolic plane and on tori of revolution. Section 3 exploits the consequences of the initial datum being a torus of revolution for the symmetries properties of the evolution, for the possible singularities and the limit. It also  contains the proofs of the main results and of the optimality results. 
In the last section we give the application on existence of tori of revolution with energy below $8 \pi$ in each conformal class. Some useful results on smooth convergence (see Definition \ref{def:smocon} below) and the Willmore flow are collected in the appendix.}

%%%%%%%%%%%%%%%%%%%%%%%%%%%%%%%%%%%%%%%%%%%%%%%%%
%%%%%%%%%%%%%%%%%%%%%%%%%%%%%%%%%%%%%%%%%%%%%%%
\section{Geometric preliminaries}
%%%%%%%%%%%%%%%%%%%%%%%%%%%%%%%%%%%%%%%%%
%%%%%%%%%%%%%%%%%%%%%%%%%%%%%%%%%%%%%%%%%%%
\subsection{Notation}
%%%%%%%%%%%%%%%%%%%%%%%%%%%%%%%%%%%%%%%%%%%
%%%%%%%%%%%%%%%%%%%%%%%%%%%%%%%%%%%%%%%%%%%
We first recall some basic definitions from differential geometry. Let $\Sigma$ be a two-dimensional smooth manifold and $f\colon \Sigma \to \R^{{n}}$ be a smooth immersion. In this paper all manifolds are assumed to have no boundary. {If we talk about tori of revolution we need to impose the restriction that $n=3$, but we will also discuss some results on the Willmore flow that remain valid in any codimension, i.e. for all $n \geq 3$. Let $g$ be the induced Riemannian metric and $\nabla$ the Levi-Civita connection on $\Sigma$, and denote the set of smooth vector fields on $\Sigma$ by $\V(\Sigma)$. For $X \in \V(\Sigma)$ and $h \in \mathcal{C}^\infty(\Sigma, \R^{{n}})$ we define $D_X h \in \mathcal{C}^\infty(\Sigma, \R^{{n}})$ as follows 
$$D_X h := \sum_{i = 1}^{{n}} X(h_i) \vec{e}_i, \quad \textrm{whenever} \quad h = \sum_{i = 1}^{{n}} h_i \vec{e}_i \in C^\infty(M;\mathbb{R}^{{n}}), $$
and $\{ \vec{e}_1, \vec{e}_2 , \vec{e}_3,...,\vec{e}_n \}$ is the canonical basis of $\mathbb{R}^{{n}}$ (see also Appendix \ref{app:tensor}). The second fundamental form of $\Sigma$ is $A \colon \V(\Sigma) \times \V(\Sigma) \to \mathcal{C}^\infty(\Sigma, \R^{{n}})$, given by
\begin{equation}
 \label{eq:Def2FF}
 A(X,Y):= D_X(D_Yf)-D_{\nabla_XY}f .\end{equation}
We remark that for all $p \in \Sigma$ one has $A_p(X,Y) \in df_p(T_p\Sigma)^\perp$, we say it takes values in the \emph{normal bundle}. Moreover $A_p(X,Y)$ only depends on $X(p)$,$Y(p)$. Its trace-free part $\Aring$ is given by
\[
 \Aring(X,Y) := A(X,Y) - \frac{1}{2}g(X,Y) \vec{H},
\]
where the mean curvature vector $\vec{H}$ is the trace of the bilinear form \eqref{eq:Def2FF} and can be computed by 
\[\vec{H}(p) = A(e_1,e_1)+A(e_2,e_2),
\]
with $\{e_1,e_2\}$ being an orthonormal basis of $T_p\Sigma$. Similarly (see Appendix
\ref{sec:AppendixGeometry} for details) we have 
\[|{A}|^2 = \sum_{i,j=1}^2 \langle A(e_i, e_j), A(e_i,e_j)\rangle_{\R^{{n}}}.\]

With these definitions we may introduce the Willmore flow of a smooth immersion $f_0 \colon \Sigma \to \R^{{n}}$. We say that a smooth family of smooth immersions $f\colon [0,T)\times \Sigma \to \R^{{n}}$, where $T>0$, evolves by the \emph{Willmore flow} with initial datum $f_0$, if $f$ satisfies
\begin{equation} \label{eq:WillmoreFlow}
 \partial_t f = - (\Delta\vec{H} + Q(\Aring) \vec{H}) \qquad \text{in }(0,T)\times \Sigma
\end{equation}
with $f(t=0) = f_0$. Here, $\Delta$ denotes the \emph{normal Laplacian}, i.e. for an orthonormal basis $\{e_1,e_2\}$ is a basis of $T_p\Sigma$ with respect to $f(t,\cdot)^*g_{\R^{{n}}}$ one has
\begin{equation*}
    \Delta \vec{H} = \sum_{i = 1}^2 (\nabla^{\perp})^2 \vec{H}(e_i,e_i),  
\end{equation*}
where $\nabla^\perp_X Y = (D_X Y)^\perp$ (cf. \eqref{eq:normalconn}, \eqref{eq:tnsderiv} for details). With the same notation as above, the quadratic operator $Q$ is given by
\begin{equation}\label{eq:aring}
\big(Q(\Aring)\vec{H}\big)(t,p) = \sum_{i,j=1}^2\langle \Aring(e_i, e_j),\vec{H}\rangle_{\R^{{n}}}\Aring(e_i,e_j).
\end{equation}
 Since \eqref{eq:WillmoreFlow} is well-posed for smooth initial immersions $f_0$ (see \cite[Prop. 1.1]{KS1}) we will always assume that the evolution is maximal, i.e.~non-extendable in the class of smooth immersions.

To study the behavior of $f(t)$ as $t \to T$ we use the following notion of smooth convergence on compact sets from \cite[Thm 4.2]{KS2}, see also \cite{Breuning} and Appendix \ref{app:smoothconv}.

\begin{definition}\label{def:smocon}(Smooth convergence of immersions) 
Let $\Sigma$ and $\widehat{\Sigma}$ be smooth two-dimensional manifolds, $(f_j)_{j = 1}^\infty \colon \Sigma \to \mathbb{R}^{{n}}$ and $\widehat{f}\colon \widehat{\Sigma} \to \mathbb{R}^{{n}}$  be smooth immersions. Define 
\begin{equation}\label{eq:sigmaj}
\widehat{\Sigma}(m) := \{ p \in \widehat{\Sigma} : |\widehat{f}(p) | < m \}, \qquad m \in \N. 
\end{equation}
 We say that $f_j$ \emph{converges to  $\widehat{f}$ 
smoothly on compact subsets of $\mathbb{R}^{{n}}$} if for each $j \in \mathbb{N}$ there exists a diffeomorphism $\phi_j \colon \widehat{\Sigma}(j) \rightarrow U_j$ for some open $U_j \subset \Sigma$, and a normal vector field $u_j \in C^\infty( \widehat{\Sigma}(j), \mathbb{R}^n)$ satisfying 
 \begin{equation}\label{eq:graph}
 f_j \circ \phi_j  = \widehat{f} + u_j \quad \textrm{on} \; \; \widehat{\Sigma}(j) 
\end{equation}  
as well as
 $||(\widehat{\nabla}^\perp)^k u_j||_{L^\infty( (\widehat{\Sigma}(j)))} \rightarrow 0 $ as $j \rightarrow \infty$ for all $k \in \mathbb{N}_0$. Here $\widehat{\nabla}$ is the Levi-Civita connection on $(\widehat{\Sigma},g_{\widehat{f}})$ and $(\widehat{\nabla}^\perp)^k u_j$ is defined as in Appendix \ref{app:tensor}. Additionally, we require that for each $R> 0 $ there exists $j(R) \in \mathbb{N}$ such that $j \geq j(R)$ implies that $f_j^{-1}(B_R(0)) \subset U_j$.
\end{definition}

We exploit a fundamental correspondence between the Willmore energy of tori and the elastic energy of curves in the hyperbolic plane already used in several works since its observation in \cite{LangerSingerWillmore}.

%%%%%%%%%%%%%%%%%%%%%%%%%%%%%%%%%%%%%%%%%%%%%%%%%
\subsection{Curves in the hyperbolic plane}
\label{sec:HypCurves}
%%%%%%%%%%%%%%%%%%%%%%%%%%%%%%%%%%%%%%%%%%%%%%%%

We consider the hyperbolic half-plane $\Hyp = \{(x^{(1)},x^{(2)}) \in \R \times (0,\infty)\}$ endowed with the metric
\begin{equation*}
g_\Hyp(v,w) = \frac{1}{z^2} \langle v,w\rangle_{\R^2} \quad v, w \in T_z \Hyp
\end{equation*}
and denote $|{v}|_\Hyp = \sqrt{g_\Hyp (v,v)}$, $v \in T_z\Hyp$. 
For a smooth  immersed curve $\gamma=(\gamma^{(1)}, \gamma^{(2)})$ in $\Hyp$, $\gamma \in \mathcal{C}^\infty(\Sph^1, \Hyp)$, the length is {as in the introduction} given by
{
\begin{equation}\label{eq:lengthhyp}
\LH(\gamma) := \int_0^1 \frac{|\gamma'(x)|_{\mathbb{R}^2}}{\gamma^{(2)}(x)} \diff x = \int_0^1 \diff{s} ,
\end{equation}}
 where $\diff {s} = {|\partial_x \gamma|}_{\mathbb{H}^2} \diff x$ denotes the arc length parameter and the derivative with respect to $x$ is abbreviated with the prime. As usual, $\partial_s = \frac{\partial_x}{|\partial_x \gamma|_{\mathbb{H}^2}}$ denotes the arc length derivative. The curvature vector field of $\gamma$ is given by
\begin{equation}\label{eq:curvhyp}
\kappa[\gamma] = \nabla_s \partial_s \gamma = \begin{pmatrix} \partial_s^2 \gamma^{(1)} - \frac{2}{\gamma^{(2)}} \partial_s \gamma^{(1)} \partial_s \gamma^{(2)} \vspace{.2cm}\\ \partial_s^2 \gamma^{(2)} + \frac{1}{\gamma^{(2)}} ( ( \partial_s \gamma^{(1)} )^2 - (\partial_s \gamma^{(2)})^2 ) \end{pmatrix}  
\end{equation}
as an element of $T_z \mathbb{H}^2$ \cite[(12)]{DS17}. Here $\nabla_s = \frac{\nabla}{\diff s}$ denotes the covariant derivative along $\gamma$ with respect to the Levi-Civita connection on $\mathbb{H}^2$. We write $\kappa = \kappa[\gamma]$ if the curve is clear from the context. 
The \emph{elastic energy} $\E$ of $\gamma$ is then defined to be
\begin{equation*}
\E(\gamma) := \int_\gamma |\kappa|_{\mathbb{H}^2}^2 \diff {s}.
\end{equation*}
Its critical points are called \emph{free hyperbolic elastica} and satisfy 
\begin{equation*}
(\nabla_{s}^\perp)^2 \kappa + \frac{1}{2} |\kappa|_{\mathbb{H}^2}^2 \kappa - \kappa = 0,
\end{equation*}
where $\nabla_{s}^\perp \eta = \nabla_s \eta - \langle \nabla_s \eta, \partial_s \gamma \rangle_{\mathbb{H}^2} \partial_s \gamma$ is the covariant derivative on the normal bundle of $\gamma$. 

We collect some results connecting the length and the elastic energy of smooth closed curves in the hyperbolic plane. 

\begin{theorem}[{\cite[Thm 5.3]{AdrianMarius}}]\label{thm:reilly}
For each $\varepsilon > 0 $ there exists $c(\varepsilon) > 0 $ such that 
\begin{equation*}
\frac{\E(\gamma)}{\mathcal{L}_{\mathbb{H}^2}(\gamma)} \geq c(\varepsilon) 
\end{equation*}
for all immersed and closed curves $\gamma \in C^\infty(\mathbb{S}^1,\mathbb{H}^2)$  such that $\mathcal{E}(\gamma) \leq 16- \varepsilon$.
\end{theorem}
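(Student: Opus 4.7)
The plan is to argue by contradiction using the Langer--Singer correspondence between closed hyperbolic curves and tori of revolution, combined with Sch\"atzle's compactness theorem for Willmore tori below the $8\pi$ threshold. Assume the statement fails; then there exist $\varepsilon > 0$ and smooth closed immersed curves $\gamma_n \in C^\infty(\Sph^1, \Hyp)$ with $\E(\gamma_n) \leq 16 - \varepsilon$ and $\E(\gamma_n)/\LH(\gamma_n) \to 0$. Since the energies are uniformly bounded this forces $\LH(\gamma_n) \to \infty$, and my aim is to extract a limiting profile curve of finite hyperbolic length, contradicting this divergence.

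The first step is to pass to the tori of revolution $F_{\gamma_n}$. Via the Langer--Singer identity $\W(F_\gamma) = \frac{\pi}{2}\E(\gamma)$ (\cite{LangerSingerWillmore}) one has $\W(F_{\gamma_n}) \leq 8\pi - \frac{\pi\varepsilon}{2}$, so by the Li--Yau inequality \cite{LiYau} each $F_{\gamma_n}$ is embedded, which in turn forces each profile curve $\gamma_n$ to be a simple closed curve in $\Hyp$. Applying Sch\"atzle's compactness theorem \cite[Thm 1.1]{Schaetzle}, there exist M\"obius transformations $\Phi_n$ of $\R^3$ and reparametrizations such that a subsequence of $\Phi_n \circ F_{\gamma_n}$ converges smoothly on compact sets to a Willmore torus $F_\infty$. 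Restricting to $\Phi_n$ that preserve the axis of revolution -- these are generated by scalings, axis-parallel translations and inversions in spheres centered on the axis, and descend to hyperbolic isometries on the profile curves -- one obtains hyperbolic-isometric images $\tilde\gamma_n$ of $\gamma_n$ converging smoothly to the profile curve $\gamma_\infty$ of $F_\infty$. Since $\gamma_\infty$ is smooth and closed, $\LH(\gamma_\infty) < \infty$; since hyperbolic isometries preserve $\LH$, we deduce $\LH(\gamma_n) = \LH(\tilde\gamma_n) \to \LH(\gamma_\infty) < \infty$, contradicting $\LH(\gamma_n) \to \infty$.

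The main obstacle I anticipate is the restriction to \emph{symmetry-preserving} M\"obius transformations in the compactness step: Sch\"atzle's theorem is formulated for arbitrary M\"obius transformations of $\R^3$, and a generic such $\Phi_n$ destroys the rotational symmetry of $F_{\gamma_n}$. One would need to argue that in each M\"obius orbit of $F_{\gamma_n}$ a rotationally symmetric representative can be chosen for which the compactness conclusion still holds -- perhaps by exploiting uniqueness of the axis of revolution of the limit $F_\infty$ and then symmetrizing the $\Phi_n$ accordingly.

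An alternative, more direct attack -- which I would pursue if the symmetric compactness fails -- uses Gauss--Bonnet and the isoperimetric inequality in $\Hyp$, justified by the Li--Yau reduction to simple curves. For simple $\gamma$ enclosing hyperbolic area $A$, Gauss--Bonnet (with $K_{\Hyp}=-1$) gives $\int_\gamma k_g\,ds = 2\pi + A$, and Cauchy--Schwarz yields $\E(\gamma)\,\LH(\gamma) \geq (2\pi + A)^2$; one then has to combine this with the hyperbolic isoperimetric inequality $\LH(\gamma)^2 \geq A^2 + 4\pi A$ and use the strict energy bound $\E(\gamma) \leq 16 - \varepsilon$ to rule out the regime where $A$ and $\LH$ are simultaneously large, obtaining an effective $c(\varepsilon) > 0$ that may degrade as $\varepsilon \to 0$.
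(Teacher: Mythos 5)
Your first route correctly identifies the heart of a contradiction argument, but it has two gaps, one of which you acknowledge. First, to conclude $\LH(\gamma_n) \to \infty$ from $\E(\gamma_n)/\LH(\gamma_n) \to 0$ you tacitly need a positive lower bound on $\E(\gamma_n)$; this follows from $\W(F_{\gamma_n}) \geq 4\pi$ together with $\W = \frac{\pi}{2}\E$ (so $\E \geq 8$), but it should be stated. Second, and more seriously, the restriction to axis-preserving M\"obius transformations is not a detail: the cited compactness theorem gives no control on the M\"obius maps, and your proposed fix---symmetrize $\Phi_n$ using the axis of revolution of the limit---presupposes that a rotationally symmetric limit exists, which is precisely what is being sought. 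A cleaner route sidesteps the symmetrization problem entirely: by Proposition \ref{prop:conftor} the conformal class of $F_\gamma$ equals $i\max\bigl(\LH(\gamma)/2\pi,\, 2\pi/\LH(\gamma)\bigr)$, so $\LH(\gamma_n)\to\infty$ forces the conformal classes to escape to infinity in moduli space, whereas $\W(F_{\gamma_n}) \leq 8\pi - \tfrac{\pi\varepsilon}{2}$ confines them to a compact set. The conformal class being both a M\"obius- and a reparametrization-invariant, this yields the contradiction with no need to preserve symmetry. (Incidentally, the limit obtained from the compactness theorem is a smooth torus but need not be Willmore, since the $\gamma_n$ are not elastica.)

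Your second (Gauss--Bonnet) route does not close. Cauchy--Schwarz gives $\E(\gamma)\,\LH(\gamma) \geq (2\pi + A)^2$, and the hyperbolic isoperimetric inequality $\LH(\gamma)^2 \geq A^2 + 4\pi A$ only supplies an \emph{upper} bound on $A$ in terms of $\LH(\gamma)$, namely $A \leq \sqrt{4\pi^2 + \LH(\gamma)^2} - 2\pi$. Nothing here rules out the regime where $A$ is small while $\LH(\gamma)$ is large, in which case $(2\pi + A)^2/\LH(\gamma) \to 0$ and the Gauss--Bonnet lower bound on $\E$ collapses. The energy cap $\E \leq 16 - \varepsilon$ fed back into the Cauchy--Schwarz bound only tightens the \emph{upper} bound on $A$; it supplies no positive lower bound on $A/\LH(\gamma)$. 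These two inequalities alone therefore cannot produce an effective $c(\varepsilon)$, and some genuinely additional input---compactness, conformal-class control, or a Reilly-type inequality---is required.
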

Note that the energy threshold of $16$ is sharp for this result, cf. \cite{AdrianMarius}.  

We also fix the notion of the Euclidean length of the curve $\gamma \colon \mathbb{S}^1 \to \mathbb{H}^2 \subset \mathbb{R}^2$, which is given by $\mathcal{L}_{\mathbb{R}^2}(\gamma)$. We also consider the \emph{Euclidean curvature} of $\gamma: \mathbb{S}^1 \rightarrow \mathbb{R}^2$ which we will denote by
$\vec{\kappa}_{euc}[\gamma] := \frac{1}{|\gamma'|} \frac{d}{dt} \frac{\gamma'}{|\gamma'|}$
 and the \emph{Euclidean scalar curvature}
$\kappa_{euc}[\gamma] := \frac{1}{|\gamma'|^2}\langle \gamma'', n \rangle_{\mathbb{R}^2}$. To finish this section we discuss some relations between Euclidean and hyperbolic length.

 \begin{lemma} 
Let $\gamma \in C^\infty(\mathbb{S}^1, \mathbb{H}^2)$ and $a,b \in [0,1]$. Then 
\begin{equation}\label{eq:hyplen}
\gamma^{(2)}(b) e^{-\LH(\gamma)} \leq \gamma^{(2)}(a) \leq \gamma^{(2)}(b)e^{\LH(\gamma)}
\end{equation} 
and
\begin{equation}\label{eq:hyplen2}
\mathcal{L}_{\mathbb{H}^2}(\gamma) \geq \frac{\mathcal{L}_{\mathbb{R}^2}(\gamma)}{\sup_{\mathbb{S}^1} \gamma^{(2)} }
\end{equation} 
\end{lemma}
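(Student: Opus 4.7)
Both estimates should follow from elementary pointwise manipulations of the integrand $|\gamma'|/\gamma^{(2)}$, so the main task is to package them cleanly rather than overcome any serious obstacle.

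For the first inequality, my plan is to compare $\LH(\gamma)$ to the total variation of $\log \gamma^{(2)}$. Since $\gamma^{(2)} > 0$, the function $t \mapsto \log \gamma^{(2)}(t)$ is smooth on $\mathbb{S}^1$, and
\[
\left| \tfrac{d}{dt} \log \gamma^{(2)}(t) \right| = \frac{|(\gamma^{(2)})'(t)|}{\gamma^{(2)}(t)} \leq \frac{|\gamma'(t)|_{\mathbb{R}^2}}{\gamma^{(2)}(t)}.
\]
Without loss of generality, assume $a \leq b$ (otherwise swap their roles; the estimate is symmetric in $a$ and $b$). Integrating from $a$ to $b$ gives
\[
\bigl| \log \gamma^{(2)}(b) - \log \gamma^{(2)}(a) \bigr| \leq \int_a^b \frac{|\gamma'(t)|_{\mathbb{R}^2}}{\gamma^{(2)}(t)} \diff t \leq \LH(\gamma),
\]
where the last step uses that the integrand is nonnegative and we extend the integration domain to all of $[0,1]$. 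Exponentiating yields $e^{-\LH(\gamma)} \leq \gamma^{(2)}(a)/\gamma^{(2)}(b) \leq e^{\LH(\gamma)}$, which is exactly \eqref{eq:hyplen}.

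For the second inequality, I would simply bound the denominator in the definition of hyperbolic length pointwise from above by its supremum:
\[
\LH(\gamma) = \int_0^1 \frac{|\gamma'(x)|_{\mathbb{R}^2}}{\gamma^{(2)}(x)} \diff x \geq \frac{1}{\sup_{\mathbb{S}^1} \gamma^{(2)}} \int_0^1 |\gamma'(x)|_{\mathbb{R}^2} \diff x = \frac{\mathcal{L}_{\mathbb{R}^2}(\gamma)}{\sup_{\mathbb{S}^1} \gamma^{(2)}}.
\]
Note that $\sup_{\mathbb{S}^1} \gamma^{(2)}$ is finite and positive because $\gamma$ is smooth on the compact set $\mathbb{S}^1$ and takes values in $(0,\infty)$, so the division is well-defined.

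There is no real obstacle here — both inequalities are consequences of writing $|\gamma'|/\gamma^{(2)}$ as either $|(\log\gamma^{(2)})'|$ (up to the sign of the derivative, which gives the harmless extra slack $|(\gamma^{(2)})'| \leq |\gamma'|_{\mathbb{R}^2}$) or as $|\gamma'|_{\mathbb{R}^2}$ divided by a pointwise factor bounded by $\sup \gamma^{(2)}$. The entire argument is essentially a one-line computation in each case.
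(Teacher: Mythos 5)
Your proof is correct and takes essentially the same approach as the paper: comparing $\LH(\gamma)$ to the total variation of $\log\gamma^{(2)}$ for \eqref{eq:hyplen}, and pointwise bounding $\gamma^{(2)}$ by its supremum for \eqref{eq:hyplen2}. The only difference is cosmetic — you make the implicit assumption $a\leq b$ explicit, which the paper leaves to the reader.
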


\begin{proof}
For $\gamma, a, b$ as in the statement we find by \eqref{eq:lengthhyp}
\begin{equation*}
\LH(\gamma) \geq \int_a^b \frac{|(\gamma^{(2)})'|}{\gamma^{(2)}} \diff x \geq |\log \gamma^{(2)}(b)  - \log \gamma^{(2)}(a)|,
\end{equation*}
and therefore 
$\log \gamma^{(2)}(b) - \LH(\gamma) \leq \log \gamma^{(2)}(a) \leq \log \gamma^{(2)}(b) + \LH(\gamma)$. Taking exponentials \eqref{eq:hyplen} follows.
For \eqref{eq:hyplen2} we simply estimate 
\begin{equation*}
\LH(\gamma) = \int_{\mathbb{S}^1} \frac{|\gamma'(u)|}{\gamma^2(u)} \; \mathrm{d} u  \geq \frac{1}{\sup_{\mathbb{S}^1}  \gamma^{(2)} } \int_{\mathbb{S}^1} |\gamma'(u)| \; \mathrm{d}u  = \frac{\mathcal{L}_{\mathbb{R}^2}(\gamma)}{\sup_{\mathbb{S}^1} \gamma^{(2)}}. 
\end{equation*}
\end{proof}
 
%%%%%%%%%%%%%%%%%%%%%%%%%%%%%%%%%%%%%%%%%%%%%%%%%
\subsection{Tori of revolution in $\R^3$}
\label{sec:tori}
%%%%%%%%%%%%%%%%%%%%%%%%%%%%%%%%%%%%%%%%%%%%%%%%%

Here we collect some basic facts about tori of revolution. More precisely we express some geometric quantities associated to tori of revolution using only their profile curves. 
If $F_\gamma : \mathbb{S}^1 \times \mathbb{S}^1\rightarrow \mathbb{R}^3$ is chosen as in Definition \ref{def:toriofrevolution} we can compute the first fundamental form with respect to the local coordinates $(u,v)$ of $\mathbb{S}^1 \times \mathbb{S}^1$. This yields the associated surface measure on the Riemannian manifold {$(\mathbb{S}^1 \times \mathbb{S}^1 , g=  F_\gamma^* g_{\mathbb{R}^3})$} given by
\begin{equation}\label{eq:areaform}
 \diff \mu_{g} = 2\pi \gamma^{(2)}(u){|\gamma'(u)|}_{\R^2} \diff u \diff v.
\end{equation}
As we have already announced, the Willmore energy of $F_\gamma$ can also be expressed only in terms of $\gamma$ using the fundamental relationship 
\begin{equation}\label{eq:willelaEnergy}
\W(F_\gamma) = \frac{\pi}{2}\E(\gamma),
\end{equation} 
see \cite{LangerSingerWillmore} and \cite[Thm 4.1]{DS18}. Moreover, let $\kappa$ be the hyperbolic curvature vector field of $\gamma$ in $\mathbb{H}^2$. Then
\begin{equation}\label{eq:willelaGrad}
-\langle(\nabla_{s}^\perp)^2 \kappa + \frac{1}{2} |\kappa|_{\mathbb{H}^2}^2 \kappa - \kappa, n \rangle_\Hyp =  {2} (\gamma^{(2)})^4 ( \Delta H + 2 H \big(\tfrac14 H^2 - K\big) ),
\end{equation}
where $n  = (-\partial_s \gamma^{(2)},\partial_s \gamma^{(1)})$ is the normal vector field along $\gamma$ (see \cite[Thm 4.1]{DS18}). 
In particular, $F_\gamma$ is a Willmore torus of revolution if and only if $\gamma$ is a hyperbolic elastica. 
{In Appendix \ref{sec:AppendixGeometry} we discuss the relationship between \eqref{eq:willelaGrad} and \eqref{eq:WillmoreEquation}.}

An immediate consequence of \cite[Proposition 6.5]{AdrianMarius} (that builds on findings in \cite{LangerSinger1}) is the following.

\begin{prop}[A gap theorem for Willmore tori of revolution]
\label{prop:willtori}
 Let $f: \mathbb{S}^1 \times \mathbb{S}^1 \rightarrow \mathbb{R}^3$ be a {Willmore torus of revolution} that satisfies $\mathcal{W}(f) \leq 8 \pi$. Then $f$ is, up to reparametrization, the Clifford torus possibly rescaled and translated in direction $(1,0,0)^T$. 
\end{prop}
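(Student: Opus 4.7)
The plan is to pull the statement back through the Langer--Singer correspondence and then invoke the classification of closed free hyperbolic elasticae of energy below the sharp threshold. By \eqref{eq:willelaEnergy}, the hypothesis $\mathcal{W}(f) \leq 8\pi$ translates to $\mathcal{E}(\gamma) \leq 16$ on the profile curve $\gamma$. By \eqref{eq:willelaGrad}, the Willmore equation for $F_\gamma$ forces the normal component of the free hyperbolic elastica operator along $\gamma$ to vanish, so after a tangential reparametrization $\gamma$ is a closed free hyperbolic elastica with $\mathcal{E}(\gamma) \leq 16$.

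Next, I would invoke \cite[Cor. 6.11]{AdrianMarius}, which classifies closed free hyperbolic elasticae below the sharp energy threshold $\mathcal{E} = 16$ (the same threshold appearing in Theorem \ref{thm:reilly}). The result asserts that $\gamma$ must be, up to an isometry of $\mathbb{H}^2$, the specific round circle whose associated surface of revolution is the Clifford torus \eqref{eq:defClifford}.

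It remains to analyze which hyperbolic isometries induce admissible ambient transformations of $F_\gamma$. The horizontal translations $(x,y) \mapsto (x+c,y)$ and the dilations $(x,y)\mapsto(\lambda x,\lambda y)$ with $\lambda > 0$ correspond via $F_\gamma$ respectively to ambient translations of $\mathbb{R}^3$ in the direction $(1,0,0)^T$ and to ambient rescalings, so these alone already produce the family described in the statement. The remaining hyperbolic isometries are the inversions about points on $\{y=0\}$; these correspond to Möbius transformations of $\mathbb{R}^3$ preserving the axis of rotation, and, as already remarked in the discussion after Theorem \ref{thm:MainThmPrecisely}, when applied to the Clifford torus they return a rescaled and translated Clifford torus. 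Combining these observations identifies $f$ up to reparametrization as a rescaled translated Clifford torus.

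The only substantive step in this argument is the classification \cite[Cor. 6.11]{AdrianMarius}, which I take as a black box; everything else amounts to translating between the two equivalent energies via \eqref{eq:willelaEnergy}--\eqref{eq:willelaGrad} and bookkeeping of the symmetry groups. The main conceptual obstacle behind the cited classification itself is the sharpness of the constant $16$, which is exactly the threshold at which the elastic energy ceases to control hyperbolic length (cf.\ Theorem \ref{thm:reilly}), so below it one expects — and obtains — strong rigidity for closed free hyperbolic elasticae.
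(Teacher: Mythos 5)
Your proposal is correct and follows the same route as the paper: translate to $\E(\gamma)\leq 16$ via \eqref{eq:willelaEnergy}, invoke \eqref{eq:willelaGrad} to see that $\gamma$ is a free hyperbolic elastica, and then cite \cite[Cor.~6.11]{AdrianMarius} to pin down $\gamma$ up to hyperbolic isometry. The only difference is that you spell out the final bookkeeping of how hyperbolic isometries descend to ambient scalings, translations in direction $(1,0,0)^T$, and symmetry-preserving Möbius inversions of the Clifford torus, whereas the paper asserts this step in one sentence.
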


\begin{proof}
Let $f =F_\gamma$ be as in the statement with profile curve $\gamma \in C^\infty(\mathbb{S}^1, \mathbb{H}^2)$. From \eqref{eq:willelaGrad} we know that $\gamma$ is a hyperbolic elastica. From \eqref{eq:willelaEnergy} we can conclude that $\E(\gamma) \leq 16$. By \cite[Proposition 6.5]{AdrianMarius} we obtain that $\gamma$ has to coincide  (up to reparametrization) with the profile curve of the Clifford torus up to isometries of $\mathbb{H}^2$. 
This however implies that $f$ is, up to reparametrization, the Clifford torus possibly rescaled and translated in direction $(1,0,0)$. 
\end{proof}

Another important quantity for our discussion is the second fundamental form $A[F_\gamma]$, which we will also express in terms of $\gamma$. A property which we will later make extensive use of is the fact that for a torus of revolution $f= F_\gamma$, $|A[F_\gamma]|^2 \in C^\infty(\mathbb{S}^1 \times \mathbb{S}^1)$ is a function that depends only on $u$ (parameter that describes the profile curve) and not on $v$ (parameter that describes the revolution). This is the reason why curvature concentration is `passed along' the revolution. We will describe this more precisely in Section \ref{sec:rotsymconc}.
{For this section it is enough to observe by a direct computation (cf. \cite[p.118]{DS18}) that with respect to the normal $N_{F_\gamma} = \frac{ \partial_u F_\gamma \times \partial_v F_\gamma}{|\partial_u F_\gamma \times \partial_v F_\gamma|}$
the principal curvatures are given by 
$\kappa_1[F_\gamma](u,v) = - \kappa_{euc}[\gamma](u)$ and $\kappa_2[F_\gamma](u,v) = \frac{(\gamma^{(1)})'(u)}{|\gamma'(u)|\gamma^{(2)}(u)}$.
}
{This at hand, one can derive a useful bound for the length of the profile curve in terms of surface quantities.
\begin{lemma}\label{lem:spurbound}
 Suppose that $f= F_\gamma : \mathbb{S}^1 \times \mathbb{S}^1 \rightarrow \mathbb{R}^3$ is a torus of revolution with profile curve $\gamma$. Then 
 \begin{equation*}
     \mathcal{L}_{\mathbb{R}^2}(\gamma)  \leq  \mu_{g_f}(\mathbb{S}^1 \times \mathbb{S}^1)^\frac{1}{2} \mathcal{W}(f)^\frac{1}{2}.
 \end{equation*}
\end{lemma}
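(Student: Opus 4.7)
The plan is to prove the inequality by decomposing
\begin{equation*}
\mathcal{L}_{\mathbb{R}^2}(\gamma) = \int_{\mathbb{S}^1} \frac{((\gamma^{(1)})')^2 + ((\gamma^{(2)})')^2}{|\gamma'|}\, du
\end{equation*}
into its two coordinate pieces and bounding each one via Cauchy--Schwarz against the Willmore energy and the surface area. First, I will produce a clean split of $\mathcal{W}(f)$. Starting from $H = \kappa_1 + \kappa_2 = -\kappa_{euc}[\gamma] + \tfrac{(\gamma^{(1)})'}{|\gamma'|\gamma^{(2)}}$ (from the paragraph just above the lemma) and using \eqref{eq:areaform}, the expansion of $H^2$ yields
\begin{equation*}
4\mathcal{W}(f) = 2\pi \int_{\mathbb{S}^1}\Big(\kappa_{euc}^2\, \gamma^{(2)}|\gamma'| + \frac{((\gamma^{(1)})')^2}{|\gamma'|\gamma^{(2)}} - 2\kappa_{euc}(\gamma^{(1)})'\Big)\, du.
\end{equation*}
The cross term vanishes: writing $\theta$ for the Euclidean tangent angle of $\gamma$, we have $\kappa_{euc}(\gamma^{(1)})'\, du = \cos\theta\, d\theta = d(\sin\theta)$, and $\sin\theta$ is periodic on $\mathbb{S}^1$ (being a component of the unit tangent). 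Each of the two remaining non-negative summands is therefore at most $\tfrac{2\mathcal{W}(f)}{\pi}$.

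For the $(\gamma^{(1)})'$-piece, Cauchy--Schwarz gives
\begin{equation*}
\Big(\int_{\mathbb{S}^1} |(\gamma^{(1)})'|\, du\Big)^2 \leq \int_{\mathbb{S}^1}\frac{((\gamma^{(1)})')^2}{|\gamma'|\gamma^{(2)}}\, du \cdot \int_{\mathbb{S}^1}|\gamma'|\gamma^{(2)}\, du \leq \frac{2\mathcal{W}(f)}{\pi}\cdot \frac{\mu_{g_f}(\mathbb{S}^1\times \mathbb{S}^1)}{2\pi},
\end{equation*}
and since $((\gamma^{(1)})')^2 \leq |\gamma'|\,|(\gamma^{(1)})'|$, I conclude $\int \tfrac{((\gamma^{(1)})')^2}{|\gamma'|}\, du \leq \int|(\gamma^{(1)})'|\, du \leq \tfrac{1}{\pi}\sqrt{\mu_{g_f}(\mathbb{S}^1\times\mathbb{S}^1)\,\mathcal{W}(f)}$.

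The subtler step is the $(\gamma^{(2)})'$-piece, which I will access by integration by parts along the closed profile curve. The function $\Phi(u) := \sin\theta(u)\gamma^{(2)}(u)$ is periodic on $\mathbb{S}^1$, so $\int_{\mathbb{S}^1}\Phi'(u)\, du = 0$, and a direct computation of $\Phi'$ produces the identity
\begin{equation*}
\int_{\mathbb{S}^1}\frac{((\gamma^{(2)})')^2}{|\gamma'|}\, du = -\int_{\mathbb{S}^1}\kappa_{euc}(\gamma^{(1)})'\gamma^{(2)}\, du.
\end{equation*}
Another Cauchy--Schwarz (with weight $\gamma^{(2)}|\gamma'|$), combined with the estimate $((\gamma^{(1)})')^2\gamma^{(2)}/|\gamma'| \leq |\gamma'|\gamma^{(2)}$, bounds the right-hand side by $\tfrac{1}{\pi}\sqrt{\mu_{g_f}(\mathbb{S}^1\times\mathbb{S}^1)\mathcal{W}(f)}$ in absolute value. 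Summing the two pieces yields
\begin{equation*}
\mathcal{L}_{\mathbb{R}^2}(\gamma) \leq \frac{2}{\pi}\sqrt{\mu_{g_f}(\mathbb{S}^1\times\mathbb{S}^1)\,\mathcal{W}(f)} < \sqrt{\mu_{g_f}(\mathbb{S}^1\times\mathbb{S}^1)\,\mathcal{W}(f)},
\end{equation*}
which is slightly stronger than claimed. The main obstacle was identifying the right periodic test function $\Phi = \sin\theta\,\gamma^{(2)}$ to handle the $(\gamma^{(2)})'$-contribution; this identity does not appear directly from the expansion of $H^2$ but only after integrating a carefully chosen auxiliary function over the closed curve.
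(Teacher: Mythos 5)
Your proof is correct and takes a genuinely different route from the paper's. The paper parametrizes $\gamma$ with constant speed $|\gamma'| = L := \mathcal{L}_{\mathbb{R}^2}(\gamma)$, then establishes the single first-moment identity
\begin{equation*}
 -2L = \int_{\mathbb{S}^1 \times [0,\frac{1}{2}]} \vec{H}\cdot e_3 \, \mathrm{d}\mu_{g_f}
\end{equation*}
by a direct computation (pairing the mean curvature vector with a fixed coordinate vector and integrating over half the torus), and then one application of Cauchy--Schwarz gives $2L \le \int |\vec{H}| \,\mathrm{d}\mu_{g_f} \le 2\mathcal{W}(f)^{1/2}\mu_{g_f}(\Sigma)^{1/2}$. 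You instead split $\mathcal{L}_{\mathbb{R}^2}(\gamma)$ into the two coordinate pieces $\int \tfrac{((\gamma^{(i)})')^2}{|\gamma'|}\,du$, observe that the cross term in $H^2$ integrates to zero (being $d(\sin\theta)$, hence exact on the closed curve), and bound each piece by a separate Cauchy--Schwarz, feeding in the resulting decomposition $4\mathcal{W}(f) = 2\pi\int(\kappa_{euc}^2 \gamma^{(2)}|\gamma'| + \tfrac{((\gamma^{(1)})')^2}{|\gamma'|\gamma^{(2)}})\,du$. The $(\gamma^{(2)})'$-piece needs the extra integration by parts against $\Phi = \sin\theta\,\gamma^{(2)}$, which is the genuinely non-obvious ingredient, but it checks out: $\Phi' = \kappa_{euc}(\gamma^{(1)})'\gamma^{(2)} + \tfrac{((\gamma^{(2)})')^2}{|\gamma'|}$ and periodicity give the claimed identity. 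What your route buys is a slightly sharper constant ($\tfrac{2}{\pi}$ rather than $1$) at the cost of more bookkeeping; what the paper's route buys is a one-line argument once the vector identity is established. Both methods are essentially rigidity-free reproofs of a Topping-type diameter/length bound via the first moment of mean curvature, and either would serve the purposes of the paper, where only the qualitative bound is used.
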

\begin{proof}
We may without loss of generality assume that $\gamma$ is parametrized with constant velocity, i.e. $|\gamma'| = \mathcal{L}_{\mathbb{R}^2}(\gamma)=: L $. Recall from Appendix  \ref{sec:AppendixGeometry} that 
$\vec{H}(u,v) = (\kappa_1(u,v)+ \kappa_2(u,v))N_f(u,v)$, where
\begin{equation*}
    N_f(u,v) = \frac{1}{\mathcal{L}_{\mathbb{R}^2}(\gamma) }\begin{pmatrix}
     (\gamma^{(2)})'(u) \\ - (\gamma^{(1)})'(u) \cos(2\pi v)  \\ - ( \gamma^{(1)})'(u) \sin(2\pi v) 
    \end{pmatrix} \quad \mbox{ with }u,v \in \mathbb{S}^1 .
\end{equation*}
We show next that 
\begin{equation}\label{eq:2L}
    -2 L = \int_{\mathbb{S}^1 \times [0, \frac{1}{2}]} \vec{H} \cdot e_3 \; \mathrm{d}\mu_{g_f}.
\end{equation}
Plugging in the quantities characterized in this section and using $(\gamma^{(1)})'^2 + ( \gamma^{(2)})'^2 = L^2$, $(\gamma^{(1)})'' ( \gamma^{(1)})' + ( \gamma^{(2)})'' (\gamma^{(2)})' = 0$ we obtain 
\begin{align*}
& \qquad \int_{\mathbb{S}^1 \times [0, \frac{1}{2}]} \vec{H} \cdot e_3 \; \mathrm{d}\mu_{g_f}  \\
& = 2 \pi \int_0^1 \int_0^{\frac{1}{2}} (\kappa_1(u,v) + \kappa_2(u,v)) (N_f(u,v) \cdot e_3) |\gamma'(u)| \gamma^{(2)}(u) \; \mathrm{d}v \; \mathrm{d}u \\ &  = 2\pi \int_0^1 \int_0^{\frac{1}{2}} \left( -\kappa_{euc}[\gamma](u) + \frac{(\gamma^{(1)})'(u)}{L \gamma^{(2)}(u)} \right)   [-(\gamma^{(1)})'(u) \sin(2\pi v)]  \gamma^{(2)}(u) \; \mathrm{d}v \; \mathrm{d}u
\\ &= -\Big[ -  \cos(2\pi v) \Big]^{\frac{1}{2}}_0  \int_0^1 \left( \frac{(\gamma^{(1)})''(\gamma^{(2)})'- (\gamma^{(2)})'' ( \gamma^{(1)})'}{L^3} +  \frac{(\gamma^{(1)})'}{L\gamma^{(2)}}\right) ( \gamma^{(1)})' \gamma^{(2)} \; \mathrm{d}u \\
 & = - 2 \frac{1}{L^3}\int_0^1 \left((\gamma^{(1)})'' (\gamma^{(1)})' (\gamma^{(2)})' - (\gamma^{(2)})'' ( \gamma^{(1)})'^2 \right) \gamma^{(2)} \; \mathrm{d}u  -\frac{2}{L}\int_0^1  (\gamma^{(1)})'^2 \; \mathrm{d}u 
 \\ & = - 2 \frac{1}{L^3}\int_0^1 \left(-(\gamma^{(2)})'' (\gamma^{(2)})'^2 - (\gamma^{(2)})'' ( \gamma^{(1)})'^2 \right) \gamma^{(2)} \; \mathrm{d}u  -\frac{2}{L}\int_0^1  (\gamma^{(1)})'^2 \; \mathrm{d}u 
 \\ &= \frac{2}{L^3} \int_0^1  (\gamma^{(2)})'' L^2 \gamma^{(2)} \; \mathrm{d}u -\frac{2}{L}\int_0^1  (\gamma^{(1)})'^2 \; \mathrm{d}u  \\
& = - \frac{2}{L} \int_0^1 (\gamma^{(2)})'^2  \; \mathrm{d} u - \frac{2}{L} \int_0^1 ( \gamma^{(1)})'^2 \; \mathrm{d}u,
\end{align*}
where we have used integration by parts in the last step. Adding up the integrands and once again using $(\gamma^{(1)})'^2 + ( \gamma^{(2)})'^2  = L^2$ we obtain \eqref{eq:2L}. From \eqref{eq:2L} and the Cauchy Schwarz inequality we also conclude 
\begin{equation*}
    2L \leq \int_{\mathbb{S}^1 \times \mathbb{S}^1} |\vec{H}| \; \mathrm{d}\mu_{g_f} \leq 2 \mathcal{W}(f)^\frac{1}{2} \mu_{g_f}( \mathbb{S}^1 \times \mathbb{S}^1) ^\frac{1}{2}. \qedhere
\end{equation*}
\end{proof}
}
A quantity which we will also study is the diameter.

\begin{lemma}\label{lem:drehdiam}
Let $f = F_\gamma : \mathbb{S}^1 \times \mathbb{S}^1 \rightarrow \mathbb{R}^3$ be a torus of revolution with profile curve $\gamma$. Then,
$$\diam(F_\gamma(\mathbb{S}^1 \times \mathbb{S}^1)) \leq \tfrac12 \mathcal{L}_{\mathbb{R}^2 }(\gamma) + 2 ||\gamma^{(2)}||_{L^\infty} \, .$$
\end{lemma}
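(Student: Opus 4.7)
The plan is to estimate the Euclidean distance between any two points $F_\gamma(u_1, v_1)$ and $F_\gamma(u_2, v_2)$ on the torus by splitting the motion into a ``longitudinal'' part (change of $u$ at fixed $v$) and a ``revolution'' part (change of $v$ at fixed $u$), and applying the triangle inequality
\begin{equation*}
| F_\gamma(u_1, v_1) - F_\gamma(u_2, v_2) | \leq | F_\gamma(u_1, v_1) - F_\gamma(u_1, v_2) | + | F_\gamma(u_1, v_2) - F_\gamma(u_2, v_2) |.
\end{equation*}

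For the revolution part, fixing $u = u_1$ only changes the last two components of $F_\gamma$, both scaled by $\gamma^{(2)}(u_1)$. A direct computation using $(\cos \alpha - \cos \beta)^2 + (\sin \alpha - \sin \beta)^2 \leq 4$ yields
\begin{equation*}
| F_\gamma(u_1, v_1) - F_\gamma(u_1, v_2) | \leq 2 \gamma^{(2)}(u_1) \leq 2 \|\gamma^{(2)}\|_{L^\infty}.
\end{equation*}
For the longitudinal part, fixing $v = v_2$ and using $\cos^2(2\pi v_2) + \sin^2(2\pi v_2) = 1$, one checks that the Euclidean distance reduces exactly to the planar distance between the profile-curve points:
\begin{equation*}
| F_\gamma(u_1, v_2) - F_\gamma(u_2, v_2) | = | \gamma(u_1) - \gamma(u_2) |_{\mathbb{R}^2}.
\end{equation*}

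Since $\gamma$ is a closed curve in $\mathbb{R}^2$, the two points $\gamma(u_1)$ and $\gamma(u_2)$ are joined by two arcs of $\gamma$ whose lengths sum to $\mathcal{L}_{\mathbb{R}^2}(\gamma)$; picking the shorter arc and applying the one-dimensional triangle inequality gives
\begin{equation*}
| \gamma(u_1) - \gamma(u_2) |_{\mathbb{R}^2} \leq \tfrac{1}{2} \mathcal{L}_{\mathbb{R}^2}(\gamma).
\end{equation*}
Combining the two bounds and taking the supremum over $(u_1, v_1), (u_2, v_2) \in \mathbb{S}^1 \times \mathbb{S}^1$ yields the claim.

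There is no real obstacle here; the only point that requires a small argument, rather than a direct calculation, is the ``half the length'' bound for the chord of a closed curve, which follows from the fact that $\mathbb{S}^1$ offers two parametrized paths between any two of its points.
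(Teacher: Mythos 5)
Your proof is correct and follows essentially the same route as the paper: split $|F_\gamma(u_1,v_1)-F_\gamma(u_2,v_2)|$ by the triangle inequality into a revolution part (bounded by $2\gamma^{(2)}\leq 2\|\gamma^{(2)}\|_{L^\infty}$ via $2-2\cos\leq 4$) and a longitudinal part (which reduces exactly to the planar chord $|\gamma(u_1)-\gamma(u_2)|$). The only place you are more explicit than the paper is in spelling out the half-perimeter bound $|\gamma(u_1)-\gamma(u_2)|\leq\tfrac12\mathcal{L}_{\mathbb{R}^2}(\gamma)$, which the paper leaves as "follows immediately."
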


\begin{proof} 
Let $(u,v), (u',v') \in \mathbb{S}^1 \times \mathbb{S}^1$ and $f= F_\gamma$ be as in the statement. Without loss of generality we can assume that $\gamma^{(2)}(u) \leq \gamma^{(2)}(u')$. We start proving 
\begin{equation*}
|f(u,v) - f(u',v')| \leq |\gamma(u) - \gamma(u')| +\sqrt{2} 
\gamma^{(2)}(u)
\sqrt{1 - \cos(2\pi (v-  v')) }.
\end{equation*}
First observe that $|f(u,v) - f(u',v') |\leq  |f(u',v') - f(u,v')|+ |f(u,v') - f(u,v)|$. Using the definition of the Euclidean distance we find
$|f(u' , v') - f(u,v')|  
=  |\gamma(u) - \gamma(u') |$.  
Similarly,
\begin{align*}
|f(u,v') - f(u,v)| 
&  = \gamma^{(2)}(u) \sqrt{(\cos(2 \pi v) - \cos(2 \pi v'))^2 + ( \sin( 2\pi v) - \sin(2 \pi v'))^2 } \\  
& = \gamma^{(2)}(u) \sqrt{2 - 2 \cos(2\pi (v-v')) }.
\end{align*}
Both computations imply the desired estimate, and the asserted diameter bound follows immediately.
\end{proof}

%%%%%%%%%%%%%%%%%%%%%%%%%%%%%%%%%%%%%%%%%%%%%%%%%%%%%%
%%%%%%%%%%%%%%%%%%%%%%%%%%%%%%%%%%%%%%%%%%%%%%%%%%%%%%
\section{The Willmore flow of tori of revolution}
%%%%%%%%%%%%%%%%%%%%%%%%%%%%%%%%%%%%%%%%%%%%%%%%%%%%%
%%%%%%%%%%%%%%%%%%%%%%%%%%%%%%%%%%%%%%%%%%%%%%%%%%%%%

In this section we understand the interplay between the rotational symmetry and the \emph{curvature concentration criterion}, which is able to dectect \emph{singularities} of the Willmore flow. This gives us a better understanding of the singularities that can arise in our symmetric setting. We will then prove the main theorems by excluding those singularities in certain circumstances.

%%%%%%%%%%%%%%%%%%%%%%%%%%%%%%%%%%%%%%%%%%%%%%%%%%%%%%%%
%%%%%%%%%%%%%%%%%%%%%%%%%%%%%%%%%%%%%%%%%%%%%%%%%%%%%%%%
\subsection{Singularities of the Willmore flow}
%%%%%%%%%%%%%%%%%%%%%%%%%%%%%%%%%%%%%%%%%%%%%%%%%%%%%%%%
%%%%%%%%%%%%%%%%%%%%%%%%%%%%%%%%%%%%%%%%%%%%%%%%%%%%%%%%

In this section we summarize how singularities of the Willmore flow look like. The following result summarizes a list of results that have been obtained previously in other articles on the Willmore flow. It exposes the {diameter of appropriate parabolic rescalings} as a quantity whose control is sufficient for convergence. The appropriate rescaling is given by a \emph{concentration property} of the Willmore flow, see Appendix \ref{Apptech}. In the following discussion we will use the two parameters $\varepsilon_0$ and $c_0$ which have been introduced in Theorem \ref{thm:C1}.   

\begin{theorem}[Convergence criterium of the Willmore flow, Proof in Appendix \ref{Apptech}]\label{thm:THM22NEW}

Let $\Sigma$ be a compact two-dimensional manifold without boundary and let $f : [0,T) \times \Sigma \rightarrow \mathbb{R}^{{n}}$ be a maximal evolution by the Willmore flow with initial datum $f_0$. Consider an arbitrary sequence $(t_j)_{j \in \N} \subset (0,T)$ with 
$t_j \rightarrow T$. 
Then,
the \emph{concentration radii}
\begin{equation}\label{eq:radii}
r_j := \sup \left\lbrace r > 0 : \forall x \in \mathbb{R}^{{n}} \; \textrm{one has}  \; \int_{f(t_j)^{-1} (B_r(x)) } |A(t_j)|^2 \; \mathrm{d} \mu_{g_{f(t_j)}} \leq \varepsilon_0 \right\rbrace , 
\end{equation} 
$j \in \N$, satisfy $t_j+ c_0 r_j^4 < T$ for all $j \in \N$. Further, the maps 
\begin{equation*}
\tilde{f}_{j,c_0}: \Sigma \to \R^3, \quad \tilde{f}_{j,c_0} := \frac{f(t_j + c_0 r_j^4)}{r_j},
\end{equation*}
are called \emph{concentration rescalings} and one of the following alternatives occurs
\begin{enumerate}
\item[\textbf{Case 1:}] {\emph{(Convergent evolution).} \\There exists $\delta > 0 $ such that $\delta < r_j < \frac{1}{\delta}$.} 
Then $T = \infty$. If additionally $(\mathrm{diam}(\tilde{f}_{j,c_0}))_{j \in \mathbb{N}}$ is uniformly bounded then the Willmore flow converges {to a Willmore immersion}. More precisely there exists a Willmore immersion $f_\infty: \Sigma \rightarrow \mathbb{R}^{{n}}$ such that $f(t) \rightarrow f_\infty$ in $C^k$ for all $k \in \mathbb{N}$ as $t \rightarrow \infty$.  
\vspace{1mm}
\item[\textbf{Case 2:}] \emph{(Blow-up or Blow-down.)} \\ A subsequence of $(r_j)_{j \in \mathbb{N}}$ goes either to zero or to infinity. In this case one has $\mathrm{diam}(\tilde{f}_{j,c_0}) \rightarrow \infty$ as $j \rightarrow \infty$. 
\end{enumerate}
In particular, if $(\mathrm{diam}(\tilde{f}_{j,c_0}))_{j \in \mathbb{N}}$ is uniformly bounded, then $T = \infty$ and the Willmore flow converges {to a Willmore immersion} $f_\infty: \Sigma \rightarrow \mathbb{R}^{{n}}$ in $C^k$ for all $k \in \mathbb{N}$.
\end{theorem}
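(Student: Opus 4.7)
The plan is to deduce the theorem from the Kuwert-Schätzle lifespan theorem \ref{thm:C1}, Breuning-type smooth compactness of immersions, and a Lojasiewicz-Simon gradient inequality for $\W$, organized by the asymptotics of the concentration radii $(r_j)$. A direct application of Theorem \ref{thm:C1} at each $t_j$ produces both $t_j + c_0 r_j^4 < T$ and a priori $C^k$-bounds on the second fundamental form on $[t_j, t_j + c_0 r_j^4]$ with natural scaling in $r_j$; after dividing by $r_j$, the rescalings $\tilde{f}_{j,c_0}$ carry uniform curvature bounds of all orders.

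In Case 1 ($\delta \leq r_j \leq 1/\delta$), combining $t_j + c_0 \delta^4 < T$ with $t_j \to T$ forces $T = \infty$. Under the additional hypothesis that $\diam(\tilde{f}_{j,c_0})$ is bounded, Breuning's smooth compactness theorem extracts a smooth subsequential limit $\tilde{f}_\infty$ of the rescalings. Energy monotonicity together with $\W \geq 0$ implies $\partial_t f \to 0$ in $L^2$ along a time-sequence, and interior parabolic regularity identifies $\tilde{f}_\infty$ as a Willmore immersion. Because $r_j$ is pinched in a compact interval, this translates to subsequential $C^k$-convergence of the unrescaled flow; a Lojasiewicz-Simon inequality at $\tilde{f}_\infty$, exploiting the analyticity of $\W$, then promotes this to full convergence $f(t) \to f_\infty$ as $t \to \infty$.

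For Case 2, if $r_{j_k} \to 0$, a universal lower diameter bound along the flow (of Simon-type, using that $\W$ is bounded) immediately gives $\diam(\tilde{f}_{j_k,c_0}) \geq c/r_{j_k} \to \infty$. For the blow-down subcase $r_{j_k} \to \infty$, the rescaled surfaces have concentration radius $1$ by construction and bounded total $\int |A|^2$; a bounded rescaled diameter would, via Breuning compactness, produce a smooth closed Willmore limit of the correct topology, and a careful incompatibility with $r_{j_k} \to \infty$ (e.g.\ via non-collapsing of the $\varepsilon_0$-concentration at scale $1$) would yield a contradiction, so diameters must blow up. The ``in particular'' conclusion is then immediate: boundedness of $\diam(\tilde{f}_{j,c_0})$ rules out Case 2 and places us in Case 1, giving both $T = \infty$ and convergence. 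The main technical obstacle will be the Lojasiewicz-Simon step, which is the standard modern tool for promoting subsequential to full trajectory convergence in geometric flows and is the essential analytic input beyond \cite{KS3}; a secondary difficulty is the blow-down branch of Case 2, where the simple lower-diameter argument does not apply and one needs a covering/compactness-based comparison instead.
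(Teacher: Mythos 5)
Your Case 1 argument is substantively the same as the paper's: the time bound $t_j + c_0 r_j^4 < T$ and the natural scaling of the curvature bounds from Theorem \ref{thm:C1} yield $T = \infty$ and uniform estimates for the rescalings, Breuning-type compactness (packaged in the paper as Theorem \ref{thm:KS}) gives a subsequential Willmore limit, and the Lojasiewicz--Simon inequality (Theorem \ref{thm:CFS}) promotes this to full convergence. That part is fine.

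However, Case 2 has a genuine gap, and it is precisely where the paper relies on a result you do not invoke. For the subcase $r_{j_k} \to 0$, you propose ``a universal lower diameter bound along the flow (of Simon-type, using that $\W$ is bounded)'' to force $\diam(\tilde{f}_{j_k,c_0}) \geq c/r_{j_k} \to \infty$. No such bound is available here: Simon's inequality (Lemma \ref{lem:simon}) gives $\diam(f(t))^2 \geq \mu_{g_{f(t)}}(\Sigma)/\mathcal{W}(f(t))$, which only bounds the diameter from below in terms of the \emph{area}, and the area along a Willmore flow is not a priori bounded away from zero --- the estimate \eqref{eq:totalarea} in Lemma \ref{lem:diamflow} is an upper bound (and the same argument yields only $|\mu(f(t)) - \mu(f_0)| \lesssim t^{1/2}$, which degenerates as $t \to \infty$). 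So the lower diameter bound you need is not justified, and indeed one cannot rule out the unrescaled diameter shrinking without further input. For the subcase $r_{j_k} \to \infty$, your argument is only a sketch (``a careful incompatibility \dots would yield a contradiction'') and the claimed non-collapsing contradiction is not actually worked out.

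The missing ingredient in both subcases is \cite[Thm 1.1]{CFS09}, ``Willmore blowups are never compact'': when $(r_j)$ degenerates (to zero or to infinity), the concentration limit manifold $\widehat{\Sigma}$ of Theorem \ref{thm:KS} is \emph{non-compact}. Since the limit immersion $\widehat{f}:\widehat{\Sigma}\to\mathbb{R}^n$ is proper, non-compactness forces $\diam(\widehat{f}(\widehat{\Sigma})) = \infty$, and lower semicontinuity of the diameter under smooth convergence (Lemma \ref{lem:appdiam}) then gives $\diam(\tilde{f}_{j,c_0}) \to \infty$ uniformly for both subcases. This single structural theorem replaces the two ad hoc arguments you attempted and is the step you should insert to close the gap.
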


In the coming sections we will study the relation between the diameter of the concentration rescalings and the hyperbolic length of the profile curves. Having understood this we will finally be able to obtain Theorem \ref{thm:boundhypmain} and Theorem \ref{thm:MainThmPrecisely}.

%%%%%%%%%%%%%%%%%%%%%%%%%%%%%%%%%%%%%%%%%%%%%%%%%%%%%%
\subsection{Dimension reduction}
%%%%%%%%%%%%%%%%%%%%%%%%%%%%%%%%%%%%%%%%%%%%%%%%%%%%%%

We have already announced that the rotational symmetry is preserved along the flow. This section is devoted to the proof of this fact, see Lemma \ref{lem:sympres}. In the proof of Lemma \ref{lem:sympres} we will make use of an alternative characterization {of tori of revolution, see Definition \ref{def:toriofrevolution}}, which we state next.

\begin{prop}\label{prop:consiBlatt}
{Let  $f: \mathbb{S}^1 \times \mathbb{S}^1 \rightarrow \mathbb{R}^3$ be a smooth immersion. Then, $f$} is a torus of revolution if and only if 
\begin{align}\label{eq:D2}
\forall \phi \in \mathbb{S}^1& : \quad  f(u,v+ \phi) = R_{2\pi\phi}  f(u,v), \; \;  \textrm{where} \;  R_z = \begin{pmatrix}
1 & 0 &0 \\ 0 & \cos z & - \sin z  \\ 0 & \sin z &  \cos z
\end{pmatrix},\\
\label{eq:D3}
\forall u \in \mathbb{S}^1 & : \quad  f^{(3)}(u,0) = 0 \; \textrm{and $f^{(2)}(u_0,0) \geq 0$ for one value $u_0 \in \mathbb{S}^1$}.
\end{align}
\end{prop}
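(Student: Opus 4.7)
The proof is a direct verification in both directions; the only subtle point is recovering the profile curve from the symmetry conditions with the correct sign of $\gamma^{(2)}$.

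\emph{``Only if'' direction.} Assume $f = F_\gamma$ has the form \eqref{eq:D1}. For \eqref{eq:D2}, I compute $R_{2\pi \phi} F_\gamma(u,v)$ directly: the first component is unchanged, and applying the angle-addition formulas to $\gamma^{(2)}(u)(\cos(2\pi v), \sin(2\pi v))$ rotated by $2\pi \phi$ yields $\gamma^{(2)}(u)(\cos(2\pi(v+\phi)), \sin(2\pi(v+\phi)))$, which is exactly $F_\gamma(u,v+\phi)$. For \eqref{eq:D3}, evaluating at $v=0$ gives $f^{(3)}(u,0) = \gamma^{(2)}(u)\sin(0) = 0$ and $f^{(2)}(u,0) = \gamma^{(2)}(u) > 0$ for every $u$, since $\gamma$ takes values in $\mathbb{H}^2 = \mathbb{R}\times(0,\infty)$.

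\emph{``If'' direction.} Define the candidate profile curve
\[
\gamma(u) := \bigl(f^{(1)}(u,0),\, f^{(2)}(u,0)\bigr), \qquad u \in \mathbb{S}^1.
\]
By \eqref{eq:D3} one has $f(u,0) = (\gamma^{(1)}(u), \gamma^{(2)}(u), 0)^T$, so applying \eqref{eq:D2} with $\phi = v$ at the point $(u,0)$ gives
\[
f(u,v) \;=\; R_{2\pi v}\, f(u,0) \;=\; \bigl(\gamma^{(1)}(u),\, \gamma^{(2)}(u)\cos(2\pi v),\, \gamma^{(2)}(u)\sin(2\pi v)\bigr)^T,
\]
which is precisely \eqref{eq:D1}. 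So the only remaining task is to verify $\gamma \in C^\infty(\mathbb{S}^1,\mathbb{H}^2)$ and that $\gamma$ is an immersion.

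Smoothness of $\gamma$ is inherited from smoothness of $f$. To see that $\gamma^{(2)}(u) > 0$ for all $u$, I compute
\[
\partial_v f(u,0) \;=\; \bigl(0,\, -2\pi\gamma^{(2)}(u)\sin(0),\, 2\pi\gamma^{(2)}(u)\cos(0)\bigr)^T \;=\; \bigl(0,0,2\pi\gamma^{(2)}(u)\bigr)^T.
\]
Since $f$ is an immersion, $\partial_v f(u,0) \neq 0$, so $\gamma^{(2)}(u) \neq 0$ for every $u$. Continuity of $\gamma^{(2)}$, connectedness of $\mathbb{S}^1$, and the normalization $\gamma^{(2)}(u_0) = f^{(2)}(u_0,0) \geq 0$ from \eqref{eq:D3} then force $\gamma^{(2)} > 0$ everywhere, i.e.\ $\gamma$ maps into $\mathbb{H}^2$. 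Finally, the immersion property of $\gamma$ follows from that of $f$: since $\partial_v f(u,0) \in \mathrm{span}\{e_3\}$ and $\partial_u f(u,0) = (\gamma^{(1)\prime}(u), \gamma^{(2)\prime}(u), 0)^T$ lies in $\mathrm{span}\{e_1,e_2\}$, the linear independence of $\partial_u f(u,0)$ and $\partial_v f(u,0)$ forces $\gamma'(u) \neq 0$.

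The main (very mild) obstacle is the second half of \eqref{eq:D3}: without the sign normalization at a single point, the rotational symmetry \eqref{eq:D2} together with $f^{(3)}(u,0) \equiv 0$ would also be compatible with $\gamma$ taking values in $\mathbb{R}\times(-\infty,0)$; the pointwise sign condition, combined with connectedness, is what pins down the correct half-plane. Everything else reduces to routine application of the rotation matrix and the definition of an immersion.
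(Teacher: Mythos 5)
Your proof is correct and follows essentially the same route as the paper's: both directions are direct verification, and the key step in the ``if'' direction is the observation that $\gamma^{(2)}(u_0)=0$ would force $\partial_v f(u_0,\cdot)=0$, contradicting that $f$ is an immersion, after which connectedness and the sign normalization at $u_0$ pin down $\gamma^{(2)}>0$. You additionally spell out that $\gamma$ is itself an immersed curve (needed for Definition~\ref{def:toriofrevolution}), which the paper leaves implicit; this is a small but welcome bit of thoroughness rather than a different approach.
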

\begin{proof}
If $f$ is a torus of revolution then \eqref{eq:D2} and \eqref{eq:D3} can be checked by direct computation. If \eqref{eq:D2} and \eqref{eq:D3} hold for some immersion $f: \mathbb{S}^1 \times \mathbb{S}^1 \rightarrow \mathbb{R}^3$ then one can define a smooth curve $\gamma: \mathbb{S}^1 \rightarrow \mathbb{R}^2$ by $\gamma(u) := (f^{(1)}(u,0),f^{(2)}(u,0))$. Equation \eqref{eq:D1} is then easy to check, but it also needs to be shown that $\gamma(u) \in \mathbb{H}^2$ for all $u \in \mathbb{S}^1$. So far we have 
\begin{equation*}
f(u,v)=  \begin{pmatrix}
\gamma^{(1)}(u), \gamma^{(2)}(u) \cos(2\pi v), \gamma^{(2)}(u) \sin(2\pi v)
\end{pmatrix} \quad \forall (u,v) \in \mathbb{S}^1 \times \mathbb{S}^1.
\end{equation*}
If now there exists a point {$u_0 \in \mathbb{S}^1 \times \mathbb{S}^1$} such that $\gamma^{(2)}(u_0) = 0$ then one can compute 
\begin{equation*}
\partial_v f(u_0,v) = (0,0,0)^T \quad \forall v \in \mathbb{S}^1,
\end{equation*}
which is a contradiction to the fact that $f$ is an immersion. Hence $\gamma^{(2)}$ may not change sign or attain the value zero. As a consequence, $\gamma^{(2)}> 0$ and the claim follows. 
\end{proof}

In particular, given a torus of revolution its profile curve is given by the formula  {$\gamma(u) := (f^{(1)}(u,0),f^{(2)}(u,0))$}. Note that -- by inspection of the previous proof -- each immersion $f: \mathbb{S}^1 \times \mathbb{S}^1 \rightarrow \mathbb{R}^3$ that fulfills \eqref{eq:D2} as well as $f^{(3)}(u,0)= 0 $ for all $u \in \mathbb{S}^1$, must satisfy {$f^{(2)}(\cdot,0) \ne 0$}.  
In particular it cannot change sign. Thus, either $f^{(2)} ( \cdot ,0 ) > 0 $ or $f^{(2)} (\cdot, 0) <0$. {In} the latter case $f( \cdot, \cdot + 1)$ defines a torus of revolution. This shows also consistency of our definition with  \cite[Def. 2.2]{Blatt}, whose results we will need later. 

When it comes to evolutions $(f(t))_{t \geq 0}$, we however want to work without repara-metrizations of $f(t)$ along the flow and hence we specify $\gamma^{(2)}= f^{(2)}(\cdot,0) > 0$ (and we check that this remains satisfied along the flow).  

\begin{lemma}\label{lem:sympres}
Let $f_0 : \mathbb{S}^1 \times \mathbb{S}^1 \rightarrow \mathbb{R}^3$ be a torus of revolution and let {$(f(t))_{t \in [0, T)}:\mathbb{S}^1 \times \mathbb{S}^1 \rightarrow \mathbb{R}^3$ evolve by the Willmore flow with initial datum $f_0$. Then $(f(t))_{t \in [0,T)}$ is a torus of revolution  for all $t \in [0,T)$.}  
\end{lemma}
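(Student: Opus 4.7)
The plan is to exploit uniqueness of smooth solutions to the Willmore flow (which holds by \cite[Prop. 1.1]{KS1}) together with the frame-invariance of \eqref{eq:WillmoreFlow} under Euclidean isometries (post-composition) and under smooth reparametrizations of the domain. Proposition \ref{prop:consiBlatt} characterizes tori of revolution by the two symmetry conditions \eqref{eq:D2} and \eqref{eq:D3}, so it suffices to transport each of these two conditions along the flow separately.

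First I would verify the rotational symmetry \eqref{eq:D2} at every $t \in [0,T)$. Fix $\phi \in \mathbb{S}^1$ and define
\begin{equation*}
\tilde f_\phi(t,u,v) := R_{-2\pi\phi}\, f(t,u,v+\phi).
\end{equation*}
Since $R_{-2\pi\phi}$ is a rigid motion of $\mathbb{R}^3$ and $v \mapsto v+\phi$ is a diffeomorphism of $\mathbb{S}^1$, the map $\tilde f_\phi$ is again a smooth maximal Willmore evolution. Applying \eqref{eq:D2} to the initial datum $f_0$ gives $\tilde f_\phi(0,u,v) = R_{-2\pi\phi} f_0(u,v+\phi) = f_0(u,v)$. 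By uniqueness $\tilde f_\phi \equiv f$, which is exactly \eqref{eq:D2} for $f(t,\cdot,\cdot)$.

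Second I would establish the reflection property $f^{(3)}(t,u,0)=0$. Let $S := \mathrm{diag}(1,1,-1)$ and set $\tilde f(t,u,v) := S\, f(t,u,-v)$. This is again a Willmore evolution, and a direct computation using \eqref{eq:D1} shows $S f_0(u,-v) = f_0(u,v)$, so by uniqueness $\tilde f \equiv f$. Evaluating at $v=0$ yields $S f(t,u,0) = f(t,u,0)$, hence $f^{(3)}(t,u,0) = 0$. For the positivity condition in \eqref{eq:D3}, the paragraph after Proposition \ref{prop:consiBlatt} already notes that the combination of \eqref{eq:D2} with $f^{(3)}(\cdot,0)\equiv 0$ forces $f^{(2)}(t,\cdot,0)$ to be nowhere zero (otherwise $\partial_v f$ would vanish there, contradicting the immersion property) and therefore of constant sign. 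Since $f_0^{(2)}(\cdot,0) = \gamma_0^{(2)} > 0$ and the flow is smooth in time, that sign remains positive on $[0,T)$. Proposition \ref{prop:consiBlatt} then concludes that $f(t)$ is a torus of revolution for every $t \in [0,T)$, with profile curve $\gamma(t,u) := (f^{(1)}(t,u,0), f^{(2)}(t,u,0))$.

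The only real delicacy is making the appeal to uniqueness clean: one must verify that each auxiliary map $\tilde f_\phi$ respectively $\tilde f$ is genuinely a solution of \eqref{eq:WillmoreFlow} on the same interval $[0,T)$. This is immediate because $\Delta \vec H$ and $Q(\Aring)\vec H$ are constructed intrinsically from the induced metric and the second fundamental form, hence commute with isometric post-composition and with diffeomorphisms of $\mathbb{S}^1 \times \mathbb{S}^1$; and because $f$ is a smooth immersion on $[0,T)$, so is each of its isometric-reparametric transforms. No other obstacle arises.
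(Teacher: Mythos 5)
Your argument is correct, and the second step is a genuinely simpler route than the one taken in the paper. After establishing the rotational symmetry \eqref{eq:D2} exactly as you do, the paper proves $f^{(3)}(t,\cdot,0)\equiv 0$ by a more technical argument: it writes $f(t)(u,v)=R_{2\pi v}(x,y,z)(t,u)$, sets $S:=\sup\{s: f(t) \text{ is a torus of revolution for all } t\in[0,s]\}$, restarts the flow at time $S$, derives a pointwise ODE of the form $\partial_t(z^2)\le Cz^2$ from the Willmore flow in local coordinates (using the curvature bounds of Theorem \ref{thm:C1} on a short time interval), and concludes $z\equiv 0$ by Gronwall, finally running a continuation argument to get $S=T$. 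You instead observe that the reflection $\tilde f(t,u,v):=S\,f(t,u,-v)$ with $S=\mathrm{diag}(1,1,-1)$ is again a Willmore evolution (isometric post-composition plus domain diffeomorphism) and that $Sf_0(u,-v)=f_0(u,v)$ by \eqref{eq:D1}, so uniqueness gives $Sf(t,u,-v)=f(t,u,v)$ and in particular $f^{(3)}(t,u,0)=0$. This replaces the Gronwall/continuation machinery with a second application of the same uniqueness-under-symmetry principle already used for \eqref{eq:D2}; it is shorter and avoids any reliance on the short-time curvature estimates, giving a cleaner proof. Your handling of the positivity in \eqref{eq:D3} via the non-vanishing of $f^{(2)}(t,\cdot,0)$ (forced by \eqref{eq:D2} and the immersion property, as noted after Proposition \ref{prop:consiBlatt}) together with continuity in $t$ and positivity at $t=0$ is also correct and matches the paper's conclusion.
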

\begin{proof}
{We prove that $(f(t))_{t \in [0, T)}$ satisfies  \eqref{eq:D2} and \eqref{eq:D3} for all $t \in [0,T)$ so that the claim follows from Proposition \ref{prop:consiBlatt}.}

Let $\phi \in \mathbb{S}^1$. We observe that $R_{2\pi\phi}$ is an isometry in $\mathbb{R}^3$ and $(u,v) \mapsto (u , v + \phi)$ is a diffeomorphism. Hence $(R_{2\pi\phi}^{-1} f(t) (\cdot, \cdot + \phi ))_{t \in [0, T)}:\mathbb{S}^1 \times \mathbb{S}^1 \rightarrow \mathbb{R}^3$ is an evolution by Willmore flow with initial value $R_{2\pi\phi}^{-1} f_0( \cdot, \cdot + \phi )$. Recall now that $f_0$ satisfies \eqref{eq:D2}, i.e. $ R_{2\pi\phi}^{-1} f_0( \cdot, \cdot + \phi ) = f_0 $.   
By the uniqueness result for the Willmore flow, see \cite[Prop. 1.1]{KS1}, we obtain that 
\begin{equation*}
R_{2\pi\phi}^{-1} f(t) ( u ,v + \phi)  = f(t)(u,v)  \quad \forall (u,v) \in \mathbb{S}^1 \times \mathbb{S}^1, 
\end{equation*}
that is \eqref{eq:D2}. In particular, there exist smooth functions $x,y,z:[0,T)\times\mathbb{S}^1 $ such that 
{\begin{equation}\label{eq:D10}
f(t)(u,v) = R_{2\pi v} (f(t) (u,0)) = R_{2\pi v} \begin{pmatrix}
x(t,u) \\ y(t,u) \\ z(t,u) 
\end{pmatrix}.
\end{equation}}

As an intermediate step for \eqref{eq:D3} we show that $f(t)^{(3)}(u,0) = 0$ for all $t> 0$ and $u \in \mathbb{S}^1$, i.e. $z\equiv 0$ on $[0,T)\times \mathbb{S}^1$. 
 Set 
\begin{equation*}
S := \sup \{ s \in [0,T): f(t) \; \textrm{is a torus of revolution for all} \;  t \in [0, s] \}. 
\end{equation*}
We show that $S = T$. If $S<T$ then observe that $z(S,u) = 0$ for all $u \in \mathbb{S}^1$ by smoothness of $(f(t))_{t \in [0, T) }$ and the fact that $f(t)^{(3)}(u,0) = 0$ for all $t \in [0,S)$ and $u \in \mathbb{S}^1$. As additionally $y(S,\cdot)$ is non-negative and $f(S)$ is an immersion,  $f(S)$ is a torus of revolution by Proposition \ref{prop:consiBlatt}.

Restart the flow with $\widetilde{f}_0 := f(S)$ (if $S= 0$ there is no need to restart). Choose now $c_0 , \rho$ for $\widetilde{f}_0$ to be as in Theorem 
\ref{thm:C1} and consider the time interval $I := [S, S + \frac{1}{c_0} \rho^4]$. The Willmore flow equation 
in the local coordinates $(u,v)$ of $\mathbb{S}^1 \times \mathbb{S}^1$ reads  
\begin{equation*}
\partial_t f(t) = P(A(t), \nabla^\perp A(t), (\nabla^\perp)^2 A(t)) \vec{N}_{f(t)},
\end{equation*}
where $\vec{N}_{f(t)}:= \frac{\partial_u f(t) \times \partial_v f}{|\partial_u f(t) \times \partial_v f(t)|}$ 
and $P(A, \nabla^\perp A, (\nabla^\perp)^2 A)$ is a scalar quantity that can be bounded in terms of $||g||_{L^\infty(\mathbb{S}^1 \times \mathbb{S}^1)}, ||(\nabla^\perp)^k A||_{L^\infty(\mathbb{S}^1 \times \mathbb{S}^1)},$ $(k=0,1,2)$.  All of those remain bounded in $I$ by \eqref{eq:gradbound} and the explanation afterwards.
The idea now is to consider the evolution equation satisfied by $z(t,u)^2$. Since 
\begin{equation*}
\vec{N}_{f(t)}(u,v)  =\frac{1}{\sqrt{\det(g(t))}} R_{2\pi v}  \begin{pmatrix}
  y(t,u) \partial_u y(t,u) + \partial_u z(t,u) z(t,u) \\ - y(t,u) \partial_u x(t,u)  \\ - z(t,u) \partial_u x(t,u) 
\end{pmatrix} ,
\end{equation*}
we find 
\begin{align*}
\partial_t( z(t,u)^2) & =2 z(t,u) \partial_tz(t,u) =2 z (t,u) P(A(t), \nabla^\perp A(t) , (\nabla^\perp)^2 A(t) )  \vec{N}_{f(t)}^{(3)} (u,0) 
\\ & = - 2 \frac{1}{\sqrt{\mathrm{det}(g(t))}} P(A(t), \nabla^\perp A(t) , (\nabla^\perp)^2 A(t))  \partial_u x(t,u) z(t,u)^2 . 
\end{align*}
By Theorem \ref{thm:C1}
for fixed $u \in \mathbb{S}^1$ we have obtained 
\begin{equation*}
\begin{cases}
\partial_t (z(t,u)^2) \leq C z(t,u)^2  & t \in I, \\
z(S,u)^2 = 0,  
\end{cases}
\end{equation*}
and hence $z(t,u) = 0 $ for all $t \in I$ and all $u \in \mathbb{S}^1$, as $u$ was chosen arbitrarily.  Similar to before, again by Proposition \ref{prop:consiBlatt} and the discussion afterwards it can be shown that $y(t,\cdot) > 0$ for all  $t \in I$.  This is finally a contradiction to the choice of $S$ and thus $S = T$. The claim follows. \qedhere 
\end{proof}

The previous lemma implies that for each Willmore evolution $(f(t))_{t \geq 0}$ starting at a torus of revolution $f_0 : \mathbb{S}^1 \times \mathbb{S}^1 \rightarrow \mathbb{R}^3$ there exists a unique smooth evolution of curves $(\gamma(t))_{t \in [0,T) } \subset  C^\infty(\mathbb{S}^1, \mathbb{H}^2), \gamma(t)(u) = f(t)(u,0)$  such that 
\begin{equation}\label{eq:D17}
f(t)(u,v) =  \begin{pmatrix}
\gamma^{(1)}(t)(u) \\ \gamma^{(2)}(t)(u) \cos(2\pi v) \\ \gamma^{(2)}(t)(u) \sin(2 \pi v) 
\end{pmatrix}, 
\end{equation}
whereupon the flow can also be seen as an evolution of $(\gamma(t))_{t \in [0,T)} $. 

%%%%%%%%%%%%%%%%%%%%%%%%%%%%%%%%%%%%%%%%%%%%%%%%%%%%%%%%%%%%%%%%%
\subsection{Symmetry of the limit immersion}
%%%%%%%%%%%%%%%%%%%%%%%%%%%%%%%%%%%%%%%%%%%%%%%%%%%%%%%%%%%%%%%%%

Theorem \ref{thm:THM22NEW} provides us with a general convergence criterion for the Willmore flow and yields a smooth limit immersion $f_\infty$, which is a Willmore immersion. In this section we need to check that the revolution symmetry is passed along to the limit, i.e. we will prove that under certain conditions the limit immersion $f_\infty$ is a (Willmore) torus of revolution. Let us stress that this not trivial because the notion of convergence is \emph{geometric}, i.e. invariant with respect to reparametrization. Hence classical results about pointwise convergence can not be applied.

The arguments in this section make frequent use of the fact that to each torus of revolution $f= F_\gamma : \mathbb{S}^1 \times \mathbb{S}^1 \rightarrow \mathbb{R}^3$ one can easily associate a smooth orthonomal frame with respect to $g_f$, given by 
\begin{equation}\label{eq:Orthoframe} 
E_1(u,v):= \frac{1}{|\gamma'(u)|} \frac{\partial}{\partial u} , \quad E_2(u,v) := \frac{1}{2\pi \gamma^{(2)}(u)} \frac{\partial}{\partial v}. 
\end{equation}
This orthonormal frame also has some further interesting properties, for example that it diagonalizes the second fundamental form $A[f]$, and hence yields the principal curvatures of $f$. 
The first principal curvature $\kappa_1[f] = \langle A[f]_{(u,v)}(E_1,E_1), N_f \rangle_{\mathbb{R}^3}= -\kappa_{euc}[\gamma](u) $ coincides up to a sign with the Euclidean scalar curvature of the profile curve, while the second principal curvature $\kappa_2[f] = \langle A[f]_{(u,v)}(E_2,E_2), N_f \rangle_{\mathbb{R}^3} = \frac{(\gamma^{(1)})'(u)}{|\gamma'(u)|\gamma^{(2)}(u)}$ depends heavily on the distance of the profile curve to the revolution axis.   
This will be of great use when it comes to explicit estimates involving the second fundamental form.

\begin{lemma}[Revolution symmetry of the limit]\label{lem:LEMMANEW}
Suppose that $f: [0,\infty) \times ( \mathbb{S}^1 \times \mathbb{S}^1)  \rightarrow \mathbb{R}^3$ is a global evolution by Willmore flow, convergent to some Willmore immersion $f_\infty: \mathbb{S}^1 \times \mathbb{S}^1 \rightarrow \mathbb{R}^3$ in $C^k$ for all $k \in \mathbb{N}$. Suppose further that $f(0)$ is a torus of revolution and $(\gamma(t))_{t \in [0,\infty)} \subset C^\infty(\mathbb{S}^1,\mathbb{R}^2)$ is as in \eqref{eq:D17}.
Then $f_\infty$ is (up to reparametrization) a Willmore torus of revolution. A profile curve $\gamma_\infty$ of $f_\infty$ can be obtained by a $C^m(\mathbb{S}^1, \mathbb{R}^2)$-limit of appropriate reparametrizations of a sequence $(\gamma(t_j))_{j \in \mathbb{N}}$, $t_j \rightarrow \infty$. Here $m \in \mathbb{N}$ is arbitrary. In particular $\gamma_\infty \in C^\infty( \mathbb{S}^1, \mathbb{H}^2)$ is a hyperbolic elastica. 
\end{lemma}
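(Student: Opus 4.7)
The plan is to construct $\gamma_\infty$ as a subsequential $C^m$-limit of suitable reparametrizations of the profile curves $\gamma(t_j)$, and then identify $F_{\gamma_\infty}$ with $f_\infty$ up to reparametrization; the elastica property follows immediately from \eqref{eq:willelaGrad}.

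For fixed $t_j \to \infty$, I would reparametrize to constant Euclidean speed, setting $\tilde\gamma_j := \gamma(t_j) \circ \rho_j$ with $|\tilde\gamma_j'|_{\mathbb{R}^2} \equiv L_j := \mathcal{L}_{\mathbb{R}^2}(\gamma(t_j))$. Because $f(t_j) \to f_\infty$ in $C^k$ in the sense of Definition \ref{def:smocon}, the parametrization-invariant norms $\|(\nabla^\perp)^k A[f(t_j)]\|_{L^\infty(g_{f(t_j)})}$ are uniformly bounded in $j$ (they agree with the classically converging $\|(\nabla^\perp)^k A[f(t_j) \circ \phi_j]\|_{L^\infty(g_{f(t_j) \circ \phi_j})}$), and the induced areas $\mu_{g_{f(t_j)}}(\mathbb{S}^1 \times \mathbb{S}^1)$ converge to $\mu_{g_{f_\infty}}(\mathbb{S}^1 \times \mathbb{S}^1) < \infty$. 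Combined with the monotonicity $\mathcal{W}(f(t_j)) \leq \mathcal{W}(f_0)$, Lemma \ref{lem:spurbound} gives $L_j \leq C$. Using the formula $\kappa_2[F_\gamma](u,v) = (\gamma^{(1)})'(u)/(|\gamma'(u)|\gamma^{(2)}(u))$ evaluated at a minimum $u_0$ of $\gamma^{(2)}$ (so that $(\gamma^{(2)})'(u_0) = 0$ and $|\gamma'(u_0)| = |(\gamma^{(1)})'(u_0)|$), one obtains $|\kappa_2[F_{\gamma(t_j)}]|(u_0, v) = 1/\gamma(t_j)^{(2)}(u_0)$, and the uniform bound on $|A|$ forces $\min_{\mathbb{S}^1}\tilde\gamma_j^{(2)} \geq c_0 > 0$ uniformly in $j$.

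The remaining degeneracy to exclude is $L_j \to 0$. If this occurred along a subsequence, constancy of speed would force $\tilde\gamma_j$ to be uniformly Cauchy and therefore converge uniformly to a constant curve in $\overline{\mathbb{H}^2}$; by inspection of \eqref{eq:D1} the images $f(t_j)(\mathbb{S}^1 \times \mathbb{S}^1)$ would converge in Hausdorff distance to a single circle or point. But the $C^0$ part of the geometric convergence gives Hausdorff convergence of these images to the two-dimensional immersed surface $f_\infty(\mathbb{S}^1 \times \mathbb{S}^1)$, a contradiction. Hence also $L_j \geq c_1 > 0$ uniformly. With $L_j$ bounded above and below, and $\tilde\gamma_j^{(2)}$ bounded below, the intrinsic $C^k$-bounds on $A[f(t_j)]$ translate, via the orthonormal frame \eqref{eq:Orthoframe} and the identities $\kappa_1 = -\kappa_{\mathrm{euc}}[\gamma]$, $\kappa_2 = (\gamma^{(1)})'/(|\gamma'|\gamma^{(2)})$, into uniform $C^m(\mathbb{S}^1, \mathbb{R}^2)$-bounds on $\tilde\gamma_j$ in the Euclidean arc-length parametrization, for every $m$.

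Arzelà--Ascoli and a diagonal extraction then yield a subsequence with $\tilde\gamma_j \to \gamma_\infty$ in $C^m(\mathbb{S}^1, \mathbb{R}^2)$ for every $m$. The bounds established above ensure $\gamma_\infty' \neq 0$ and $\gamma_\infty^{(2)} > 0$, so $\gamma_\infty \in C^\infty(\mathbb{S}^1, \mathbb{H}^2)$ and $F_{\gamma_\infty}$ is a smooth immersion. Since $F_{\tilde\gamma_j} = f(t_j) \circ (\rho_j \times \mathrm{id})$ is simply a reparametrization of $f(t_j)$ and $F_{\tilde\gamma_j} \to F_{\gamma_\infty}$ in classical $C^m$, it also converges to $F_{\gamma_\infty}$ in the geometric sense; uniqueness of geometric $C^k$-limits up to reparametrization therefore identifies $f_\infty$ with $F_{\gamma_\infty}$ up to reparametrization. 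In particular $F_{\gamma_\infty}$ is a Willmore torus of revolution, and \eqref{eq:willelaGrad} gives that $\gamma_\infty$ is a hyperbolic elastica. The main obstacle is precisely the nondegeneration step: in the geometric convergence framework one has no direct control of the reparametrizations $\phi_j$, so the intrinsic bounds on $f(t_j)$ must be pulled back to Euclidean bounds on $\tilde\gamma_j$ by hand, using the rotational symmetry crucially through the explicit expression of $\kappa_2$ at extrema of $\gamma^{(2)}$.
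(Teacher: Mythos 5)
Your proof is correct and follows the paper's overall architecture: reparametrize the profile curves to constant Euclidean speed, bound $\mathcal{L}_{\mathbb{R}^2}(\gamma(t_j))$ via Lemma~\ref{lem:spurbound}, convert the uniform intrinsic bounds on $(\nabla^\perp)^k A[f(t_j)]$ (Remark~\ref{rem:convsecFF}) into classical $C^m$-bounds on $\tilde\gamma_j$ using the orthonormal frame and the identities for the principal curvatures, extract a subsequential $C^m$-limit $\gamma_\infty$ by Arzel\`a--Ascoli and a diagonal argument, and identify $f_\infty$ with $F_{\gamma_\infty}$ up to reparametrization via Proposition~\ref{prop:D5} and Corollary~\ref{cor:unique}. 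Where you genuinely diverge is the non-degeneracy step. The paper proves $\inf_{\mathbb{S}^1}\gamma_\infty^{(2)}>0$ by contradiction: if $\inf\gamma^{(2)}(t_j)\to 0$, then via \eqref{eq:hyplen}, \eqref{eq:hyplen2} and Lemma~\ref{lem:drehdiam} also $\sup\gamma^{(2)}(t_j)\to 0$ and hence $\mathrm{diam}(f(t_j))\to 0$, contradicting that $f_\infty$ is an immersion. Your argument is more direct: at a minimizer $u_0$ of $\gamma^{(2)}$ one has $(\gamma^{(2)})'(u_0)=0$, so $|\kappa_2[F_{\gamma(t_j)}](u_0,\cdot)|=1/\gamma(t_j)^{(2)}(u_0)$, and the uniform bound on $\|A[f(t_j)]\|_{L^\infty}$ immediately gives $\min_{\mathbb{S}^1}\gamma(t_j)^{(2)}\geq c_0>0$. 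This is cleaner and has a real advantage: it does not appeal to \eqref{eq:hyplen}, whose use requires a bound on $\LH(\gamma(t_j))$ that is not among the lemma's hypotheses; in your approach the boundedness of $\LH$ becomes a corollary of the two bounds $\gamma^{(2)}\geq c_0$ and $\mathcal{L}_{\mathbb{R}^2}(\gamma(t_j))\leq C$ rather than an implicit input. You also make explicit the step $L_j\geq c_1>0$ (needed so that $\gamma_\infty$ is actually immersed, a hypothesis of Proposition~\ref{prop:D5}), which the paper leaves tacit; your Hausdorff argument is fine, though one can reach the same conclusion even more quickly from $\mu_{g_{f(t_j)}}(\mathbb{S}^1\times\mathbb{S}^1)=2\pi L_j\int_0^1\gamma(t_j)^{(2)}\,\mathrm{d}u\leq 2\pi L_j\sup_{\mathbb{S}^1}\gamma(t_j)^{(2)}\to 0$ if $L_j\to 0$, contradicting the convergence of areas to $\mu_{g_{f_\infty}}(\mathbb{S}^1\times\mathbb{S}^1)>0$.
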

\begin{proof}
Let $(t_j)_{j \in \mathbb{N}} \subset [0, \infty)$ be an arbitrary sequence such that $t_j \rightarrow \infty$. \\
\textbf{Step 1. Bounds for the profile curves.}
After reparametrization we may assume without loss of generality that {$(\gamma(t_j))_{j \in \N}$} is parametrized with constant Euclidean speed. 

Now fix $m \in \mathbb{N}$ arbitrary. To bound the $W^{m,2}$-norm of {$(\gamma(t_j))_{j \in \N}$}  
we first bound $||\gamma(t_j)||_{L^\infty(\mathbb{S}^1, \mathbb{R}^2)}$. To this end we observe by \eqref{eq:D17} that 
\begin{equation*}
||\gamma(t_j) ||_{L^\infty(\mathbb{S}^1, \mathbb{R}^2) } = ||f(t_j)||_{L^\infty(\mathbb{S}^1 \times \mathbb{S}^1, \mathbb{R}^3)}. 
\end{equation*}
Now $||f(t_j)||_{L^\infty}$ is uniformly bounded because it converges in $C^k$ for all $k \in \mathbb{N}$ to $f_\infty$, whose image is a compact subset of $\mathbb{R}^3$. Note that we have used here that the $L^\infty$-norm is not affected by reparametrization. Next we bound $\mathcal{L}_{\mathbb{R}^2}(\gamma(t_j))= ||\partial_u\gamma(t_j)||_{L^\infty}$. 
{We use Lemma \ref{lem:spurbound} and Lemma \ref{lem:simon}  to compute
\begin{equation*}
   \mathcal{L}_{\mathbb{R}^2}(\gamma(t_j)) \leq  \mathcal{W}(f(t_j))^\frac{1}{2} \mu_{g_{f(t_j)}}(\mathbb{S}^1 \times \mathbb{S}^1)^\frac{1}{2} \leq  \mathrm{diam}(f(t_j)(\mathbb{S}^1 \times \mathbb{S}^1)) \mathcal{W}(f(t_j)).
\end{equation*}
Notice that 
$\mathrm{diam}(f(t_j)( \mathbb{S}^1 \times \mathbb{S}^1) \leq 2 ||f(t_j)||_{L^\infty(\mathbb{S}^1 \times \mathbb{S}^1, \mathbb{R}^3)}$, which is uniformly bounded in $j$.
By Lemma \ref{lem:appsemi} and the fact that $\mathbb{S}^1 \times \mathbb{S}^1$ is compact we infer that $\mathcal{W}(f(t_j)) \rightarrow \mathcal{W}(f_\infty)$ and hence $(\mathcal{W}(f(t_j)))_{j \in \mathbb{N}}$ is also uniformly bounded. We conclude the boundedness of $(\mathcal{L}_{\mathbb{R}^2}(\gamma(t_j)))_{j \in \mathbb{N}}$. 
}

Further, we bound second derivatives uniformly in $j$. To this end we introduce the following notation. For a torus of revolution $f: \mathbb{S}^1 \times \mathbb{S}^1 \rightarrow \mathbb{R}^3$ with profile curve $\gamma \in C^\infty(\mathbb{S}^1, \mathbb{H}^2)$  we introduce the vector field on $\mathbb{S}^1 \times \mathbb{S}^1$ 
\begin{equation*}
\partial_s \Big\vert_{(u,v)} = \frac{1}{|\partial_u \gamma(u)|_{euc}}  \frac{\partial}{\partial u} \Big\vert_{(u,v)}. 
\end{equation*}
One easily checks that $g_f( \partial_s , \partial_s ) = 1$ and 
 \begin{equation*}
 \begin{pmatrix}
 -\vec{\kappa}_{euc}[\gamma] (u) \\ 0 
 \end{pmatrix} = A_{(u,0)} [f] (\partial_s , \partial_s) \quad \forall u \in \mathbb{S}^1. 
 \end{equation*}
 By Remark \ref{rem:convsecFF} $||A[f(t_j)]||_{L^\infty}$ is uniformly bounded  in $j$. This is why 
 \begin{equation*}
 ||\vec{\kappa}_{euc}[\gamma(t_j)]||_{L^\infty} \leq ||A[f(t_j)]||_{L^\infty} ||g_{f(t_j)}(\partial_s , \partial_s )||_{L^\infty}^2,
 \end{equation*}
 is also uniformly bounded in $j$. We next control all higher order arclength derivatives of the curvature of $\gamma(t_j)$ uniformly in $j$. Easy tensor calculus and $\partial_s = \frac{1}{|\partial_u \gamma(t_j)(u)|} \partial_u$  implies with \eqref{eq:DXundnablaX}
 {\begin{align}\nonumber
 \frac{1}{|\partial_u \gamma(t_j)(u)| } &\begin{pmatrix}
 -\partial_u \vec{\kappa}_{euc}[\gamma(t_j)](u) \\ 0 
 \end{pmatrix} \\ \label{eq:pandoro}
& =  - D_{\partial_s } \begin{pmatrix}
  \vec{\kappa}_{euc}[\gamma(t_j)] (u) \\ 0 
\end{pmatrix}   = D_{\partial_s } A[f(t_j)]( \partial_s , \partial_s ) 
 \\ \nonumber& =   \nabla_{\partial_s}^\perp A(\partial_s , \partial_s ) - \sum_{i=1}^2 \langle A(\partial_s, \partial_s), A(\partial_s, E_i) \rangle_{\R^3} D_{E_i} [f(t_j)],
 \end{align}}
 where $\{ E_1, E_2 \}$ is an arbitrary orthonormal basis of $T_{(u,0)} (\mathbb{S}^1 \times \mathbb{S}^1)$ with respect to $g_{f(t_j)}$ and we have used the (slightly ambiguous) shorthand notation $A$ for $A[f(t_j)]$. Choosing $E_1 = \partial_s$ and $E_2(u,v) = \frac{1}{\gamma(t_j)^{(2)}(u)} \frac{\partial}{\partial v} \Big\vert_{(u,v)}$ we obtain with \eqref{eq:tnsderiv}
\begin{align}\nonumber
 \frac{1}{|\partial_u \gamma(t_j)(u)|} & \begin{pmatrix}
 - \partial_u \vec{\kappa}_{euc}[\gamma(t_j)](u) \\ 0 
 \end{pmatrix} \\ \nonumber
& = \nabla_{\partial_s}^\perp A(\partial_s , \partial_s ) - |A(\partial_s , \partial_s)|^2 D_{\partial_s} f(t_j)
 \\ \nonumber
& =  \nabla^\perp A( \partial_s, \partial_s , \partial_s ) + A( \nabla_{\partial_s} \partial_s, \partial_s ) + A( \partial_s , \nabla_{\partial_s} \partial_s ) - |A(\partial_s, \partial_s )|^2 D_{\partial_s} f(t_j)
 \\ & =\nabla^\perp A( \partial_s, \partial_s, \partial_s ) - |A(\partial_s, \partial_s)|^2 D_{\partial_s} f(t_j)  ,\label{eq:NEW423}
 \end{align}
 where we have used in the last step that $\nabla_{\partial_s} \partial_s = 0$, which is an immediate consequence of the formula $df_p(\nabla_XY) = \nabla^{\mathbb{R}^3}_{df_p(X)} (df_{(\cdot)} (Y)) $ applied with $f= f(t_j)$. 
 Note that 
 \begin{equation*}
 D_{\partial_s} f (u,0) = \frac{1}{|\gamma'(u)|} D_{\partial_u } f = \frac{1}{|\gamma'(u)|} (\partial_u f)
\end{equation*} 
has Euclidean norm equal to  $1$. We obtain, since $g_{f(t_j)}(\partial_s, \partial_s ) \leq 1$, that 
\begin{equation*}
\frac{|\partial_u \vec{\kappa}_{euc} [\gamma(t_j)](u) | }{|\partial_u \gamma(t_j)(u) |} \leq ||\nabla^\perp A[f(t_j)]||_{L^\infty} + ||A||_{L^\infty}^2 .
\end{equation*}
If we introduce the differential operator $\partial^{arc} := \frac{1}{|\partial_u \gamma(t_j) |} \partial_u$ on $\mathbb{S}^1$  we have obtained 
\begin{equation}
||\partial^{arc} \vec{\kappa}_{euc}[\gamma(t_j)]||_{L^\infty} \leq ||\nabla^\perp A[f(t_j)]||_{L^\infty} + ||A||_{L^\infty}^2 .
\end{equation}
Next we obtain by differentiating \eqref{eq:NEW423} and using the shorthand notation $f= f(t_j)$ as well as $\nabla_{\partial_s} \partial_s = 0 $ again proceeding as in \eqref{eq:pandoro} and \eqref{eq:NEW423}
\begin{align*}
 \begin{pmatrix}
- (\partial^{arc})^2 \vec{\kappa}_{euc}[\gamma(t_j)](u) \\ 0 
 \end{pmatrix} & = D_{\partial_s} [ \nabla^\perp A (\partial_s , \partial_s ,\partial_s ) -|A(\partial_s, \partial_s)|^2 D_{\partial_s} f] \\
 & = D_{\partial_s} \nabla^\perp A( \partial_s , \partial_s , \partial_s  )
 \\ & \quad \quad  \quad \quad \quad -\partial_s (|A(\partial_s, \partial_s)|^2) D_{\partial_s} f - |A(\partial_s, \partial_s)|^2 D_{\partial_s} D_{\partial_s} f \\
 & = \nabla^\perp_{\partial_s} \nabla^\perp A( \partial_s , \partial_s , \partial_s  ) - ( \nabla^\perp A( \partial_s, \partial_s , \partial_s) , A ( \partial_s , \partial_s ) ) D_{\partial_s} f 
 \\ & \quad \quad  \quad \quad \quad -\partial_s (|A(\partial_s, \partial_s)|^2) D_{\partial_s} f - |A(\partial_s, \partial_s)|^2 D_{\partial_s} D_{\partial_s} f 
 \\ & = (\nabla^\perp)^2 A( \partial_s , \partial_s , \partial_s , \partial_s ) - ( \nabla^\perp A( \partial_s, \partial_s , \partial_s) , A ( \partial_s , \partial_s ) ) D_{\partial_s} f 
 \\ & \quad \quad  \quad \quad \quad -\partial_s (|A(\partial_s, \partial_s)|^2) D_{\partial_s} f - |A(\partial_s, \partial_s)|^2 D_{\partial_s} D_{\partial_s} f .
\end{align*}
Note that since $A$ is normal and $\nabla_{\partial_s} \partial_s = 0 $ we have
\begin{align*}
\partial_s |A(\partial_s, \partial_s)|^2 & = 2( D_{\partial_s} A(\partial_s, \partial_s) , A(\partial_s, \partial_s) ) \\
& = 2 ( \nabla_{\partial_s }^\perp A ( \partial_s , \partial_s ) , A ( \partial_s , \partial_s) ) 
= 2 ( \nabla^\perp A( \partial_s , \partial_s , \partial_s ) , A( \partial_s, \partial_s)) .
\end{align*}
Moreover we have
\begin{equation*}
D_{\partial_s} D_{\partial_s} f  = (D_{\partial_s} D_{\partial_s} f )^T + A(\partial_s , \partial_s ). 
\end{equation*}
An easy computation\footnote{Recall that the normal to the curve $\gamma$ coincides up to a sign with the normal to $f(\Sigma)$.} now reveals that $(D_{\partial_s} D_{\partial_s} f )^{T} = 0$ and we obtain 
\begin{align*}
& \begin{pmatrix}
 -(\partial^{arc})^2 \vec{\kappa}_{euc}[\gamma(t_j)](u) \\ 0 
 \end{pmatrix}   =   (\nabla^\perp)^2 A( \partial_s, \partial_s, \partial_s , \partial_s  ) \\
& \qquad \qquad - 3 ( \nabla^\perp A( \partial_s , \partial_s , \partial_s) , A(\partial_s, \partial_s) ) D_{\partial_s} f  - |A(\partial_s, \partial_s)|^2 A(\partial_s , \partial_s) .
\end{align*}
For short we write
\begin{align*}
\begin{pmatrix}
 -(\partial^{arc})^2 \vec{\kappa}_{euc}[\gamma(t_j)](u) \\ 0 
 \end{pmatrix} =  (\nabla^\perp)^2A + \nabla^\perp A * A * D_{\partial_s} f + A* A* A 
\end{align*} 

which implies 
\begin{equation*}
||(\partial^{arc})^2 \vec{\kappa}_{euc} ||_{L^\infty} \leq C [||(\nabla^\perp)^2 A||_{L^\infty} + ||\nabla A||_{L^\infty}\; ||A||_{L^\infty} + ||A||_{L^\infty}^3] .
\end{equation*}
Inductively one shows that  for all $m \in \mathbb{N}$
\begin{align}\label{eq:partialarc} 
& \begin{pmatrix}
 -(\partial^{arc})^m \vec{\kappa}_{euc}[\gamma(t_j)](u) \\  0 
 \end{pmatrix} \\  \nonumber
= &  (\nabla^\perp)^m A +   P_1(A, \nabla^\perp A,...,(\nabla^\perp)^{m-1} A )  * D_{\partial_s} f + P_2( A, \nabla^\perp A, ..., (\nabla^\perp)^{m-2} A )   
\end{align} 
where $P_1$ is a real-valued polynomial of degree $\leq 2$ and $P_2$ is an $\mathbb{R}^3$-valued polynomial of degree $\leq 3$.
%Next let $\gamma_{t_j}$ be a reparametrization of $\gamma(t_j)$ with constant Euclidean velocity, i.e. $\partial_u \gamma_{t_j} = \mathcal{L}_{\mathbb{R}^2}( \gamma(t_j))$. 

 We conclude from \eqref{eq:partialarc} that for all $m \in \mathbb{N}$ 
\begin{equation}
||\partial_u^{m} \gamma(t_j)||_{L^\infty} \leq C(m) \mathcal{L}_{\mathbb{R}^2} (\gamma(t_j) )^m \left[ || (\nabla^\perp)^m A ||_{L^\infty} + \sum_{i = 0}^{m-1} ||(\nabla^\perp)^i A||_{L^\infty}^3 \right].
\end{equation}
Hence for each fixed $m \in \mathbb{N}$ we can bound $(\gamma(t_j))_{j \in \mathbb{N}}$ uniformly in $W^{m+1,\infty} ( \mathbb{S}^1, \mathbb{R}^2)$ and hence obtain a convergent subsequence in $C^{m} ( \mathbb{S}^1, \mathbb{R}^2)$ for any $m$. \\
\textbf{Step 2. The limit curve is a profile curve.}
By a diagonal argument we can also obtain a sequence $t_{j} \rightarrow \infty$ (no relabeling) and $\gamma_\infty \in C^\infty( \mathbb{S}^1, \mathbb{R}^2)$ such that $\gamma(t_{j})$ converges to $\gamma_\infty$ in $C^m(\mathbb{S}^1, \mathbb{R}^2)$ for all $m \in \mathbb{N}$ (classical convergence). Note also that $\gamma_\infty$ is parametrized with constant Euclidean speed and $\gamma_\infty^{(2)} \geq 0 $ on $\mathbb{S}^1$.  We next show that  $\gamma_\infty \in C^\infty( \mathbb{S}^1, \mathbb{H}^2)$, i.e. $\inf_{\mathbb{S}^1} \gamma_\infty^{(2)} > 0 $.
Indeed, assume the opposite, i.e. there exists $u_0 \in \mathbb{S}^1$ such that $\gamma_\infty^{(2)}(u_0) = 0$. Notice that this and $\gamma_\infty^{(2)}\geq 0$ also yields $(\gamma_\infty^{(2)})'(u_0) = 0$. As a consequence, we infer that there exists $C > 0$ and $\delta_0 > 0$ such that $0 \leq \gamma_\infty^{(2)}(u) \leq C|u-u_0|^2$ for all $u \in (u_0- \delta_0, u_0 + \delta_0)$.
The fact that $\gamma_\infty$ is parametrized with constant Euclidean velocity also yields that $|(\gamma_\infty^{(1)})'(u_0)| = \mathcal{L}_{\mathbb{R}^2}(\gamma_\infty) > 0$. With this information we now estimate the following quantity for arbitrary $\delta \in (0,\delta_0)$
\begin{equation*}
    Q := \int_0^1 \frac{|(\gamma_\infty^{(1)})'(u)|^2}{\gamma_\infty^{(2)}(u)} \; \mathrm{d}u \geq \int_{u_0- \delta}^{u_0 + \delta} \frac{|(\gamma_\infty^{(1)})'(u)|^2}{C|u-u_0|^2} \; \mathrm{d}u \geq \frac{1}{C\delta^2} \int_{u_0-\delta}^{u_0 + \delta} |(\gamma_\infty^{(1)})'(u)|^2 \; \mathrm{d}u.
\end{equation*}
Taking the limit $\delta \rightarrow 0+$ yields infinity on the right hand side, since $$\frac{1}{2\delta} \int_{u_0-\delta}^{u_0+\delta} |(\gamma_\infty^{(1)})'(u)|^2 \; \mathrm{d}u \rightarrow |(\gamma_\infty^{(1)})'(u_0)|^2 = \mathcal{L}_{\mathbb{R}^2}(\gamma_\infty)^2  > 0.$$ We infer that $Q = \infty.$ On the other hand, Fatou's lemma and the explicit formula for the second principal curvature $\kappa_2$ of a surface imply that
\begin{align*}
    Q & \leq \liminf_{j \rightarrow \infty} \int_0^1 \frac{|(\gamma(t_{j})^{(1)})'|^2}{\gamma(t_{j})^{(2)}} \; \mathrm{d}u
    = \liminf_{j \rightarrow \infty} \frac{\mathcal{L}_{\mathbb{R}^2}(\gamma(t_{j}))}{2\pi} \int_0^{1}\int_0^1 \frac{2\pi|(\gamma(t_{j})^{(1)})'|^2}{\gamma(t_{j})^{(2)} \mathcal{L}_{\mathbb{R}^2}(\gamma(t_{j})) } \; \mathrm{d}u \; \mathrm{d}v
    \\ & \leq \liminf_{j \rightarrow \infty} \frac{\mathcal{L}_{\mathbb{R}^2}(\gamma(t_{j}))}{2\pi} \int_0^1\int_0^1 \kappa_2[F_{\gamma(t_{j})}]^2 \{2\pi \gamma(t_{j})^{(2)} \mathcal{L}_{\mathbb{R}^2}(\gamma(t_{j})) \} \; \mathrm{d}u \; \mathrm{d}v 
    \\ & 
    \leq \liminf_{j \rightarrow \infty} \frac{\mathcal{L}_{\mathbb{R}^2}(\gamma(t_{j}))}{2\pi} \int_{\mathbb{S}^1 \times \mathbb{S}^1} |A[F_{\gamma(t_{j})}]|^2 \; \mathrm{d}\mu_{F_{\gamma(t_{j})}}
    \\ & = \liminf_{j \rightarrow \infty} \frac{\mathcal{L}_{\mathbb{R}^2}(\gamma(t_{j}))}{2\pi} \int_{\mathbb{S}^1 \times \mathbb{S}^1} |A[f(t_{j})]|^2 \; \mathrm{d}\mu_{f(t_{j})} = \liminf_{j \rightarrow \infty} \frac{2\mathcal{L}_{\mathbb{R}^2}(\gamma(t_{j}))}{\pi} \mathcal{W}(f(t_{j})),
\end{align*}
where the last identity is due to the Gauß-Bonnet theorem, cf. \eqref{eq:secwillmore}. Recall from estimates in Step 1  that  $\mathcal{L}_{\mathbb{R}^2}(\gamma(t_{j}))$ is uniformly bounded. As a consequence of this one infers that $Q < \infty$, a contradiction. 
%Then $\inf_{\mathbb{S}^1}  \gamma^{(2)}(t_j ) %= \inf_{\mathbb{S}^1} \gamma^{(2)}(t_j)
%\rightarrow 0$. By \eqref{eq:hyplen} we also find that $\sup_{\mathbb{S}^1} \gamma^{(2)}(t_j) \rightarrow 0$. This implies by Lemma \ref{lem:drehdiam} {and \eqref{eq:hyplen2}} that 
%\begin{equation*}
%\mathrm{diam} (f(t_j)) \leq \frac{1}{2} \mathcal{L}_{\mathbb{R}^2} ( \gamma(t_j) ) + 2 \sup_{\mathbb{S}^1} \gamma^{(2)}(t_j)\leq  \left( \frac{1}{2} \LH( \gamma(t_j) ) +  2 \right) \sup_{\mathbb{S}^1} \gamma^{(2)}(t_j) \rightarrow 0. 
%\end{equation*}
%By lower semicontinuity one would have $\mathrm{diam}(f_\infty) = 0$, a contradiction to the fact that $f_\infty: \mathbb{S}^1 \times \mathbb{S}^1 \rightarrow \mathbb{R}^3$ is an immersion. 
We obtain therefore that $\gamma_\infty \in C^\infty( \mathbb{S}^1, \mathbb{H}^2)$. \\
\textbf{Step 3. Convergence of the associated surfaces}.
By the following proposition (Proposition \ref{prop:D5}), the tori of revolution $F_{\gamma(t_j)}$ converge to $F_{\gamma_\infty}$ classically in $C^k$ for all $k$. Since $F_{\gamma(t_j)}$ is a reparametrization of %$F_{\gamma(t_j)}=
$f(t_j)$ for all $j \in \mathbb{N}$, also $f(t_j)$ converges to $F_{\gamma_\infty}$ in $C^k$ for all $k$. By assumption however, $f(t_j)$ also converges to $f_\infty$ in $C^k$ for all $k$ {(in general not anymore classically, but in the sense of Definition \ref{def:Clconv})}. Applying Corollary \ref{cor:unique} we infer that $f_\infty$ coincides up to reparametrization with $F_{\gamma_\infty}$. In particular $f_\infty$ is (up to reparametrization) a torus of revolution. Since $f_\infty$ is also a Willmore immersion it must (up to reparametrization) be a Willmore torus of revolution.  By \eqref{eq:willelaGrad} we infer also that $\gamma_\infty$ is a hyperbolic elastica. 
\end{proof}

The following proposition is needed to complete the proof of the previous lemma.

%We will use this again to obtain the following proposition.% that is needed to finish the proof of the previous one. 

\begin{prop}\label{prop:D5}
Let $m\geq 1$ and suppose that $(\gamma_j) _{j \in \mathbb{N}} \subset C^\infty(\mathbb{S}^1, \mathbb{H}^2)$ converges in $C^m(\mathbb{S}^1, \mathbb{R}^2)$ (classically) to some immersed curve $\gamma \in C^m(\mathbb{S}^1, \mathbb{H}^2)$. Then $F_{\gamma_j}$ converges classically to $F_\gamma$  in  $C^m(\mathbb{S}^1 \times \mathbb{S}^1)$. 
\end{prop}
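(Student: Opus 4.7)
\medskip
\noindent\textbf{Proof plan.}
The central observation is that the map $\gamma \mapsto F_\gamma$ is affine (in fact, linear) in the components $\gamma^{(1)}, \gamma^{(2)}$, with coefficients that are smooth bounded functions of the second variable $v$. Explicitly, from \eqref{eq:D1} we may write
\begin{equation*}
F_\gamma(u,v) = \gamma^{(1)}(u) \, e_1 + \gamma^{(2)}(u) \cos(2\pi v) \, e_2 + \gamma^{(2)}(u) \sin(2\pi v) \, e_3,
\end{equation*}
so that in particular no division by $\gamma^{(2)}$ appears. Consequently, the convergence statement is really just a statement about uniform convergence of products; the hypothesis $\gamma \in C^m(\mathbb{S}^1, \mathbb{H}^2)$ (as opposed to just $C^m(\mathbb{S}^1, \mathbb{R}^2)$) is actually unnecessary for the conclusion itself, but is kept for consistency with the context.

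First I would compute mixed partial derivatives of $F_\gamma$ up to order $m$. For any multi-index $(\alpha,\beta) \in \N_0^2$ with $\alpha+\beta \leq m$, the derivative $\partial_u^\alpha \partial_v^\beta F_\gamma(u,v)$ is a finite sum of terms of the form
\begin{equation*}
\partial_u^\alpha \gamma^{(i)}(u) \cdot \varphi_{\beta}(v),
\end{equation*}
where $i \in \{1,2\}$ and $\varphi_\beta$ is either identically zero, a constant, or a scalar multiple of $\sin(2\pi v)$ or $\cos(2\pi v)$. In particular the $v$-factors are uniformly bounded on $\mathbb{S}^1$ by a constant depending only on $m$.

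Next I would apply the $C^m$-convergence of $\gamma_j$ to $\gamma$, which gives $\|\partial_u^\alpha (\gamma_j^{(i)} - \gamma^{(i)})\|_{L^\infty(\mathbb{S}^1)} \to 0$ for all $\alpha \leq m$ and $i \in \{1,2\}$. Combined with the uniform bound on the $v$-factors, this yields
\begin{equation*}
\|\partial_u^\alpha \partial_v^\beta (F_{\gamma_j} - F_\gamma)\|_{L^\infty(\mathbb{S}^1 \times \mathbb{S}^1)} \to 0 \quad \text{for all } \alpha+\beta \leq m,
\end{equation*}
which is exactly the statement that $F_{\gamma_j} \to F_\gamma$ classically in $C^m(\mathbb{S}^1 \times \mathbb{S}^1, \mathbb{R}^3)$.

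There is really no main obstacle here: the linearity of $F_\gamma$ in $\gamma$ together with smoothness of the trigonometric factors reduces everything to a bookkeeping exercise on the chain/product rule. The only mild subtlety is that one should note the absence of any denominator involving $\gamma^{(2)}$, which is what makes the argument work without invoking the lower bound on $\gamma^{(2)}$ coming from $\gamma \in C^m(\mathbb{S}^1, \mathbb{H}^2)$.
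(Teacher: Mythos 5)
There is a genuine gap, and it is precisely the step you dismissed as "unnecessary." The notion of classical $C^m$-convergence used in this paper (Definition~\ref{def:classi}, together with the $L^\infty$ norm in Appendix~\ref{app:tensor}) is \emph{not} uniform convergence of the flat coordinate partials $\partial_u^\alpha\partial_v^\beta$; it is uniform convergence of the tensorial derivatives $D^k w_j$ measured pointwise against orthonormal frames of the \emph{induced} metric $\widehat{g} = F_\gamma^* g_{\mathbb{R}^3}$. For the torus of revolution the natural orthonormal frame is $E_1 = \frac{1}{|\gamma'(u)|}\partial_u$ and $E_2 = \frac{1}{2\pi\gamma^{(2)}(u)}\partial_v$, so that, for instance,
\begin{equation*}
|D w_j(E_2)| = \frac{1}{2\pi\gamma^{(2)}(u)} \Big\vert \frac{\partial w_j}{\partial v}\Big\vert,
\qquad
|D w_j(E_1)| = \frac{1}{|\gamma'(u)|} \Big\vert \frac{\partial w_j}{\partial u}\Big\vert,
\end{equation*}
with $w_j := F_{\gamma_j} - F_\gamma$. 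These divisions by $\gamma^{(2)}$ and $|\gamma'|$ are exactly where $\inf_{\mathbb{S}^1}\gamma^{(2)} > 0$ (from $\gamma \in C^m(\mathbb{S}^1,\mathbb{H}^2)$ and compactness) and $\inf_{\mathbb{S}^1}|\gamma'| > 0$ (from $\gamma$ being an immersion) enter; without the $\mathbb{H}^2$ hypothesis the frame $E_2$ degenerates and the conclusion fails in the norm actually used in the paper. Moreover, Definition~\ref{def:classi} presupposes that the limit $F_\gamma$ is an immersion, which already forces $\gamma^{(2)}$ to be nowhere zero, so the hypothesis cannot be dropped even at the level of the statement's well-posedness.

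Your main computation --- that $\partial_u^\alpha\partial_v^\beta(F_{\gamma_j}-F_\gamma) \to 0$ uniformly, by linearity of $\gamma \mapsto F_\gamma$ and boundedness of the trigonometric factors --- is a correct first step and is implicitly the content of the paper's formula for $w_j$. What is missing is the conversion to the metric-dependent norms via the orthonormal frame (and, inductively for $m \geq 2$, control of the covariant derivatives of $E_1,E_2$, which brings in boundedness of the Christoffel symbols of $\widehat{g}$). Once you add that normalization, your argument coincides with the paper's; as written, it proves a weaker statement (flat-coordinate $C^m$ convergence) and incorrectly asserts that one of the hypotheses is superfluous.
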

\begin{proof}
We will use without further notice the characterization of $C^m$-convergence in Proposition \ref{prop:propsmoconalt}. We show the claim only for $m = 1$, the other cases follow by induction. We define $w_j : \mathbb{S}^1 \times \mathbb{S}^1 \rightarrow \mathbb{R}^3$ via
\begin{equation}\label{eq:D21}
w_j(u,v) :=  F_{\gamma_j}(u,v) - F_\gamma(u,v) = \begin{pmatrix}
\gamma_j^{(1)}(u)- \gamma^{(1)}(u) \\ (\gamma_j^{(2)}(u) - \gamma^{(2)}(u)) \cos(2\pi v) \\  (\gamma_j^{(2)}(u) - \gamma^{(2)}(u)) \sin(2\pi v)
\end{pmatrix} 
\end{equation}
and we show that $||w_j||_{L^\infty(\mathbb{S}^1 \times \mathbb{S}^1, \widehat{g}) },||D w_j||_{L^\infty(\mathbb{S}^1 \times \mathbb{S}^1, \widehat{g})} \rightarrow  0$ as $j \rightarrow \infty$. Here $\widehat{g}= F_\gamma^*g_{\mathbb{R}^3}$ is the metric induced by $F_\gamma$.  The fact that $||w_j||_{L^\infty} \rightarrow 0 $ follows directly from
\eqref{eq:D21} %\eqref{eq:Orthoframe}
by the estimate
\begin{equation*}
||w_j||_{L^\infty} \leq ||\gamma_j- \gamma||_{L^\infty} \rightarrow 0. 
\end{equation*}
Let $E_1,E_2$ be the orthonormal frame as in \eqref{eq:Orthoframe}. Then
\begin{equation}\label{eq:D24}
||Dw_j||_{L^\infty} = \sup_{\mathbb{S}^1 \times \mathbb{S}^1}  \sup_{g(X,X) \leq 1}  |D w_j(X)| = \sup_{ \mathbb{S}^1 \times \mathbb{S}^1} \sup_{ \theta_1^2 + \theta_2^2 \leq 1 }  |Dw_j ( \theta_1 E_1 + \theta_2 E_2 )| ,
\end{equation}
and
%Using \eqref{eq:Orthoframe} we compute
\begin{align*}
|D w_j ( E_1 )|& = \frac{1}{|\gamma'(u)|} \left\vert \frac{\partial w_j}{\partial u} \right\vert \leq \frac{1}{|\gamma'(u)|} ||\gamma_j '- \gamma' ||_\infty  \leq \frac{1}{\inf_{\mathbb{S}^1} |\gamma'|} ||\gamma_j' - \gamma'||_\infty ,
\\
|Dw_j ( E_2 )| & = \frac{1}{2 \pi \gamma^{(2)}(u)} \left\vert \frac{\partial w_j}{\partial v} \right\vert  \leq \frac{1}{ \inf_{\mathbb{S}^1} \gamma^{(2)} } ||\gamma_j - \gamma||_{L^\infty} .
\end{align*}
Note that $\inf_{\mathbb{S}^1} |\gamma'| > 0 $ as $\gamma$ is immersed and $\inf_{\mathbb{S}^1} \gamma^{(2)} > 0$ since $\gamma \in C^\infty(\mathbb{S}^1,  \mathbb{H}^2)$ and $\mathbb{S}^1$ is compact.  
The claim follows from \eqref{eq:D24} since $\gamma_j \rightarrow \gamma$ in $C^1$.
\end{proof}

%%%%%%%%%%%%%%%%%%%%%%%%%%%%%%%%%%%%%%%%%%%%%%%%%%%%%%
\subsection{Rotational symmetry and concentration}\label{sec:rotsymconc}
%%%%%%%%%%%%%%%%%%%%%%%%%%%%%%%%%%%%%%%%%%%%%%%%%%%%%%
In this section we will prove a lemma that controls the distance of the concentration points to the axis of revolution.  Here the revolution symmetry will play an important role. The following lemma is the main observation that rules out Case (2) in Theorem \ref{thm:THM22NEW}. 
%\changedAn{In the proof we use \emph{cylindrical coordinates} defined as follows. We write $x=(x^{(1)},x^{(2)},x^{(3)})\in \R^3$ in cylindrical coordinates as $(h,\rho \sigma)$ with $h=x^{(1)} \in \R$, $\rho=\sqrt{(x^{(2)})^2+(x^{(3)})^2}\geq 0$ and $\sigma =(x^{(2)},x^{(3)})/\rho\in \mathbb{S}^1$ .}
 
\begin{lemma}[Distance control for concentration points]\label{lem:evilcirclelemma}
Let $f: [0,T) \times (\mathbb{S}^1 \times \mathbb{S}^1) \rightarrow \mathbb{R}^3$ be a maximal evolution by Willmore flow such that $f(0)$ is a torus of revolution. 
Suppose that $t_j \rightarrow T$. Let $(r_j)_{j \in \mathbb{N}}$ be as in Theorem \ref{thm:THM22NEW} and let $x_j \in \mathbb{R}^3$ be such that
\begin{equation}\label{eq:concentrationxj}
\int_{f(t_j)^{-1} ( \overline{B_{r_j}(x_j)} ) } |A[f(t_j)]|^2  \; \mathrm{d} \mu_{g_{f(t_j)}} \geq \varepsilon_0.
\end{equation}
Let $h_j \in \mathbb{R}, \; \rho_j > 0$  and $\sigma_j \in \mathbb{S}^1$ such that $\frac{x_j}{r_j}$ {is expressed in cylindrical }%can be expressed in cylinder 
coordinates by \footnote{i.e. $h_j=x_j^{(1)}/r_j \in \mathbb{R}$, $\rho_j=\sqrt{(x_j^{(2)})^2+(x_j^{(3)})^2}/r_j\geq 0$ and $\sigma_j =(x_j^{(2)},x_j^{(3)})/(\rho_j r_j)\in \mathbb{S}^1$. We consider a cylinder with axis in direction $(1,0,0)$.}  $\frac{x_j}{r_j} = (h_j, \rho_j \sigma_j)$. Then $(\rho_j)_{j \in \mathbb{N}}$ is bounded. 
\end{lemma}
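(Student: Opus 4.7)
The plan is to combine the rotational symmetry preserved by the flow (Lemma \ref{lem:sympres}) with the scale invariance of $\int|A|^2\,d\mu$ and the monotonicity of the Willmore energy along the flow. Since $f(t_j)$ is a torus of revolution around the $e_1$-axis, so is the rescaled map $\tilde f_j := f(t_j)/r_j$. Using $|A|^2 = |\vec H|^2 - 2K$ and Gauss-Bonnet on the torus ($\chi = 0$), together with scale invariance $\int|A[\tilde f_j]|^2\,d\mu_{g_{\tilde f_j}} = \int|A[f(t_j)]|^2\,d\mu_{g_{f(t_j)}}$, I obtain the uniform bound
\begin{equation*}
\int_{\mathbb{S}^1\times\mathbb{S}^1} |A[\tilde f_j]|^2\,d\mu_{g_{\tilde f_j}} \;=\; 4\,\mathcal{W}(f(t_j)) \;\leq\; 4\,\mathcal{W}(f_0),
\end{equation*}
where the last step uses that $\mathcal{W}(f(t))$ is non-increasing along the Willmore flow.

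Next I rescale the concentration condition \eqref{eq:concentrationxj} and use symmetry. The preimage identity $f(t_j)^{-1}(\overline{B_{r_j}(x_j)}) = \tilde f_j^{-1}(\overline{B_1(x_j/r_j)})$ together with the scale invariance of $|A|^2\,d\mu$ gives
\begin{equation*}
\int_{\tilde f_j^{-1}(\overline{B_1(x_j/r_j)})} |A[\tilde f_j]|^2\,d\mu_{g_{\tilde f_j}} \;\geq\; \varepsilon_0.
\end{equation*}
Now, for any rotation $R_\theta$ around the $e_1$-axis, the map $R_\theta \circ \tilde f_j$ parametrises the same revolution surface as $\tilde f_j$ by Lemma \ref{lem:sympres}, hence their second fundamental forms and area measures agree under the identification. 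Since $R_\theta$ fixes the cylindrical coordinate $\rho_j$, the center $R_\theta(x_j/r_j)$ lies on the circle of radius $\rho_j$ about the axis at height $h_j$, and the analogous inequality holds with $B_1(x_j/r_j)$ replaced by $B_1(R_\theta(x_j/r_j))$.

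The final step is a packing argument, which I expect to be routine. If $\rho_j \to \infty$ along a subsequence, then for $j$ large I may choose rotations $R_{\theta_1},\dots,R_{\theta_{N_j}}$ around the $e_1$-axis whose images of $x_j/r_j$ are pairwise Euclidean-distant by more than $2$, so that the balls $\overline{B_1(R_{\theta_k}(x_j/r_j))}$ are pairwise disjoint: two points at angular separation $\theta$ on a circle of radius $\rho_j$ have Euclidean distance $2\rho_j \sin(\theta/2)$, so the choice $\theta > 2\arcsin(1/\rho_j)$ suffices and yields $N_j \sim \pi\rho_j \to \infty$ such rotations. Summing the localised lower bound $\varepsilon_0$ over these disjoint sets gives $\int|A[\tilde f_j]|^2\,d\mu_{g_{\tilde f_j}} \geq N_j\varepsilon_0 \to \infty$, contradicting the uniform bound $4\mathcal{W}(f_0)$ established above. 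Hence $(\rho_j)_{j\in\mathbb{N}}$ is bounded. The only real subtlety is verifying that rotation around the axis acts as an isometric reparametrisation of the revolution surface at the level of $|A|^2\,d\mu$ — but this is immediate since Euclidean rotations preserve all induced geometric quantities.
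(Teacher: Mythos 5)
Your proof is correct and follows essentially the same strategy as the paper: rescale by $r_j$, use the rotational symmetry preserved along the flow (Lemma~\ref{lem:sympres}) to propagate the $\varepsilon_0$-lower bound from $B_1(x_j/r_j)$ to all rotated centers at the same axial distance $\rho_j$, then pack pairwise disjoint unit balls on that circle and invoke the Gauss--Bonnet identity $\int|A|^2\,d\mu=4\mathcal{W}$ together with energy monotonicity to bound the number of disjoint balls, hence $\rho_j$. The paper formalizes the packing count as a quantity $N(\rho)$ and produces the explicit bound $N(\rho)\geq \pi/(4\arccos(1-8/\rho^2))$, whereas you argue via $2\rho_j\sin(\theta/2)>2$ and the asymptotics $N_j\sim\pi\rho_j$; these are two cosmetic variants of the same elementary trigonometric estimate, both of which show $N\to\infty$ as $\rho\to\infty$.
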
 
\begin{proof}
We first use scaling properties to obtain that 
\begin{equation}\label{eq:new414}
\int_{\left( \frac{f(t_j)}{r_j} \right)^{-1} ( \overline{B_{1}(\frac{x_j}{r_j})} ) } \left\vert A\left[ \frac{f(t_j)}{r_j} \right] \right\vert^2  \; \mathrm{d} \mu_{g_{\frac{f(t_j)}{r_j}}} \geq \varepsilon_0.
\end{equation}
Now write $\frac{x_j}{r_j} = (h_j, \rho_j \sigma_j)$ as in the statement. Since $\frac{f(t_j)}{r_j}$ has a revolution symmetry {(see Lemma \ref{lem:sympres})} we conclude from \eqref{eq:new414} that the curvature concentration does not only happen at points but actually on circles.  %is passed along the symmetry axis. 
More precisely,
\begin{equation}\label{eq:new423}
\int_{\left( \frac{f(t_j)}{r_j} \right)^{-1}  (   \overline{B_1 (h_j, \rho_j \sigma) }) } \left\vert A\left[ \frac{f(t_j)}{r_j} \right] \right\vert^2  \; \mathrm{d} \mu_{g_{\frac{f(t_j)}{r_j}}} \geq \varepsilon_0 \quad \forall \sigma \in \mathbb{S}^1.
\end{equation}
Next, we define for each $\rho > 0$ the maximal number of disjoint closed balls of radius $1$ needed to cover the circle $(0,\rho \ \mathbb{S}^1) \subset \R^3$ %a circle of radius $\rho$
\begin{align*}
N(\rho) & := \max \big\lbrace l \in \mathbb{N} : \exists \omega_1, ..., \omega_l \in \mathbb{S}^1 \\
&  \qquad \quad \textrm{ s.t. }  \overline{B_1((0,\rho \omega_1))}, ..., \overline{B_1((0, \rho \omega_l))}  \textrm{ are pairwise disjoint} \big\rbrace.
\end{align*}
This number depends only on the radius of the circle and not on its position in $\R^3$. %Note that 
By compactness of $\mathbb{S}^1$, $N(\rho)$ is well-defined and finite. Moreover, using \eqref{eq:new423} on $N(\rho_j)$ disjoint balls that cover $(h_j, \rho_j \ \mathbb{S}^1)$ and that 
preimages of disjoint sets are always disjoint, we infer
\begin{equation*}
\int_{\mathbb{S}^1 \times \mathbb{S}^1}  \left\vert A\left[ \frac{f(t_j)}{r_j} \right] \right\vert^2  \; \mathrm{d} \mu_{g_{\frac{f(t_j)}{r_j}}} \geq N(\rho_j) \varepsilon_0 .
\end{equation*}
Note that this implies by scaling properties {and the Gauss-Bonnet Theorem} that 
\begin{equation*}
N(\rho_j) \leq \frac{1}{\varepsilon_0} \int_{\mathbb{S}^1 \times \mathbb{S}^1} |A[f(t_j)]|^2 d\mu_{g_{f(t_j)}} = \frac{1}{\varepsilon_0} \mathcal{W}(f(t_j)) \leq \frac{\mathcal{W}(f_0)}{\varepsilon_0}.
\end{equation*}
To infer that $\rho_j$ is bounded it suffices now to show that $N(\rho) \rightarrow \infty$ as $\rho \rightarrow \infty$. To this end we prove that 
\begin{equation}\label{eq:new427}
N(\rho) \geq  \frac{\pi}{4 \arccos ( 1-  \frac{8}{ \rho^2} ) } \mbox{ for }\rho \geq 4. 
\end{equation}
Let us first fix {$\rho \geq 4$}. % and call  call the right hand side  of \eqref{eq:new427} $\tilde{N}[=\tilde{N}(\rho)]$.
Note first that the squared Euclidean distance  in $\mathbb{R}^3$ 
between $( 0, \rho \cos( \alpha), \rho \sin( \alpha) )$ and $ ( 0, \rho \cos(\beta),  \rho \sin(\beta))$ is given by 
\begin{equation*}
d^2_{\alpha, \beta} := 2 \rho^2 ( 1- \cos( \alpha- \beta) ) .
\end{equation*} 
Also observe that the balls $ \overline{B_1((0, \rho \cos( \alpha), \rho \sin( \alpha) )}, \overline{B_1( (0, \rho \cos( \beta) , \rho \sin(\beta) )}$ are disjoint if and only if $d^2_{\alpha, \beta} >
4$. Hence it suffices to find distinct values $\alpha_1,...,\alpha_{\tilde{N}} \in [0,2\pi)$ such that for all $i,j \in \{1,..., \tilde{N} \}$ one has 
\begin{equation*}
d^2_{\alpha_i,\alpha_j} \geq 16 \quad \forall i, j \in \{ 1, ..., \tilde{N} \}. 
\end{equation*}
We claim that the choice of $\alpha_j := j \arccos(1- \frac{8}{\rho^2})$ , $j= 1,..., \tilde{N}$ { with
$$ \tilde{N}= \big\lfloor \frac{\pi}{4 \arccos ( 1-  \frac{8}{ \rho^2} )}\big\rfloor $$ has the desired properties.} %does the job. 
Indeed, note that $\alpha_1,...,\alpha_{\tilde{N}} \in [0, \frac{\pi}{4}]$ which implies that $|\alpha_i- \alpha_j| \in [0, \frac{\pi}{2}]$ for all $i,j$. Using evenness of $\cos$ and monotonicity of $\cos$ in $[0, \frac{\pi}{2}]$ we obtain for all $i,j \in \{ 1,...,\tilde{N} \}$
\begin{align*}
d^2_{\alpha_i, \alpha_j} & = 2\rho^2 ( 1- \cos( \alpha_i- \alpha_j) ) 
=2\rho^2 ( 1- \cos ( |i-j| \arccos(1- \frac{8}{\rho^2}) ) \\ &  \geq 2\rho^2 ( 1- \cos(1 \cdot \arccos( 1- \frac{8}{\rho^2}) ) = 16.
\end{align*}
We have thus shown \eqref{eq:new427} and thus the claim follows.
\end{proof}

\begin{remark}
The lemma reveals an interesting property of the Willmore flow of tori of revolution. Suppose that $T< \infty$. Then by Theorem \ref{thm:THM22NEW} {and in particular the property $t_j +c_0 r_j^4<T$, necessarily }$r_j \rightarrow 0$. Now let $(x_j)_{j \in \mathbb{N}}$ be a collection of \emph{points of concentration}, i.e. points where \eqref{eq:concentrationxj} holds true. From the previous lemma we know that the distance of $\frac{x_j}{r_j}$ to the $x$-axis is bounded. Hence the distance $(x_j)_{j \in \mathbb{N}}$ to the $x$-axis tends to zero. With other words: Finite-time-concentration may only happen close to the $x$-axis.  
\end{remark}

%%%%%%%%%%%%%%%%%%%%%%%%%%%%%%%%%%%%%%%%%%%%%%%%%
\subsection{Proof of Theorems \ref{thm:boundhypmain} and \ref{thm:MainThmPrecisely}}
%%%%%%%%%%%%%%%%%%%%%%%%%%%%%%%%%%%%%%%%%%%%%%%%%

\begin{proof}[Proof of Theorem \ref{thm:boundhypmain}] Let $f: [0,T) \times \mathbb{S}^1 \times \mathbb{S}^1 \rightarrow \mathbb{R}^3$ be as in the statement.
 That $f(t)$ is a torus of revolution for all $t \in [0,T)$ follows from Lemma \ref{lem:sympres}. Thus we can actually choose $(\gamma(t))_{t \in [0,T)}$ as in the statement, see also the discussion after Lemma \ref{lem:sympres}. 
Let $t_j \rightarrow T$ be such that $\LH( \gamma(t_j)) \leq M$ for some $M> 0$ and let $r_j > 0 $ and $\tilde{f}_{j,c_0}$ be as in Theorem \ref{thm:THM22NEW}. By Theorem \ref{thm:THM22NEW} it is sufficient for the convergence of the Willmore flow that $(\mathrm{diam}(\tilde{f}_{j,c_0}))_{j \in \mathbb{N}}$ is bounded. {Notice that we assume a bound on $\LH$ at $t_j$ and we want a bound on the diameter at $t_j+c_0 r_j^4$.} To this end we define $\tilde{f}_{j,0} := \frac{f(t_j)}{r_j}$ and choose for all $j \in \mathbb{N}$, $x_j$ as in \eqref{eq:concentrationxj}. Such a choice of $x_j$  exists due to the definition of $r_j$ in Theorem \ref{thm:THM22NEW}. We write $\frac{x_j}{r_j} = (h_j, \rho_j \sigma_j)$, $\rho_j > 0$ and $\sigma_j \in \mathbb{S}^1$ as in Lemma \ref{lem:evilcirclelemma} and infer from Lemma \ref{lem:evilcirclelemma} that $(\rho_j)_{j \in \mathbb{N}}$ is bounded, say $\rho_j \leq C$ for all $j \in \mathbb{N}$. Note that {by the choice of $x_j$, in particular \eqref{eq:new414},} for all $j \in \mathbb{N}$ one has $\mathrm{dist}( \frac{x_j}{r_j} , \tilde{f}_{j,0}(\mathbb{S}^1 \times \mathbb{S}^1) ) \leq 1$. %,  otherwise $x_j$ could not be as in \eqref{eq:concentrationxj}. 
Now we look at $\tilde{\gamma}_j= \frac{\gamma(t_j)}{r_j}$, which is clearly a profile curve of $\tilde{f}_{j,0}$ and satisfies also $\LH(\tilde{\gamma}_j) \leq M$ by scaling invariance of the hyperbolic length. By the distance estimate we can find $u_j ,v_j \in \mathbb{S}^1$ such that 
{$$\left\vert  \frac{1}{r_j} \left[ (x_j^{(2)}, x_j^{(3)}) - \gamma_j^{(2)}(u_j) (\cos(2 \pi v_j ), \sin(2 \pi v_j)) \right] \right\vert \leq 1.$$}
Hence we infer that 
\begin{equation*}
\tilde{\gamma}_j^{(2)} (u_j) \leq 1 + |\frac{1}{r_j} (x_j^{(2)} , x_j^{(3)} ) | \leq 1 + \rho_j  \leq 1 + C.
\end{equation*}
From the bounded hyperbolic length and \eqref{eq:hyplen} we infer that
\begin{equation*}
\sup_{\mathbb{S}^1} \tilde{\gamma}_j^{(2)} \leq \tilde{\gamma}_j^{(2)}(u_j) e^{\LH( \tilde{\gamma}_j ) } \leq (1+C)e^M.
\end{equation*}
%which is uniformly bounded in $j$. 
{This} implies also by \eqref{eq:hyplen2} that 
\begin{equation*}
\mathcal{L}_{\mathbb{R}^2} ( \tilde{\gamma}_j) \leq \sup_{\mathbb{S}^1} \tilde{\gamma}_j^{(2)} \LH( \tilde{\gamma}_j) \leq M(1+C) e^M ,
\end{equation*}
%which is also uniformly bounded in $j$. 
and from Lemma \ref{lem:drehdiam} we now infer 
{\begin{equation}\label{eq:etchiu}
\mathrm{diam}(\tilde{f}_{j,0}) \leq \frac{1}{2}\mathcal{L}_{\mathbb{R}^2}(\tilde{\gamma}_j) + 2 \sup_{\mathbb{S}^1} \tilde{\gamma}_j^{(2)} \leq D,
\end{equation}
for some constant $D\geq 0$.}
%which is now also uniformly bounded in $j$, say $\mathrm{diam}(\tilde{f}_{j,0}) \leq D$. 
We now define $\tilde{f}_j(s) := \frac{f(t_j  + s r_j^4) }{r_j}$, $s \in [0,c_0]$, taking into account the parabolic scaling. It is easy to see that then $\tilde{f}_j$ is a solution of the Willmore flow equation and $\tilde{f}_j(0) = \tilde{f}_{j,0}$ and $\tilde{f}_j(c_0) = \tilde{f}_{j,c_0}$. Hence we can estimate by Lemma \ref{lem:diamflow} 
\begin{equation*}
\mathrm{diam}(f_{j,c_0}) \leq C( \mathcal{W}(\tilde{f}_{j,0}) ) ( \mathrm{diam}( \tilde{f}_{j,0} ) + c_0^\frac{1}{4}).
\end{equation*}
Using that by scaling invariance $\mathcal{W}(\tilde{f}_{j,0})= \mathcal{W}(f(t_j)) \leq \mathcal{W}(f_0)$ and \eqref{eq:etchiu} %the previous estimates 
we obtain 
\begin{equation}
\mathrm{diam}(f_{j,c_0}) \leq C(\mathcal{W}(f_0)) ( D + c_0^\frac{1}{4} ). 
\end{equation}
By Theorem \ref{thm:THM22NEW} this implies that $T= \infty$ and $(f(t))_{t \in [0, \infty)}$ is a convergent evolution. It only remains to show that the limit is a torus of revolution. This is however a direct consequence of Lemma \ref{lem:LEMMANEW}. 
\end{proof}

\begin{proof}[Proof of Theorem \ref{thm:MainThmPrecisely}]
Let $(f(t))_{t \in [0,T) }$ and $(\gamma(t))_{t \in [0,T)}$ be as in the statement. We distinguish two cases.  \\
\textbf{Case 1:} $\mathcal{W}(f_0) < 8\pi$. 
To show long-time existence and convergence of the evolution we apply Theorem \ref{thm:boundhypmain}. To this end we need to show that 
\begin{equation*}
 \liminf_{t \rightarrow T} \LH(\gamma(t)) < \infty. 
\end{equation*}
First we observe that $(\gamma(t))_{t \in [0,T)}$ satisfies 
\begin{equation*}
\mathcal{E}(\gamma(t)) = \frac{2}{\pi} \mathcal{W}(F_{\gamma(t)}) = \frac{2}{\pi} \mathcal{W}(f(t)) \leq \frac{2}{\pi} \mathcal{W}(f_0) <16.
\end{equation*} 
We apply Theorem \ref{thm:reilly} with $\varepsilon := 16- \frac{2}{\pi} \mathcal{W}(f_0)$ to find that for each $t \in [0,T) $ one has 
\begin{equation*}
\LH(\gamma(t)) \leq \frac{1}{c(\varepsilon)} \mathcal{E}(\gamma(t)) = \frac{2}{\pi c(\varepsilon)} \mathcal{W}(f(t)) \leq  \frac{2}{\pi c(\varepsilon)} \mathcal{W}(f_0),
\end{equation*}
and hence the hyperbolic length is uniformly bounded for $t \in [0,T)$. By Theorem \ref{thm:boundhypmain} the evolution converges in $C^k$ for all $k$ and the limit, say $f_\infty: \mathbb{S}^1 \times \mathbb{S}^1 \rightarrow \mathbb{R}^3$, is a Willmore torus of revolution. By the gradient flow properties of the Willmore flow and Lemma \ref{lem:appdiam} we obtain that $\mathcal{W}(f_\infty) \leq \mathcal{W}(f_0) < 8\pi$.  We obtain from Proposition \ref{prop:willtori} that $f_\infty$ is, up to reparametrization, a Clifford torus, possibly rescaled and translated in the direction $(1,0,0)^T$. The claim follows.\\
\textbf{Case 2:} $\mathcal{W}(f_0) = 8\pi$. We first claim that $f_0$ is not a Willmore surface. Indeed, if it were then it would by Proposition \ref{prop:willtori} be a rescaled and translated reparametrization of a Clifford Torus. But the Willmore energy of the Clifford torus is $2\pi^2$, contradicting $\mathcal{W}(f_0)= 8\pi$. Hence 
\begin{equation*}
\frac{d}{dt} \mathcal{W}(f(t)) \Big\vert_{t = 0} = -|| \nabla_{L^2} \mathcal{W}(f_0)||_{L^2(\Sigma)}^2 < 0 ,
\end{equation*}
which implies that there exists $t_0 > 0 $ such that $\mathcal{W}(f(t_0)) < 8\pi$. We restart the Willmore flow with $f(t_0)$ which satisfies the assumptions of Case 1 and hence converges to a reparametrization of the Clifford torus, possibly rescaled and translated in direction $(1,0,0)^T$. The claim follows. 
\end{proof}

%%%%%%%%%%%%%%%%%%%%%%%%%%%%%%%%%%%%%%%%%%%%%%%%%%%%%%%%%%%%%%%%%
%%%%%%%%%%%%%%%%%%%%%%%%%%%%%%%%%%%%%%%%%%%%%%%%%%%%%%%%%%%%%%%%%

\subsection{Optimality}
%%%%%%%%%%%%%%%%%%%%%%%%%%%%%%%%%%%%%%%%%%%%%%%%%%%%%%%%%%%%%%%%%
%%%%%%%%%%%%%%%%%%%%%%%%%%%%%%%%%%%%%%%%%%%%%%%%%%%%%%%%%%%%%%%%%

We show that the upper bound of $8 \pi$ on the Willmore energy of the initial datum in Theorem \ref{thm:MainThmPrecisely} is sharp by proving Theorem \ref{thm:optii}. In the statement of this theorem, the geometric quantities that may possibly degenerate along the flow are the second fundamental form or the diameter. On contrary, the statement of Theorem \ref{thm:boundhypmain} suggests another  quantity which must degenerate -- the hyperbolic length. In the following we will construct the non-convergent evolutions and study the relation between the degenerating quantities. %In the proof another quantity will play an important role -- the hyperbolic length of the profile curve.

\begin{lemma}[The singular evolutions]\label{thm:singex}
 For any $\varepsilon > 0$ there exists a  torus of revolution $f_0: \mathbb{S}^1 \times \mathbb{S}^1 \rightarrow \mathbb{R}^3$ such that $\mathcal{W}(f_0) < 8\pi + \varepsilon$, and the maximal Willmore flow $(f(t))_{t \in [0,T)}$ {starting at $f_0$ satisfies 
$
\lim_{t \rightarrow T} \LH(\gamma(t)) = \infty. 
$}
% \end{enumerate}
\end{lemma}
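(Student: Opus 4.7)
The idea is to exhibit, for each $\varepsilon > 0$, a torus of revolution $f_0 = F_{\gamma_0}$ with \emph{vanishing total Euclidean curvature} of the profile, $\int_{\gamma_0} \kappa_{euc}\,ds = 0$, and $\W(f_0) < 8\pi + \varepsilon$. The turning number $\frac{1}{2\pi}\int \kappa_{euc}\,ds$ is preserved along the flow as a topological invariant, and Proposition~\ref{prop:willtori} then rules out convergence to a Clifford torus (whose profile is a Euclidean circle of turning number $\pm 1$). Theorem~\ref{thm:boundhypmain} converts non-convergence into $\LH \to \infty$.

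\emph{Construction.} A smooth immersed closed planar curve with vanishing turning number must self-intersect, hence $F_{\gamma_0}$ is not embedded and the Li--Yau inequality forces $\W(F_{\gamma_0}) \geq 8\pi$. To approach this lower bound, I take figure-eight-type profiles: two oppositely-oriented loops in $\mathbb{H}^2$ (close to geodesic circles for the Clifford parameter) joined by thin smooth bridges. Shrinking the bridges and tuning the hyperbolic radii allows $\W(F_{\gamma_0}) < 8\pi + \varepsilon$ for every $\varepsilon > 0$, while opposite orientation of the loops cancels the total Euclidean curvature.

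\emph{Preservation of turning number and non-convergence.} By Lemma~\ref{lem:sympres}, $(\gamma(t))_{t\in[0,T)}$ is a smooth family of immersed closed curves, so $t \mapsto \int_{\gamma(t)} \kappa_{euc}\,ds$ is continuous and $2\pi\mathbb{Z}$-valued, hence constantly zero. Assume, for contradiction, that $\liminf_{t \to T} \LH(\gamma(t)) < \infty$. Theorem~\ref{thm:boundhypmain} then gives $T = \infty$ and $C^k$-convergence of $f(t)$ to a Willmore torus of revolution $f_\infty$. Lemma~\ref{lem:LEMMANEW} provides a sequence $t_j \to \infty$ along which (reparametrizations of) $\gamma(t_j)$ converge in $C^m$ for every $m$ to a hyperbolic elastica $\gamma_\infty$ that is a profile of $f_\infty$. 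Continuity of the total Euclidean curvature under $C^1$-convergence yields $\int_{\gamma_\infty}\kappa_{euc}\,ds = 0$. By monotonicity of the Willmore energy, $\W(f_\infty) \leq \W(f_0) < 8\pi + \varepsilon$. Choosing $\varepsilon$ small enough so that \cite[Cor.~6.11]{AdrianMarius}, combined with the discreteness of closed hyperbolic elastica energies just above $\E = 16$, forces $\gamma_\infty$ to be a geodesic circle in $\mathbb{H}^2$, its profile has turning number $\pm 1$---a contradiction. Thus $\liminf_{t\to T}\LH = \infty$, which for the nonnegative quantity $\LH$ is equivalent to $\lim_{t\to T}\LH = \infty$. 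For larger $\varepsilon$ the same $f_0$ works a fortiori.

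\emph{Main obstacle.} The most delicate step is the first: explicitly producing smooth immersed profiles $\gamma_0$ of vanishing total curvature whose Willmore energy is arbitrarily close to the Li--Yau threshold $8\pi$, which requires working with near-singular figure-eight configurations while controlling the elastic energy. A secondary but essential input is the non-existence of closed free hyperbolic elasticae with turning number zero and $\E$ only slightly above $16$, which follows from the Langer--Singer classification underlying \cite[Cor.~6.11]{AdrianMarius}.
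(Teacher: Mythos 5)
Your overall strategy coincides with the paper's: start with a torus whose profile curve has vanishing total Euclidean curvature, observe that the total curvature is a flow invariant, and derive a contradiction from the hypothetical convergence to a Willmore torus of revolution. Two steps, however, contain genuine gaps.

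First, the \emph{construction} of the initial datum is only sketched. You assert that figure-eight profiles with two oppositely oriented loops joined by thin bridges can be tuned so that $\W(F_{\gamma_0})<8\pi+\varepsilon$ and the total curvature cancels, but you do not carry this out; you even label it the ``main obstacle.'' The paper avoids this issue altogether by citing \cite[Cor.~6.4]{AdrianMarius}, which supplies closed curves $\gamma_\varepsilon$ with $16\le\E(\gamma_\varepsilon)<16+\varepsilon$ and $T[\gamma_\varepsilon]=0$. As written your step is an unproven claim.

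Second, the \emph{contradiction at the limit} does not work as stated. You obtain a hyperbolic elastica $\gamma_\infty$ with $T[\gamma_\infty]=0$ and $\W(F_{\gamma_\infty})\le \W(f_0)<8\pi+\varepsilon$, i.e.\ $\E(\gamma_\infty)<16+2\varepsilon/\pi$. This does \emph{not} put $\gamma_\infty$ in the range $\E\le16$ where \cite[Cor.~6.11]{AdrianMarius} (and Proposition~\ref{prop:willtori}) applies, so you cannot conclude that $\gamma_\infty$ is a geodesic circle from that result. The appeal to ``discreteness of closed hyperbolic elastica energies just above $\E=16$'' is not justified and would need a separate argument. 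The paper sidesteps all of this by invoking \cite[Cor.~5.8]{AdrianMarius}, which asserts that \emph{no} closed free hyperbolic elastica has vanishing Euclidean total curvature, at any energy level. This yields the contradiction immediately, with no energy-threshold bookkeeping. (A minor point: you say ``continuity of the total Euclidean curvature under $C^1$-convergence''; total curvature involves the second derivative, so $C^2$ (or weak $W^{2,2}$) convergence is what is needed --- which you do have from Lemma~\ref{lem:LEMMANEW}, so this is a misstatement rather than a gap.)
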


The main idea is to start the flow with an immersed curve that has \emph{total curvature} 
\begin{equation}\label{eq:totalcurv}
T[\gamma] :=  \frac{1}{2\pi}\int_\gamma \kappa_{euc}[\gamma] \; \mathrm{d}\mathbf{s}
\end{equation}
equal to zero. This quantity $T[\cdot]$ turns out to be a flow invariant and can hence be helpful to classify possible limits of convergent evolution. This in turn can also be used to show that some evolutions cannot be convergent.  
\begin{lemma}\label{lem:totcurv}
The total curvature $T$, defined on curves in $W^{2,2}(\mathbb{S}^1,\mathbb{R}^2)_{imm} := \{ \gamma \in W^{2,2}(\mathbb{S}^1, \mathbb{R}^2) : \gamma  \mbox{ immersed} \}$ is integer-valued and  weakly continuous in the relative topology of $W^{2,2}(\mathbb{S}^1,\mathbb{R}^2)_{imm}$.
Moreover it is a flow invariant for the Willmore flow of tori of revolution, i.e. if $(f(t))_{t \in [0,T)}$ is an evolution by the Willmore flow with profile curve $(\gamma(t))_{t \in [0,T)}$
then $T[\gamma(t)] = T[\gamma(0)]$ for all $ t\in [0,T)$. 
\end{lemma}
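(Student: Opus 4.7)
The plan is to establish the three claims separately, then combine. For integer-valuedness, I would invoke the classical Hopf Umlaufsatz (rotation index theorem) in a form that works in $W^{2,2}$. Concretely, for $\gamma \in W^{2,2}(\mathbb{S}^1, \mathbb{R}^2)_{imm}$, the unit tangent $\tau := \gamma'/|\gamma'|$ lies in $W^{1,2}(\mathbb{S}^1, \mathbb{S}^1)$, since $\inf_{\mathbb{S}^1} |\gamma'| > 0$ by compactness and the immersion property. A standard lifting argument produces $\theta \in W^{1,2}([0,1], \mathbb{R})$ with $\tau = (\cos\theta, \sin\theta)$, and the periodicity of $\tau$ forces $\theta(1) - \theta(0) \in 2\pi \mathbb{Z}$. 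Using $\kappa_{euc}[\gamma] \, |\gamma'| = \theta'$ pointwise a.e.\ (by direct computation from $\gamma' = |\gamma'|(\cos\theta, \sin\theta)$), one gets $T[\gamma] = \frac{1}{2\pi}\int_0^1 \theta'(u) \, \mathrm{d}u = (\theta(1)-\theta(0))/(2\pi) \in \mathbb{Z}$.

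For weak continuity, suppose $\gamma_j \rightharpoonup \gamma$ in $W^{2,2}(\mathbb{S}^1, \mathbb{R}^2)_{imm}$. By the Rellich--Kondrachov embedding $W^{2,2}(\mathbb{S}^1) \hookrightarrow C^1(\mathbb{S}^1)$ being compact, $\gamma_j \to \gamma$ in $C^1$. Since $\gamma$ is immersed, $\inf_{\mathbb{S}^1}|\gamma'| > 0$, and by uniform $C^1$ convergence we obtain $\inf_j \inf_{\mathbb{S}^1} |\gamma_j'| \geq c > 0$ for $j$ large. Hence the unit tangents $\tau_j$ converge uniformly to $\tau$, and by continuous dependence of the lift one can choose angle functions $\theta_j \in C^0([0,1],\mathbb{R})$ with $\theta_j \to \theta$ uniformly. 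In particular $(\theta_j(1)-\theta_j(0))/(2\pi) \to (\theta(1)-\theta(0))/(2\pi)$, and since both sides are integers they must coincide for $j$ large. This proves $T[\gamma_j] \to T[\gamma]$.

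Finally, for flow invariance, I would combine the above with smoothness of the Willmore flow. By Lemma~\ref{lem:sympres}, the profile curve $\gamma(t)(u) = (f(t)^{(1)}(u,0), f(t)^{(2)}(u,0))$ is well-defined, and since $f \in C^\infty([0,T) \times \mathbb{S}^1 \times \mathbb{S}^1, \mathbb{R}^3)$, the map $t \mapsto \gamma(t) \in W^{2,2}(\mathbb{S}^1, \mathbb{R}^2)_{imm}$ is continuous (even smooth). By the weak continuity established above (applied with strong convergence, which is stronger), $t \mapsto T[\gamma(t)]$ is continuous on the connected set $[0,T)$ with values in $\mathbb{Z}$, hence constant.

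The main technical point is the construction of the lift and its continuity in $W^{2,2}$; this is routine once one works uniformly on the set $\{|\gamma'| \geq c\}$, but requires care because weak $W^{2,2}$-convergence does not preserve immersions a priori — the argument relies crucially on upgrading via Rellich to $C^1$-convergence so that the immersion property is passed along and the tangent angle lift depends continuously on the tangent.
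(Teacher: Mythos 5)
Your proof is correct but takes a genuinely different route from the paper's. The paper disposes of integer-valuedness and flow invariance in one stroke by citing the Whitney--Graustein theorem: $T$ is an integer-valued invariant of regular homotopy, and $t\mapsto\gamma(t)$ is a regular homotopy, so $T[\gamma(t)]$ is constant. For weak continuity, the paper argues directly from the explicit formula $T[\gamma]=\frac{1}{2\pi}\int_0^1 \frac{1}{|\gamma'|}\bigl((\gamma^{(2)})''(\gamma^{(1)})'-(\gamma^{(1)})''(\gamma^{(2)})'\bigr)\dx$: the compact embedding $W^{2,2}\hookrightarrow C^1$ upgrades weak convergence to $C^1$-convergence of $\gamma_j$, so the first-derivative factors converge uniformly while $\gamma_j''\rightharpoonup\gamma''$ weakly in $L^2$, and the usual weak--strong pairing gives convergence of the integral. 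You instead re-derive the relevant half of Whitney--Graustein by constructing the tangent-angle lift $\theta\in W^{1,2}$ and reading off $T[\gamma]=(\theta(1)-\theta(0))/(2\pi)\in\mathbb{Z}$; you then obtain weak continuity not from the integral formula but from $C^1$-convergence of the tangents, uniform convergence of the lifts, and the fact that an integer-valued sequence that converges must stabilize; and you obtain flow invariance by combining continuity of $t\mapsto T[\gamma(t)]$ with integer-valuedness and connectedness of $[0,T)$. Your route is more self-contained (no external citation of Whitney--Graustein) and derives all three claims from the single mechanism of the lift; the paper's route is shorter because it outsources the topology and uses a crisp analytic argument for the remaining continuity claim. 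One small point worth being explicit about in your write-up: when you pass from uniform convergence of $\tau_j$ to uniform convergence of the lifts $\theta_j$, you should normalize the lifts (say $\theta_j(0)\in[0,2\pi)$) and note that the degree $(\theta_j(1)-\theta_j(0))/(2\pi)$ is independent of this normalization, so the asserted convergence of degrees is well-posed.
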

\begin{proof}
The fact that $T[\cdot]$ is integer valued and an invariant with respect to regular homotopies is very classical and follows from the Whitney Graustein Theorem. Since $\gamma(t)= f(t)(u,0)$ {(see \eqref{eq:D17})} and $t \mapsto f(t)$  is a regular homotopy, so is $t \mapsto \gamma(t)$. Hence we can also conclude that it is a Willmore flow invariant. The weak $W^{2,2}$-continuity follows immediately from the formula 
\begin{equation*}
T[\gamma] := \frac{1}{2\pi} \int_0^1  \frac{1}{|\gamma'|} \left( (\gamma^{(2)})''(\gamma^{(1)})'- (\gamma^{(1)})''(\gamma^{(2)})' \right) \dx 
\end{equation*}
and the compact embedding $W^{2,2} \hookrightarrow C^1$. 
\end{proof}

\begin{proof}[Proof of Lemma \ref{thm:singex}]
Fix $\varepsilon > 0$. By {\cite[Cor. 6.4]{AdrianMarius}} there exists a curve $\gamma_\varepsilon$ such that $16 \leq \mathcal{E}(\gamma_\varepsilon) < 16 + \varepsilon$ and $T[\gamma_\varepsilon] = 0$, where $T[\cdot]$ is given as in  \eqref{eq:totalcurv}. Now start the flow with {$f_0=F_{\gamma_\varepsilon} :\mathbb{S}^1 \times \mathbb{S}^1 \rightarrow \mathbb{R}^3$} defined as in \eqref{eq:D1} with profile curve $\gamma_\varepsilon$ and let $(f(t))_{t \in [0,T)}$ be the corresponding evolution {by the Willmore flow}. Assume that  for  $(\gamma(t))_{t \in [0,\infty)}$ as in \eqref{eq:D17} one has
\begin{equation*}
\liminf_{t \rightarrow T} \LH(\gamma(t)) < \infty. 
\end{equation*}
By Theorem \ref{thm:boundhypmain} we obtain that then $T = \infty$ and $(f(t))_{t \in [0,\infty)}$ is convergent to a Willmore torus of revolution $f_\infty$.
Let now $t_j \rightarrow \infty$  be a sequence such that $\LH ( \gamma(t_j)) \leq M < \infty$ for all $j \in \mathbb{N}$.
By Lemma \ref{lem:LEMMANEW} we obtain that an appropriate reparametrization of $\gamma(t_j)$ converges in $C^k(\mathbb{S}^1,\mathbb{R}^2)$ to some  $\gamma_\infty \in C^\infty( \mathbb{S}^1, \mathbb{H}^2)$, which is a profile curve of $f_\infty$, i.e. up to reparametrization  $f_\infty = F_{\gamma_\infty}$. By \eqref{eq:willelaGrad}  we infer  that $\gamma_\infty$ is a hyperbolic elastica.
 
 Now we choose $\phi_j \in C^4(\mathbb{S}^1, \mathbb{S}^1)$ such that $\gamma(t_j) \circ \phi_j$ converges to $\gamma_\infty$ classically in $C^4(\mathbb{S}^1,\mathbb{R}^2)$. Then, by the previous lemma
\begin{equation*}
T[\gamma_\infty] = \lim_{j \rightarrow \infty} T[\gamma(t_j)] = T[\gamma(0)] = 0. 
\end{equation*}
Hence $\gamma_\infty$ is a hyperbolic elastica with vanishing  Euclidean total curvature. By \cite[Cor. 5.8]{AdrianMarius} there exist no hyperbolic elastica of vanishing total curvature. We obtain a contradiction and the claim follows. 
\end{proof}
{As an important ingredient for case (2) in Theorem \ref{thm:optii}, we need to show} that global evolutions under the Willmore flow of tori of revolution with unbounded hyperbolic length and no curvature concentration must have unbounded diameter.% This is actually true for any global evolution and not just for the tori of revolution constructed in  the proof above.

\begin{lemma}[Diameter Blow-Up]\label{lem:diamblow}
Let $f_0: \mathbb{S}^1 \times \mathbb{S}^1 \rightarrow \mathbb{R}^3$ be a torus of revolution and let $(f(t))_{t \in [0,\infty)}$ evolve by the Willmore flow with initial datum $f_0$. Let $\gamma(t)=f(t)(\cdot,0)$ be the profile curve of $f(t)$ for all $t\geq 0$. Assume  that 
$(A(t))_{t \in [0,\infty)}$ is bounded in $L^\infty(\Sigma)$ and 
$\lim_{t \rightarrow \infty} \LH( \gamma(t)) = \infty$.  Then 
\begin{equation*}
\lim_{t \rightarrow \infty} \mathrm{diam}(f(t)(\mathbb{S}^1 \times \mathbb{S}^1) ) = \infty. 
\end{equation*} 
\end{lemma}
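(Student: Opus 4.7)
The plan is a proof by contradiction. Assume the diameter does not tend to infinity, so that there exists $R < \infty$ and a sequence $t_n \to \infty$ with $\mathrm{diam}(f(t_n)(\mathbb{S}^1\times\mathbb{S}^1)) \leq R$. I will show that $\mathcal{L}_{\mathbb{H}^2}(\gamma(t_n))$ must then be uniformly bounded in $n$, contradicting the hypothesis $\mathcal{L}_{\mathbb{H}^2}(\gamma(t))\to\infty$.

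The key observation, which I expect to be the pivotal step, is that the bounded $L^\infty$-norm of the second fundamental form forces $\gamma^{(2)}(t,\cdot)$ to stay uniformly bounded away from zero. Recall from Section~\ref{sec:tori} that the second principal curvature of $F_\gamma$ is $\kappa_2[F_\gamma](u,v) = (\gamma^{(1)})'(u)/(|\gamma'(u)|\gamma^{(2)}(u))$, so $|\kappa_2[F_\gamma]| \leq \|A[F_\gamma]\|_{L^\infty} \leq C$ by assumption. Letting $u^\ast \in \mathbb{S}^1$ be a minimum point of $\gamma^{(2)}(t,\cdot)$, we have $(\gamma^{(2)})'(u^\ast) = 0$, so $|(\gamma^{(1)})'(u^\ast)| = |\gamma'(u^\ast)|$ and consequently
\begin{equation*}
|\kappa_2[F_{\gamma(t)}](u^\ast,v)| \;=\; \frac{1}{\gamma^{(2)}(t,u^\ast)} \;=\; \frac{1}{\min_{\mathbb{S}^1}\gamma^{(2)}(t,\cdot)}.
\end{equation*}
Hence $\min_{\mathbb{S}^1}\gamma^{(2)}(t,\cdot) \geq 1/C$ for every $t \geq 0$.

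Next I combine this lower bound with the diameter assumption. Since $\gamma(t_n)(\mathbb{S}^1) = f(t_n)(\cdot,0)(\mathbb{S}^1) \subset f(t_n)(\mathbb{S}^1\times\mathbb{S}^1)$, diameter $\leq R$ gives $\gamma^{(2)}(t_n,\cdot) \leq R$ on $\mathbb{S}^1$. Invoking Lemma~\ref{lem:spurbound}, the monotonicity of Willmore energy, and the trivial bound $\mu_{g_{f(t_n)}}(\mathbb{S}^1\times\mathbb{S}^1) = 2\pi\int \gamma^{(2)}(t_n)|\gamma(t_n)'|\,du \leq 2\pi R\,\mathcal{L}_{\mathbb{R}^2}(\gamma(t_n))$ from \eqref{eq:areaform}, I obtain
\begin{equation*}
\mathcal{L}_{\mathbb{R}^2}(\gamma(t_n))^2 \;\leq\; \mu_{g_{f(t_n)}}(\mathbb{S}^1\times\mathbb{S}^1)\,\mathcal{W}(f(t_n)) \;\leq\; 2\pi R\,\mathcal{L}_{\mathbb{R}^2}(\gamma(t_n))\,\mathcal{W}(f_0),
\end{equation*}
so $\mathcal{L}_{\mathbb{R}^2}(\gamma(t_n)) \leq 2\pi R\,\mathcal{W}(f_0)$.

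Finally, combining the two bounds,
\begin{equation*}
\mathcal{L}_{\mathbb{H}^2}(\gamma(t_n)) \;=\; \int_{\mathbb{S}^1} \frac{|\gamma(t_n)'(u)|}{\gamma^{(2)}(t_n,u)}\,du \;\leq\; C\,\mathcal{L}_{\mathbb{R}^2}(\gamma(t_n)) \;\leq\; 2\pi C R\,\mathcal{W}(f_0) \;<\; \infty,
\end{equation*}
which contradicts $\mathcal{L}_{\mathbb{H}^2}(\gamma(t_n)) \to \infty$. This shows $\liminf_{t\to\infty} \mathrm{diam}(f(t)(\mathbb{S}^1\times\mathbb{S}^1)) = \infty$ and hence the full limit, proving the lemma. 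The main obstacle is really only the first step (identifying that $\|A\|_{L^\infty}$ controls $\min\gamma^{(2)}$ from below via the circle-of-revolution principal curvature $\kappa_2 = 1/\gamma^{(2)}$ at critical points); once that is in place the rest is a routine combination of the already-established lemmas on length, area, and diameter for tori of revolution.
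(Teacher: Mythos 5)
Your proof is correct and it takes a genuinely different, more elementary route than the paper. The crucial point in both arguments is obtaining a uniform lower bound on $\inf_{\mathbb{S}^1}\gamma^{(2)}(t,\cdot)$; the two proofs differ entirely in how they get it. You observe that at a minimum point $u^*$ of $\gamma^{(2)}(t,\cdot)$ one has $(\gamma^{(2)})'(u^*)=0$, so $|\gamma'(u^*)|=|(\gamma^{(1)})'(u^*)|$ and the second principal curvature satisfies $|\kappa_2(u^*,v)|=1/\gamma^{(2)}(t,u^*)=1/\min_{\mathbb{S}^1}\gamma^{(2)}(t,\cdot)$, whence the assumed bound $\sup_t\|A(t)\|_{L^\infty}=:D<\infty$ gives $\min\gamma^{(2)}(t,\cdot)\geq 1/D$ for every $t$, directly and without any flow-theoretic input. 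The rest is then a routine chain through Lemma \ref{lem:spurbound}, the area formula \eqref{eq:areaform}, and $\mathcal{L}_{\mathbb{H}^2}\leq D\,\mathcal{L}_{\mathbb{R}^2}$. The paper instead deduces the lower bound on $\gamma^{(2)}$ from the full convergence machinery: it invokes the concentration criterion (Theorem \ref{thm:THM22NEW}), shows the concentration radii $r_j$ are bounded below via the density estimate \eqref{eq:C3}, concludes the flow converges to a Willmore torus of revolution using Lemma \ref{lem:LEMMANEW}, and only then reads off $\inf\gamma^{(2)}(t,\cdot)\to\inf\gamma_\infty^{(2)}>0$; it also closes the contradiction differently, by blowing up the surface area and then the diameter via Lemma \ref{lem:simon}. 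Your pointwise principal-curvature argument buys a substantially shorter and more self-contained proof that avoids Theorems \ref{thm:THM22NEW} and \ref{thm:CFS} and Lemma \ref{lem:LEMMANEW} entirely; the paper's heavier route has the side benefit of producing a convergent subsequence and a limiting torus, but for the stated lemma that extra structure is not needed. (Minor quantitative remark: the diameter bound in fact gives $\gamma^{(2)}\leq R/2$, since $|f(t_n)(u,0)-f(t_n)(u,\tfrac12)|=2\gamma^{(2)}(t_n,u)$; this does not affect your conclusion.)
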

\begin{proof}
We first introduce the constant $D := \sup_{t \in [0,\infty)} ||A(t)||_{L^\infty} < \infty$. Next we assume for a contradiction that there exists some $t_j \rightarrow T= \infty$ such that $\mathrm{diam}(f(t_j) ( \mathbb{S}^1 \times \mathbb{S}^1) \leq M < \infty$ for all $j \in \mathbb{N}$.  
Let $(r_j)_{j \in \mathbb{N}} \subset (0,\infty)$ be as in Theorem \ref{thm:THM22NEW}. Note that there exists $x_j \in \mathbb{R}^3$
\begin{equation*}
\varepsilon_0 \leq \int_{f(t_j)^{-1} (\overline{B_{r_j}(x_j)})} |A[f(t_j)]|^2 d\mu_{g_{f(t_j)}} \leq D^2 \mu_{g_{f(t_j)}} ( f(t_j)^{-1} ( \overline{ B_{r_j}(x_j) )} ) .
\end{equation*}
By \eqref{eq:C3} we have that 
\begin{equation*}
\mu_{g_{f(t_j)}} ( f(t_j)^{-1} ( \overline{ B_{r_j}(x_j) )} ) \leq C \mathcal{W}(f(t_j))) r_j^2 \leq C\mathcal{W}(f_0) r_j^2.
\end{equation*}
In particular we find by the previous two equations
{\begin{equation}
r_j^2 \geq \frac{\varepsilon_0}{D^2C \mathcal{W}(f_0)},
\end{equation}}
i.e. there exists $\delta > 0$ such that $r_j \geq \delta$ for all $j \in \mathbb{N}$.  Since we have assumed that $\mathrm{diam}(f(t_j)(\mathbb{S}^1 \times \mathbb{S}^1)) \leq M$ we obtain that 
\begin{equation*}
\mathrm{diam}\left( \frac{f(t_j)}{r_j} ( \mathbb{S}^1 \times \mathbb{S}^1 ) \right) \leq \frac{1}{r_j} \mathrm{diam} ( f(t_j) ( \mathbb{S}^1 \times \mathbb{S}^1) ) \leq  \frac{1}{\delta} M.
\end{equation*}
Now recall that $\tilde{f}_j(s) := \frac{f(t_j + s r_j^4)}{r_j}, s \in [0,c_0]$ defines a solution of the Willmore flow, %equation, 
with $\tilde{f}_j(0) = \frac{f(t_j)}{r_j}$ and $\tilde{f}_j(c_0) = \tilde{f}_{j,c_0}$, defined as in Theorem \ref{thm:THM22NEW}. With Lemma \ref{lem:diamflow} we obtain thus  that 
\begin{equation*}
\mathrm{diam} ( \tilde{f}_{j,c_0} ) \leq C(\mathcal{W}( \frac{f(t_j)}{r_j}) ) ( \mathrm{diam}( \frac{f(t_j)}{r_j} ) + c_0^\frac{1}{4} ) \leq C(\mathcal{W}(f_0)) \left( \frac{M}{\delta} + c_0^\frac{1}{4} \right),
\end{equation*}
which is uniformly bounded in $j$. This implies by Theorem \ref{thm:THM22NEW} that there exists a Willmore immersion $f_\infty : \mathbb{S}^1 \times \mathbb{S}^1 \rightarrow \mathbb{R}^3$ such that $f(t) \rightarrow f_\infty$ in $C^k$ for all $k \in \mathbb{N}$. {By Lemma \ref{lem:LEMMANEW} $f_\infty$ is a Willmore torus of revolution}.  In particular, up to reparametrization one has $f_\infty = F_{\gamma_\infty}$ for some $\gamma_\infty \in C^\infty (\mathbb{S}^1, \mathbb{H}^2)$. We next claim that there exists $\delta > 0$ such that $\inf_{\mathbb{S}^1} \gamma(t)^{(2)} > \delta$ for all $t \in [0,\infty)$. To this end observe 
\begin{align*}
\lim_{t \rightarrow \infty} \inf_{\mathbb{S}^1} \gamma(t)^{(2)} & = \lim_{t \rightarrow \infty} \inf_{\mathbb{S}^1 \times \mathbb{S}^1} \sqrt{(f(t)^{(2)} )^2 + (f(t)^{(3)})^2} \\ & = \inf_{\mathbb{S}^1 \times \mathbb{S}^1} \sqrt{(f_\infty^{(2)} )^2 + (f_\infty^{(3)})^2} = \inf_{\mathbb{S}^1} \gamma_\infty^{(2)} > 0,
\end{align*}
{since $\gamma_\infty^{(2)}(u) > 0$ for all $u \in \mathbb{S}^1$ and $\mathbb{S}^1$ is compact.}
Note that we have used here that the infimum expression is independent of the parametrization of $f(t)$. 
This and the fact that $(f(t))_{t \in [0,\infty)}$ is a smoothly evolving family of tori of revolution implies $\inf_{\mathbb{S}^1} \gamma(t)^{(2)} > \delta$  for all $t \in [0,\infty)$. %the claim. 
Next we look at the surface area of $f(t)$, i.e. 
\begin{equation*}
 \mu_{g_{f(t)}}( \mathbb{S}^1 \times \mathbb{S}^1) =2\pi  \int_0^1  |\gamma(t)'(u)| \gamma^{(2)}(t)(u) \; \mathrm{d}u ,
\end{equation*}
and infer
\begin{equation*}
\mu_{g_{f(t_j)}}(\mathbb{S}^1 \times \mathbb{S}^1) \geq 2\pi \delta^2 \LH(\gamma(t_j)) \rightarrow \infty . 
\end{equation*}
{With Lemma \ref{lem:simon}  it follows %an inequality that we prove in the following lemma we infer 
\begin{equation*}
M \geq \mathrm{diam}(f(t_j)(\mathbb{S}^1 \times \mathbb{S}^1) ) \geq \sqrt{\frac{\mu_{g_{f(t_j)}}(\mathbb{S}^1 \times \mathbb{S}^1)}{\mathcal{W}(f(t_j))}} \geq  \sqrt{\frac{\mu_{g_{f(t_j)}}(\mathbb{S}^1 \times \mathbb{S}^1)}{\mathcal{W}(f_0)}}  \rightarrow \infty.
\end{equation*}
A contradiction. We infer that $\lim_{t \rightarrow \infty} \mathrm{diam}(f(t)( \mathbb{S}^1 \times \mathbb{S}^1)) = \infty$. }
 \end{proof}

In the proof we have used without further notice that the concept of tori of revolution in \cite[Def. 2.2]{Blatt} coincides with our definition in Definition \ref{def:toriofrevolution}, at least up to reparametrization. For details recall Proposition \ref{prop:consiBlatt} and the discussion afterwards.

\begin{proof}[Proof of Theorem \ref{thm:optii}]
Let $\varepsilon > 0 $ be as in the statement and $f_0$ be as in Lemma \ref{thm:singex}. Then the evolution $(f(t))_{t \in [0,T)}$ satisfies  
 $\lim_{t \rightarrow T} \LH(\gamma(t)) = \infty$. 
 Next let $t_j \uparrow T$ be a sequence. {Let $\epsilon_0 > 0, c_0 > 0$ and $(r_j)_{j \in \mathbb{N}}$ be as in Theorem \ref{thm:THM22NEW}. We distinguish now two cases.\\
 \textbf{Case 1: There exists a subsequence of $r_j$ that converges to zero.}
 We claim that then condition (1) in the statement occurs. To this end assume that $(||A(t)||_{L^\infty})_{t \in [0,T)}$ is bounded, say $D := \sup_{t \in [0,T)}||A(t)||_{L^\infty} < \infty$. Then one has by \eqref{eq:radii} that for all $j \in \mathbb{N}$ there exists $x_j \in \mathbb{R}^3$ such that 
 \begin{equation*}
     \epsilon_0 \leq \int_{f(t_j)^{-1}(\overline{B_{r_j}(x_j)})} |A(t_j)|^2 d\mu_{g_{f(t_j)}} \leq D^2 \mu_{g_{f(t_j)}} (f(t_j)^{-1} (\overline{B_{r_j}(x_j)}).
 \end{equation*}
 Using \eqref{eq:C3} we find that 
$
     \epsilon_0 \leq c \mathcal{W}(f_0) D^2 r_j^2. 
$
 This is a contradiction to the condition that up to a subsequence $r_j \rightarrow 0$. Hence we have shown that $(||A(t)||_{L^\infty(\Sigma)})_{t \in [0,T)}$ is unbounded. \\
\textbf{Case 2: There exists $\delta >0$ such that $r_j \geq \delta$ for all $j \in \mathbb{N}$.}
First observe that in this case $T=\infty$ since $t_j + c_0 r_j^4 < T$ by Theorem \ref{thm:THM22NEW}. If condition (1) in the statement holds true, i.e. $(||A(t)||_{L^\infty(\Sigma)})_{t \geq 0 }$ is unbounded, there is nothing to prove. Hence we may assume that $(||A(t)||_{L^\infty(\Sigma)})_{t \geq 0}$ is bounded.} Since $\lim_{t \rightarrow \infty} \LH(\gamma(t)) = \infty$, 
by Lemma \ref{lem:diamblow} we find that $\lim_{t \rightarrow \infty} \mathrm{diam}(f(t))(\mathbb{S}^1 \times \mathbb{S}^1) = \infty$ and hence condition (2) occurs. This proves the claim. 
\end{proof}

%%%%%%%%%%%%%%%%%%%%%%%%%%%%%%%%%%%%%%%%%%%%%%%%%%%%%%%%%%%%%%%%%
%%%%%%%%%%%%%%%%%%%%%%%%%%%%%%%%%%%%%%%%%%%%%%%%%%%%%%%%%%%%%%%%%

\section{An application: Energy minimization among conformal constraints}
%%%%%%%%%%%%%%%%%%%%%%%%%%%%%%%%%%%%%%%%%%%%%%%%%%%%%%%%%%%%%%%%%
%%%%%%%%%%%%%%%%%%%%%%%%%%%%%%%%%%%%%%%%%%%%%%%%%%%%%%%%%%%%%%%%%

A very vivid field of research is 
the minimization of the Willmore energy among all tori that are \emph{conformally equivalent} to a reference torus.  Being conformally equivalent means that the surface can be parametrized with a \emph{conformal} immersion of the reference torus. Taking a reference torus of the form $\frac{\mathbb{C}}{\mathbb{Z} + \omega \mathbb{Z}}$ one can also associate to every torus its \emph{conformal class}, defined as {follows}.

\begin{definition}[{Conformal class, cf. \cite[p.293]{SchCh1}}]\label{def:confclass}
Let $S \subset \mathbb{R}^3$ be a smooth torus. Then there exists a unique  $\omega \in \mathbb{C}$ satisfying $|\omega| \geq 1$,  $\mathrm{Im}(\omega) > 0 $ and $\mathrm{Re}(\omega) \in [0 ,\frac{1}{2}]$ such that there exists a conformal smooth immersion
\begin{equation*}
F : \frac{\mathbb{C}}{\mathbb{Z} + \omega \mathbb{Z}} \rightarrow S, 
\end{equation*}
i.e. 
\begin{equation}\label{eq:conf}
g_{i,j}^F = e^{2u} \delta_{i,j} \quad \textrm{for some $u \in C^\infty(\frac{\mathbb{C}}{\mathbb{Z} + \omega \mathbb{Z}})$}.
\end{equation}
The value $\omega =\omega(S) \in \mathbb{C}$ is then called the \emph{conformal class of $S$}. If $\omega$ is purely imaginary, we call the torus \emph{rectangular}.
\end{definition}

 As it turns out, all tori of revolution are rectangular (see also \cite[Prop.7]{LangerSingerWillmore}). 
\begin{prop}\label{prop:conftor}
Suppose that $\gamma\in C^\infty( \mathbb{S}^1, \mathbb{H}^2)$. Then {$F_{\gamma}(\mathbb{S}^1 \times \mathbb{S}^1)$, the torus with profile curve $\gamma$,} has conformal class 
\begin{equation*}
\omega(F_{\gamma}(\mathbb{S}^1 \times \mathbb{S}^1)) =\begin{cases}  i \frac{1}{2\pi}\LH(\gamma) & \LH(\gamma) \geq 2\pi, \\ i \frac{2\pi}{\LH(\gamma)}  & \LH(\gamma) < 2\pi. \end{cases}
\end{equation*}
In particular, each torus of revolution is rectangular and {$\omega(F_{\gamma}(\mathbb{S}^1 \times \mathbb{S}^1))$} is a continuous function of $\LH(\gamma)$.
\end{prop}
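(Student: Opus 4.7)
The plan is to directly exhibit a conformal diffeomorphism from a flat torus onto $F_\gamma(\mathbb{S}^1 \times \mathbb{S}^1)$, identify the resulting lattice, and then put it into the standard fundamental domain via a modular transformation.

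First, I would compute the first fundamental form of $F_\gamma$ in coordinates $(u,v) \in \mathbb{S}^1 \times \mathbb{S}^1$. A direct computation as in Section~\ref{sec:tori} gives $g_{11} = |\gamma'(u)|_{\R^2}^2$, $g_{22} = 4\pi^2 (\gamma^{(2)}(u))^2$ and $g_{12} = 0$. The natural way to make this conformal is to reparametrize $u$ by hyperbolic arc length. Concretely, set $L := \mathcal{L}_{\mathbb{H}^2}(\gamma)$ and let $s(u) := \int_0^u \frac{|\gamma'(\tau)|}{\gamma^{(2)}(\tau)}\, d\tau$, which is a smooth diffeomorphism $\mathbb{S}^1 \to \mathbb{R}/L\mathbb{Z}$ since $\gamma^{(2)} > 0$ and $\gamma$ is immersed. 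In coordinates $(s, v)$, using $|\gamma'|\, du = \gamma^{(2)}\, ds$, the metric becomes
\begin{equation*}
g = (\gamma^{(2)})^2\, ds^2 + 4\pi^2 (\gamma^{(2)})^2\, dv^2 = (\gamma^{(2)})^2\bigl( ds^2 + 4\pi^2\, dv^2 \bigr).
\end{equation*}

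Next, I would interpret this conformally: the map $\Phi(s,v) := s + 2\pi i v$ identifies $\mathbb{R}/L\mathbb{Z} \times \mathbb{R}/\mathbb{Z}$ with the flat torus $\mathbb{C}/\Lambda$, where $\Lambda := L\mathbb{Z} \oplus 2\pi i \mathbb{Z}$, and pulls back the Euclidean metric on $\mathbb{C}$ to $ds^2 + 4\pi^2 dv^2$. So $F_\gamma \circ \Phi^{-1} : \mathbb{C}/\Lambda \to F_\gamma(\mathbb{S}^1 \times \mathbb{S}^1)$ is a smooth conformal immersion as required in \eqref{eq:conf}. Rescaling by $1/L$ to normalize the first generator of the lattice to $1$, the torus is conformally equivalent to $\mathbb{C}/(\mathbb{Z} + \tau \mathbb{Z})$ with $\tau = \tfrac{2\pi i}{L}$.

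Finally, I would bring $\tau$ into the canonical fundamental domain $\{|\omega|\geq 1,\, \mathrm{Im}\,\omega > 0,\, \mathrm{Re}\,\omega \in [0,\tfrac12]\}$ from Definition~\ref{def:confclass}. Since $\tau$ is purely imaginary, the real part condition is automatic. If $L < 2\pi$ then $|\tau| > 1$ already, so $\omega = \tau = \tfrac{2\pi i}{L}$. If $L \geq 2\pi$ then $|\tau| \leq 1$, and applying the modular transformation $\tau \mapsto -1/\tau$ (which corresponds to swapping the two lattice generators, hence preserves the conformal class) yields $\omega = -1/\tau = \tfrac{iL}{2\pi}$, which now satisfies $|\omega| \geq 1$. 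In both cases $\omega$ is purely imaginary, so $F_\gamma(\mathbb{S}^1 \times \mathbb{S}^1)$ is rectangular. Continuity of $\omega$ in $L$ on each of the pieces $L<2\pi$ and $L>2\pi$ is obvious, and both formulas agree at $L = 2\pi$ with value $i$, so $\omega$ is continuous in $L = \mathcal{L}_{\mathbb{H}^2}(\gamma)$.

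The main non-routine point is recognizing that the switch at $L = 2\pi$ is exactly the modular involution $\tau \mapsto -1/\tau$ needed to enter the fundamental domain, and checking that this transformation preserves the conformal class (which it does, being induced by a relabeling of lattice generators). Everything else reduces to the explicit computation of $g_{ij}$ and the arc-length reparametrization, which are routine once the conformal ansatz $g = e^{2u}(ds^2 + 4\pi^2 dv^2)$ is identified.
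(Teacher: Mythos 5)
Your proposal is correct and takes essentially the same approach as the paper: both start from the diagonal first fundamental form $(|\gamma'|^2,\,4\pi^2(\gamma^{(2)})^2)$ in $(u,v)$-coordinates, reparametrize the profile curve by hyperbolic arc length to render the metric conformal to the flat metric, and read off the modulus from the resulting lattice. The only cosmetic difference is that the paper writes down two separate explicit conformal immersions $F$ (when $\LH(\gamma)\geq 2\pi$) and $\widetilde F$ (when $\LH(\gamma)<2\pi$) directly from the normalized flat torus, whereas you produce one conformal immersion from $\mathbb{C}/(L\mathbb{Z}+2\pi i\mathbb{Z})$ and then apply the modular involution $\tau\mapsto -1/\tau$ to land in the fundamental domain; this is a transparent repackaging of the same idea.
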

\begin{proof}
Let $\overline{\gamma}: \mathbb{R} \rightarrow \mathbb{R}$ be the $\frac{1}{2\pi}\LH(\gamma)$-periodic reparametrization of $\gamma$ with constant hyperbolic velocity $2\pi$. {If $\LH(\gamma) \geq 2\pi$} we choose {the smooth immersion
\begin{equation*}
F : \frac{\mathbb{C}}{\mathbb{Z}+ \frac{i\LH(\gamma)}{2\pi} \mathbb{Z}} \rightarrow F_{\gamma}(\mathbb{S}^1 \times \mathbb{S}^1)
\end{equation*} }
by 
\begin{equation}\label{eq:35}
F(s+ it) = \begin{pmatrix}
\overline{\gamma}^1(t) \\ \overline{\gamma}^2(t) \cos(2\pi s) \\ \overline{\gamma}^2(t) \sin(2\pi s) 
\end{pmatrix} .
\end{equation} 
An easy computation shows $g_{1,2}^F = g_{2,1}^F = 0$ and 
\begin{equation*}
g_{1,1}^F = (\overline{\gamma}^1)'^2 + (\overline{\gamma}^2)'^2 , \quad g_{2,2}^F = 4\pi^2(\overline{\gamma}^2)^2. 
\end{equation*}
Therefore by our choice of parametrization
\begin{equation*}
\frac{g_{1,1}^F}{g_{2,2}^F }= \frac{(\overline{\gamma}^1)'^2 + (\overline{\gamma}^2)'^2}{4\pi^2(\overline{\gamma}^2(t))^2} = 1. 
\end{equation*}
Hence \eqref{eq:conf} is satisfied {and $F$ is a conformal immersion}. Moreover one readily checks that $\omega = i \frac{\LH(\gamma)}{2\pi}$ meets the requirements of Definition \ref{def:confclass}. 

If $\LH(\gamma) < 2\pi$ we choose
\begin{equation*}
\widetilde{F} : \frac{\mathbb{C}}{\mathbb{Z} + i\frac{2\pi}{\LH(\gamma) }\mathbb{Z}} \rightarrow F_{\gamma}(\mathbb{S}^1 \times \mathbb{S}^1)
\end{equation*}
to be given by 
\begin{equation*}
\widetilde{F} (s + it ): = F(\frac{\LH(\gamma)}{2\pi}t + i\frac{\LH(\gamma)}{2\pi}s),
\end{equation*}
where $F$ is as in \eqref{eq:35} and the claim follows also in this case arguing as before.
\end{proof}

\begin{remark}\label{rem:lengthCT}
The conformal class of the Clifford Torus is $\omega = i$. Indeed,  
its defining curve is
\begin{equation*}
\gamma(t) = \begin{pmatrix}
0 \\ 1 
\end{pmatrix} + \frac{1}{\sqrt{2}} \begin{pmatrix}
\cos(t) \\ \sin(t)
\end{pmatrix} \quad t \in [- \pi, \pi).
\end{equation*}
From this we conclude with the {residue theorem (more precisely \cite[Prop. III.7.10]{Freitag})} that 
\begin{align*}
\LH(\gamma) & = \int_{-\pi}^{\pi} \frac{1}{\sqrt{2} + \sin(t)} \; \mathrm{d}t  = \int_{-\pi}^\pi \frac{1}{\sqrt{2}+ 2 \cos \frac{t}{2} \sin \frac{t}{2}} \; \mathrm{d}t \\ & = \int_{-\pi}^\pi \frac{1}{\sqrt{2} + 2 \frac{\tan \frac{t}{2}}{1 + \tan^2 \frac{t}{2}}} \; \mathrm{d}t = \int_{-\pi}^\pi \frac{1+ \tan^2 \frac{t}{2}}{\sqrt{2}(1+ \tan^2 \frac{t}{2}) + 2 \tan \frac{t}{2}} \; \mathrm{d}t
\\  &= 2 \int_{- \infty}^\infty  \frac{1}{\sqrt{2}(1+z^2) + 2z} \; \mathrm{d}z = 2 \int_{-\infty}^\infty  \frac{1}{\sqrt{2} ( z - \frac{-1+i}{\sqrt{2}}) ( z - \frac{-1-i}{\sqrt{2}}) }
\\& = 2 (2\pi i) \sum_{ a: \mathrm{Im}(a) > 0 }  \mathrm{Res}\left( \frac{-1}{\sqrt{2} ( z - \frac{-1+i}{\sqrt{2}}) ( z - \frac{1-i}{\sqrt{2}}) } , a \right) 
\\ & =  4\pi i \frac{1}{\sqrt{2}( \frac{-1+i}{\sqrt{2}}- \frac{-1-i}{\sqrt{2}})} = 2\pi . \qedhere
\end{align*} 
\end{remark}

An interesting problem is the minimization of the Willmore functional in each conformal class.
\begin{definition}[Conformally constrained Willmore minimization]
For $\omega$ as in Definition \ref{def:confclass} we set 
\begin{equation*}
M_{3,1}(\omega) := \inf \{ \mathcal{W}(f) : f: \frac{\mathbb{C}}{\mathbb{Z}+ \omega \mathbb{Z}} \rightarrow \mathbb{R}^3 \; \textrm{conformal immersion}\}
\end{equation*}
\end{definition}
In \cite[Prop. D.1]{SchCh1} the authors show that there exists some $b_0\geq 1$ such that $b \geq b_0$ implies $M_{3,1}(ib)< 8\pi$. {Our first contribution in this context is the new insight that $b_0=1$. We prove the existence of tori of revolution with Willmore energy smaller than $8 \pi$ in each conformal class $\omega=ib$, $b \geq 1$, via the Willmore flow studied in Theorem \ref{thm:MainThmPrecisely}.} Note that $\frac{\mathbb{C}}{\mathbb{Z} + ib\mathbb{Z}}$ and $\mathbb{S}^1 \times \mathbb{S}^1$ are diffeomorphic with diffeomorphism $\phi: \mathbb{S}^1 \times \mathbb{S}^1 \rightarrow \frac{\mathbb{C}}{\mathbb{Z} + ib\mathbb{Z}}$ being given by $\phi(u,v) = u+ i b v$. Hence the results about the Willmore flow in Theorem \ref{thm:MainThmPrecisely} apply also for surfaces defined on $\frac{\mathbb{C}}{\mathbb{Z} + ib\mathbb{Z}}$.

\begin{theorem}\label{thm:65}
For each $b \geq 1$ there exists a torus of revolution $T_b$ such that $\omega(T_b) = ib $ and $\mathcal{W}(T_b) < 8\pi$. 
\end{theorem}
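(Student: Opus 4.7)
The plan is to combine the convergence result of Theorem \ref{thm:MainThmPrecisely} with the characterization of the conformal class in Proposition \ref{prop:conftor} via an intermediate value argument on the hyperbolic length of the profile curve along the Willmore flow.

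For $b=1$, the Clifford torus is itself a torus of revolution with $\omega=i$ (Remark \ref{rem:lengthCT}) and $\W=2\pi^2<8\pi$, so it is a valid choice for $T_1$.

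For $b>1$, I would proceed in two steps. First, produce an initial torus of revolution $\widetilde T_b$ satisfying $\W(\widetilde T_b)<8\pi$ and $\LH(\widetilde\gamma_b)>2\pi b$, where $\widetilde\gamma_b$ is its profile curve. Then start the Willmore flow from $\widetilde T_b$: by Lemma \ref{lem:sympres} the evolution $(f(t))_{t\geq 0}$ remains a torus of revolution with some smooth family of profile curves $(\gamma(t))_{t\geq 0}$, and by Theorem \ref{thm:MainThmPrecisely} the flow converges in $C^k$ for all $k$ to a Clifford torus, possibly rescaled and translated in the direction $(1,0,0)$. Lemma \ref{lem:LEMMANEW} then yields, up to reparametrization, a smooth convergence $\gamma(t)\to\gamma_\infty$ in every $C^m$ to a profile curve of the limit; since $\gamma_\infty$ parametrizes a scaling/translation of the Clifford profile curve, and since hyperbolic length is invariant under reparametrization and under hyperbolic isometries (in particular under the rescalings $(x,y)\mapsto(\lambda x,\lambda y)$ that correspond to rescaling the torus), Remark \ref{rem:lengthCT} gives $\LH(\gamma_\infty)=2\pi$, hence $\LH(\gamma(t))\to 2\pi$ as $t\to\infty$. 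The map $t\mapsto\LH(\gamma(t))$ is continuous on $[0,\infty)$ since the flow is smooth and $\inf_{\mathbb{S}^1}\gamma^{(2)}(t)$ remains bounded away from zero uniformly in $t$, by the argument at the end of the proof of Lemma \ref{lem:diamblow}. Starting above $2\pi b$ and ending at $2\pi<2\pi b$, the intermediate value theorem delivers $t^*>0$ with $\LH(\gamma(t^*))=2\pi b$. Setting $T_b:=F_{\gamma(t^*)}$, Proposition \ref{prop:conftor} gives $\omega(T_b)=ib$ and the gradient flow monotonicity yields $\W(T_b)\leq\W(\widetilde T_b)<8\pi$.

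The main obstacle is the construction of $\widetilde T_b$ for arbitrary $b>1$. For $b$ in the bounded interval $\bigl(1,(4+\sqrt{16-\pi^2})/\pi\bigr)\approx(1,2.06)$, one may take $\widetilde T_b$ to be generated by a hyperbolic circle of radius $\rho$ with $\sinh\rho$ slightly larger than $b$: a direct computation combined with \eqref{eq:willelaEnergy} gives $\W(\widetilde T_b)=\pi^2\cosh^2(\rho)/\sinh(\rho)<8\pi$ in this regime, together with $\LH(\widetilde\gamma_b)=2\pi\sinh(\rho)>2\pi b$. For larger $b$ a more delicate construction is needed, and the natural candidate is a suitable perturbation of the near-critical profile curves $\gamma_\varepsilon$ of \cite[Cor.~6.4]{AdrianMarius} (which drive the singular evolutions of Lemma \ref{thm:singex} and whose elastic energy can be made arbitrarily close to the singular threshold $16$) designed to lower the elastic energy strictly below $16$ while keeping the hyperbolic length as large as required; this is the step I expect to be the hard part of the argument.
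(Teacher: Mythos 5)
Your strategy — run the Willmore flow from a suitable initial torus of revolution and apply the intermediate value theorem to the hyperbolic length of the profile curve, which sweeps continuously down to $2\pi$ as the flow converges to the Clifford torus — is exactly the paper's argument for covering the range $[1,b_0)$. The continuity of $t\mapsto\LH(\gamma(t))$, the identification $\LH(\gamma_\infty)=2\pi$ via Remark \ref{rem:lengthCT} and scaling/translation invariance, and the monotonicity $\W(F_{\gamma(t)})\leq\W(f_0)<8\pi$ are all the right ingredients. Note, however, that you do not actually need a separate initial datum $\widetilde T_b$ for each $b$: a single flow starting from one torus with large hyperbolic length covers every intermediate value of $b$ simultaneously by the intermediate value theorem, which is the streamlined version the paper uses.

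The genuine gap is the one you flag yourself: you need a torus of revolution with $\W<8\pi$ and $\LH$ at least $2\pi b$ for \emph{arbitrary} $b$, and your circle construction only works up to $b<(4+\sqrt{16-\pi^2})/\pi\approx 2.06$, since the Willmore energy $\pi^2(b+b^{-1})$ of the stereographic product torus exceeds $8\pi$ beyond that. The perturbation idea based on the near-critical curves of \cite[Cor.~6.4]{AdrianMarius} is not carried out and would in fact be delicate, because those curves have elastic energy \emph{at least} $16$ (equivalently $\W\geq 8\pi$), so it is not clear how to lower the energy strictly below $8\pi$ while simultaneously keeping the hyperbolic length arbitrarily large. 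The paper sidesteps this entirely: it invokes \cite[Prop.~D.1]{SchCh1}, which already produces (after mollifying to smooth) a torus of revolution $T_{b_0}$ with $\W(T_{b_0})<8\pi$ and conformal class $ib_0$ for some possibly large but fixed $b_0$; the flow starting from $T_{b_0}$, combined with your intermediate value argument, then covers all of $[1,b_0)$, and the cases $b\geq b_0$ are already handled by the cited result. Without that reference (or an equivalent construction for large $b$), your proof is incomplete for $b\geq (4+\sqrt{16-\pi^2})/\pi$.
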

\begin{proof}
From the construction in the Proof of \cite[Prop. D.1]{SchCh1} follows that there exists $b_0>1$ such that for all $b \geq b_0$ there exists a torus $T_b$ as in the statement. Note that actually the authors construct only a $C^{1,1}$-torus of revolution $T_b$ but by mollification of the profile curve one can easily obtain a smooth torus of revolution that satisfies the same requirements and differs not too much in the conformal class as the hyperbolic length depends continuously on $\gamma$. 

It remains to prove the claim for $b \in [1,b_0)$. For this choose $f_0 : \frac{\mathbb{C}}{\mathbb{Z} + ib_0 \mathbb{Z}} \rightarrow \mathbb{R}^3$ to be a smooth conformal parametrization of $T_{b_0}$ and let $(f(t))_{t \in (0, \infty)}$ be the evolution of $f_0$ by the Willmore flow, which is global and smoothly convergent to the {Clifford torus (possibly rescaled and translated in direction $(1,0,0)$)} by Theorem \ref{thm:MainThmPrecisely}. Moreover, $f(t)$ is a torus of revolution for all $t\geq 0$. Let $\gamma(t)= f(t)(\cdot,0)$ be the profile curve of $f(t)$ for all $t\geq 0$, i.e. $f(t)=F_{\gamma(t)}$. 
By  \eqref{eq:D17},  $t \mapsto \gamma(t)$ is a smooth family of curves for $t\geq0$ and in particular 
$\LH(\gamma(t))$ depends smoothly on $t$. By Proposition \ref{prop:conftor} one obtains that $ t\mapsto  \frac{1}{i}\omega( F_{\gamma(t)})$ is real valued and depends continuously on $t$. 
We show next that along a subsequence $t \mapsto \frac{1}{i} \omega( F_{\gamma(t)}) $ tends to $1$ as $t \rightarrow \infty$. 
By Lemma \ref{lem:LEMMANEW} we obtain that there exists some $t_j \rightarrow \infty$ such that an appropriate reparametrization of $\gamma(t_j)$ converges in $C^2(\mathbb{S}^1, \mathbb{R}^2)$ to $\gamma_\infty \in C^\infty(\mathbb{S}^1, \mathbb{H}^2)$, a profile curve of the Clifford torus (possibly rescaled and translated in direction $(1,0,0)$). Thus we have
 \begin{equation}
 2\pi = \LH ( \gamma_\infty) = \lim_{j \rightarrow \infty} \LH(t_j),
 \end{equation}
 i.e $\frac{1}{i} \omega ( F_{\gamma(t_j)} ) \rightarrow 1$ as $j \rightarrow \infty$. Since $\frac{1}{i}\omega(F_{\gamma(0)} ) = b_0$  each value between $1$ and $b_0$ is attained by the intermediate value theorem. From this the existence of a torus of revolution $T_{b}$ for each $b \in [1,b_0)$ follows.
\end{proof}
\begin{remark}
    Theorem \ref{thm:65} can also be proven using the results in \cite{AdrianMarius} concerning the elastic flow in $\mathbb{H}^2$ (which also dissipates the Willmore energy).
\end{remark}
 
In \cite{KS4} the authors prove that the infimum in a conformal class $\omega$ is attained once one can find a competitor with energy below $8\pi$. For $\omega = ib$ our small energy tori serve as such competitors and show that the infimum is attained. 

\begin{cor}
For each $b \geq 1$ the infimum $M_{3,1}(ib)$ is attained and the map $b \rightarrow M_{3,1}(ib)$ is continuous on $[1,\infty)$. 
\end{cor}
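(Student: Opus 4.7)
The plan is to combine Theorem \ref{thm:65} with the existence and compactness theory from \cite{KS4} for the conformally constrained Willmore problem.

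For attainment, for each $b \geq 1$ the torus of revolution $T_b$ from Theorem \ref{thm:65} satisfies $\omega(T_b) = ib$ and $\mathcal{W}(T_b) < 8\pi$, so the conformal class $ib$ contains a competitor with energy strictly below $8\pi$. The existence theorem of \cite{KS4} then produces a smooth conformal minimizer, and thus $M_{3,1}(ib)$ is attained.

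For continuity I would argue upper and lower semicontinuity separately. For upper semicontinuity at $b_0 \geq 1$, I take a minimizer $f$ of $M_{3,1}(ib_0)$ produced as above and consider a smooth one-parameter family of immersions $f_s$ with $f_0 = f$ whose conformal classes $\omega(f_s) = ib(s)$ cover a neighborhood of $b_0$. Such a deformation exists because the differential of the map immersion $\mapsto$ conformal class is surjective onto the one-dimensional rectangular stratum, which can be checked by explicit normal perturbations of $f$. Since $s \mapsto \mathcal{W}(f_s)$ is continuous, this yields $\limsup_{b \to b_0} M_{3,1}(ib) \leq M_{3,1}(ib_0)$. For lower semicontinuity, I take $b_n \to b_0$ and minimizers $f_n$ of $M_{3,1}(ib_n)$. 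Using $M_{3,1}(ib_n) \leq \mathcal{W}(T_{b_n})$, together with the fact that the construction in the proof of Theorem \ref{thm:65} depends continuously on $b$ (since the Willmore energy of a torus of revolution is a continuous function of the profile curve through $\mathcal{W}(F_\gamma) = \tfrac{\pi}{2}\mathcal{E}(\gamma)$ and the conformal class is a continuous function of $\LH(\gamma)$), the energies $\mathcal{W}(f_n)$ can be bounded uniformly below $8\pi - \delta$ on a neighborhood of $b_0$. The compactness theory for immersions with energy strictly below $8\pi$ (e.g.\ \cite[Thm 1.1]{Schaetzle} together with \cite{KS4}) then produces a subsequential limit $f_\infty$ modulo Möbius transformations and reparametrizations that is again a smooth conformal immersion. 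Möbius invariance and continuity of the conformal class under this convergence give $\omega(f_\infty) = ib_0$, and lower semicontinuity of the Willmore energy yields $M_{3,1}(ib_0) \leq \mathcal{W}(f_\infty) \leq \liminf_{n \to \infty} M_{3,1}(ib_n)$. Combined with the upper bound this proves continuity.

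The main obstacle is the lower semicontinuity step, which relies on a nontrivial compactness theorem for immersions of Willmore energy strictly below $8\pi$ and on the fact that the conformal class is indeed preserved when one passes to the Möbius-quotiented subsequential limit; a secondary subtlety is to verify that the competitors $T_{b_n}$ from Theorem \ref{thm:65} can be chosen with $\mathcal{W}(T_{b_n})$ uniformly bounded away from $8\pi$ locally in $b$, for which one inspects the construction (smoothing of an explicit profile curve and continuous dependence of both $\LH$ and $\mathcal{E}$ on $\gamma$).
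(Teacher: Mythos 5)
Your attainment step matches the paper's: invoke Theorem~\ref{thm:65} to produce a competitor of energy strictly below $8\pi$ in each class $ib$, then cite the existence theory of \cite{KS4}. For continuity, however, you take a genuinely different route: the paper simply cites \cite[Thm 7.3, Prop.~5.1]{KS4}, which \emph{already} assert that every $b$ with $M_{3,1}(ib)<8\pi$ is a point of both attainment \emph{and} continuity of $b\mapsto M_{3,1}(ib)$, so no semicontinuity argument is required once Theorem~\ref{thm:65} is available. You instead attempt to re-derive continuity from scratch.

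Two issues with that re-derivation. First, the upper-semicontinuity step has a real gap: you assert that the differential of the map ``immersion $\mapsto$ conformal class'' is surjective onto the rectangular stratum at a minimizer $f$, so that nearby rectangular classes can be reached by small normal perturbations of $f$. This fails precisely when $f$ is isothermic, which is exactly the degenerate case that \cite{KS4} has to treat separately; there is no a priori reason the constrained minimizer is non-isothermic, and ``explicit normal perturbations'' do not obviously produce the desired deformation in the degenerate case. One would have to either rule out isothermic minimizers here or perturb $f$ first to a nearby non-isothermic immersion of comparable energy, and neither is immediate. Second, your lower-semicontinuity step is essentially a re-implementation of the compactness and conformal-class-preservation machinery that is the content of \cite{KS4}, so it neither simplifies nor generalizes the paper's argument; you would also need to be careful that the energies $\mathcal{W}(T_{b_n})$ are uniformly bounded away from $8\pi$ locally in $b$, which holds because the Willmore energy is nonincreasing along the flow used in the proof of Theorem~\ref{thm:65}, but this needs to be said rather than assumed. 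In short, the natural (and the paper's) route is to use that \cite[Thm 7.3]{KS4} already gives continuity under the $8\pi$ threshold, and Theorem~\ref{thm:65} verifies that threshold on all of $[1,\infty)$.
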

\begin{proof}
{Theorem 7.3 and Proposition 5.1 in \cite{KS4} show that each $b \geq 1$  where  $ M_{3,1}(ib) < 8\pi$ is a point of continuity of $b \mapsto M_{3,1}(ib)$ and a point where the infimum in the definition of $M_{3,1}$ is attained. The claim then follows directly from this results and Theorem \ref{thm:65}}.
\end{proof}

The symmetries of the Willmore energy might suggest that the infimum of the Willmore energy in each class of rectangular tori (i.e. $\omega = ib$) is attained at a torus of revolution. This is in general still open. Far reaching results are obtained using 
a formulation of the Willmore energy in $\mathbb{S}^3$ by means of the \emph{stereographic projection}. Since the stereographic projection is conformal it does also not change the conformal class. Looking at the Willmore energy in $\mathbb{S}^3$ one can find tori with a lot of symmetries: For $\alpha \in (0,1)$ one can look at $\alpha \mathbb{S}^1 + \sqrt{1-\alpha^2} \mathbb{S}^1$. The stereographic projection of all of those are tori of revolution. In particular, those are good candidates for minimizers in their conformal classes $\omega = i\frac{ \sqrt{1-\alpha^2}}{\alpha}$. For $\alpha = \frac{1}{\sqrt{2}}$ we obtain the Clifford torus which is the global minimizer and hence surely the minimizer in its conformal class. In \cite{SchCh2}, \cite{SchCh1} the authors show that for conformal classes close to the Clifford torus one still gets minimizers of the form $\alpha \mathbb{S}^1 \times \sqrt{1-\alpha^2} \mathbb{S}^1$. 
{More precisely, the result \cite[Thm 3.1]{SchCh1} shows that there exists $b_1 > 1$ such that for all $b \leq  b_1$ one has that   $M_{3,1}(b)$ is attained by $\Sigma_b := P \left(  \frac{1}{\sqrt{1+b^2}} \mathbb{S}^1 \times \frac{b}{\sqrt{1+b^2}} \mathbb{S}^1 \right)$, where $P : \mathbb{S}^3 \rightarrow \mathbb{R}^3$ denotes the stereographic projection. The authors also obtain that $b_1 < \infty$.} The critical value $b_1$ can be understood as a point where a symmetry of the minimizers breaks down. They also note that this property has to break down for large conformal classes, cf. \cite[p.293-294]{SchCh1}. In the following we will be able to find an explicit upper bound on the symmetry-breaking value $b_1$. This result is now obtained by energy comparison. There are other (sharper) results using a stability discussion of $\Sigma_b$ in $\mathbb{S}^3$, cf. \cite{KuwertLorenz}.   

\begin{cor}
Let {$b_1\geq 1$ be such that for $b\leq b_1$ the minimizer for $M_{3,1}(b)$ is attained by $\Sigma_b := P \left(  \frac{1}{\sqrt{1+b^2}} \mathbb{S}^1 \times \frac{b}{\sqrt{1+b^2}} \mathbb{S}^1 \right)$, where $P : \mathbb{S}^3 \rightarrow \mathbb{R}^3$ denotes the stereographic projection.} Then 
\begin{equation}\label{eq:dings}
b_1 < \frac{4}{\pi} + \sqrt{\frac{16}{\pi^2}- 1} \simeq 2.06136 \, .
\end{equation}
\end{cor}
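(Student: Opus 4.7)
The strategy is a direct energy comparison: compute the Willmore energy of $\Sigma_b$ explicitly as a function of $b$, and exploit Theorem \ref{thm:65} to produce a revolution competitor of strictly smaller energy whenever $\mathcal{W}(\Sigma_b) > 8\pi$. The value of $b$ at which this happens first gives the claimed upper bound on $b_1$.

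First I would compute $\mathcal{W}(\Sigma_b)$ in closed form. Writing $\alpha = 1/\sqrt{1+b^2}$ and $\beta = b/\sqrt{1+b^2}$, the flat torus $T_{\alpha,\beta} = \alpha\mathbb{S}^1 \times \beta\mathbb{S}^1 \subset \mathbb{S}^3$ has constant principal curvatures $\beta/\alpha$ and $-\alpha/\beta$ (with respect to a unit normal in $\mathbb{S}^3$), so its mean curvature $H_{\mathbb{S}^3}$ satisfies $H_{\mathbb{S}^3}^2 = (\beta^2-\alpha^2)^2/(4\alpha^2\beta^2)$. Using the conformal invariance of the Willmore energy together with the well-known identity $\mathcal{W}(P(T)) = \int_T (H_{\mathbb{S}^3}^2 + 1)\, dA$ for surfaces $T \subset \mathbb{S}^3$ not passing through the pole of stereographic projection, and the area $4\pi^2 \alpha\beta$ of $T_{\alpha,\beta}$, one obtains after using $\alpha^2+\beta^2 = 1$
\begin{equation*}
\mathcal{W}(\Sigma_b) = 4\pi^2\alpha\beta\Big(\tfrac{(\beta^2-\alpha^2)^2}{4\alpha^2\beta^2} + 1\Big) = \pi^2\,\frac{(\alpha^2+\beta^2)^2}{\alpha\beta} = \frac{\pi^2(1+b^2)}{b}.
\end{equation*}
As a sanity check, $b=1$ recovers $2\pi^2$, the energy of the Clifford torus.

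Next I would combine this with Theorem \ref{thm:65} to force a strict inequality for $b_1$. By that theorem, for every $b \geq 1$ there is a torus of revolution $T_b$ with $\omega(T_b) = ib$ and $\mathcal{W}(T_b) < 8\pi$. Hence if for some $b \geq 1$ we have $\mathcal{W}(\Sigma_b) \geq 8\pi$, then $T_b$ is a conformally equivalent competitor with strictly smaller energy, so $\Sigma_b$ cannot realize $M_{3,1}(ib)$; by the definition of $b_1$ this means $b > b_1$.

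Finally I would identify the critical $b$. Solving $\pi^2(1+b^2)/b = 8\pi$, i.e.\ $\pi b^2 - 8b + \pi = 0$, yields the larger root
\begin{equation*}
b^{\ast} = \frac{4 + \sqrt{16 - \pi^2}}{\pi} = \frac{4}{\pi} + \sqrt{\frac{16}{\pi^2} - 1}.
\end{equation*}
Since $b \mapsto \mathcal{W}(\Sigma_b)$ is strictly increasing on $[1,\infty)$ (its derivative is $\pi^2(b^2-1)/b^2$), we have $\mathcal{W}(\Sigma_{b^{\ast}}) = 8\pi > \mathcal{W}(T_{b^{\ast}})$, so already $b^{\ast} > b_1$, giving the strict inequality \eqref{eq:dings}. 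There is no hard step: the only thing to be careful about is the standard curvature/area computation for $T_{\alpha,\beta}$ in $\mathbb{S}^3$ and the conformal-invariance identity relating the Willmore energies in $\mathbb{S}^3$ and $\mathbb{R}^3$ via stereographic projection; both are classical.
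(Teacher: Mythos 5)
Your proposal is correct and follows essentially the same route as the paper: compute $\mathcal{W}(\Sigma_b)$ via Topping's formula for surfaces in $\mathbb{S}^3$, obtain $\mathcal{W}(\Sigma_b) = \pi^2\,(b+1/b)$, and compare against the revolution competitor $T_b$ with $\mathcal{W}(T_b)<8\pi$ from Theorem \ref{thm:65} to force $b_1 < b^\ast$. The only cosmetic difference is that you evaluate the mean curvature directly in $\mathbb{S}^3$ from the known principal curvatures $\beta/\alpha$, $-\alpha/\beta$, whereas the paper passes through $\mathbb{R}^4$ via $|\vec{H}_{\mathbb{R}^4}|^2 = |\widetilde{H}|^2 + 4$ and an explicit parametrization; both yield the same closed form.
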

\begin{proof} 
Let $b>1$ be such that $\Sigma_b$ is a minimizer and let $T_b$ be the torus constructed in Theorem \ref{thm:65}. Then, necessarily, $\mathcal{W}(\Sigma_b) \leq \mathcal{W}(T_b) < 8\pi$. This inequality implies the claim once we have shown that $\mathcal{W}(\Sigma_b) = \pi^2 ( b+ \frac{1}{b})$.\\
For this according to \cite[Eq. (9)]{Topping} for all $f: \Sigma \rightarrow \mathbb{R}^3$ 
\begin{equation*}
\mathcal{W}(f) = \int_{\Sigma} \left( \frac{1}{4} |\widetilde{H}|_{P^{-1}(f)}^2 + 1 \right) \; \mathrm{d}\mu_f
\end{equation*}
where $\widetilde{H}$ denotes the mean curvature of {$P^{-1}(f)$} in $\mathbb{S}^3$ and $\mu_f$ denotes the surface measure in $\mathbb{S}^3$. By {\cite[Eq. (2.3)]{SchCh2}} we have $|\vec{H}_{\mathbb{R}^4}|^2 = |\widetilde{H}|^2 + 4$ and hence we obtain 
\begin{equation*}
\mathcal{W}(f) = \frac{1}{4} \int_{\Sigma} |\vec{H}_{\mathbb{R}^4}(P^{-1}(f))|^2 \; \mathrm{d}\mu_f
\end{equation*}
Having now arrived in $\mathbb{R}^4$ and using that $P^{-1}(\Sigma_b) = \frac{1}{\sqrt{1+b^2}} \mathbb{S}^1 \times \frac{b}{\sqrt{1+b^2}} \mathbb{S}^1$ we can define $r := \frac{1}{\sqrt{1+b^2}}$ and use the parametrization 
\begin{equation*}
F: \mathbb{S}^1 \times \mathbb{S}^1 \ni (\phi , \theta) \mapsto \begin{pmatrix}
r \cos(2\pi \phi) \\ r \sin(2\pi \phi) \\ \sqrt{1-r^2} \cos(2\pi \theta)  \\ \sqrt{1-r^2} \sin(2 \pi \theta) 
\end{pmatrix} \in \mathbb{R}^4.
\end{equation*}
A computation reveals that 
\begin{equation*}
g = 4\pi^2 \begin{pmatrix}
r^2  & 0 \\ 0 & 1-r^2
\end{pmatrix}.
\end{equation*}
We obtain that $\frac{1}{2\pi r} \frac{\partial}{\partial \phi}$ and $\frac{1}{2\pi \sqrt{1-r^2}} \frac{\partial}{\partial \theta}$ is an orthonormal basis $T_{(\phi, \theta)}(\mathbb{S}^1, \mathbb{S}^1)$ and hence 
\begin{equation*}
\vec{H}_{\mathbb{R}^4}(F) = \frac{1}{4\pi^2 r^2} \frac{\partial^2F }{\partial \phi^2}  + \frac{1}{4\pi^2(1-r^2)} \frac{\partial^2 F}{\partial \theta^2},
\end{equation*}
which implies that
\begin{equation*}
|\vec{H}_{\mathbb{R}^4} (F) |^2 = \frac{1}{r^2} + \frac{1}{1-r^2}.
\end{equation*}
Also note that $\sqrt{\mathrm{det}(g)} = 4\pi^2 r \sqrt{1-r^2}$. The Willmore energy then reads 
\begin{equation*}
\mathcal{W}(\Sigma_b) = \frac{1}{4} \left(   \frac{1}{r^2} + \frac{1}{1-r^2} \right) 4\pi^2 r \sqrt{1-r^2} = \pi^2 \left( \frac{\sqrt{1-r^2}}{r} + \frac{r}{\sqrt{1-r^2}} \right) 
\end{equation*}
and the claim follows using that by definition of $r$ one has $r = \frac{1}{\sqrt{1+b^2}}$. 
\end{proof}

%%%%%%%%%%%%%%%%%%%%%%%%%%%%%%%%%%%%%%%%%%%%%%%%
%%%%%%%%%%%%%%%%%%%%%%%%%%%%%%%%%%%%%%%%%%%%%%
\appendix
%%%%%%%%%%%%%%%%%%%%%%%%%%%%%%%%%%%%%%%%%%%%%%%%%
%%%%%%%%%%%%%%%%%%%%%%%%%%%%%%%%%%%%%%%%%%%%%%%%%

%%%%%%%%%%%%%%%%%%%%%%%%%%%%%%%%%%%%%%%%%%%%%%%%%%%%%%%%%%%%%%%%%%%%%
\section{Consistency between extrinsic and intrinsic view}
\label{sec:AppendixGeometry}
%%%%%%%%%%%%%%%%%%%%%%%%%%%%%%%%%%%%%%%%%%%%%%%%%%%%%%%%%%%%%%%%%%%%%%%%
 
In literature there are multiple ways to define geometric quantities like curvature. This also leads to different notions of the Willmore energy and its gradient flow. Here we want to convince the reader that all those notions are consistent with the one we chose. For this we first have to do some computations in local coordinates. Let $M$ be a smooth $2$-D manifold, ${f:M \rightarrow \mathbb{R}^3}$ be an immersion and $\psi : M \rightarrow \mathbb{R}^2$ be a chart for $M$ with coordinates $(u^1,u^2)$. Given vector-field $X = x^i \frac{\partial}{\partial u^i} $ and $Y = y^j \frac{\partial}{\partial u^j}$ then
\begin{equation}\label{eq:Ainccoo}
A(X,Y) = x^i y^j \left( \frac{\partial^2 f}{\partial u^i \partial u^j} - \Gamma_{i,j}^k \frac{\partial f}{\partial u^k} \right),  
\end{equation}
where $\Gamma_{i, j}^k $ are the Christoffel symbols {defined using the metric $g_{ij}=\langle \frac{\partial}{\partial u^i}f, \frac{\partial}{\partial u^j}f\rangle$}. In particular, we see that the second fundamental form is symmetric.

If $ f: M \rightarrow \mathbb{R}^3$ is an {isometric immersion}  then for each local chart $(u^1,u^2)$ of $M$ one can define a unit normal field $\vec{N} = \frac{\partial_{u^1} f \times \partial_{u^2}f }{|\partial_{u^1} f \times \partial_{u^2}f|}$ for $(u^1, u^2)$ and rewrite
{\begin{align}\nonumber
A(X,Y) & = x^i y^j \left( \frac{\partial^2 f}{\partial u^i \partial u^j} - g^{kl}\langle \frac{\partial^2 f}{\partial u^i \partial u^j} , \frac{\partial f}{\partial u^l} \rangle_{\R^3}  \frac{\partial f}{\partial u^k} \right) %\label{eq:1.14} 
\\ & = x^i y^j  \langle \frac{\partial^2 f}{\partial u^i \partial u^j}, \vec{N} \rangle_{\R^3} \vec{N} \label{eq:1.15} .
\end{align}}
If $f:M \rightarrow f(M) \subset \mathbb{R}^3$ is now an isometric embedding and $f(M)$ is orientable, $\vec{N}$ is independent of the chosen chart and \eqref{eq:1.15} coincides with the usual definition of the second fundamental form. 

Let us now choose normal coordinates $(u^1,u^2)$ and fix $e_1 = \frac{\partial f}{\partial u^1}$ and $e_2 = \frac{\partial f}{\partial u^2}$. Then by \eqref{eq:1.15} we find 
$$A(e_i,e_j) = h_i^j \vec{N}, $$ 
where $h_i^j$ denote the usual coefficients of the Weingarten map. Then, the mean curvature (vector) and Gauss curvature are given by  
{\begin{align}\label{eq:Hvec} 
\vec{H} & = A(e_1,e_1) + A(e_2,e_2) = (h_1^1+h_2^2) \vec{N}= H \vec{N} ,\\ \nonumber
K & : =\langle A(e_1,e_1) , A(e_2,e_2) \rangle_{\R^3} -  \langle A(e_1,e_2) , A(e_2,e_1) \rangle_{\R^3} = h^1_1 h^2_2 -(h^1_2)^2,
\end{align}}
where $H$ denotes the \emph{scalar} mean curvature. For $Q(\Aring)H$, the {\lq cubic\rq}-term in the Willmore equation, one easily derives
$$ Q(\Aring{}) \vec{H} = \frac12 H (H^2-4K). $$
With similar computations,
{\begin{align*}
|A|^2 & = |H|^2 - 2K = \sum_{i,j^=1}^2 \langle A(e_i,e_j), A(e_i, e_j) \rangle_{\R^3} ,
\end{align*}}
and hence for each toroidal immersion {$f: \mathbb{S}^1 \times \mathbb{S}^1 \rightarrow \mathbb{R}^3$} one has by the Gauss-Bonnet Theorem 
\begin{equation}\label{eq:secwillmore}
\int_\Sigma |A|^2 \; \mathrm{d}\mu_f = 4 \mathcal{W}(f) .
\end{equation}
Similarly, again in the case of tori, an easy computation shows that $|\Aring{}|^2 = \frac{1}{2}H^2 - 2 K $  and 
\begin{equation*}
\int_\Sigma |\Aring{}|^2 \; \mathrm{d}\mu_f = 2   \W(f) .
\end{equation*}
Also, note that $|\Aring{}|^2 \leq |A|^2$. 

%%%%%%%%%%%%%%%%%%%%%%%%%%%%%%%%%%%%%%%%%%%%%%%%%%%%%%%%%%%%%%%%%%%%
%%%%%%%%%%%%%%%%%%%%%%%%%%%%%%%%%%%%%%%%%%%%%%%%%%%%%%%%%%%%%%%%%%%%
\section{Tensor calculus}\label{app:tensor}
%%%%%%%%%%%%%%%%%%%%%%%%%%%%%%%%%%%%%%%%%%%%%%%%%%%%%%%%%%%%%%%%%%%%%
%%%%%%%%%%%%%%%%%%%%%%%%%%%%%%%%%%%%%%%%%%%%%%%%%%%%%%%%%%%%%%%%%%%%%

Throughout the article, we use a nonstandard notation for some differential geometric concepts involving connections, derivatives and tensors. We discuss here that our notation is consistent with the one used in %It is important to check that the notation is consistent with 
the articles \cite{KS1}, \cite{KS2}, \cite{KS3}, since many results cited there are used. Here we shall briefly introduce these concepts and clarify their meaning. Let $M$ be a smooth $2$-dim. manifold and $f \in C^\infty(M;\mathbb{R}^{{n}})$ be an immersion. Moreover, let $\nabla$ be the Levi-Civita connection on $M$. For a vector field $X \in \mathcal{V}(M)$ we define the \emph{full derivative} $D_X: C^\infty(M;\mathbb{R}^{{n}}) \rightarrow C^\infty(M;\mathbb{R}^{{n}})$ via 
\begin{equation}\label{eq:DXDEF}
D_X G := \sum_{i = 1}^{{n}} X(G_i) \vec{e}_i, \quad \textrm{whenever} \quad G = \sum_{i = 1}^{{n}} G_i \vec{e}_i \in C^\infty(M;\mathbb{R}^3), 
\end{equation}
and ${\{ \vec{e}_1, \vec{e}_2 ,..., \vec{e}_n \}}$ is the canonical basis of $\mathbb{R}^{{n}}$. We say that $G \in C^\infty(M;\mathbb{R}^{{n}})$ is a \emph{normal vector field} if $G(p) \perp df_p(T_pM)$ for all $p \in M$. We define for short $N_pM := df_p(T_pM)^\perp$ and $NM := \bigsqcup_{p \in M } N_pM $ the \emph{normal bundle}. For such a normal vector field $G \in C^\infty(M,NM)$ we define the \emph{normal connection} of $G$ to be 
\begin{equation}\label{eq:normalconn}
\nabla^\perp_X G \big\vert_p := \pi_{N_pM} ( D_X G \big\vert_p ) = D_X G^\perp,
\end{equation}
where $\pi_U$ denotes the orthogonal projection on $U$. A normal vector field that will be used very frequently is $Y= A(Z,W)$ for some $Z,W \in \mathcal{V}(M)$. This is however not just a normal vector field but each of its components is also a $(2,0)$-tensor -- we may think of $p \rightarrow A_p(Z,W)$ as a $(2,0)$-tensor on $M$ with values in the normal bundle $NM$, i.e. a for each $p \in M$ it is a multilinear map from $T_pM^2$ to $N_pM$. 
If we do so, the standard concept of \emph{tensorial connections} (cf. \cite[Lemma 4.6]{Lee} is not applicable, since it is needed that the tensor takes values in $\mathbb{R}$. One can however overcome this by using two \emph{different connections}, namely $\nabla$ and $\nabla^\perp$. More precisely, for a $(k,0)$-tensor $F: p \mapsto (F_p : T_pM^k \rightarrow N_pM)$ on $M$ with values in the normal bundle $NM$ we can define a $(k+ 1,0)$-tensor $\nabla^\perp F$ via
\begin{equation}\label{eq:tnsderiv}
\nabla^\perp F(X_1,...,X_{k+1}) := \nabla^\perp_{X_1} F( X_2,...,X_{k+1} )- \sum_{j = 2}^{k+1} F( X_2,...,\nabla_{X_1}X_j,...,X_{k+1})  ,
\end{equation}
for $X_1,...,X_{k+1} \in \mathcal{V}(M)$.
It can easily be checked that $\nabla^\perp F$ is indeed a $(k+1)$-tensor, i.e. $\nabla^\perp F_p $ depends only on $X_1(p),...,X_{k+1}(p)$. Moreover, if $F$ is a $(0,0)$ tensor on $M$ with values in $NM$, i.e. $F \in C^\infty(M;NM)$ then the  notation of $\nabla^\perp F$ coincides with the previous definition in \eqref{eq:normalconn}. We remark that in \cite{KS1}, \cite{KS2}, \cite{KS3}, $\nabla^\perp$ and $\nabla$ are both denoted by $\nabla$. 
%In the course of the article 
The $L^\infty(M)$-norm of a $(k,0)$ tensor $F$ on $M$ with values in $NM$ is defined to be 
\begin{equation*}
||F||_{L^\infty(M)} := \sup_{p \in M} \sup_{\{ E_1,E_2 \}\; \textrm{orthonormal basis of} \; T_pM } \sum_{i_1,...,i_k = 1}^2 |F(E_{i_1},...,E_{i_k})|,
\end{equation*}
where $|\cdot|$ denotes the norm in $\mathbb{R}^{{n}}$.
 We will also use very frequently \cite[Eq. (2.7)]{KS1}, which we state here for the reader's convenience. Let $f  \in C^\infty(M; \mathbb{R}^{{n}})$ be an immersion with second fundamental form $A$ and normal bundle $NM$. Then for each $G \in C^\infty(M;NM)$ and $X \in \mathcal{V}(M)$ one has
 {\begin{equation}\label{eq:DXundnablaX}
 D_X G = \nabla^\perp_X G - \sum_{i = 1}^2 \langle G, A(X,E_i) \rangle_{\R^n} D_{E_i} f,
 \end{equation}}
 where $\{ E_1, E_2 \}$ is an arbitrary orthonormal basis of $T_pM$ with respect to $g_f := f^*g_{\mathbb{R}^{{n}}}$.% and $ ( \cdot, \cdot ) $ is the standard inner product of $\mathbb{R}^3$.
 We also remark that we can define a tensorial version of $D$, treated as a tensor on $M$ with values in $\mathbb{R}^{{n}}$. The transformation law we prescribe here is analogous to \eqref{eq:tnsderiv}, namely if $F$ is a $(k,0)$ Tensor on $M$ with values in $\mathbb{R}^{{n}}$ we define for $X_1,...,X_{k+1} \in \mathcal{V}(M)$ 
 \begin{equation*}
 DF (X_1,...,X_{k+1}) := D_{X_1} F(X_2,...,X_{k+1}) -  \sum_{j = 2}^{k+1} F(X_2,...,\nabla_{X_1} X_j, ..., X_{k+1}). 
 \end{equation*}
 As an important special case we obtain for $f \in C^\infty(M,\mathbb{R}^{{n}})$ 
 \begin{equation*}
 D^2 f(X,Y) = D_XD_Yf - D_{\nabla_XY } f.  
 \end{equation*}
If $f$ is additionally an immersion, this formula yields exactly the second fundamental form (cf. \eqref{eq:Def2FF}). Hence one could also write $A[f] =D^2f$.  

%%%%%%%%%%%%%%%%%%%%%%%%%%%%%%%%%%%%%%%%%%%%%%%%%%%%%%%%%%%%%%%%%%%%%%%%
%%%%%%%%%%%%%%%%%%%%%%%%%%%%%%%%%%%%%%%%%%%%%%%%%%%%%%%%%%%%%%%%%%%%%%%%
\section{On the smooth convergence of surfaces}\label{app:smoothconv}
%%%%%%%%%%%%%%%%%%%%%%%%%%%%%%%%%%%%%%%%%%%%%%%%%%%%%%%%%%%%%%%%%%%%%%%%
%%%%%%%%%%%%%%%%%%%%%%%%%%%%%%%%%%%%%%%%%%%%%%%%%%%%%%%%%%%%%%%%%%%%%%%%

Here we present some useful results concerning smooth convergence on compact subsets of $\mathbb{R}^n$, which we will simply call smooth convergence. 

 We remark that smooth convergence, see Definition \ref{def:smocon}, actually takes place in the equivalence class of surfaces that coincide up to reparametrization, more precisely

\begin{remark}\label{rem:equivclass}
Consider a sequence of immersions $(f_j)_{j \in \mathbb{N}}$, $f_j : \Sigma \rightarrow \mathbb{R}^n$, that converges to $\widehat{f}$ smoothly on compact subsets of $\mathbb{R}^n$ and a sequence of diffeomorphisms $(\Psi_j)_{j \in \mathbb{N}}$, $\Psi_j : \Sigma_j \rightarrow \Sigma$ with $\Sigma_j$ smooth manifold without boundary. Then it follows from the definition of smooth convergence, that $f_j \circ \Psi_j$ converges to $\widehat{f}$ smoothly on compact subsets of $\mathbb{R}^n$. Moreover if $\Psi: \widetilde{\Sigma} \rightarrow \widehat{\Sigma}$ is yet another diffeomorphism then $f_j $ also converges to $f \circ \Psi$ smoothly on compact subsets of $\mathbb{R}^n$. 
\end{remark}

\begin{remark}\label{rem:toppres}
In general, smooth convergence is not \emph{topology-preserving}, i.e. the topologies of $\widehat{\Sigma}$ and $\Sigma$ need not coincide, cf. \cite[Fig. 6]{Breuning}. The situation is better if $\Sigma$ is connected and $\widehat{\Sigma}$ has a compact component $C$. %From 
\cite[Lemma 4.3]{KS2} gives that %follows then that 
$\Sigma,\widehat{\Sigma}$ are diffeomorphic. By the previous remark they can then also chosen to be equal. 
\end{remark}

Next we examine how relevant geometric quantities behave with respect to smooth convergence, for instance the diameter.

\begin{lemma} \label{lem:appdiam}
Suppose that $(f_j)_{j=1}^{\infty} : \Sigma \rightarrow \mathbb{R}^{{n}}$ is a sequence that converges smoothly on compact subsets of $\mathbb{R}^n$ to $\widehat{f}: \widehat{\Sigma}  \rightarrow \mathbb{R}^{{n}}$. Then 
\begin{equation*}
\diam \widehat{f}(\widehat{\Sigma}) \leq   \liminf_{j \rightarrow \infty} \diam f_j(\Sigma) .
\end{equation*}  
\end{lemma}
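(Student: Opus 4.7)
The plan is to estimate the Euclidean distance between any two points in $\widehat{f}(\widehat{\Sigma})$ by the diameter of $f_j(\Sigma)$ plus a vanishing error, and then take the supremum at the end.

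First I would fix two arbitrary points $p, q \in \widehat{\Sigma}$. Since $\widehat{\Sigma}(j) = \{r \in \widehat{\Sigma} : |\widehat{f}(r)| < j\}$ exhausts $\widehat{\Sigma}$, there exists $j_0 = j_0(p,q) \in \mathbb{N}$ such that $p,q \in \widehat{\Sigma}(j)$ for all $j \geq j_0$. By Definition \ref{def:smocon}, for every such $j$ one has the graph representation
\begin{equation*}
f_j \circ \phi_j(p) = \widehat{f}(p) + u_j(p), \qquad f_j \circ \phi_j(q) = \widehat{f}(q) + u_j(q),
\end{equation*}
where $\phi_j : \widehat{\Sigma}(j) \to U_j \subset \Sigma$ is a diffeomorphism and $u_j \in C^\infty(\widehat{\Sigma}(j), \mathbb{R}^n)$ satisfies $\|u_j\|_{L^\infty(\widehat{\Sigma}(j))} \to 0$ as $j \to \infty$ (this is the $k=0$ case of the tensorial smallness condition in the definition).

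Next I would apply the triangle inequality to obtain
\begin{equation*}
|\widehat{f}(p) - \widehat{f}(q)| \leq |f_j(\phi_j(p)) - f_j(\phi_j(q))| + |u_j(p)| + |u_j(q)| \leq \diam f_j(\Sigma) + 2 \|u_j\|_{L^\infty(\widehat{\Sigma}(j))},
\end{equation*}
valid for every $j \geq j_0$. Taking $\liminf_{j \to \infty}$ on both sides and using $\|u_j\|_{L^\infty(\widehat{\Sigma}(j))} \to 0$ yields
\begin{equation*}
|\widehat{f}(p) - \widehat{f}(q)| \leq \liminf_{j \to \infty} \diam f_j(\Sigma).
\end{equation*}
Since $p, q \in \widehat{\Sigma}$ were arbitrary, taking the supremum over such pairs gives $\diam \widehat{f}(\widehat{\Sigma}) \leq \liminf_{j \to \infty} \diam f_j(\Sigma)$, as claimed.

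There is no real obstacle here: everything is a direct consequence of the definition of smooth convergence, combined with the observation that Euclidean distance is lower semicontinuous under uniform approximation of two chosen points. The mild subtlety, which is easy to handle, is that the approximation $f_j \circ \phi_j \approx \widehat{f}$ only holds on the exhausting sets $\widehat{\Sigma}(j)$; but because we first fix the pair $(p,q)$ and then let $j \to \infty$, both points eventually lie in $\widehat{\Sigma}(j)$, so this poses no difficulty.
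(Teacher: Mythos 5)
Your proof is correct and follows essentially the same approach as the paper: use the graph representation $f_j\circ\phi_j = \widehat f + u_j$ on $\widehat\Sigma(j)$, apply the triangle inequality to bound $|\widehat f(p)-\widehat f(q)|$ by $\diam f_j(\Sigma) + 2\|u_j\|_{L^\infty}$, and pass to the limit. The only cosmetic difference is that the paper begins with a sequence of point pairs whose distance approaches $\diam\widehat f(\widehat\Sigma)$, while you fix an arbitrary pair first and take the supremum at the end; both handle the case $\diam\widehat f(\widehat\Sigma)=\infty$ correctly.
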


\begin{proof}
Suppose that $(\widehat{f}(p_k))_{k = 1}^\infty , (\widehat{f}(q_k))_{k =1}^\infty \subset \widehat{f}(\widehat{\Sigma})$ are sequences such that 
{$$|\hat{f}(p_k) - \hat{f}(q_k)| \rightarrow \diam \widehat{f}(\widehat{\Sigma}) \, .$$ 
Then, by Definition \ref{def:smocon} for each $k \in \mathbb{N}$ there exists $j(k) \in \mathbb{N}$ such that $p_k,q_k \in \widehat{\Sigma}(j)$  for all $j \geq j(k) $.
Now \eqref{eq:graph} implies that for all $j \geq j(k)$}
\begin{align*}
|\widehat{f}(p_k)- \widehat{f}(q_k) | & \leq |f_j \circ \phi_j (p_k) - f_j \circ \phi_j(q_k)| + |u_j(p_k) - u_j(q_k)| \\ & 
\leq \diam f_j(\Sigma) + 2 ||u_j||_{L^\infty(\hat{\Sigma}(j))} .
\end{align*}
Letting first $j \rightarrow \infty$ and then $k \rightarrow \infty$ we obtain the claim.
\end{proof}

Now we study the lower semicontinuity {with respect to smooth convergence of the Willmore energy. As a first step we prove the following result.}
\begin{lemma}\label{lem:B3}
{Let $(f_j)_{j \in \mathbb{N}}$, $f_j: \Sigma \rightarrow \mathbb{R}^{{n}}$ be} a sequence of immersions that converges smoothly on compact subsets of $\mathbb{R}^n$ to an immersion $\widehat{f}: \widehat{\Sigma}  \rightarrow \mathbb{R}^{{n}}$. % in the sense of Definition \ref{def:smocon}. Suppose that 
Let $(U, \psi)$ be a chart for $\widehat{\Sigma}$ such that $U \subset \widehat{\Sigma}(J)$ for some $J \in \mathbb{N}$ and $\widehat{g}_{i\tau}\circ \psi^{-1} \in C^1(\overline{\psi(U)})$  $\widehat{\Gamma}_{i\tau}^\alpha \circ \psi^{-1} \in C^0(\overline{\psi(U)})$, for all $i,\tau,\alpha$, and $\mathrm{det}(\widehat{g}), \widehat{g}_{11}$ are bounded from below by some positive $\delta > 0$ {where $\widehat{g}_{i\tau}$ and $\widehat{\Gamma}_{i\tau}^\alpha$ denote the metric and Christoffel's symbols induced by $\widehat{f}$ on $\widehat{\Sigma}$. {Moreover we require that $||D^2\widehat{f}||_{L^\infty(U, g_{\widehat{f}})}$, $||A[\widehat{f}]||_{L^\infty(U,g_{\widehat{f}})}, ||DA[\widehat{f}]||_{L^\infty(U,g_{\widehat{f}})} < \infty$.} } Let $(\phi_j)_{j = 1}^\infty$, $\phi_j:\widehat{\Sigma} \to \Sigma$, be a sequence of diffeomorphisms as in Definition \ref{def:smocon}. Let $\widehat{g}(m)$ be the first fundamental form induced by $f_m \circ \phi_m$ on $U$ with respect to the chart $(U,\psi)$ and $H(m) := H_{f_m \circ \phi_m} $ be the mean curvature of $f_m \circ \phi_m$. 

Then, $\widehat{g}(m) \circ \psi^{-1}$ converges to $\widehat{g} \circ \psi^{-1}$ uniformly in $\psi(U)$ and $H(m) \circ \psi^{-1}$ converges to $H_{\widehat{f}} \circ \psi^{-1}$ uniformly in $\psi(U)$. 
\end{lemma}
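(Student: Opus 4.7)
The plan is to exploit the characterization $f_m \circ \phi_m = \widehat{f} + u_m$ from Definition \ref{def:smocon}, together with the hypothesis $\|(\widehat{\nabla}^\perp)^k u_m\|_{L^\infty(\widehat{\Sigma}(j))} \to 0$ for all $k$, to express all relevant quantities in the coordinate chart $(U,\psi)$ and show convergence term by term. The main conceptual step is translating the normal-bundle control of $u_m$ into control of the full (Euclidean) partial derivatives $\partial_i u_m$, $\partial_i \partial_\tau u_m$ in $\psi(U)$, since it is these extrinsic quantities that enter the coordinate formulae $\widehat{g}(m)_{i\tau} = \langle \partial_i(f_m\circ\phi_m), \partial_\tau(f_m\circ\phi_m)\rangle$ and $H(m) = g(m)^{i\tau}\langle \partial_i\partial_\tau(f_m\circ\phi_m) - \Gamma(m)^\alpha_{i\tau} \partial_\alpha(f_m\circ\phi_m), N_{f_m\circ\phi_m}\rangle$.

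First I would use the key identity \eqref{eq:DXundnablaX}, namely $D_X u_m = \nabla^\perp_X u_m - \sum_i \langle u_m, A[\widehat{f}](X,E_i)\rangle D_{E_i}\widehat{f}$, applied with $X = \partial/\partial \psi^i$. Since $\|u_m\|_\infty \to 0$, $\|\widehat{\nabla}^\perp u_m\|_\infty \to 0$, and $\|A[\widehat{f}]\|_{L^\infty(U)}$, $\|D\widehat{f}\|_{L^\infty(U)}$ are finite by hypothesis, this yields $\|D u_m\|_{L^\infty(U,g_{\widehat{f}})} \to 0$, and hence $\partial_i u_m \circ \psi^{-1} \to 0$ uniformly on $\psi(U)$ (using the lower bounds on $\widehat{g}_{11}$ and $\det \widehat{g}$ to control the norm change). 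From $\partial_i(f_m\circ\phi_m) = \partial_i \widehat{f} + \partial_i u_m$ it follows that $\widehat{g}(m)_{i\tau}\circ\psi^{-1} \to \widehat{g}_{i\tau}\circ\psi^{-1}$ uniformly on $\psi(U)$. Since $\widehat{g}_{i\tau}\circ\psi^{-1}$ is bounded and $\det \widehat{g} \geq \delta$, the inverse metrics $g(m)^{i\tau}$ also converge uniformly to $\widehat{g}^{i\tau}$.

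For the mean curvature, I would iterate the same trick to second order. Differentiating $D u_m$ once more via the tensorial rule \eqref{eq:tnsderiv} and again using \eqref{eq:DXundnablaX} (now applied to the normal vector field $\nabla^\perp_X u_m$ as well as to the tangential correction), one obtains $\|D^2 u_m\|_{L^\infty(U,g_{\widehat{f}})} \to 0$, where the estimate involves $\|u_m\|_\infty$, $\|\widehat{\nabla}^\perp u_m\|_\infty$, $\|(\widehat{\nabla}^\perp)^2 u_m\|_\infty$, together with the fixed bounds on $A[\widehat{f}]$, $DA[\widehat{f}]$, $D\widehat{f}$, $D^2\widehat{f}$ on $U$. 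Converting to coordinates with the formula $D^2 u_m(\partial_i,\partial_\tau) = \partial_i\partial_\tau u_m - \widehat{\Gamma}^\alpha_{i\tau}\partial_\alpha u_m$ and using the $C^0$-bound on $\widehat{\Gamma}^\alpha_{i\tau}$ yields $\partial_i\partial_\tau u_m \circ \psi^{-1} \to 0$ uniformly. Consequently the Christoffel symbols $\Gamma(m)^\alpha_{i\tau}$ of $f_m \circ \phi_m$ converge uniformly to $\widehat{\Gamma}^\alpha_{i\tau}$, and $\partial_i\partial_\tau(f_m\circ\phi_m) \to \partial_i\partial_\tau \widehat{f}$ uniformly.

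Finally, the normal vector $N_{f_m\circ\phi_m}$ can be expressed rationally in terms of $\partial_i(f_m\circ\phi_m)$ (a normalised cross product, or equivalently Gram--Schmidt against $\partial_1(f_m\circ\phi_m)$, $\partial_2(f_m\circ\phi_m)$ using the uniform lower bound on $\det g$); it therefore converges uniformly to $N_{\widehat{f}}$ on $\psi(U)$. Combining these uniform limits in the coordinate expression $H_f = g^{i\tau}\langle \partial_i\partial_\tau f - \Gamma^\alpha_{i\tau}\partial_\alpha f, N_f\rangle$ gives $H(m)\circ\psi^{-1} \to H_{\widehat{f}}\circ\psi^{-1}$ uniformly. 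The principal obstacle throughout is bookkeeping: all of the convergences are ``up to'' tangential corrections controlled by $A[\widehat{f}]$ and its derivatives, so the argument reduces to carefully unwrapping \eqref{eq:DXundnablaX} and \eqref{eq:tnsderiv} to the order needed, and checking that every term either vanishes (because $u_m$ and its normal derivatives do) or is uniformly bounded on $U$ (by the chart hypotheses on $\widehat{g}$, $\widehat{\Gamma}$, $A[\widehat{f}]$ and its first derivative).
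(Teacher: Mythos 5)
Your proposal follows the same overall strategy as the paper: write $f_m\circ\phi_m = \widehat{f}-u_m$, translate the normal-bundle control $\|(\widehat{\nabla}^\perp)^k u_m\|_{L^\infty}\to 0$ into control of the Euclidean coordinate derivatives $\partial_i u_m$ and $\partial_i\partial_\tau u_m$ via \eqref{eq:DXundnablaX} and \eqref{eq:tnsderiv}, deduce uniform convergence of $\widehat{g}(m)$, $\widehat{g}(m)^{-1}$, and the Christoffel symbols, and then of the mean curvature. This is exactly the structure of the paper's proof.

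One step, though, introduces an issue the paper deliberately avoids. You propose to show convergence of the scalar mean curvature by first constructing a unit normal vector $N_{f_m\circ\phi_m}$ as a normalized cross product (or via Gram--Schmidt against the two tangent vectors) and then pairing the coordinate expression $\partial_i\partial_\tau f - \Gamma^\alpha_{i\tau}\partial_\alpha f$ against it. But the lemma is stated for immersions into $\mathbb{R}^n$ with arbitrary $n$: the normal bundle has rank $n-2$, so there is no single unit normal, no cross product, and the formula $H_f = g^{i\tau}\langle\partial_i\partial_\tau f - \Gamma^\alpha_{i\tau}\partial_\alpha f, N_f\rangle$ only makes sense in codimension one. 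The paper sidesteps this entirely by working with the mean curvature \emph{vector}
\[
\vec{H}_{\widehat f} = \widehat g^{\,i\tau}\Bigl(\frac{\partial^2\widehat f}{\partial y^i\partial y^\tau} - \widehat\Gamma^\alpha_{i\tau}\frac{\partial\widehat f}{\partial y^\alpha}\Bigr),
\]
for which your already-established convergences of $\widehat g(m)^{-1}$, $\partial_i\partial_\tau(f_m\circ\phi_m)$, and $\widehat\Gamma^\alpha_{i\tau}(m)$ are exactly enough, with no normal vector needed. The conclusion as used downstream (in the lower semicontinuity of the Willmore energy) only requires $|\vec H(m)|^2\to|\vec H_{\widehat f}|^2$, so nothing is lost by working vectorially. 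With that modification your argument coincides with the paper's.
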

\begin{proof}

For $m > J$ let $u_m$ be as in Definition \ref{def:smocon} such that on $\widehat{\Sigma}(m)$ one has 
\begin{equation}\label{eq:exp}
f_m \circ \phi_m + u_m = \widehat{f} \; \mbox{ and } \; ||(\widehat{\nabla}^\perp)^k u_m||_{L^\infty(\widehat{\Sigma}(m))} \rightarrow 0,  \; m \rightarrow \infty \, .
\end{equation}
Let $(y^1 , y^2)$ be the local coordinates induced by $(U, \psi)$, in particular  for all $h \in C^\infty(\Sigma;\mathbb{R}^d)$, {$d\in \mathbb{N}$,} we denote $\frac{\partial h}{\partial y^i} = \frac{\partial (h \circ \psi^{-1})}{\partial e_i} \circ \psi$.  Our first intermediate claim is that $\frac{\partial u_m}{\partial y^i}$ and $\frac{\partial^2 u_m}{\partial y^i \partial y^\tau}$ converge to zero uniformly in $U$ for all $i,\tau$. 

In the following we let $E_1,E_2 \in \mathcal{V}(U)$ be the smooth orthonormal frame on $(U, g_{\widehat{f}})$ which we obtain by applying the Gram-Schmidt procedure on $\left\lbrace \frac{\partial}{\partial y^1}, \frac{\partial}{\partial y^2} \right\rbrace$, i.e. %. One readily checks that 
$$E_1 = \frac{1}{\sqrt{\widehat{g}_{1,1}}} \frac{\partial}{\partial y^1} \mbox{ and } E_2 = \frac{1}{\sqrt{\widehat{g}_{1,1}}\sqrt{det(\widehat{g})}} \big( \widehat{g}_{11} \frac{\partial}{\partial y^2} - \widehat{g}_{12}  \frac{\partial}{\partial y^1} \big)\, . $$
Note that by \eqref{eq:DXundnablaX}  
\begin{equation*}
\frac{\partial u_m}{\partial y^i} = D_{\frac{\partial}{\partial y^i} } u_m = \widehat{\nabla}^\perp_{\frac{\partial}{\partial y^i} } u_m - \sum_{j = 1}^2 \langle  u_m , A[\widehat{f}]( \frac{\partial}{\partial y^i}, E_j) \rangle_{\R^3} D_{E_j} \widehat{f}  
\end{equation*}
and hence on $U$ we have 
\begin{equation}\label{eq:zehgestossenautsch}
\left\vert \frac{\partial u_m}{\partial y^i} \right\vert \leq ||\widehat{\nabla}^\perp u_m ||_{L^\infty(U)} + 2 ||A[\widehat{f}]||_{L^\infty(U)} |\widehat{g}_{i,i}|^\frac{1}{2} ||u_m||_{L^\infty(U)}.
\end{equation}
Estimating $||\widehat{\nabla}^\perp u_m||_{L^\infty(U)}  \leq ||\widehat{\nabla}^\perp u_m||_{L^\infty(\widehat{\Sigma}(m))} \rightarrow 0$, $|| u_m||_{L^\infty(U)}  \leq || u_m||_{L^\infty(\widehat{\Sigma}(m))} \rightarrow 0$ and $|\widehat{g}_{i,i}| \leq ||\widehat{g}_{i,i} \circ \psi^{-1}||_{L^\infty(\psi(U))} $  we infer that $\frac{\partial u_m}{\partial y^i }$ converges to  zero uniformly on $U$. Next we compute for all $i, \tau$ writing for short $A = A[\widehat{f}]$  
\begin{align*}
\frac{\partial^2 u_m}{\partial y^\tau \partial y^i} & = D_{\frac{\partial}{\partial y^\tau}} D_{\frac{\partial}{\partial y^i}} u_m =  D_{\frac{\partial}{\partial y^\tau}} \left( \widehat{\nabla}^\perp_{\frac{\partial}{\partial y^i}} u_m  - \sum_{j = 1}^2 \langle u_m , A ( \frac{\partial}{\partial y^i}, E_j) \rangle_{\R^{{n}}} D_{E_j} f\right) 
\\ & =  D_{\frac{\partial}{\partial y^\tau}}  \widehat{\nabla}^\perp_{\frac{\partial}{\partial y^i}} u_m - \sum_{j = 1}^2 D_{\frac{\partial}{\partial y^\tau}} \left[ \langle u_m , A ( \frac{\partial}{\partial y^i}, E_j) \rangle_{\R^{{n}}} D_{E_j} f\right]
\\ & = \widehat{\nabla}^\perp_{\frac{\partial}{\partial y^\tau}} \widehat{\nabla}^\perp_{\frac{\partial}{\partial y^i}} u_m - \sum_{l = 1}^2 \langle \widehat{\nabla}^\perp_{\frac{\partial}{\partial y^i}} u_m , A ( \frac{\partial}{\partial y^\tau} , E_l ) \rangle_{\R^{{n}}}  D_{E_l} f
\\ & \quad - \sum_{j = 1}^2 \langle \frac{\partial u_m}{\partial y^\tau}, A( \frac{\partial}{\partial y^i}, E_j ) \rangle_{\R^{{n}}} D_{E_j} \widehat{f} - \sum_{j =1}^2 \langle u_m , D_{\frac{\partial}{\partial y^\tau}} A( \frac{\partial}{\partial y^i}, E_j) \rangle_{\R^{{n}}} D_{E_j} f
\\ & \quad  - \sum_{ j = 1}^2 \langle u_m, A(\frac{\partial}{\partial y^i}, E_j) \rangle_{\R^{{n}}} D_{\frac{\partial}{\partial y^\tau}} D_{E_j} \widehat{f} 
\\ & = (\widehat{\nabla}^\perp)^2 u_m (\frac{\partial}{\partial y^\tau}, \frac{\partial}{\partial y^i} )  {+ \widehat{\nabla}^\perp u_m ( \widehat{\nabla}_{\frac{\partial}{\partial y^\tau}} \frac{\partial}{\partial y^i} ) }
\\ &  \quad -\sum_{l = 1}^2 \langle \widehat{\nabla}^\perp u_m ( \frac{\partial}{\partial y^i}) , A ( \frac{\partial}{\partial y^\tau} , E_l ) \rangle_{\R^{{n}}} D_{E_l} f - \sum_{j = 1}^2 \langle \frac{\partial u_m}{\partial y^\tau}, A( \frac{\partial}{\partial y^i}, E_j ) \rangle_{\R^{{n}}} D_{E_j} \widehat{f} 
\\ & \quad -  \sum_{ j = 1}^2 \langle u_m, DA( \frac{\partial}{\partial y^\tau} , \frac{\partial}{\partial y^i}, E_j) + A ( \widehat{\nabla}_{\frac{\partial}{\partial y^\tau} } \frac{\partial}{\partial y^i}, E_j)  + A( \frac{\partial}{\partial y^i}, \widehat{\nabla}_{\frac{\partial}{\partial y^\tau} }  E_j ) \rangle_{\R^{{n}}} D_{E_j} \widehat{f} 
\\ & \quad - \sum_{j = 1}^2 \langle u_m , A( \frac{\partial}{\partial y^i}, E_j) \rangle_{\R^{{n}}} \left[ D^2\widehat{f}( \frac{\partial}{\partial y^\tau}, E_j ) + D\widehat{f}( \widehat{\nabla}_{\frac{\partial}{\partial y^\tau}} E_j ) \right].
\end{align*}
All terms that appear here as arguments of tensors can be bounded in $L^\infty$-norm with quantities that we assumed to be bounded. Notice that a bound on %bounding 
$\widehat{\nabla}_{\frac{\partial}{\partial y^\tau} }  \frac{\partial}{\partial y^i}$  needs the fact that the Christoffel symbols lie in $C^0(\overline{\psi(U)})$. Bounding $\widehat{\nabla}_{\frac{\partial}{\partial y^\tau}} E_j$ in terms of the given quantities needs the explicit representation of $E_j$ that we discussed above. Here we also need that $\mathrm{det}(\widehat{g}) , \widehat{g}_{11}$ are bounded from below uniformly in $U$. We obtain with a straightforward computation that $\frac{\partial^2 u_m}{\partial y^\tau \partial y^i}$ converges to zero uniformly in $U$. 

We now show that $\widehat{g}(m)$ converges to $\widehat{g} $ uniformly on $U$ which implies the convergence claimed in the statement. First note that by \eqref{eq:exp} and \eqref{eq:zehgestossenautsch} 
$$\frac{\partial (f_m \circ \phi_m)}{\partial y^\tau} = \frac{\partial \widehat{f}}{\partial y^\tau} + \mathit{o}(1),$$
where $\frac{\partial \widehat{f}}{\partial y^\tau}$ are bounded by assumption. Hence, $\frac{\partial (f_m \circ \phi_m)}{\partial y^\tau}$ and $\hat{g}(m)$ are uniformly bounded. 
Now we can compute using \eqref{eq:exp}  
\begin{align*}
\widehat{g}_{i\tau} & = \langle \frac{\partial \widehat{f}}{\partial y^i}, \frac{\partial \widehat{f}}{\partial y^\tau} \rangle_{\R^{{n}}} \\
& = \widehat{g}_{i\tau}(m) + \langle \frac{\partial (f_m \circ \phi_m)}{\partial y^i}, \frac{\partial u_m}{\partial y^\tau} \rangle_{\R^{{n}}} + \langle \frac{\partial (f_m \circ \phi_m)}{\partial y^\tau}, \frac{\partial u_m}{\partial y^i} \rangle_{\R^{{n}}} + \langle \frac{\partial u_m}{\partial y^\tau}, \frac{\partial u_m}{\partial y^i} \rangle_{\R^n}.
\end{align*}
By the arguments above, the last three terms are uniformly convergent to zero and so convergence of the first fundamental form is shown.  Note in particular that also $\widehat{g}^{-1}(m)$ converges to $\widehat{g}^{-1}$ since we assumed that $\mathrm{det}(\widehat{g})$ is strictly bounded from below.\\
Observe now that  by \eqref{eq:Ainccoo} and \eqref{eq:Hvec}
\begin{equation*}
\vec{H}_{\widehat{f}} = \widehat{g}^{i\tau} \big( \frac{\partial^2 \widehat{f}}{\partial y^i \partial y^\tau} - \widehat{\Gamma}_{i\tau}^\alpha \frac{\partial \widehat{f}}{\partial y^\alpha}  \big)  
\end{equation*}
and 
\begin{equation*}
\vec{H}(m) = \widehat{g}^{i\tau}(m) \big( \frac{\partial^2 (f_m \circ \phi_m)}{\partial y^i \partial y^\tau} - \widehat{\Gamma}_{i\tau}^\alpha(m) \frac{\partial (f_m \circ \phi_m)}{\partial y^\alpha} \big),
\end{equation*}
where $\widehat{\Gamma}_{i\tau}^\alpha(m)$ denotes the Christoffel symbols of the immersion $f_m \circ \phi_m$ with respect to the chart $(U,\psi)$. 
We have already discussed the uniform convgence of all terms that $H(m)$ consists of except for the Christoffel symbols. The convergence of those however follows analogously to the convergence of $\widehat{g}(m)$ from the classical formula 
\begin{equation*}
\widehat{\Gamma}_{i\tau}^\alpha(m) =  g^{\alpha \beta}(m) \langle \frac{\partial^2 (f_m \circ \phi_m) }{\partial y^i \partial y^\tau}, \frac{\partial (f_m \circ \phi_m)}{\partial y^\beta }\rangle_{\R^{{n}}}. \qedhere  
\end{equation*}
\end{proof}

\begin{lemma}\label{lem:appsemi}
Suppose that $(f_j)_{j=1}^{\infty} : \Sigma \rightarrow \mathbb{R}^{{n}}$ is a sequence of immersions that converges smoothly on compact subsets of $\mathbb{R}^n$ to an immersion $\widehat{f}: \widehat{\Sigma}  \rightarrow \mathbb{R}^{{n}}$. Then 
\begin{equation*}
\mathcal{W}(\widehat{f}) \leq   \liminf_{j \rightarrow \infty} \mathcal{W}(f_j) .
\end{equation*}
Additionally, if $\widehat{\Sigma}$ is compact then $\mathcal{W}(\widehat{f})= \lim_{j \rightarrow \infty} \mathcal{W}(f_j)$. 
\end{lemma}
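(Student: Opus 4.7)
The overall strategy is to exhaust $\widehat{\Sigma}$ by the sets $\widehat{\Sigma}(m)$, reparametrize each $f_j$ by the diffeomorphism $\phi_j$ from Definition \ref{def:smocon} so that the integrands can be compared pointwise with those of $\widehat{f}$, and invoke Lemma \ref{lem:B3} to pass to the limit. Throughout, use the fact that the Willmore integrand $\tfrac{1}{4}|\vec{H}|^2 \, \mathrm{d}\mu$ is invariant under reparametrization, so that for every $m \in \mathbb{N}$ and every $j \geq m$,
\begin{equation*}
\mathcal{W}(f_j) \geq \frac{1}{4}\int_{\phi_j(\widehat{\Sigma}(m))} |\vec{H}_{f_j}|^2 \, \mathrm{d}\mu_{f_j} = \frac{1}{4}\int_{\widehat{\Sigma}(m)} |\vec{H}_{f_j\circ \phi_j}|^2 \, \mathrm{d}\mu_{f_j\circ \phi_j}.
\end{equation*}

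The first key step is to show that for each fixed $m$ one has the chart-wise convergence
\begin{equation*}
\int_{\widehat{\Sigma}(m)} |\vec{H}_{f_j\circ\phi_j}|^2 \, \mathrm{d}\mu_{f_j\circ\phi_j} \ \xrightarrow[j\to\infty]{}\ \int_{\widehat{\Sigma}(m)} |\vec{H}_{\widehat{f}}|^2 \, \mathrm{d}\mu_{\widehat{f}}.
\end{equation*}
To that end, fix $J>m$ such that $\overline{\widehat{\Sigma}(m)} \subset \widehat{\Sigma}(J)$ (possible because $|\widehat{f}|$ is continuous and bounded on the closure of $\widehat{\Sigma}(m)$), and cover $\overline{\widehat{\Sigma}(m)}$ by finitely many coordinate charts $(U_\alpha,\psi_\alpha)$ with $U_\alpha \subset \widehat{\Sigma}(J)$ satisfying the technical hypotheses of Lemma \ref{lem:B3} (lower bounds on $\det\widehat{g}$ and $\widehat{g}_{11}$, $C^1$-regularity of $\widehat{g}_{i\tau}\circ\psi_\alpha^{-1}$, etc.); such charts can always be built locally e.g.\ from Riemannian normal coordinates. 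Choose a smooth partition of unity $(\chi_\alpha)$ subordinate to $(U_\alpha)$ and write the integral as a finite sum of $\int_{\psi_\alpha(U_\alpha)} \chi_\alpha|\vec{H}(j)|^2 \sqrt{\det \widehat{g}(j)}\, \mathrm{d}y^1\mathrm{d}y^2$. Lemma \ref{lem:B3} gives uniform convergence of $\widehat{g}(j)\circ\psi_\alpha^{-1}$ to $\widehat{g}\circ\psi_\alpha^{-1}$ and of $\vec{H}(j)\circ\psi_\alpha^{-1}$ to $\vec{H}_{\widehat{f}}\circ\psi_\alpha^{-1}$ on $\psi_\alpha(U_\alpha)$, so the integrands converge uniformly on a fixed compact set and the dominated (in fact uniform) convergence theorem applies.

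Combining the previous two displays and taking $\liminf_{j\to\infty}$ yields
\begin{equation*}
\liminf_{j\to\infty}\mathcal{W}(f_j) \geq \frac{1}{4}\int_{\widehat{\Sigma}(m)} |\vec{H}_{\widehat{f}}|^2 \, \mathrm{d}\mu_{\widehat{f}}.
\end{equation*}
Since $\widehat{\Sigma}=\bigcup_m \widehat{\Sigma}(m)$ with $\widehat{\Sigma}(m)\subset\widehat{\Sigma}(m+1)$, the monotone convergence theorem as $m\to\infty$ gives $\liminf_j \mathcal{W}(f_j) \geq \mathcal{W}(\widehat{f})$, proving lower semicontinuity.

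For the equality statement, assume $\widehat{\Sigma}$ is compact. Then $\widehat{f}(\widehat{\Sigma})$ is bounded, so there exists $m_0$ with $\widehat{\Sigma}(m)=\widehat{\Sigma}$ for all $m\geq m_0$. By Remark \ref{rem:toppres} one may identify $\Sigma=\widehat{\Sigma}$; then $\phi_j : \widehat{\Sigma} \to U_j$ is a diffeomorphism onto an open subset $U_j \subset \Sigma$, and since $\Sigma$ is compact (assuming connectedness, which we may reduce to by treating each component separately), $U_j$ is simultaneously open and closed, hence $U_j=\Sigma$ and $\phi_j$ is a self-diffeomorphism. Therefore $\mathcal{W}(f_j)=\mathcal{W}(f_j\circ\phi_j)$, and the chart-wise convergence established above (with $\widehat{\Sigma}(m)$ replaced by all of $\widehat{\Sigma}$) yields $\mathcal{W}(f_j)\to\mathcal{W}(\widehat{f})$.

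The main technical obstacle is the first step: verifying that a finite subcover of $\overline{\widehat{\Sigma}(m)}$ by charts meeting the hypotheses of Lemma \ref{lem:B3} exists. This is a routine but careful application of local coordinate constructions on a Riemannian manifold, and once done the argument reduces to the uniform convergence provided by Lemma \ref{lem:B3}.
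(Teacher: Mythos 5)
Your proposal takes a mildly different route from the paper. The paper covers all of $\widehat\Sigma$ by a countable, locally finite collection of charts with a subordinate partition of unity, applies Lemma~\ref{lem:B3} chart by chart, and then invokes Fatou's lemma on the resulting sum. You instead exhaust $\widehat\Sigma$ by the nested open sets $\widehat\Sigma(m)$, argue uniform convergence of the integrand on each $\widehat\Sigma(m)$ via a finite chart cover, and finish with monotone convergence in $m$. The exhaustion picture is a clean organizational choice, and the final two steps (taking $\liminf_j$ first and then $m\to\infty$) are correct as written; the handling of the compact case (identifying $\Sigma=\widehat\Sigma$, showing $U_j=\Sigma$ by connectedness, so that $\mathcal W(f_j)=\mathcal W(f_j\circ\phi_j)$) also works.

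There is, however, a gap in the first step. You invoke a \emph{finite} subcover of $\overline{\widehat\Sigma(m)}$ to get uniform convergence on $\widehat\Sigma(m)$, and this requires $\overline{\widehat\Sigma(m)}$ to be compact. That compactness is automatic if $\widehat f$ is proper (then $\overline{\widehat\Sigma(m)}\subset\widehat f^{-1}(\overline{B_m(0)})$ is compact), but properness of $\widehat f$ is \emph{not} among the hypotheses of the lemma, and $\widehat\Sigma$ need not be compact either. Without one of these, $\widehat\Sigma(m)$ can be a ``long'' open set with non-compact closure, and there is no finite subcover; the uniform-convergence argument then breaks down. The repair is routine: inside each $\widehat\Sigma(m)$ use a countable locally finite cover by charts satisfying the hypotheses of Lemma~\ref{lem:B3}, get the limit of each chart integral exactly as you wrote, and then apply Fatou to the sum to obtain $\liminf_j\int_{\widehat\Sigma(m)}|\vec H_{f_j\circ\phi_j}|^2\,d\mu_{f_j\circ\phi_j}\geq \int_{\widehat\Sigma(m)}|\vec H_{\widehat f}|^2\,d\mu_{\widehat f}$, which is all you need for the subsequent $m\to\infty$ step. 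In effect this collapses your approach back to the paper's Fatou-based argument restricted to $\widehat\Sigma(m)$; alternatively you can drop the exhaustion altogether and apply Fatou to a countable cover of all of $\widehat\Sigma$ at once, which is precisely what the paper does.
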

\begin{proof}
We start choosing a cover $\{(U_p,\psi_p)\}_{p \in \widehat{\Sigma}}$ of $\widehat{\Sigma}$ such that $U_p$ is an open neighborhood of $p$. Since each $p$ is contained in some $\Sigma(m_p)$ for some $m_p \in \mathbb{N}$ and $\Sigma(m_p)$ is open, we may assume that $U_p \subset \Sigma(m_p)$ by possibly shrinking $U_p$. Let 
$V_p$ be a neighborhood of of $p$ compactly contained in $U_p$. Then in each chart $(V_p, \psi_p)$, $\widehat{g}_{it}$ and $ \Gamma_{it}^\alpha $ are bounded and $\mathrm{det}(\widehat{g})$ is uniformly bounded from below by some $\delta = \delta(p) > 0$. By second countability there exist countably many points $\{ p_\nu \}_{\nu = 1}^\infty$ such that $\{(V_{p_\nu} , \psi_{p_\nu})\}_{\nu = 1}^\infty$ is a cover of $\widehat{\Sigma}$ and there exists a locally finite partition of unity $(\eta_\nu)_{\nu = 1}^\infty$ of smooth an compactly supported functions that satisfy $\mathrm{supp}(\eta_\nu) \subset V_{p_\nu} $. Now we infer by Lemma \ref{lem:B3} {(taking diffeomorphisms $\phi_m$ as in \eqref{eq:exp})} and Fatou's Lemma
\begin{align*}
\int_{\widehat{\Sigma}} H_{\widehat{f}}^2 d\mu_{\widehat{f}} & = \sum_{\nu = 1}^\infty \int_{ V_{p_\nu}}\eta_\nu H_{\widehat{f}}^2  d\mu_{\widehat{f}} \\
&= \sum_{\nu = 1}^\infty \int_{\psi_{p_{\nu}}(V_{p_\nu})} (\eta_{\nu} \circ \psi_{p_\nu}^{-1}) (H_{\widehat{f}} \circ \psi_{p_\nu}^{-1})^2 \sqrt{\det{\widehat{g}}} \circ \psi_{p_\nu}^{-1} \dx\\
& = \sum_{\nu = 1}^\infty \lim_{m \rightarrow \infty} \int_{\psi_{p_{\nu}}(V_{p_\nu})} (\eta_{\nu} \circ \psi_{p_\nu}^{-1}) (H_{f_m \circ \phi_m} \circ \psi_{p_\nu}^{-1})^2 \sqrt{\det{\widehat{g}(m)}} \circ \psi_{p_\nu}^{-1} \dx
\\ & \leq \liminf_{m \rightarrow \infty} \sum_{\nu = 1}^\infty \int_{\psi_{p_{\nu}}(V_{p_\nu})} (\eta_{\nu} \circ \psi_{p_\nu}^{-1}) (H_{f_m \circ \phi_m} \circ \psi_{p_\nu}^{-1})^2 \sqrt{\det{\widehat{g}(m)}} \circ \psi_{p_\nu}^{-1} \dx \\ & 
= \liminf_{m \rightarrow \infty} \sum_{ \nu = 1}^\infty \int_{V_{p_\nu}} \eta_\nu H_{f_m \circ \phi_m}^2 d\mu_{f_m \circ \phi_m} = \liminf_{m \rightarrow \infty} \int_{\widehat{\Sigma}} H_{f_m \circ \phi_m}^2 d\mu_{f_m \circ \phi_m} 
\end{align*} 
All in all we obtain $\mathcal{W}(\widehat{f}) \leq \liminf_{m\rightarrow \infty} \mathcal{W}(f_m \circ \phi_m) = \liminf_{m \rightarrow \infty} W(f_m)$ as the Willmore energy does not depend on the reparametrization. %It remains to show the last sentence of the statement. 
If $\widehat{\Sigma}$ is compact then the partition of unity can be chosen to be finite and the last claim follows then with the same techniques.
\end{proof}

\begin{lemma}\cite[Co. 1.4]{Breuning} \label{lem:breu}
Suppose that $f_j : \Sigma \rightarrow \mathbb{R}^n$ and $\widehat{f}: \widehat{\Sigma} \rightarrow \mathbb{R}^n$ are such that $f_j$ converges to $\widehat{f}$ smoothly on compact subsets of $\mathbb{R}^n$. Then the surface measures $f_j^*\mu_{g_j}$ converge in $C_0(\mathbb{R}^n)'$ to $ \hat{f}^*\mu_{\hat{f}}$. 
\end{lemma}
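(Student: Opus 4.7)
The plan is to test the pushforward measures against an arbitrary $\phi \in C_c(\mathbb{R}^n)$; by density of $C_c$ in $C_0$ together with a uniform mass bound on bounded sets, this suffices. So fix $\phi \in C_c(\mathbb{R}^n)$ with $\mathrm{supp}(\phi) \subset \overline{B_R(0)}$ and aim to show
\begin{equation*}
\int_{\Sigma} \phi \circ f_j \, \mathrm{d}\mu_{g_{f_j}} \longrightarrow \int_{\widehat{\Sigma}} \phi \circ \widehat{f} \, \mathrm{d}\mu_{g_{\widehat{f}}}.
\end{equation*}
My first move would be to exploit the diffeomorphisms $\phi_j : \widehat{\Sigma}(j) \to U_j$ from Definition \ref{def:smocon}. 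Since $f_j^{-1}(B_R(0)) \subset U_j$ for $j \geq j(R)$ and $\phi$ vanishes outside $B_R(0)$, the change of variables $p = \phi_j(q)$ combined with $f_j \circ \phi_j = \widehat{f} + u_j$ yields
\begin{equation*}
\int_{\Sigma} \phi \circ f_j \, \mathrm{d}\mu_{g_{f_j}} = \int_{\widehat{\Sigma}(j)} \phi \circ (\widehat{f} + u_j) \, \mathrm{d}\mu_{g_{\widehat{f}+u_j}}.
\end{equation*}

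Next, since $\|u_j\|_{L^\infty(\widehat{\Sigma}(j))} \to 0$, for $j$ large enough one has $\|u_j\|_{L^\infty} \leq 1$, and then the integrand on the right is supported in $\widehat{\Sigma}(R+1)$, a set \emph{independent of $j$}. Mimicking the partition-of-unity argument from the proof of Lemma \ref{lem:appsemi}, I would cover a neighborhood of $\widehat{\Sigma}(R+1)$ by countably many relatively compact charts $(V_\nu, \psi_\nu)$ satisfying the regularity hypotheses of Lemma \ref{lem:B3} (so $\widehat{g}_{i\tau}$ and $\widehat{\Gamma}_{i\tau}^\alpha$ are bounded, $\det \widehat{g}$ is bounded below), choose a subordinate locally finite partition of unity $\{\eta_\nu\}$, and expand the integrand as $\sum_\nu \eta_\nu \cdot \phi \circ (\widehat{f}+u_j)$. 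On each chart, Lemma \ref{lem:B3} (extended to also cover the convergence of $\widehat{g}_{i\tau}(j)$) gives uniform convergence of the metric coefficients, hence of $\sqrt{\det \widehat{g}(j)} \to \sqrt{\det \widehat{g}}$; combined with uniform continuity of $\phi$ and $u_j \to 0$ uniformly on $\widehat{\Sigma}(R+1)$, this yields uniform convergence of each chart-wise integrand and thus convergence of each chart-wise integral to its limit.

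The main obstacle I anticipate is the potential non-compactness of $\widehat{\Sigma}(R+1)$: a priori the partition of unity here is countable rather than finite, so interchanging limit and sum needs justification. I would handle this by a further truncation: fix $m \geq R+1$ large, control the integral over $\widehat{\Sigma}(m)$ using finitely many charts as above, and estimate the tail $\widehat{\Sigma}(R+1) \setminus \widehat{\Sigma}(m)$ uniformly in $j$ via the chart-wise comparison $c_m g_{\widehat{f}} \leq g_{\widehat{f}+u_j} \leq C_m g_{\widehat{f}}$ (valid for $j \geq j(m)$ by Lemma \ref{lem:B3}) combined with absolute continuity of $\mu_{g_{\widehat{f}}}$. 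Letting $m \to \infty$ via a diagonal argument closes the proof; the same uniform mass bound $\mu_{g_{f_j}}(f_j^{-1}(B_R(0))) \leq C(R)$ also justifies the reduction from $C_0$ to $C_c$ used at the outset.
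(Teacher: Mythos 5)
The paper does not prove Lemma \ref{lem:breu} at all: it is stated as an external result, cited directly from Breuning \cite[Co. 1.4]{Breuning}, so there is no ``paper's own proof'' to compare against. That said, your outline is a sensible reconstruction in the spirit of the machinery the paper develops nearby (change of variables via the diffeomorphisms $\phi_j$ of Definition \ref{def:smocon}, localization in charts, uniform convergence of metric coefficients from Lemma \ref{lem:B3}, and a partition-of-unity/Fatou-type treatment as in the proof of Lemma \ref{lem:appsemi}).

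One substantive loose end: your tail control is garbled and conceals a genuine issue. You write of estimating ``the tail $\widehat{\Sigma}(R+1) \setminus \widehat{\Sigma}(m)$ uniformly in $j$'' with $m \geq R+1$, but then $\widehat{\Sigma}(R+1) \subset \widehat{\Sigma}(m)$, so that difference is empty; what you actually need is an exhaustion of $\widehat{\Sigma}(R+1)$ by compact sets $K_m$ and a bound on $\mu_{g_{\widehat{f}+u_j}}(\widehat{\Sigma}(R+1) \setminus K_m)$ uniform in $j$. This only closes if $\mu_{g_{\widehat{f}}}(\widehat{\Sigma}(R+1)) < \infty$, which is automatic when $\widehat{f}$ is proper (then $\widehat{\Sigma}(R+1)$ is relatively compact) but is not an explicit hypothesis of the lemma as stated. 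The lemma is applied in the paper only to the compact case $\Sigma = \widehat{\Sigma} = \mathbb{S}^1 \times \mathbb{S}^1$ (in the proof of Proposition \ref{prop:fastunique}), and Theorem \ref{thm:KS} produces a \emph{proper} limit immersion, so in context this is harmless; but if you want a self-contained proof valid at the lemma's stated generality, you should either invoke properness of $\widehat{f}$ explicitly or supply the argument that smooth convergence on compacts forces local finiteness of $\mu_{g_{\widehat{f}}}$ near any bounded set of $\mathbb{R}^n$. Similarly, the reduction from $C_0$ to $C_c$ requires a \emph{total} mass bound $\sup_j \mu_{g_{f_j}}(\Sigma) < \infty$ (not merely a bound on each $f_j^{-1}(B_R)$), which again holds trivially for the paper's compact tori with bounded Willmore energy but is not discharged by what you wrote.
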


A second concept of convergence that is related to smooth convergence is the $C^l$-convergence which we also use throughout the article. 

\begin{definition}\label{def:Clconv}
 We say that a sequence of immersions $(f_j)_{j \in \mathbb{N}}$, $f_j : \Sigma \rightarrow \mathbb{R}^{{n}}$ defined on a two dimensional manifold ${\Sigma}$ without boundary converges to $\widehat{f}: {\Sigma} \rightarrow \mathbb{R}^{{n}}$ in $C^l(\Sigma)$, $l \in \mathbb{N}$ if there exist diffeomorphisms ${\phi}_j : {\Sigma} \rightarrow {\Sigma}$  for all $j \in \mathbb{N}$ and $u_j: {\Sigma} \rightarrow N \Sigma$ such that 
$f_j \circ \phi_j  + u_j = \widehat{f}$ on ${\Sigma}$ and $||(\widehat{\nabla}^\perp)^k u_j||_{L^\infty({\Sigma})} \rightarrow 0 $ as $j \rightarrow \infty$ for all $k \in \{ 0, ..., l\}$.
\end{definition}
 
\begin{remark}\label{ref:remcomp}
The two concepts of convergence we discussed are obviously related. Indeed, if $f_j :  \widetilde{\Sigma} \rightarrow \mathbb{R}^n$ is a sequence that converges smoothly on compact subsets to some $\widehat{f} : \widetilde{\Sigma} \rightarrow \mathbb{R}^n$ and $\widetilde{\Sigma}$ is compact, then $f_j$ converges to $\widehat{f}$ in $C^l$ for all $l \in \mathbb{N}$. We further say that a family $(f(t))_{t \in [0,\infty)}$ converges to $\widehat{f}$ in $C^l$ for all $l$ if for each sequence $t_j \rightarrow \infty$ one has $f(t_j) \rightarrow \widehat{f}$ as $j \rightarrow \infty$.
\end{remark}

We will now present an alternative characterization of $C^l$ convergence in which we do not need to require that $u_j$ are orthogonal. However we have to pay a price -- in this case one needs control the full derivative. {Even though we expect this result to be true even in higher codimension, we formulate it only in the case of $n = 3$ for the sake of simplicity. This will be sufficient for our purposes.} 

\begin{prop}\label{prop:propsmoconalt}
Let ${\Sigma}$ be a compact orientable two-dimensional manifold without boundary and $f_j : {\Sigma} \rightarrow \mathbb{R}^3$ be a sequence of immersions and $k \geq 2$. Then $f_j$ converges to a limit immersion $\widehat{f}: {\Sigma} \rightarrow \mathbb{R}^3$ in $C^k$ if and only if there exist $w_j \in C^k( {\Sigma}, \mathbb{R}^3)$ and $C^k$-smooth diffeomorphisms $\psi_j: {\Sigma} \rightarrow {\Sigma}$ such that for $j$ large enough
\begin{equation*}
f_j \circ \psi_j = \widehat{f} + w_j \quad \; \textrm{on} \; {\Sigma}
\end{equation*}
and  for all $k \in \mathbb{N}$ one has $||D^k w_j||_{L^\infty({\Sigma}, g_{\widehat{f}})} \rightarrow 0$ as $j \rightarrow \infty$. 
\end{prop}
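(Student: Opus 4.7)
The plan is to prove both implications of the biconditional, treating $(\Rightarrow)$ as the routine direction and $(\Leftarrow)$ as the one requiring real work. For $(\Rightarrow)$, I would take $\psi_j := \phi_j$ and $w_j := -u_j$ from Definition \ref{def:Clconv}. Since each $u_j$ is a normal vector field along $\widehat{f}$, iterated use of \eqref{eq:DXundnablaX} together with its tensorial extension \eqref{eq:tnsderiv} rewrites $D^\ell u_j$ as a polynomial expression in $(\widehat{\nabla}^\perp)^m u_j$ for $m \leq \ell$, with coefficients depending only on $A[\widehat{f}]$, its normal derivatives up to order $\ell - 1$, and $D\widehat{f}$. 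On the compact manifold $\Sigma$ these coefficients are uniformly bounded, so the assumed decay $\|(\widehat{\nabla}^\perp)^m u_j\|_{L^\infty} \to 0$ propagates to $\|D^\ell w_j\|_{L^\infty} \to 0$ for each $\ell \leq k$.

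For $(\Leftarrow)$, the strategy is to correct the given $\psi_j$ by an additional small $C^k$-diffeomorphism $\chi_j$ of $\Sigma$ so that the residual becomes normal. Writing $F_j := f_j \circ \psi_j = \widehat{f} + w_j$, I would endow $\Sigma$ with $g_{\widehat{f}}$, use compactness to fix a uniform injectivity radius $\rho > 0$, and parametrize admissible corrections as $\chi_X(p) := \exp_p(X(p))$ for $C^k$ vector fields $X$ of sufficiently small $C^1$-norm; such $\chi_X$ are $C^k$-diffeomorphisms of $\Sigma$. The requirement that $\widehat{f} - F_j \circ \chi_X$ be a normal field along $\widehat{f}$ then reduces to $\Phi_j(X) = 0$ for the nonlinear operator
$$\Phi_j(X)(p)(V) := \langle \widehat{f}(p) - F_j(\exp_p(X(p))), d\widehat{f}_p(V) \rangle_{\mathbb{R}^3}, \quad V \in T_p\Sigma,$$
viewed on a neighborhood of $0$ in $C^k(\Sigma, T\Sigma)$ with values in $C^k$-sections of $T^*\Sigma$.

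The key computations will be $\Phi_j(0)(p)(V) = -\langle w_j(p), d\widehat{f}_p(V)\rangle$, giving $\|\Phi_j(0)\|_{C^k} \lesssim \|w_j\|_{C^k} \to 0$, and $D\Phi_j(0)[Y](p)(V) = -\langle dF_j|_p(Y(p)), d\widehat{f}_p(V)\rangle$, which converges in operator norm to the bundle isomorphism $-g_{\widehat{f}}(Y,V): T\Sigma \to T^*\Sigma$. This puts a standard Banach-space implicit function theorem in position to produce, for $j$ large, a unique small solution $X_j$ with $\|X_j\|_{C^k} \leq C\|\Phi_j(0)\|_{C^k} \to 0$. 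Defining $\phi_j := \psi_j \circ \chi_{X_j}$ and $u_j := \widehat{f} - f_j \circ \phi_j$, the vector field $u_j$ is then normal by construction, and $\|u_j\|_{C^k(\Sigma)} \to 0$ by the chain rule; the reverse direction of \eqref{eq:DXundnablaX} upgrades this to $\|(\widehat{\nabla}^\perp)^\ell u_j\|_{L^\infty} \to 0$ for $\ell \leq k$, yielding Definition \ref{def:Clconv}.

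The hard part will be the implicit function step with bounds uniform in $j$: one must verify that $D\Phi_j(0)$ is boundedly invertible with a bound independent of $j$, which is where the convergence $F_j \to \widehat{f}$ in $C^k$ and the assumption $k \geq 2$ genuinely enter (ensuring both $C^1$-closeness of the linearizations and enough regularity for the composition operator $X \mapsto F_j \circ \chi_X$ to be continuously Fréchet differentiable on $C^k$). A minor technical subtlety is the classical loss of a derivative under composition with a $C^k$-map, which can be handled either by passing to a Hölder scale $C^{k,\alpha}$ in intermediate steps or by noting that $F_j$ and hence $\Phi_j$ have the required extra regularity on the compact $\Sigma$.
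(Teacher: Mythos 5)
Your proof of the forward implication matches the paper's: take $\psi_j = \phi_j$, $w_j = -u_j$, and propagate $\|(\widehat{\nabla}^\perp)^m u_j\|_{L^\infty}\to 0$ to $\|D^\ell w_j\|_{L^\infty}\to 0$ via iterated use of \eqref{eq:DXundnablaX}/\eqref{eq:tnsderiv} and the compactness of $\Sigma$.

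For the reverse direction you take a genuinely different route. The paper inverts, once and for all, the graph-decomposition map $F(\eta,\beta)=(\widehat{f}+\beta N_{\widehat{f}})\circ\eta$ from $\mathrm{Diffeo}^k(\Sigma,\Sigma)\times C^k(\Sigma;\mathbb{R})$ to $C^k(\Sigma;\mathbb{R}^3)$: the differential $d_{(\mathrm{id},0)}F(X,\alpha)=D_X\widehat{f}+\alpha N_{\widehat{f}}$ is an isomorphism, the inverse function theorem gives a fixed neighborhood of $(\mathrm{id},0)$ on which $F$ is a diffeomorphism, and since $\widehat{f}+w_j\to\widehat{f}$ in $C^k$ the pair $(\eta_j,\beta_j)$ exists for $j$ large with $\beta_j\to 0$ in $C^k$; one then sets $\psi_j:=\phi_j\circ\eta_j^{-1}$, $u_j:=\beta_j N_{\widehat{f}}$. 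You instead solve a $j$-dependent normality equation $\Phi_j(X)=0$ for a small vector field $X_j$ via the implicit function theorem, using the exponential map of $g_{\widehat{f}}$ to parametrize the correction. Both routes are applications of the same theorem, but they buy different things. The paper's single-inversion formulation avoids uniform-in-$j$ estimates entirely, at the price of requiring the Banach-manifold structure of $\mathrm{Diffeo}^k$ (hence the citations to \cite{Wittmann} and Hirsch) and, notably, a global unit normal field $N_{\widehat{f}}$, which is exactly why the paper assumes $\Sigma$ orientable (a restriction the authors explicitly flag as ``somewhat unnatural''). Your formulation only imposes the pointwise orthogonality $\widehat{f}-F_j\circ\chi_X \perp d\widehat{f}(T\Sigma)$, so it never needs an orientation; the price is that you must establish uniform invertibility of $D\Phi_j(0)$, which you correctly identify as following from $D\Phi_j(0)\to -g_{\widehat{f}}$ once $dF_j\to d\widehat{f}$ in $C^{k-1}$. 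Both arguments share the same technical soft spot — the loss of one derivative under composition ($g\mapsto g\circ\eta$ for the paper, $X\mapsto F_j\circ\chi_X$ for you), so that the natural Fréchet differentiability lives one level down from $C^k$. The paper glosses over this; you flag it and propose fixes (Hölder interpolation, or a pointwise bootstrap using that $\Phi_j(X)(p)=0$ is an algebraic equation in $X(p)$ with $C^k$ coefficients). With one of those fixes carried out in detail, your argument is sound and, in fact, slightly more robust than the paper's because of the dispensed orientability hypothesis.
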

\begin{proof}
First assume that $f_j: {\Sigma} \rightarrow \mathbb{R}^3$ converges to $\widehat{f}: {\Sigma} \rightarrow \mathbb{R}^3$ in $C^k({\Sigma})$ . Then, for $j$ large enough one can find $u_j \in C^k( {\Sigma}, N {\Sigma}) $ and $C^k$-diffeomorphisms $\phi_j: {\Sigma} \rightarrow {\Sigma}$ such that 
\begin{equation*}
f_j \circ \phi_j = \widehat{f} + u_j \quad \; \textrm{on} \; {\Sigma}
\end{equation*}
and for all $k \in \mathbb{N}$ one has $||(\widehat{\nabla}^\perp)^k u_j||_{L^\infty} \rightarrow 0$ as $j \rightarrow \infty$. Now we choose $\psi_j := \phi_j$ and $w_j := u_j$. It only remains to show that 
$||D^k w_j ||_{L^\infty} \rightarrow 0$ as $k \rightarrow \infty$. For $k = 1$ we observe that for each $X \in \mathcal{V}(M)$ one has {by \eqref{eq:DXundnablaX}
\begin{align}\label{eq:DXWJ}
D_X w_j & = \widehat{\nabla}^\perp_X w_j - \sum_{i = 1}^2 \langle w_j, A[\widehat{f}](X, E_i) \rangle_{\R^3} D_{E_i} f 
\\ & =  \widehat{\nabla}^\perp_X u_j - \sum_{i=1}^2   \langle u_j, A[\widehat{f}](X, E_i) \rangle_{\R^3} D_{E_i} f \, . \nonumber
\end{align}}
We obtain that 
\begin{equation*}
||Dw_j||_{L^\infty} \leq ||\widehat{\nabla}^\perp u_j||_{L^\infty} + C || u_j||_{L^\infty} ||A[\widehat{f}]||_{L^\infty} || D\widehat{f}||_{L^\infty}.
\end{equation*}
Since ${\Sigma}$ is compact, {%we obtain that 
$||A[\widehat{f}]||_{L^\infty}$ and $||D\widehat{f}||_{L^\infty}$ are finite  and thus %one obtains that 
$||Dw_j ||_{L^\infty} \rightarrow 0$ as $j \rightarrow \infty$. The estimates for $k\geq 2$ %rest of this claim ($k \geq 2$) 
follow} easily by using iterated versions of \eqref{eq:DXWJ}. 

For the converse, suppose we have diffeomorphisms $\psi_j$ and $w_j$ as in the statement.  We denote by $C^k({\Sigma};\mathbb{R})$ the set of all $C^k$-smooth real valued maps from ${\Sigma}$ of $\mathbb{R}$ equipped with the norm $||f||_{C^k({\Sigma};\mathbb{R})} := \sum_{l = 1}^k ||\widehat{\nabla}^l f||_{L^\infty}$, where $\widehat{\nabla}$ here denotes the tensorial connection with respect to the Levi-Civita connection on $({\Sigma}, g_{\widehat{f}})$, cf. \cite[Lemma 4.6]{Lee}. We also endow $C^k( {\Sigma}; \mathbb{R}^3)$ with the norm $||f||_{C^k( {\Sigma} ; \mathbb{R}^3 ) } = \sum_{l=1}^k ||D^l f||_{L^\infty}$. Moreover we define $\mathrm{Diffeo}^k( {\Sigma}, {\Sigma}) $ to be the set of all $C^k$ smooth diffeomorphisms of ${\Sigma}$. Note  that $\mathrm{Diffeo}^k( {\Sigma} , {\Sigma})$ is a smooth Banach manifold with the compact-open topology and {for all $\phi \in \mathrm{Diffeo}^k( {\Sigma})$ the tangent space $T_{\phi}  \mathrm{Diffeo}^k( {\Sigma} , {\Sigma})$ can be identified with $\mathcal{V}({\Sigma})$.} This fact follows from \cite{Wittmann} and \cite[Chap. 2 Thm 1.7]{Hirsch}.
Let now $N_{\widehat{f}}$ be a smooth unit normal field along $\widehat{f}$. (Here orientability of ${\Sigma}$ is needed). 
 We now define for all $k \in \mathbb{N}$ the map 
{\begin{equation}\label{eq:Anto}
F : \mathrm{Diffeo}^k({\Sigma}, {\Sigma}) \times C^k({\Sigma};\mathbb{R}) \rightarrow C^k( {\Sigma};\mathbb{R}^3), \quad  F(\eta, \beta) := (\widehat{f} + \beta N_{\widehat{f}} ) \circ \eta \, .
\end{equation}}
%to be given by $F(\eta, \beta) := (\widehat{f} + \beta N_{\widehat{f}} ) \circ \eta$.  
 It is easy to show that for all $X \in \mathcal{V}({\Sigma})$ and $\alpha \in C^k({\Sigma};\mathbb{R})$ one has 
 $d_{(id,0)} F (X,\alpha) = D_X \widehat{f} + \alpha N_{\widehat{f}}$. Having this formula, one checks that $d_{(id,0)} F : T_{(id,0) } (\mathrm{Diffeo}^k({\Sigma}, {\Sigma}) \times C^k({\Sigma};\mathbb{R})) \rightarrow T_{\widehat{f}}(C^k({\Sigma};\mathbb{R}^3)) \simeq C^k({\Sigma};\mathbb{R}^3)$ is an isomorphism. As a consequence one can find a small neighborhood $V$ of $(\mathrm{id},0)$ such that $F\vert_V$ is a diffeomorphism. We conclude that for all $k \in \mathbb{N}$ there exists $\epsilon > 0$ such that $||g- \widehat{f}||_{C^k({\Sigma};\mathbb{R}^3)}< \epsilon$ implies that there exists $\eta \in \mathrm{Diffeo}^k$ and $\beta \in C^k$ such that $g = (\widehat{f} + \beta N_{\widehat{f}})\circ \eta$. Next we look at $g = \widehat{f}+ w_j$. For $j$ large enough one has that there exists $\eta_j \in \mathrm{Diffeo}^k$ and $\beta_j \in C^k$ such that 
\begin{equation*}
\widehat{f} + w_j = (\widehat{f}+ \beta_j N_{\widehat{f}}) \circ \eta_j  
\end{equation*}
and thus we infer that 
\begin{equation*}
f_j \circ \phi_j = ( \widehat{f} + \beta_j N_{\widehat{f}}) \circ \eta_j .
\end{equation*}
We compose with $\eta_j^{-1}$ to obtain 
\begin{equation*}
f_j \circ \phi_j \circ \eta_j^{-1} = \widehat{f} + \beta_j N_{\widehat{f}} 
\end{equation*}
Defining $\psi_j := \phi_j \circ \eta_j^{-1}$ and $u_j := \beta_j N_{\widehat{f}}$ we obtain that $f_j \circ \psi_j = \widehat{f} + u_j$ and $u_j \in C^k({\Sigma}, N {\Sigma}) $. It remains to show that $||(\widehat{\nabla}^\perp)^l u_j|| \rightarrow 0$ for all $l = 1,...,k$. To do so we compute for any { $X \in \mathcal{V}({\Sigma})$}
\begin{equation*}
\widehat{\nabla}^\perp_X u_j = \widehat{\nabla}^\perp_X ( \beta_j N_f) = X(\beta_j) N_{\widehat{f}} + \beta_j \widehat{\nabla}^\perp_X  N_{\widehat{f}} 
\end{equation*}  
Note that $X(\beta_j) = \widehat{\nabla}_X \beta_j$ and thus 
\begin{equation*}
||\widehat{\nabla}^\perp u_j||_{L^\infty} \leq ||\beta_j||_{C^1(\Sigma,\mathbb{R})} ( 1 + ||\widehat{\nabla}^\perp N_{\widehat{f}}||_{L^\infty} ) .
\end{equation*}
Observe that $||\widehat{\nabla}^\perp N_{\widehat{f}}||_{L^\infty}$ is finite by compactness of ${\Sigma}$. 
Similarly one can show that 
\begin{equation}\label{eq:finalle}
||(\widehat{\nabla}^\perp)^j u_j ||_{L^\infty} \leq C(k,{\Sigma}, \widehat{f}) ||\beta_j||_{C^j({\Sigma},\mathbb{R}) }  \quad \forall j = 1,...,k.
\end{equation}
Note that $\widehat{f} + w_j \rightarrow \widehat{f}$ in $C^k({\Sigma};\mathbb{R}^3)$ and the fact that $F$, {defined in \eqref{eq:Anto},} is a local diffeomorphism implies that $(\eta_j, \beta_j) $ converges to $(\mathrm{id},0)$ in $\mathrm{Diffeo}^k({\Sigma}, {\Sigma}) \times C^k( {\Sigma}) $. Thus $\beta_j$ converges to $0$ in $C^k(\widehat{\Sigma})$.  This and \eqref{eq:finalle} verify Definition \ref{def:Clconv} for $l = k$. The claim is shown.  
\end{proof}

%\begin{remark}
%The prerequisite that ${\Sigma}$ is orientable is somewhat unnatural but needed for the construction of the normal field that we chose. Luckily in our case we always have ${\Sigma}= \mathbb{S}^1 \times \mathbb{S}^1$ which is orientable. If one wants to obtain the result for non-orientable manifolds ${\Sigma}$ one needs to localize very carefully, because the $\phi_j$ needs to be diffeomorphisms, which is a global statement.
%\end{remark}

Also $C^l$-convergence is not affected by reparametrizations and Remark \ref{rem:equivclass} can be formulated also for the $C^l$-convergence. This implies in particular that limits with respect to $C^l$-convergence are not unique. In the rest of this section we will however show that, in our setting, $C^l$-limits are unique up to reparametrizations. Let us first fix what we mean by classical $C^l$ convergence. 
\begin{definition}\label{def:classi}
 We say that a sequence of immersions $(h_j)_{j = 1}^\infty, h_j : \Sigma \rightarrow \mathbb{R}^{{n}}$ converges classically in $C^l$ to some immersion $h: \Sigma \rightarrow \mathbb{R}^{{n}}$ if $u_j := h- h_j : \Sigma \rightarrow \mathbb{R}^{{n}}$ satisfies $||D^k u_j||_{L^\infty(\Sigma)} \rightarrow 0 $ for all $k = 0,...,l$.
 \end{definition}

\begin{prop}\label{prop:fastunique}
Let $(f_j)_{j = 1}^\infty: \mathbb{S}^1 \times \mathbb{S}^1 \rightarrow \mathbb{R}^3$ be a sequence of smooth immersions and $l \geq 2$. {Let $f,h: \mathbb{S}^1 \times \mathbb{S}^1 \rightarrow \mathbb{R}^3$ be such that $f_j$ converges to $f$ in $C^l$ and and $f_j$ converges to $h$ classically in $C^l$. Then $f$ and $h$} coincide up to reparametrization, i.e. there exists a $C^l$-diffeomorphism $\phi :\mathbb{S}^1 \times \mathbb{S}^1 \rightarrow \mathbb{S}^1 \times \mathbb{S}^1$ such that $h = f \circ \phi$. 
\end{prop}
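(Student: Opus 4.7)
The plan is to apply Proposition \ref{prop:propsmoconalt} first, in order to replace the geometric $C^l$-convergence $f_j \to f$ by the existence of $C^l$-diffeomorphisms $\psi_j : \mathbb{S}^1 \times \mathbb{S}^1 \to \mathbb{S}^1 \times \mathbb{S}^1$ and maps $w_j : \mathbb{S}^1 \times \mathbb{S}^1 \to \mathbb{R}^3$ with $f_j \circ \psi_j = f + w_j$ and $\|D^k w_j\|_{L^\infty} \to 0$ for $k = 0, \ldots, l$. The classical convergence $f_j \to h$ provides $v_j := f_j - h$ with $\|D^k v_j\|_{L^\infty} \to 0$ for the same range of $k$. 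Combining the two identities yields $h \circ \psi_j + v_j \circ \psi_j = f + w_j$, so if $\psi_j$ converges (along a subsequence) to a $C^l$-diffeomorphism $\phi$, passing to the limit produces $h \circ \phi = f$, and then $\tilde\phi := \phi^{-1}$ is the reparametrization claimed in the statement.

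The main analytic task is a uniform $C^l$-bound on $\psi_j$ and $\psi_j^{-1}$. Differentiating $f_j \circ \psi_j = f + w_j$ gives $(Df_j)|_{\psi_j} \cdot D\psi_j = Df + Dw_j$. Since $h$ is a $C^l$-immersion on the compact manifold $\mathbb{S}^1 \times \mathbb{S}^1$, its Jacobian $\sqrt{\det((Dh)^T Dh)}$ is bounded below by a positive constant, and so, for $j$ large, is the Jacobian of $f_j = h + v_j$. In particular the Moore--Penrose left inverse $L_j := ((Df_j)^T Df_j)^{-1} (Df_j)^T$ exists uniformly in $j$ and is bounded in $C^{l-1}$, whence $D\psi_j = L_j|_{\psi_j} (Df + Dw_j)$ is uniformly bounded in $L^\infty$. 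Comparing induced area forms on both sides of $f_j \circ \psi_j = f + w_j$ gives $\sqrt{\det((Df_j)^T Df_j)}|_{\psi_j} \, |\det D\psi_j| = \sqrt{\det((Df + Dw_j)^T (Df + Dw_j))}$, which shows that $|\det D\psi_j|$ is uniformly bounded above and below. Iterating the differentiation of $f_j \circ \psi_j = f + w_j$ and solving at each step for the top-order derivative of $\psi_j$ in terms of lower-order data yields a uniform $C^l$-bound on $\psi_j$; the analogous bound on $\psi_j^{-1}$ follows from the inverse function formula $D\psi_j^{-1} = (D\psi_j)^{-1}|_{\psi_j^{-1}}$ and the two-sided Jacobian control. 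By Arzelà--Ascoli, a subsequence yields limits $\psi_j \to \phi$ and $\psi_j^{-1} \to \phi'$ in $C^{l-1}$, and passing to the limit in $\psi_j \circ \psi_j^{-1} = \mathrm{id}$ and in $f_j \circ \psi_j = f + w_j$ produces $\phi \circ \phi' = \phi' \circ \phi = \mathrm{id}$ and $h \circ \phi = f$.

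It remains to upgrade $\phi$ from $C^{l-1}$ to $C^l$. Around any $q_0 \in \mathbb{S}^1 \times \mathbb{S}^1$, one can choose a neighborhood $U$ of $\phi(q_0)$ on which the $C^l$-immersion $h$ is an embedding onto a $C^l$-submanifold of $\mathbb{R}^3$; composing the $C^l$ nearest-point projection $\pi$ of a tubular neighborhood of $h(U)$ in $\mathbb{R}^3$ with $(h|_U)^{-1}$ yields a $C^l$ left inverse of $h$. Applied to the identity $h \circ \phi = f$, this gives $\phi = (h|_U)^{-1} \circ \pi \circ f$ on a neighborhood of $q_0$, so $\phi$ is $C^l$ locally and hence globally. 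The relation $Dh \cdot D\phi = Df$, together with the immersion property of both $f$ and $h$, shows that $D\phi$ is everywhere invertible; thus $\phi$ is a $C^l$-diffeomorphism, and $\tilde\phi := \phi^{-1}$ satisfies $h = f \circ \tilde\phi$ as required.

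The main obstacle I anticipate lies in the second step: each additional differentiation of $f_j \circ \psi_j = f + w_j$ produces terms of the form $(D^\beta f_j)|_{\psi_j}$ multiplied by products of lower-order derivatives of $\psi_j$, and the top-order derivative has to be extracted at each step using the uniform immersion property of $f_j$ combined with the two-sided control of $|\det D\psi_j|$. The $C^l$-upgrade via a local left inverse of $h$ in the last step is then a clean way to conclude without needing $C^l$-convergence of $\psi_j$ themselves.
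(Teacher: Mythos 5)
Your proposal is essentially correct, but it reaches the crucial global bijectivity of $\phi$ by a genuinely different route than the paper. After both arguments have produced a map $\phi$ satisfying $h \circ \phi = f$ and upgraded it to a local $C^l$-diffeomorphism via a local left inverse of $h$ (the paper's Step~2 and your third paragraph are essentially the same idea), the paper only controls $D\phi_j$ in $L^\infty$ (via a pointwise contradiction argument at a maximizing point) and does \emph{not} attempt any control of $\phi_j^{-1}$; instead it invokes degree theory, computing $\deg(\phi)=\pm 1$ from the degree--integration formula combined with Lemma~\ref{lem:breu} (identifying $f^*\mu_f = h^*\mu_h$ as a common weak-$\ast$ limit of $f_j^*\mu_{f_j}$), and then reads off injectivity and surjectivity from \eqref{eq:degree}. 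You instead extract uniform $C^l$-bounds on \emph{both} $\psi_j$ and $\psi_j^{-1}$ --- using the left inverse $L_j$ for the forward bound, the change-of-variables identity for area forms to get two-sided control of $|\det D\psi_j|$, and the inverse-function formula for $D\psi_j^{-1}$ --- and then pass to limits $\phi$ and $\phi'$ of both sequences, obtaining bijectivity directly from $\phi\circ\phi'=\phi'\circ\phi=\mathrm{id}$. Your route avoids degree theory and Lemma~\ref{lem:breu} entirely, which is a real simplification of the conceptual toolkit, at the price of heavier quantitative bookkeeping: the iterated higher-order bound on $\psi_j^{-1}$ is only sketched (you state the $k=1$ formula explicitly and appeal to iteration for $k\le l$), and you rely on Proposition~\ref{prop:propsmoconalt} up front, whereas the paper works directly from Definition~\ref{def:Clconv}. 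Both approaches are valid; yours trades degree-theoretic machinery for estimates, and the paper trades estimates for degree theory.
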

\begin{proof} 
Since $f_j$ converges to $f$ in $C^l$ there exist diffeomorphisms $\phi_j$ of $\mathbb{S}^1 \times \mathbb{S}^1$ and maps $u_j: \mathbb{S}^1 \times \mathbb{S}^1 \rightarrow \mathbb{R}^3$ such that
\begin{equation}\label{eq:B7}
f_j \circ \phi_j + u_j = f \quad \textrm{on} \; \mathbb{S}^1 \times \mathbb{S}^1,
\end{equation}
{and $||u_j||_{L^\infty}, ||D u_j||_{L^\infty}$ converge to zero.
 Moreover there exist $v_j$ such that 
 \begin{equation}\label{eq:B8}
 f_j  + v_j = h \quad \textrm{on} \; \mathbb{S}^1 \times \mathbb{S}^1,
 \end{equation}
and $||v_j||_{L^\infty}, ||D v_j||_{L^\infty}$ converge to zero.\\
{\textbf{Step 1:} $(\phi_j)_{j= 1}^\infty$ converges uniformly to some {$\phi \in C^0(\mathbb{S}^1 \times \mathbb{S}^1)$} that satisfies $h = f \circ \phi$.}\\
First note that functions on $\mathbb{S}^1 \times \mathbb{S}^1$ can be periodically extended on $\mathbb{R}^2$. Doing so and tacitly identifying all the functions we defined above with their unique periodic extensions we infer that \eqref{eq:B7} and \eqref{eq:B8} hold on the whole of $\mathbb{R}^2$. From both equations we infer that 
 \begin{equation}\label{eq:B8a}
 h \circ \phi_j  - v_j \circ \phi_j + u_j = f \quad \textrm{on} \; \mathbb{R}^2.
 \end{equation}
 Since we deal now with functions in $C^1(\mathbb{R}^2; \mathbb{R}^3)$ we can compute derivatives simply using the Jacobi matrix. By the chain rule
 \begin{equation}\label{eq:B9}
 (Dh(\phi_j)- Dv_j(\phi_j)) D\phi_j + Du_j = Df \quad \mbox{ in } \mathbb{R}^2.
 \end{equation}
We claim that $||D\phi_j||_{L^\infty(\mathbb{R}^{2\times 2})}$ is bounded. For this assume that a subsequence (which we do not relabel) satisfies $||D\phi_j||_{L^\infty} \rightarrow \infty$ and let $p_j \in \mathbb{S}^1 \times \mathbb{S}^1$ be such that $|D\phi_j(p_j)| = ||D\phi_j||_{L^\infty}$, where $|\cdot|$ is a suitable matrix norm. Evaluating \eqref{eq:B9} at $p_j$ and dividing by $||D\phi_j||_{L^\infty}$ we obtain
{ \begin{equation}\label{eq:B11}
 \big( Dh ( \phi_j(p_j)) -  Dv_j(\phi_j(p_j)) \big) \frac{D\phi_j(p_j)}{||D\phi_j||_{L^\infty}} + \frac{1}{||D\phi_j||_{L^\infty}}Du_j(p_j) = \frac{1}{||D\phi_j||_{L^\infty}} Df(p_j) \, .
 \end{equation} 
By} boundedness of $\phi_j: \mathbb{R}^2 \rightarrow \mathbb{S}^1 \times \mathbb{S}^1$ and the choice of $p_j$ one can choose a subsequence such that $(\phi_j(p_j))_{j = 1}^\infty$ converges to some $q \in \mathbb{S}^1 \times \mathbb{S}^1$  and $\frac{D\phi_j(p_j)}{||D\phi_j||_{L^\infty}}$ converges to some $B \in \mathbb{R}^{2\times 2}$ that satisfies $|B| = 1$. Note that by the requirements on $u_j$, $v_j$ and the fact that the first fundamental forms of $f,h$ w.r.t the local coordinates $(u,v)$ are bounded one has
$||Du_j||_{L^\infty(\mathbb{R}^2,\mathbb{R}^{2\times 3})},||Dv_j||_{L^\infty(\mathbb{R}^2,\mathbb{R}^{2\times 3})} \rightarrow 0$ as $j \rightarrow \infty$. Passing to the limit in \eqref{eq:B11} we obtain
  \begin{equation*}
  Dh(q) B = 0 \, . 
\end{equation*}
This is a contradiction to $h$ being an immersion and $|B| = 1$. Hence $||D \phi_j||_{L^\infty(\mathbb{R}^2,\mathbb{R}^{2,2})}$ is bounded. Note also that {$\phi_j: \mathbb{R}^2 \rightarrow \mathbb{S}^1 \times \mathbb{S}^1$} is uniformly bounded as it takes values only in {$\mathbb{S}^1 \times \mathbb{S}^1$}. By Arzela-Ascoli's theorem there exists a subsequence (which we do not relabel) and $\phi \in C^0(\mathbb{S}^1 \times \mathbb{S}^1)$ such that $\phi_j \rightarrow \phi$ on $\mathbb{S}^1 \times \mathbb{S}^1$.   
We can now go back to \eqref{eq:B8a} and pass to the limit there to obtain 
{\begin{equation}\label{eq:B13}
h \circ \phi = f \quad \textrm{on} \; \mathbb{S}^1 \times \mathbb{S}^1.
\end{equation}}

  {\textbf{Step 2:}} $\phi$ is a local $C^l$ diffeomorphism, i.e. $\phi$ is $C^l$ smooth and for all $p \in \mathbb{S}^1 \times \mathbb{S}^1$ there exists an open neighborhood $U$ containing $p$ such that $\phi_{\mid_U}$ is a diffeomorphism onto its image.\\
To this end fix $p \in \mathbb{S}^1 \times \mathbb{S}^1$ and recall that, being $h$ an immersion, there exists an open neighborhood $W$ of $\phi(p)$ such that $h_{\mid_W}$ is a diffeomorphism onto its image $V:= h(W)$. We denote by $\widetilde{h} :V \rightarrow W$ the inverse of $h_{\mid_W}$.  By \eqref{eq:B13} we obtain 
\begin{equation}\label{eq:B14}
\phi = \widetilde{h} \circ f \quad  \textrm{on} \; f^{-1}(V).
\end{equation}
Notice that since $\phi(p) \in W$ it follows that $f(p) =h(\phi(p))\in V$ and hence $p \in f^{-1}(V)$ so that $f^{-1}(V)$ is an open neighborhood of $p$. Now there exists another open neighborhood $G$ of $p$  such that $f_{\mid_G}$ is a $C^l$-diffeomorphism onto its image. Defining $U = G \cap f^{-1}(V)$ we obtain that $\phi_{\mid_U}$ is a $C^l$-diffeomorphism as a composition of two diffeomorphisms. Note in particular that $D\phi(p)$ is invertible at each point {$p \in \mathbb{S}^1 \times \mathbb{S}^1$}. This implies in particular, as $\mathbb{S}^1 \times \mathbb{S}^1$ is connected and $\phi \in C^l$ that 
$ \mathrm{sgn}(\mathrm{det}(D\phi))$ is constant. 

{\textbf{Step 3:} $\mathrm{deg}(\phi) = \pm 1$.}\\ 
Recall that the \emph{mapping degree} of $\phi$ is given by 
\begin{equation}\label{eq:degree}
\mathrm{deg}(\phi) := \sum_{x \in \phi^{-1}(\{y\}) } \mathrm{sgn} \left(  \mathrm{det}(D\phi(x)) \right) 
\end{equation}
for any choice of $y \in \mathbb{S}^1 \times \mathbb{S}^1$. See \cite[Chap. 3]{Outerelo} or \cite[Chap. 3,§3]{Pollack} for the well-definedness of $\mathrm{deg}$, e.g. the independence of the definition of the chosen $y$ and finiteness of the sum in the definition. We make use of the degree-integration formula (cf \cite[p.188]{Pollack} to compute $\mathrm{deg}(\phi)$. Since {$\phi:\mathbb{S}^1 \times \mathbb{S}^1 \to \mathbb{S}^1 \times \mathbb{S}^1 $} is sufficiently smooth one has for all differential forms $\omega$ on $\mathbb{S}^1 \times \mathbb{S}^1$ that 
\begin{equation*}
\int_{\mathbb{S}^1 \times \mathbb{S}^1} \phi^* \omega = \mathrm{deg}(\phi) \int_{\mathbb{S}^1 \times \mathbb{S}^1} \omega,
\end{equation*}
where $\phi^* \omega$ is defined as in \cite[p.166]{Pollack}. Let $\eta \in C_0^\infty(\mathbb{R}^3)$ be arbitrary. Take $\omega_\eta(u,v) := \eta(h(u,v)) \sqrt{\det{Dh^T Dh}} \;  du \wedge dv$. Then 
\begin{equation}\label{eq:B17}
\int_{\mathbb{S}^1 \times \mathbb{S}^1} \omega_\eta = \int_0^1 \int_0^{2\pi} \eta(h(u,v)) \sqrt{\mathrm{det}(Dh^TDh)} \; \mathrm{d}u \; \mathrm{d}v  = \int \eta \; \mathrm{d}h^*\mu_h,
\end{equation}
since $Dh^TDh$ is the first fundamental form of $(\mathbb{S}^1 \times \mathbb{S}^1, g_h)$.
 Note that by Lemma \ref{lem:breu} $f^* \mu_f$ coincides with $h^* \mu_h$ as  both  measures are $C_0(\mathbb{R}^n)'$-limits of $f_j^* \mu_{f_j}$. Hence by \eqref{eq:B17}
 \begin{equation}\label{eq:B18}
  \int \eta \; \mathrm{d}f^* \mu_f = \int \eta \; \mathrm{d}h^* \mu_h  = \int_{\mathbb{S}^1 \times \mathbb{S}^1 }  \omega_\eta 
 \end{equation} 
Using now that $f = h \circ \phi$ we can also compute $ \int \eta \; \mathrm{d}f^* \mu_f$ in another way. Since $s := \mathrm{sgn} \; \mathrm{det} D\phi$ is constant, by definition of $\phi^* \omega_\eta$  we obtain 
\begin{align*}
 \int \eta \; \mathrm{d}f^*\mu_f &  = \int_0^1 \int_0^{1} \eta(f(u,v)) \sqrt{\mathrm{det}(Df^TDf)}  \; \mathrm{d}u \; \mathrm{d}v  \\ &  = \int_0^1 \int_0^{1} \eta(h(\phi(u,v))) \sqrt{\mathrm{det}(Dh^TDh)} |\mathrm{det}(D\phi)|  \; \mathrm{d}u \; \mathrm{d}v
 \\ & = s \int_0^1 \int_0^{1}  \eta(h(\phi(u,v))) \sqrt{\mathrm{det}(Dh^TDh)} \mathrm{det}(D\phi)  \; \mathrm{d}u \; \mathrm{d}v
 \\ &  = s \int_{\mathbb{S}^1\times \mathbb{S}^1 } \phi^*\omega_\eta
= s \cdot \mathrm{deg}(\phi) \int_{{\mathbb{S}^1 \times \mathbb{S}^1}}  \omega_\eta .  
\end{align*}
This and \eqref{eq:B18} yields that $\mathrm{deg}(\phi) = \frac{1}{s} = \pm 1$. 

{\textbf{Conclusion}}\\  
The fact that $\mathrm{deg}(\phi) = \pm 1$, $\mathrm{sgn}(\mathrm{det}(D\phi))$ is constant together with \eqref{eq:degree} imply that $\phi^{-1}(\{y\})$ must be a singleton for any choice of $y \in \mathbb{S}^1 \times \mathbb{S}^1$. This proves the injectivity of $\phi$. Surjectivity follows directly from \cite[Chap. 3, Remark 1.5(2)]{Outerelo}. We finally end up with a surjective and injective local diffeomorphism. By this inverse function theorem, this is also a global diffeomorphism.}
\end{proof}

\begin{cor}\label{cor:unique}
If $(f_j)_{j = 1}^\infty: \mathbb{S}^1 \times \mathbb{S}^1 \rightarrow \mathbb{R}^3$ converges in $C^l$ to some $f : \mathbb{S}^1 \times \mathbb{S}^1 \rightarrow \mathbb{R}^3$ and also to some $h :  \mathbb{S}^1 \times \mathbb{S}^1 \rightarrow \mathbb{R}^3$. Then there exists a $C^l$ diffeomorphism $\phi : \mathbb{S}^1 \times \mathbb{S}^1 \rightarrow \mathbb{S}^1 \times \mathbb{S}^1$ such that $f = h \circ \phi$. 
\end{cor}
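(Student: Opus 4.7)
The plan is to deduce Corollary \ref{cor:unique} directly from Proposition \ref{prop:fastunique}, by using the alternative characterization of $C^l$-convergence provided by Proposition \ref{prop:propsmoconalt} to reduce one of the two $C^l$-convergences to a classical one (after reparametrization).

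First I would invoke Proposition \ref{prop:propsmoconalt} for the convergence $f_j \to f$ in $C^l$: this yields $C^l$-diffeomorphisms $\psi_j : \mathbb{S}^1 \times \mathbb{S}^1 \to \mathbb{S}^1 \times \mathbb{S}^1$ and maps $w_j : \mathbb{S}^1 \times \mathbb{S}^1 \to \mathbb{R}^3$ (not required to be normal) such that
\begin{equation*}
f_j \circ \psi_j = f + w_j, \qquad \|D^k w_j\|_{L^\infty} \to 0 \quad \text{for all } k = 0,\dots,l.
\end{equation*}
In other words, the reparametrized sequence $g_j := f_j \circ \psi_j$ converges to $f$ \emph{classically} in $C^l$ in the sense of Definition \ref{def:classi}.

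Next I would verify that $g_j$ still converges to $h$ in $C^l$. This is the expected invariance of $C^l$-convergence under reparametrization (the analogue of Remark \ref{rem:equivclass} for $C^l$-convergence): from $f_j \to h$ in $C^l$ one obtains diffeomorphisms $\eta_j$ and normal fields $v_j$ with $f_j \circ \eta_j + v_j = h$ and $\|(\widehat{\nabla}^\perp)^k v_j\|_{L^\infty} \to 0$; composing with $\psi_j^{-1}\circ \eta_j$ gives $g_j \circ (\psi_j^{-1}\circ \eta_j) + v_j = h$, which shows exactly that $g_j \to h$ in $C^l$.

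At this point the hypotheses of Proposition \ref{prop:fastunique} are satisfied by $(g_j)$: it converges to $h$ in $C^l$ and classically in $C^l$ to $f$. Applying that proposition (with the roles $f \leftrightarrow h$, $h \leftrightarrow f$), we obtain a $C^l$-diffeomorphism $\phi : \mathbb{S}^1 \times \mathbb{S}^1 \to \mathbb{S}^1 \times \mathbb{S}^1$ such that $f = h \circ \phi$, which is the claim. The only mildly delicate point is the reparametrization invariance of $C^l$-convergence in the middle step; but this follows immediately from unravelling Definition \ref{def:Clconv}, so no serious obstacle is expected—the corollary is essentially a packaging statement built on top of Propositions \ref{prop:propsmoconalt} and \ref{prop:fastunique}.
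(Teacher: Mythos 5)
Your proof is correct and follows essentially the same route as the paper's: invoke Proposition \ref{prop:propsmoconalt} to upgrade one of the two $C^l$-convergences to a classical one after reparametrization, note that the other $C^l$-convergence is unaffected by composing with diffeomorphisms, and then apply Proposition \ref{prop:fastunique}. The only (immaterial) difference is that you make $f_j \to f$ classical while the paper makes $f_j \to h$ classical; since the conclusion is symmetric in $f$ and $h$ up to inverting the diffeomorphism, both choices yield the statement.
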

\begin{proof}
If $f_j$ converges to $h$ in $C^l$ then by Proposition \ref{prop:propsmoconalt}  there exists a sequence of diffeomorphisms $(\psi_j)_{j = 1}^\infty$ of $\mathbb{S}^1 \times \mathbb{S}^1$ such that $f_j \circ \psi_j$ converges to $h$ classically in $C^l$. Since (non-classical) $C^l$ convergence is not affected by reparametrizations, we infer that {also $f_j \circ \psi_j$ converges to $f$ in $C^l$. By Proposition \ref{prop:fastunique} applied to $f_j \circ \psi_j$ we infer that $f = h \circ \phi$ for a $C^l$-diffeomorphism $\phi$ of $\mathbb{S}^1 \times \mathbb{S}^1$.} 
\end{proof}

%%%%%%%%%%%%%%%%%%%%%%%%%%%%%%%%%%%%
%%%%%%%%%%%%%%%%%%%%%%%%%%%%%%%%%%%%
\section{On the Willmore flow}\label{Apptech}
%%%%%%%%%%%%%%%%%%%%%%%%%%%%%%%%%%%%
%%%%%%%%%%%%%%%%%%%%%%%%%%%%%%%%%%%%

Here we mention some previous results on the Willmore flow, which we will use. Since we need the precise formulations and constants we state them here for the readers convenience. We start with a short time existence and uniqueness result. We remark that this result is not the only short time existence result in the literature (cf. eg. \cite{simonett2001}), but it is the most useful for the formulation we use. 
\begin{theorem}\cite[Thm 1.2]{KS1}\label{thm:C1}
Suppose that $f_0 : \Sigma \rightarrow \mathbb{R}^n$ is a smooth immersion. Then there exist constants $\varepsilon_0> 0, c_0 < \infty$ that depend only on $n$ such that for all $\rho > 0 $ that satisfy 
\begin{equation*}
\sup_{x \in \mathbb{R}^n} \int_{ f_0^{-1}( B_\rho(x) ) } |A[f_0]|^2 d\mu_{f_0} \leq \varepsilon_0
\end{equation*}
then there exists a unique maximal smooth Willmore flow $(f(t))_{t\in [0,T)}$ starting at $f_0$ that satisfies $T \geq c_0 \rho^4$. {Moreover,} for all $m \geq 0$ there exists $C = C(n,m,f_0)$ such that 
\begin{equation}\label{eq:gradbound}
||(\nabla^\perp)^m A[f(t)]||_{L^\infty(\Sigma)} \leq C  \quad  \forall t \in [0, c_0 \rho^4]. 
\end{equation}
\end{theorem}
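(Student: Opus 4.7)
The statement is cited from \cite[Thm 1.2]{KS1}, so in practice one would defer to that reference; but to outline the strategy Kuwert--Schätzle use, here is how I would organise the argument.

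First, I would reduce the geometric PDE \eqref{eq:WillmoreFlow} to an honest (quasilinear) parabolic PDE. Because the Willmore flow is invariant under reparametrization, the symbol of $-(\Delta\vec H + Q(\Aring)\vec H)$ degenerates in tangential directions; to break this degeneracy one works locally by writing $f(t)$ as a normal graph over $f_0$, i.e.\ $f(t,p) = f_0(p) + u(t,p)\,\nu(p)$ on a coordinate patch, with $u$ scalar (or vector-valued in higher codimension) taking values in the normal bundle. The Willmore operator then becomes $\partial_t u = -\Delta_{g_0}^2 u + \text{l.o.t.}$ modulo a leading term that can be handled by a DeTurck-type tangential correction, yielding a genuinely fourth-order quasilinear parabolic system in $u$. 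Standard contraction-mapping short-time existence in $C^{4+\alpha,1+\alpha/4}$ (or parabolic $W^{4,p}$) spaces then gives a smooth solution on some small interval.

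The core quantitative step is to promote the existence time from ``some $\tau>0$'' to the concrete lower bound $c_0\rho^4$ under the smallness hypothesis $\sup_x \int_{f_0^{-1}(B_\rho(x))} |A|^2\,d\mu \leq \varepsilon_0$. Here I would use the Michael--Simon Sobolev inequality to interpolate $L^p$-norms of $A$ against $\int|A|^2$, so that $\varepsilon_0$ chosen small enough guarantees a uniform lower bound on $\mathrm{inj}$-type quantities and a uniform bound on $\|A\|_{L^\infty(B_\rho(x))}$ on a parabolic cylinder of scaled size $\rho^4$. The scaling $t\sim\rho^4$ is natural because of the parabolic scaling of the fourth-order flow. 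A continuity argument then rules out the solution ceasing to exist before $t = c_0\rho^4$: if $T < c_0\rho^4$ were maximal, the integral curvature bound would still be valid at $T$, contradicting maximality via the graph reconstruction.

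For the higher derivative estimates \eqref{eq:gradbound}, the standard approach is to compute the evolution equation of $|(\nabla^\perp)^m A|^2$, which has schematic form
\begin{equation*}
\partial_t |(\nabla^\perp)^m A|^2 + 2|(\nabla^\perp)^{m+2} A|^2 \;=\; P_m\bigl(A,\nabla^\perp A,\ldots,(\nabla^\perp)^m A\bigr),
\end{equation*}
where $P_m$ is a polynomial in the covariant derivatives of $A$ up to order $m$ together with a few higher-order terms that can be absorbed into the good term on the left via integration by parts. One then bootstraps: the $m=0$ bound follows from the energy-concentration hypothesis via Michael--Simon and a Moser iteration; higher $m$ then follows by induction, using interpolation inequalities of Gagliardo--Nirenberg type on the evolving surface to reduce $\|(\nabla^\perp)^m A\|_{L^\infty}$ to combinations of $\|A\|_{L^\infty}$ and $\|(\nabla^\perp)^{m+2}A\|_{L^2}$. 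The hard part is juggling the dependence of the constants on $f_0$ while keeping the $\rho$-scaling: one needs the constants $\varepsilon_0, c_0$ to depend only on the ambient dimension $n$ and not on $f_0$, which forces all estimates to be formulated in a scale-invariant way. Once the uniform $L^\infty$-control on $(\nabla^\perp)^m A$ is in place on $[0,c_0\rho^4]$, uniqueness follows from the standard energy argument applied to the difference of two solutions in the graph representation.
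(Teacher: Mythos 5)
This theorem is not proved in the present paper: it is cited verbatim from \cite[Thm 1.2]{KS1}, with the authors additionally noting that the derivative bound \eqref{eq:gradbound} is not part of the theorem statement in \cite{KS1} but appears in its proof, at \cite[Eq. (4.27)]{KS1}. So there is no internal proof to compare against, and you are right to frame your answer as a reconstruction of the Kuwert--Sch\"atzle argument rather than a new proof.

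As such a reconstruction, your outline is broadly on target --- graph representation to break the reparametrization degeneracy, parabolic scaling $t\sim\rho^4$, the Michael--Simon Sobolev inequality, the evolution inequality for $|(\nabla^\perp)^m A|^2$, interpolation, a continuity argument, and scale-invariance of $\varepsilon_0,c_0$ --- but it omits the single device that actually converts the local smallness hypothesis $\sup_x\int_{f_0^{-1}(B_\rho(x))}|A|^2\,d\mu\leq\varepsilon_0$ into the lower bound $T\geq c_0\rho^4$: the \emph{localization by cutoff functions}. Kuwert and Sch\"atzle derive Gronwall-type estimates for quantities of the form $\int\gamma^s\,|(\nabla^\perp)^m A|^2\,d\mu$ where $\gamma$ is a cutoff adapted to the ball $B_\rho(x)$; the $\rho$-scaling enters through $\|\nabla\gamma\|_\infty\lesssim\rho^{-1}$, and it is these weighted local estimates, not the unlocalized schematic identity you wrote down, that produce the lifespan bound. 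Without the cutoff the global evolution inequality carries no information from the hypothesis. Two smaller imprecisions: restricting to normal graphs over $f_0$ already renders the equation parabolic (the leading term is $\Delta_{g_0}^2 u$), so no additional DeTurck-type tangential correction is needed; and the $L^\infty$ control on $A$ is obtained by interpolating the localized $L^2$ bounds on $(\nabla^\perp)^m A$ (using the Michael--Simon inequality), not by Moser iteration. With the cutoff machinery inserted, the rest of your bootstrap and the uniqueness-by-energy argument are in the spirit of \cite{KS1}.
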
 
Note that \eqref{eq:gradbound} is not in the statement of \cite[Thm 1.2]{KS1} but in its proof, see \cite[Eq. (4.27)]{KS1}. In fact the bound of the derivatives of the curvature are crucial in the proof of the short time existence theorem. In addition to bounds on the curvature one also needs a bound on the metric. Let us emphasize that this bound is (in finite time) implied by the curvature bounds as part of a more general result, see \cite[Lemma 14.2]{Hamilton}. Once short time existence is shown one can look at long time existence. The most important blow up criterion obtained so far is the one { discussed in Theorem \ref{thm:KS} below. It says that if $T < \infty$ then the curvature has to concentrate.}%we mentioned in Theorem \ref{thm:KS} -- if $T < \infty$ then the curvature has to concentrate. 

One can ask what happens to other quantities once the curvature degenerates. By Simon's monotonicity formula, the `density' will not degenerate. Indeed, in \cite[Eq. (1.3)]{Simon}, a local bound for the surface measure is shown. A useful implication stated in \cite[Lemma 4.1]{KS2} is that there exists $c >0$ such that for all proper immersions $f: \Sigma \rightarrow \mathbb{R}^n$ {($\Sigma$ compact and without boundary)} one has
\begin{equation}\label{eq:C3}
\frac{\mu_f( f^{-1}(B_\rho(x_0)))}{\rho^2} \leq c  \mathcal{W}(f) \quad \forall \rho > 0, 
\end{equation}
where we further assume that $\Sigma$ is a torus so that its  Euler characteristic vanishes.

Up to this point, no examples of evolutions where the curvature degenerates are known, even though there exists one candidate {for this phenomenon,} %where it is likely, 
cf. \cite{Mayer01anumerical}.

Close to local minimizers curvature concentration cannot occur and one deduces convergence with the aid of a Lojasiewicz-Simon gradient inequality.  

\begin{theorem}\cite[Lemma 4.1]{CFS09}\label{thm:CFS}
Let $f_{W}:\Sigma \to \mathbb{R}^n$ be a Willmore immersion of a compact manifold $\Sigma$ without boundary, and let $k \in \mathbb{N}$, $\delta>0$. Then there exists $\varepsilon= \varepsilon(f_{W})>0$ such that the following is true: \\suppose that $(f(t))_{t\in [0,T)}$ is a Willmore flow of $\Sigma$ satisfying 
$$ \| f_0-f_{W}\|_{W^{2,2}\cap C^1} < \varepsilon, $$
and
\begin{equation}\label{eq:nowillmoreconjecture}
 \mathcal{W}(f(t)) \geq \mathcal{W}(f_{W}) \mbox{ whenever }\| f(t)\circ \Phi(t)-f_{W}\|_{C^k} \leq \delta \, ,
 \end{equation}
for some appropriate diffeomorphisms $\Phi(t): \Sigma \to \Sigma$.\\
Then this Willmore flow exists globally, that is, $T=\infty$, and converges, after reparametrization by appropriate diffeomorphisms $\tilde{\Phi}(t): \Sigma \to \Sigma$, smoothly to a Willmore immersion $f_{\infty}$. That is, 
$$ f(t)\circ \tilde{\Phi}(t) \to f_{\infty} \mbox{ as }t \to \infty. $$
Moreover, $\mathcal{W}(f_{\infty})= \mathcal{W}(f_W)$ and $\| f_0-f_{W}\|_{C^k} < \delta$. 
\end{theorem}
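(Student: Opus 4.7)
The plan is to establish a Lojasiewicz--Simon (LS) gradient inequality for the Willmore functional near $f_W$ and combine it with the dissipation identity of the flow to force convergence. As a first step I would make a gauge choice that tames the infinite-dimensional kernel of $\nabla_{L^2}\mathcal{W}$ arising from reparametrization invariance: represent any immersion close to $f_W$ (in $W^{2,2}\cap C^1$) as $f = f_W + \varphi \vec{N}_{f_W}$ with $\varphi\in W^{4,2}(\Sigma, N\Sigma)$ small, by applying the implicit function theorem to the projection onto normal sections of $f_W$. In this gauge the Willmore operator linearizes to an elliptic fourth-order self-adjoint Fredholm operator on normal sections, and the functional $\varphi \mapsto \mathcal{W}(f_W + \varphi\vec{N}_{f_W})$ is real-analytic on a neighborhood of $0$ in $W^{4,2}$. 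The abstract LS framework then yields $\theta \in (0,\tfrac12]$ and $C,\sigma>0$ such that
\begin{equation*}
|\mathcal{W}(f_W + \varphi\vec{N}_{f_W}) - \mathcal{W}(f_W)|^{1-\theta} \leq C\,\|\nabla_{L^2}\mathcal{W}(f_W + \varphi\vec{N}_{f_W})\|_{L^2(\Sigma)}
\end{equation*}
whenever $\|\varphi\|_{C^k} < \sigma$.

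Next, I would perform a bootstrap argument. Let $T^\ast \in (0,T]$ be the maximal time for which appropriate reparametrizations $\Phi(t)$ keep $\|f(t)\circ\Phi(t) - f_W\|_{C^k} \leq \delta$; by hypothesis \eqref{eq:nowillmoreconjecture}, on $[0,T^\ast)$ one has $\mathcal{W}(f(t)) - \mathcal{W}(f_W) \geq 0$, so the LS inequality is applicable. Combining it with the dissipation identity
\begin{equation*}
-\frac{d}{dt}\mathcal{W}(f(t)) = \|\nabla_{L^2}\mathcal{W}(f(t))\|_{L^2}^2 = \|\partial_t f(t)\|_{L^2}^2,
\end{equation*}
a standard chain-rule manipulation gives $-\frac{d}{dt}\bigl(\mathcal{W}(f(t)) - \mathcal{W}(f_W)\bigr)^{\theta} \geq c\,\|\partial_t f(t)\|_{L^2}$, which integrates to
\begin{equation*}
\int_0^{T^\ast} \|\partial_t f(t)\|_{L^2}\, dt \;\leq\; C\bigl(\mathcal{W}(f_0) - \mathcal{W}(f_W)\bigr)^{\theta}.
\end{equation*}
Provided $\varepsilon$ is chosen so that the right-hand side is smaller than, say, $\delta/4$ (after upgrading from $L^2$ to $C^k$ by parabolic interior estimates), this integral bound prevents $f(t)$ from ever reaching the boundary of the $\delta$-neighborhood, so $T^\ast = T$ and $T = \infty$.

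The finite $L^1_t L^2_x$-bound on $\partial_t f$ shows that $f(t)$ is Cauchy in $L^2(\Sigma)$ as $t\to\infty$, producing a limit $f_\infty$. To upgrade this to $C^k$-convergence and to identify $f_\infty$ as a Willmore immersion with $\mathcal{W}(f_\infty)=\mathcal{W}(f_W)$, I would invoke the smoothing estimates of Kuwert--Schätzle (\cite{KS1}, \cite{KS2}): since the flow stays close in $C^1$ to the smooth immersion $f_W$, one obtains uniform $L^\infty$-bounds on $A$ and, by the interpolation/concentration machinery, on every $(\nabla^\perp)^m A$ along $[1,\infty)$. This compactness combined with $L^2$-Cauchyness pins down the limit in $C^k$; the energy identity together with the assumption \eqref{eq:nowillmoreconjecture} then forces $\mathcal{W}(f_\infty) = \mathcal{W}(f_W)$, and passing to the limit in \eqref{eq:WillmoreEquation} shows $f_\infty$ is Willmore.

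The main obstacle will be the LS inequality itself. Verifying the hypotheses of the abstract LS theorem requires the correct choice of function spaces (here, $W^{4,2}$ in the domain paired with $L^2$ in the range), a quantitative understanding of how the reparametrization orbit sits inside the immersions, and Fredholmness of the Jacobi operator on normal sections modulo its finite-dimensional kernel. The analyticity of $\varphi\mapsto\mathcal{W}(f_W+\varphi\vec{N}_{f_W})$ in $W^{4,2}$ is delicate because the integrand involves ratios of polynomials in derivatives of $\varphi$ (through $g^{ij}$ and $\sqrt{\det g}$), but on a neighborhood of $\varphi=0$ these are composed with analytic functions of the small variable $\varphi$, which ultimately yields the analyticity needed to apply the Lojasiewicz--Simon machinery.
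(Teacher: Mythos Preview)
The paper does not prove this statement: Theorem~\ref{thm:CFS} is quoted verbatim from \cite[Lemma~4.1]{CFS09} and is used as a black box in Appendix~\ref{Apptech} (specifically inside the proof of Theorem~\ref{thm:THM22NEW}). There is therefore no proof in the paper to compare against.

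That said, your outline is the correct strategy and is essentially the one carried out in \cite{CFS09}. A few points where your sketch would need tightening if you actually wanted to reprove the result: (i) the gauge-fixing step (writing nearby immersions as normal graphs over $f_W$) requires more than $W^{2,2}\cap C^1$-closeness to land in $W^{4,2}$ normal sections, so one must first run the flow for a short time and use the smoothing estimates of \cite{KS1} before entering the Lojasiewicz--Simon argument; (ii) the upgrade from the $L^1_t L^2_x$-bound on $\partial_t f$ to a $C^k$-bound that prevents escape from the $\delta$-neighborhood is not automatic and relies on interpolation against the uniform higher-order curvature bounds, which in turn are only available once you know the flow stays $C^1$-close to $f_W$---so the bootstrap has to be set up carefully to avoid circularity; (iii) the analyticity of the Willmore functional on normal graphs is established in \cite{Simon} and \cite{CFS09} and is indeed the key structural input. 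None of these are gaps in your strategy, just places where the actual proof in \cite{CFS09} does real work that your sketch defers.
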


\begin{remark}\label{rem:appepslos}
Notice that $\varepsilon$ in the statement does not change if instead of $f_{W}$ one considers the translated Willmore surface $f_{W}+\bar{x}$ for $\bar{x} \in \mathbb{R}^{{n}}$. Indeed, if $f_0$ satisfies 
$$ \| f_0 - (f_{W}+\bar{x})\|_{W^{2,2}\cap C^1} < \varepsilon= \varepsilon(f_W),$$
then clearly $f_0-\bar{x}$ satisfies the assumptions on the initial datum stated in \ref{thm:CFS} so that the corresponding Willmore flow $\tilde{f}(t)$ converges. Due to the uniqueness of the solution for the Willmore flow, $ \tilde{f}(t)=f(t)-\bar{x}$ with $f(t)$ the solution of the Willmore flow which starts in $f_0$. Hence, also $f(t)$ converges.
\end{remark}

\begin{remark}\label{rem:convsecFF}
We also remark that in case that the Willmore flow converges in $C^k$ for all $k$ one obtains uniform bounds on all derivatives of the second fundamental form, i.e. for all $m \in \mathbb{N}_0$ there exists $C = C(m, f_0)$ such that 
\begin{equation*}
||(\nabla^\perp)^m A [f(t) ]||_{L^\infty} \leq C  \quad \forall t \in [0,\infty). 
\end{equation*}
\end{remark}

Not every evolution of the Willmore flow is convergent. What one can however always obtain is a \emph{Willmore concentration limit} of appropriate parabolic rescalings. Below we will introduce the Willmore concentration limit rigorously since we need to examine it for the proof of Theorem \ref{thm:THM22NEW}.  

\begin{theorem}[{Willmore Concentration Limit, \cite[Sec. 4]{KS2}}] \label{thm:KS}
Let $\Sigma$ be a compact two-dimensional manifold {without boundary} 
and let $f \colon [0,T)\times \Sigma\to \mathbb{R}^{{n}}$ be immersions evolving by the Willmore flow with initial datum $f_0$. 
{Let $\varepsilon_0>0$ and $c_0$ be defined as in Theorem \ref{thm:C1}.} 

Then for each sequence $(t_j)_{j = 1}^\infty \nearrow T$ there exist $(x_j)_{j = 1}^\infty \subset \mathbb{R}^{{n}}$, $(r_j)_{j = 1}^\infty \subset (0,\infty) $ {(defined as in \eqref{eq:radii})} and $c_0 > 0 $ such that 
\begin{equation}\label{eq:tjrj}
 t_j + c_0r_j^4 < T \quad \text{for all } j \in \mathbb{N}
\end{equation}
 and
\begin{equation}\label{eq:deftildef}
\tilde{f}_j := \frac{1}{r_j} \left( f(t_j + c_0 r_j^4,\cdot ) - x_j  \right) \colon \Sigma \rightarrow \mathbb{R}^{{n}}
\end{equation}
converges smoothly on compact subsets of $\mathbb{R}^n$ to a {proper} Willmore immersion $\widehat{f}: \widehat{\Sigma} \rightarrow \mathbb{R}^{{n}}$, where $\widehat{\Sigma} \neq \emptyset$ is a smooth two-dimensional manifold without boundary. Moreover 
\begin{equation}   \label{eq:concentration}
\liminf_{j \rightarrow \infty } \int_{B_j} |A(t_j + c_0 r_j^4)|^2 \diff \mu_{g(t_j + c_0 r_j^4)} > 0,
\end{equation}
where $B_j=(f(t_j + c_0 r_j^4))^{-1}(\overline{B_{r_j}(x_j)})$.
\end{theorem}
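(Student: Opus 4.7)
The plan is to adapt the standard parabolic blow-up procedure of Kuwert--Schätzle \cite{KS2}: define $r_j$ as the concentration radius, use short-time existence applied at time $t_j$ to get control for a controlled extra time $c_0 r_j^4$, extract a smooth limit via uniform curvature bounds, and verify the limit is Willmore via energy dissipation.

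\textbf{Step 1 (Short-time extension).} For each $j \in \mathbb{N}$ set $r_j$ as in \eqref{eq:radii}, so that $\sup_{x} \int_{f(t_j)^{-1}(B_{r_j}(x))} |A(t_j)|^2 \, d\mu_{g(t_j)} \leq \varepsilon_0$. Apply Theorem \ref{thm:C1} to $f(t_j)$ with $\rho = r_j$: there exists a smooth Willmore flow starting at $f(t_j)$ defined on $[0, c_0 r_j^4]$, where $\varepsilon_0, c_0$ depend only on $n$. By uniqueness (\cite[Prop. 1.1]{KS1}), this coincides with the restriction of $f$, giving \eqref{eq:tjrj}.

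\textbf{Step 2 (Choice of $x_j$ and concentration).} By the definition of $r_j$ as a supremum, there exist $x_j \in \mathbb{R}^n$ with $\int_{f(t_j)^{-1}(\overline{B_{r_j}(x_j)})} |A(t_j)|^2 \, d\mu_{g(t_j)} \geq \varepsilon_0/2$. To transfer this to time $t_j + c_0 r_j^4$ and obtain \eqref{eq:concentration}, rescale: set $\tilde f_j(s, \cdot) := \frac{1}{r_j}(f(t_j + s r_j^4, \cdot) - x_j)$, which again solves the Willmore flow by scale invariance on $s \in [0, c_0]$. At $s=0$ the concentration radius of $\tilde f_j(0)$ equals $1$, so Theorem \ref{thm:C1} (applied at each rescaled time) gives, for every $m \in \mathbb{N}_0$, uniform bounds
\begin{equation*}
\|(\nabla^\perp)^m A[\tilde f_j(s)]\|_{L^\infty} \leq C(n,m), \qquad s \in [0, c_0].
\end{equation*}
Combined with \eqref{eq:C3}, this gives a uniform local area bound. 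A standard ODE-type continuity argument (as in \cite[Sec. 4]{KS2}) then shows that a definite fraction of the concentrated curvature cannot disperse within rescaled time $c_0$, yielding \eqref{eq:concentration}.

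\textbf{Step 3 (Smooth subsequential limit).} At the time slice $s = c_0$, the immersions $\tilde f_j$ have uniformly bounded curvature and all its covariant derivatives on compact subsets of $\mathbb{R}^n$, together with the local area bound from \eqref{eq:C3} with $\mathcal{W}(\tilde f_j) = \mathcal{W}(f(t_j + c_0 r_j^4)) \leq \mathcal{W}(f_0)$. Apply Breuning's compactness theorem for immersions with bounded geometry \cite{Breuning} (after discarding components whose image leaves every compact subset of $\mathbb{R}^n$). This yields a smooth two-dimensional manifold $\hat\Sigma \neq \emptyset$ without boundary, a proper smooth immersion $\hat f : \hat\Sigma \to \mathbb{R}^n$, and a subsequence (not relabeled) with $\tilde f_j \to \hat f$ smoothly on compact subsets of $\mathbb{R}^n$ in the sense of Definition \ref{def:smocon}. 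That $\hat\Sigma \neq \emptyset$ follows from \eqref{eq:concentration} (which forces a compact piece of $\tilde f_j(\Sigma)$ to accumulate in $\overline{B_1(0)}$).

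\textbf{Step 4 (The limit is Willmore).} This is the main obstacle. The standing energy identity for the flow reads
\begin{equation*}
\int_0^T \int_\Sigma |\Delta \vec H + Q(\Aring)\vec H|^2 \, d\mu_{g(t)} \, dt = \mathcal{W}(f_0) - \lim_{t \to T} \mathcal{W}(f(t)) < \infty.
\end{equation*}
Hence the energy dissipation on the time slab $[t_j, t_j + c_0 r_j^4]$ tends to zero. Under the parabolic rescaling that defines $\tilde f_j$, this slab becomes $[0, c_0]$, and by scale invariance one obtains
\begin{equation*}
\int_0^{c_0} \int_\Sigma |\Delta \vec H[\tilde f_j(s)] + Q(\Aring[\tilde f_j(s)])\vec H[\tilde f_j(s)]|^2 \, d\mu \, ds \longrightarrow 0.
\end{equation*}
Combined with the uniform $C^\infty$-bounds from Step~2, a mean-value argument picks out a rescaled time at which the Willmore quantity tends to zero pointwise on compact subsets; the smooth convergence then passes this to the limit, so $\hat f$ solves $\Delta \vec H + Q(\Aring)\vec H = 0$, i.e., $\hat f$ is a Willmore immersion. (Alternatively, reparametrize so that the blow-up sequence has $s = c_0$ replaced by an appropriate Lebesgue point.)

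The hard part is Step~4: managing the interchange of limits in the Willmore equation under the geometric (reparametrization-insensitive) notion of convergence, while propagating the concentration in \eqref{eq:concentration} all the way to the rescaled time $s = c_0$. The remaining items --- properness of $\hat f$ and non-emptiness of $\hat \Sigma$ --- follow from \eqref{eq:C3} and \eqref{eq:concentration} as indicated in Step~3.
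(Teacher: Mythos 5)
The paper does not prove this theorem: the bracketed attribution ``[Sec.\ 4]{KS2}'' indicates it is being quoted from Kuwert--Schätzle without proof, so there is no in-paper argument to compare against. Your sketch is a reasonable reconstruction of the KS2 blow-up argument in outline, and you correctly identify Step~4 as the delicate point. One substantive remark on Step~4 is in order, though: the mean-value argument as written has a genuine gap. You pick times $s_j \in [0,c_0]$ with $\int_\Sigma |\Delta \vec H + Q(\Aring)\vec H|^2\,d\mu \to 0$, but the smooth local convergence in Step~3 is established at the \emph{different} time $s=c_0$, and the rescaled velocity $\partial_s \tilde f_j$ is only $O(1)$-bounded (not small), so $\tilde f_j(s_j)$ and $\tilde f_j(c_0)$ need not converge to the same limit; one cannot simply ``pass this to the limit'' at $s=c_0$. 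The fix that KS2 actually use is to extract, from the uniform curvature bounds of Theorem~\ref{thm:C1}, a limit of the \emph{entire} rescaled flows $\tilde f_j(s)$, $s\in[0,c_0]$, obtaining a limiting Willmore flow $\widehat f(s)$ on $[0,c_0]$; then Fatou's lemma applied to the dissipation identity gives
\begin{equation*}
\int_0^{c_0}\!\!\int_{\widehat\Sigma} |\partial_s \widehat f(s)|^2 \,d\mu\,ds \;\leq\; \liminf_{j\to\infty} \int_0^{c_0}\!\!\int_{\Sigma} |\partial_s \tilde f_j(s)|^2 \,d\mu\,ds \;=\; 0,
\end{equation*}
so the limit flow is stationary and $\widehat f = \widehat f(c_0)$ is a Willmore immersion. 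Your Lebesgue-point alternative would require redefining the theorem's time slice, which contradicts the fixed $t_j + c_0 r_j^4$ in the statement. Apart from this and the (acknowledged) hand-waving in the dispersion estimate of Step~2, the outline is sound and captures the right ideas: concentration radii, parabolic rescaling, short-time estimates, and local compactness à la Breuning.
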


Now we are finally ready to prove Theorem \ref{thm:THM22NEW}. 
\begin{proof}[Proof of Theorem \ref{thm:THM22NEW}] {The first part of the statement follows from \eqref{eq:tjrj}.} From Theorem \ref{thm:KS} it follows that there exists a sequence $(x_j)_{j \in \mathbb{N}} {\subset \mathbb{R}^n} $ and a proper Willmore immersion $\widehat{f}: \widehat{\Sigma} \rightarrow \mathbb{R}^{{n}}$ such that 
\begin{equation}\label{eq:433}
\tilde{f}_{j,c_0} - \frac{x_j}{r_j}  \rightarrow \widehat{f},
\end{equation}
smoothly as $j \rightarrow \infty$. Now we examine the asymptotics of $(r_j)_{j \in \mathbb{N}}$ \\
If there exists a subsequence of the radii $r_j$ that tends to zero or infinity. By \cite[Thm 1.1]{CFS09}, $\widehat{\Sigma}$ is not compact. In particular $\mathrm{diam}(\widehat{f}(\widehat{\Sigma}))= \infty$ since otherwise $\widehat{f}(\widehat{\Sigma})$ lies in a compact set of $\mathbb{R}^{{n}}$ which is a contradiction to the properness of {$\widehat{f}$}. By lower semincontinuity of the diameter, cf. Lemma \ref{lem:appdiam}, we infer 
\begin{equation*}
\infty = \mathrm{diam}(\widehat{f}(\widehat{\Sigma})) \leq \liminf_{j \rightarrow \infty} \mathrm{diam} \left( \tilde{f}_{j,c_0} - \frac{x_j}{r_j} \right) =  \liminf_{j \rightarrow \infty} \mathrm{diam} ( \tilde{f}_{j,c_0} ) 
\end{equation*}
Hence we have shown that (2) occurs. 

Suppose on contrary that $(r_j)_{j \in \mathbb{N}}$ has no subsequence that tends to zero or infinity. Then there exists $\delta > 0 $ such that $\delta < r_j < \frac{1}{\delta}$ for all $j \in \mathbb{N}$ and Case (1) occurs. {Necessarily from \eqref{eq:tjrj} we see that $T=\infty$.} 

It remains to show that a bound on the diameter ensures {full convergence to a Willmore immersion}. Suppose therefore that $\mathrm{diam}(\tilde{f}_{j,c_0}) \leq M$ for all $j \in \mathbb{N}$. Note that then  - once again by lower semicontinuity, cf. Lemma \ref{lem:appdiam},
\begin{equation*}
\mathrm{diam}(\widehat{f}(\widehat{\Sigma})) \leq \liminf_{j \rightarrow \infty} \mathrm{diam} \left( f_{j,c_0} - \frac{x_j}{r_j} \right) = \liminf_{j \rightarrow \infty} \mathrm{diam} \left( f_{j,c_0}\right) \leq M. 
\end{equation*}
Since $\widehat{f}$ is proper this ensures that {$\widehat{\Sigma}$}  %$\widehat{f}$ 
is compact. By \cite[Lemma 4.3]{KS2} we infer that $\widehat{\Sigma} = \mathbb{S}^1 \times \mathbb{S}^1$ and the convergence in \eqref{eq:433} is actually convergence in $C^k$ for all $k \in \mathbb{N}$. Now we define 
\begin{equation*}
\widetilde{f}_j: [0,c_0] \times \mathbb{S}^1 \times \mathbb{S}^1 \rightarrow \mathbb{R}^{{n}}, \quad \widetilde{f}_j(s) := \frac{f(t_j + s r_j^4)}{r_j}.
\end{equation*}
Note that by scaling properties of the Willmore gradient $\widetilde{f}_j$ %: [0,c_0] \times \mathbb{S}^1 \times \mathbb{S}^1 \rightarrow \mathbb{R}^3$ 
solves the Willmore flow equation. By \eqref{eq:433} we can now fix $j_0 \in \mathbb{N}$ and a smooth diffeomorphism $\Phi :  \mathbb{S}^1 \times \mathbb{S}^1 \rightarrow \mathbb{S}^1 \times \mathbb{S}^1$ such that 
\begin{equation}\label{eq:4.7}
\big\Vert \tilde{f}_{j_0,c_0} \circ \Phi - \frac{x_{j_0}}{r_{j_0}} - \widehat{f} \big\Vert_{C^2} < \epsilon = \epsilon(\widehat{f}),
\end{equation}
where $\epsilon(\widehat{f})$ is chosen as in Theorem \ref{thm:CFS}. By Remark \ref{rem:appepslos} we also have $\epsilon(\widehat{f}) = \epsilon \big( \widehat{f} + \frac{x_{j_0}}{r_{j_0}} \big)$. We infer by Theorem \ref{thm:CFS} that the Willmore flow starting at $\widetilde{f}_{j_0, c_0} \circ \Phi$ exists globally and converges { (up to reparametrization)} to a Willmore immersion $f_\infty : \mathbb{S}^1 \times \mathbb{S}^1 \rightarrow \mathbb{R}^{{n}}$. By geometric uniqueness of Willmore evolutions we infer that $\widetilde{f}_{j_0} \circ \Phi$, {first defined on $[0,c_0]$,} % : [0,c_0] \times \mathbb{S}^1 \times \mathbb{S}^1$ 
extends to a global evolution, i.e. defined on $[0,\infty)$, and converges (up to reparametrization) to $f_\infty$. Again by geometric uniqueness we infer that $\widetilde{f}_{j_0}$ extends to a global evolution converging (up to reparametrization) to $f_\infty \circ \Phi^{-1}$. 
 Using scaling properties of the Willmore flow we infer that $f$ extends to a global evolution by Willmore flow that converges to  $r_{j_0} f_\infty$, which is again a Willmore immersion. 
 
 To show the last sentence of the claim we first observe that a uniform bound on the diameter implies that case (2) may not occur, in particular $r_j \in (\delta, \frac{1}{\delta})$ for some $\delta > 0 $. Then the fact that $t_j + c_0 r_j^4 < T$ for all $j$ and $t_j \rightarrow T$ implies that $ T = \infty$. Convergence follows then according to case (1) with the diameter bound. 
\end{proof}

With this theorem we have proved that boundedness of $\mathrm{diam}( \tilde{f}_{j,c_0} )$ implies convergence. The fact that $\tilde{f}_{j,c_0}$ need information about $f(t_j + c_0 r_j^4)$ and not just about $f(t_j)$ adds a technical difficulty -- the time shift might cause that geometric quantities degenerate. Luckily, the diameter is not so much affected by (bounded) time shifts, as we shall see in the following

\begin{lemma}[Evolution of Diameter and Area] \label{lem:diamflow}
Suppose that $f :  [0,T) \times \Sigma \rightarrow \mathbb{R}^{{n}}$  is a maximal evolution by Willmore flow. Then there exist constants $C_1 = C_1(\mathcal{W}(f(0))), C_2 = C_2( \mathcal{W}(f(0)))$ depending monotonically on $\mathcal{W}(f(0))$ such that 
\begin{equation}\label{eq:totalarea}
\mu_{g_{f(t)}} ( \Sigma)  \leq \mu_{g_{f(0)}} ( \Sigma) + C_1(\mathcal{W}(f(0))) t^\frac{1}{2} 
\end{equation}
and 
\begin{equation*}
\mathrm{diam}(f(t)(\Sigma)) \leq C_2(\mathcal{W}(f(0))) ( \mathrm{diam}(f(0)(\Sigma)) + t^\frac{1}{4} ) .
\end{equation*}
\end{lemma}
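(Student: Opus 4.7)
\textbf{Proof plan for Lemma \ref{lem:diamflow}.} The strategy is to control the time derivative of the area by combining the first variation of area with the gradient-flow dissipation identity, apply Cauchy--Schwarz twice (once in space, once in time) to extract the $t^{1/2}$ rate, and then bootstrap this into the diameter bound via Simon's diameter inequality together with \eqref{eq:C3}.

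First I would establish the area estimate. Since the Willmore flow velocity $\partial_t f = -(\Delta \vec H + Q(\Aring)\vec H)$ is a normal vector field, the first variation of area gives
\begin{equation*}
    \frac{d}{dt}\mu_{g_{f(t)}}(\Sigma) = -\int_\Sigma \langle \partial_t f, \vec H\rangle\,d\mu_{g_{f(t)}}.
\end{equation*}
Cauchy--Schwarz in space together with $\|\vec H\|_{L^2}^2 = 4\mathcal W(f(t))$ and the gradient-flow identity $\frac{d}{dt}\mathcal W(f(t)) = -\|\partial_t f\|_{L^2}^2$ yield
\begin{equation*}
    \left|\frac{d}{dt}\mu_{g_{f(t)}}(\Sigma)\right| \leq 2\,\mathcal W(f(t))^{1/2}\left(-\frac{d}{dt}\mathcal W(f(t))\right)^{1/2} \leq 2\,\mathcal W(f(0))^{1/2}\left(-\frac{d}{dt}\mathcal W(f(t))\right)^{1/2}.
\end{equation*}
Integrating on $[0,t]$ and applying Cauchy--Schwarz in the $t$-variable gives
\begin{equation*}
    |\mu_{g_{f(t)}}(\Sigma) - \mu_{g_{f(0)}}(\Sigma)| \leq 2\,\mathcal W(f(0))^{1/2}\,t^{1/2}\,(\mathcal W(f(0))-\mathcal W(f(t)))^{1/2} \leq 2\,\mathcal W(f(0))\,t^{1/2},
\end{equation*}
which proves \eqref{eq:totalarea} with $C_1 = 2\mathcal W(f(0))$, monotone in $\mathcal W(f(0))$.

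For the diameter bound I would combine two auxiliary inequalities. Lemma \ref{lem:simon} (Simon's diameter inequality) yields
\begin{equation*}
    \mathrm{diam}(f(t)(\Sigma))^2 \leq C\,\mathcal W(f(t))\,\mu_{g_{f(t)}}(\Sigma) \leq C\,\mathcal W(f(0))\bigl(\mu_{g_{f(0)}}(\Sigma) + 2\,\mathcal W(f(0))\,t^{1/2}\bigr),
\end{equation*}
while applying \eqref{eq:C3} at $t=0$ with $\rho = \mathrm{diam}(f(0)(\Sigma))$ and any $x_0 \in f(0)(\Sigma)$ (so that $f(0)^{-1}(\overline{B_\rho(x_0)}) = \Sigma$) gives
\begin{equation*}
    \mu_{g_{f(0)}}(\Sigma) \leq c\,\mathcal W(f(0))\,\mathrm{diam}(f(0)(\Sigma))^2.
\end{equation*}
Plugging this in produces
\begin{equation*}
    \mathrm{diam}(f(t)(\Sigma))^2 \leq C'\,\mathcal W(f(0))^2\bigl(\mathrm{diam}(f(0)(\Sigma))^2 + t^{1/2}\bigr),
\end{equation*}
and taking square roots with $\sqrt{a+b}\leq\sqrt a+\sqrt b$ finishes the proof with $C_2 = \sqrt{C'}\,\mathcal W(f(0))$.

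The only mildly delicate point is the second step: the $t^{1/2}$ rate is \emph{not} obvious from the pointwise-in-time bound on $\frac{d}{dt}\mu$, and its extraction hinges on coupling the spatial Cauchy--Schwarz with the gradient-flow dissipation identity and then integrating via a second Cauchy--Schwarz in time. Once the area estimate is in place, the diameter bound is a routine sandwiching between Simon's two diameter/area inequalities. All constants are manifestly monotone-increasing in $\mathcal W(f(0))$.
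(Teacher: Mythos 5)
Your proof is correct and follows essentially the same route as the paper's: first variation of area plus two applications of Cauchy--Schwarz coupled with the gradient-flow dissipation identity for \eqref{eq:totalarea} (giving the same $C_1 = 2\mathcal{W}(f(0))$), then sandwiching with diameter--area inequalities for the second estimate. The one small variation is that for the reverse inequality $\mu_{g_{f(0)}}(\Sigma) \lesssim \mathcal{W}(f(0))\,\mathrm{diam}(f(0)(\Sigma))^2$ you invoke \eqref{eq:C3} with $\rho = \mathrm{diam}(f(0)(\Sigma))$, whereas the paper uses the lower bound built into Lemma \ref{lem:simon}; both stem from Simon's density/monotonicity estimate and yield the same conclusion, though \eqref{eq:C3} as stated carries the torus (Euler characteristic zero) restriction while Lemma \ref{lem:simon} is formulated for any compact connected surface.
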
 
\begin{proof}
First we remark that, {since the Willmore flow is a gradient flow,} for all  $s \geq 0$
\begin{equation}\label{eq:aiu}
\int_0^s  \int_\Sigma |\partial_t f(t)|^2 d\mu_{g_{f(t_j)}} =  \mathcal{W}(f(0))- \mathcal{W}(f(s)) {\leq  \mathcal{W}(f(0))}.  
\end{equation}
By \cite[Eq.(2.16)]{KS1} we have
 \begin{align*}
 \left\vert \frac{d}{dt} \mu_{g_{f(t)} } ( \Sigma) \right\vert  & =  \left\vert \int_\Sigma \langle \vec{H}[f(t)], \partial_t f(t) \rangle \; \mathrm{d}\mu_{g_{f(t)}} \right\vert  \\ & \leq \left( \int_\Sigma  |\vec{H}[f(t)]|^2 \; \mathrm{d}\mu_{g_{f(t)}}  \right)^\frac{1}{2}\left( \int_\Sigma  |\partial_t f(t)|^2  \; \mathrm{d}\mu_{g_{f(t)}}  \right)^\frac{1}{2}
 \\ & \leq 2 \sqrt{\mathcal{W}(f(t))}\left( \int_\Sigma  |\partial_t f(t)|^2  \; \mathrm{d}\mu_{g_{f(t)}}  \right)^\frac{1}{2}
 \end{align*}
 Integrating {with respect to $t$ and since $t \mapsto \mathcal{W}(f(t))$ is decreasing} we obtain 
 \begin{align*}
| \mu_{g_{f(s)}}( \Sigma) - \mu_{g_{f(0)}} ( \Sigma) | & \leq 2 \sqrt{\mathcal{W}(f(0))} \int_0^s \left( \int_\Sigma  |\partial_t f(t)|^2  \; \mathrm{d}\mu_{g_{f(t)}}  \right)^\frac{1}{2} \; \mathrm{d}t 
\\ & \leq  2 \sqrt{\mathcal{W}(f(0))} s^\frac{1}{2} \left( \int_0^s \int_\Sigma  |\partial_t f(t)|^2  \; \mathrm{d}\mu_{g_{f(t)}} \; \mathrm{d}t \right)^\frac{1}{2}
\\ & \leq 2 \mathcal{W}(f(0)) s^\frac{1}{2},
 \end{align*}
{using \eqref{eq:aiu} in the last step.} The estimate in \eqref{eq:totalarea} %for the total area 
follows if we choose {$C_1(W) = 2 {\mathcal{W}(f(0))}$}. Next we use a generalization of \cite[Lemma 1.1]{Simon} (cf. the following Lemma) for immersed surfaces to obtain that there exists $C_S  > 0$ such that $\mathrm{diam}(f(\Sigma))^2 \leq C_S \mathcal{W}(f) \mu_{g_f}( \Sigma)$. Using this, {\eqref{eq:totalarea}} and Lemma \ref{lem:simon} we obtain 
%\todo{Possibly need to get rid of the square-root in the computations below: would change also the constant at the end and make it better.}
 \begin{align*}
 \mathrm{diam} (f(t)(\Sigma))^2 & \leq C_S \mathcal{W}(f(t)) \mu_{g_{f(t)}}(\Sigma)  \leq C_S \mathcal{W}(f(0)) ( \mu_{g_{f(0)}}( \Sigma ) + 2 {\mathcal{W}(f(0))} t^\frac{1}{2} ) 
 \\ & \leq C_S \mathcal{W}(f(0)) ( \mathcal{W}(f(0)) \mathrm{diam}(f(0))^2 + 2 {W(f(0))} t^\frac{1}{2} ) .
 \\ & \leq C_S \mathcal{W}(f(0))^2 ( \mathrm{diam}(f(0))^2 + 2t^\frac{1}{2} ) 
 \\ & \leq 2C_S \mathcal{W}(f(0))^2 ( \mathrm{diam}(f(0)) + t^\frac{1}{4} )^2.
 \end{align*}
 The choice of $C_2(W) := 2 C_S W^2 $ does the job. 
\end{proof}

{In this proof we have used the following Lemma, which generalizes \cite[Lemma 1.1]{Simon}.
\begin{lemma}[{cf. \cite[Lemma 1.1]{Simon}}] \label{lem:simon}
 There exists $C_S= C_S(n) > 0$ such that for all immersions $f: \Sigma \rightarrow \mathbb{R}^n$  of a compact connected $2D$-manifold without boundary $\Sigma$ one has 
 \begin{equation*}
     \frac{\mu_{g_f}(\Sigma)}{\mathcal{W}(f)} \leq \mathrm{diam}(f(\Sigma))^2 \leq C_S(n) {\mu_{g_f}(\Sigma) \mathcal{W}(f)} .
 \end{equation*}
 \end{lemma}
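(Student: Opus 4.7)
The two bounds are handled by different techniques.

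The lower bound $\mu_{g_f}(\Sigma) \leq \diam(f(\Sigma))^2\,\W(f)$ comes from a direct divergence-theorem calculation on the closed manifold $\Sigma$. Using $\Delta_{g_f} f = \vec{H}$ and $|\nabla f|_{g_f}^2 = 2$ (since $\dim\Sigma = 2$), one computes $\Delta_{g_f}(|f-x_0|^2) = 4 + 2\langle f-x_0,\vec{H}\rangle_{\mathbb{R}^n}$ for any $x_0 \in \mathbb{R}^n$. Integrating over $\Sigma$ gives the identity $2\mu_{g_f}(\Sigma) = -\int_\Sigma \langle f-x_0,\vec{H}\rangle\,\mathrm{d}\mu_{g_f}$. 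Choosing $x_0 \in f(\Sigma)$, so that $|f-x_0| \leq \diam(f(\Sigma))$ pointwise, and combining with the Cauchy-Schwarz estimate $\int_\Sigma|\vec{H}|\,\mathrm{d}\mu_{g_f} \leq 2\mu_{g_f}(\Sigma)^{1/2}\W(f)^{1/2}$, one obtains $2\mu_{g_f}(\Sigma) \leq 2\diam(f(\Sigma))\,\mu_{g_f}(\Sigma)^{1/2}\W(f)^{1/2}$, which squares to the claim.

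The upper bound $\diam(f(\Sigma))^2 \leq C_S(n)\,\mu_{g_f}(\Sigma)\,\W(f)$ is the classical Simon diameter estimate \cite[Lemma 1.1]{Simon}. Its proof extends verbatim to arbitrary codimension: the sole analytic ingredient is Simon's monotonicity formula, which is valid in any codimension, cf.\ \cite[(2.11)]{KS1}. In the present notation it yields the lower density bound $\pi \leq \rho^{-2}\mu_{g_f}(f^{-1}(B_\rho(x_0))) + \tfrac14 \int_{f^{-1}(B_\rho(x_0))}|\vec{H}|^2\,\mathrm{d}\mu_{g_f}$ for every $x_0 \in f(\Sigma)$ and every $\rho > 0$. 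The plan is then to chain this estimate along a segment $[y_1,y_2] \subset \mathbb{R}^n$ with $|y_1-y_2| = \diam(f(\Sigma))$: subdivide the segment into many pieces, apply monotonicity at suitably chosen centers whose preimages in $\Sigma$ are pairwise disjoint, and sum the contributions. Balancing the accumulated Willmore-energy error against the total area $\mu_{g_f}(\Sigma)$ yields a bound of the desired form, with a constant depending only on $n$.

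The main obstacle lies entirely in the upper bound. A single-point application of the monotonicity formula, with $\rho = \diam(f(\Sigma))$, only gives $\diam(f(\Sigma))^2 \leq \mu_{g_f}(\Sigma)/(\pi - \tfrac14\W(f))$, which is vacuous since $\W(f) \geq 4\pi$ for every closed immersed surface. The chaining/covering step along the diameter-realizing segment is precisely what avoids any smallness assumption on $\W(f)$ and leads to the unconditional bound $\diam^2 \lesssim \mu\,\W$.
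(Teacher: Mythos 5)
Your proposal takes a genuinely different route for both inequalities. For the lower bound you give a clean, self-contained computation: integrating $\Delta_{g_f}|f-x_0|^2 = 4 + 2\langle f-x_0,\vec{H}\rangle$ with $x_0\in f(\Sigma)$ and applying Cauchy--Schwarz yields $\mu_{g_f}(\Sigma) \le \mathrm{diam}(f(\Sigma))^2\,\mathcal{W}(f)$ directly. This argument is correct and needs no appeal to Simon's lemma at all. For the upper bound you propose to re-run Simon's chaining argument using the codimension-independent monotonicity formula. The paper instead treats \emph{both} bounds at once with a single, different device: it perturbs the immersion $f$ to the embedding $f_\eps = (f,\eps\iota):\Sigma\to\R^{n+N}$ (with $\iota:\Sigma\hookrightarrow\R^N$ any smooth embedding), applies Simon's original result \cite[Lemma~1.1]{Simon} to $f_\eps$, and passes to the limit $\eps\to 0$ using the continuity of $\mathrm{diam}$, $\mathcal{W}$, and the area under the smooth convergence $f_\eps\to(f,0)$. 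This approach buys a complete proof of the immersed statement without touching the internals of Simon's argument, at the price of a fixed bump in codimension ($n\mapsto n+N$) inside the constant $C_S$.

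The one genuine gap in your submission is that the upper bound is left as a plan rather than a proof: you assert that Simon's argument ``extends verbatim,'' but the chaining/covering construction along the diameter-realizing segment, and the balancing of accumulated energy against area, are not carried out. That plan is sound in spirit (monotonicity does hold in any codimension, and preimages of disjoint balls under an immersion are disjoint, as the paper itself exploits elsewhere), but writing it out is non-trivial; the paper's perturbation-to-embedding trick is precisely a way to avoid having to do it.
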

 \begin{proof}
 Let $\Sigma$ be as in the statement.
 By \cite[Lemma 1.1]{Simon} we infer that for all $n \in \mathbb{N}$ there exists $c(n) > 0$ such that for all embeddings $f: \Sigma \rightarrow \mathbb{R}^n$ one has
     \begin{equation}\label{eq:simonabsch}
     \frac{\mu_{g_f}(\Sigma)}{\mathcal{W}(f)} \leq \mathrm{diam}(f(\Sigma))^2 \leq c(n) {\mu_{g_f}(\Sigma) \mathcal{W}(f)} .
 \end{equation}
 We need to generalize this result to immersions.
 Let $N \in \mathbb{N}$ be such that each smooth two-dimensional manifold can be smoothly embedded into $\mathbb{R}^{N}$. Such constant $N$ exists due to Nash's embedding theorem (or alternatively one can derive $N= 4$ explicitly using a handle decomposition). We will show that the desired estimate is satisfied with the constant $C_S(n) := c(n+N)$. To this end let $f : \Sigma \rightarrow \mathbb{R}^n$ be an immersion and $\iota : \Sigma \rightarrow \mathbb{R}^N$ be an embedding. For fixed $\epsilon > 0$ define $f_\epsilon: \Sigma \rightarrow \mathbb{R}^{n+N}$ via $f_\epsilon(p) := (f(p), \epsilon \iota(p))^T$. It is easy to check that $f_\epsilon$ is an embedding.  We infer by \eqref{eq:simonabsch} that 
 \begin{equation}\label{eq:approxsimon}
     \frac{\mu_{g_{f_\epsilon}}(\Sigma)}{\mathcal{W}(f_\epsilon)} \leq \mathrm{diam}(f_\epsilon(\Sigma))^2 \leq c(n+N) {\mu_{g_{f_\epsilon}}(\Sigma) \mathcal{W}(f_\epsilon)}. 
 \end{equation}
 Next we pass to the limit as $\epsilon \rightarrow 0$. 
 First we examine the diameter. Note that for all $x,y \in \Sigma$ one has 
 \begin{equation*}
     |f_\epsilon(x) - f_\epsilon(y)|^2 = |f(x) - f(y)|^2 + \epsilon^2|\iota(x) - \iota(y)|^2.
 \end{equation*}
 From this one easily infers 
 \begin{equation*}
     \mathrm{diam}(f(\Sigma))^2 \leq \mathrm{diam}(f_\epsilon(\Sigma))^2  \leq \mathrm{diam}(f(\Sigma))^2 + \epsilon^2 \mathrm{diam}( \iota (\Sigma)).
 \end{equation*}
 Since $\Sigma$ is compact we find that $\mathrm{diam}(\iota(\Sigma))< \infty$. Hence
 \begin{equation*}
     \lim_{\epsilon \rightarrow 0} \mathrm{diam}(f_\epsilon(\Sigma)) = \mathrm{diam}(f(\Sigma))
 \end{equation*}
 One readily checks that $f_\epsilon \rightarrow (f,0)$ in $C^k$ for all $k$. From Lemma \ref{lem:appsemi} one infers then that 
 $
     \lim_{\epsilon \rightarrow 0} \mathcal{W}(f_\epsilon) = \mathcal{W}((f,0)) = \mathcal{W}(f).
$
 That $\mathcal{W}((f,0)) = \mathcal{W}(f)$ can easily be checked since $A[(f,0))](X,Y) = D^2(f,0)(X,Y) = (D^2f(X,Y),0)$, where the last identity is due to the fact that $D$ is defined componentwise, cf. \eqref{eq:DXDEF}. Using methods similar to Lemma \ref{lem:appsemi} one can also check 
 $
     \lim_{\epsilon \rightarrow 0 } \mu_{g_{f_\epsilon}}(\Sigma ) = \mu_{g_{(f,0)}}(\Sigma) = \mu_{g_f}(\Sigma).
$
 This being shown, the claim follows from \eqref{eq:approxsimon} letting $\epsilon \rightarrow 0$.
 \end{proof}
}

\bibliography{Lib}

\end{document}